\setlist{nosep}
\setlist[itemize]{leftmargin=*}
\setlist[enumerate]{leftmargin=*,align=left,nolistsep}
\DeclareMathAlphabet{\mathcalligra}{T1}{calligra}{m}{n}
\declaretheoremstyle[
headfont=\color{blue}\normalfont\bfseries,
bodyfont=\color{blue}\normalfont\itshape,
]{colored}
\newtheorem{theorem}{Theorem}[section]
\newtheorem{corollary}[theorem]{Corollary}
\newtheorem{definition}[theorem]{Definition}
\newtheorem{question}[theorem]{Question}
\newtheorem{lemma}[theorem]{Lemma}
\newtheorem{proposition}[theorem]{Proposition}
\theoremstyle{definition}
\newtheorem*{remark}{Remark}
\newcommand{\RR}{\mathbb{R}}
\newcommand{\NN}{\mathbb{N}}
\newcommand{\QQ}{\mathbb{Q}}
\newcommand{\ZZ}{\mathbb{Z}}
\newcommand{\re}{\mathrm{Re}}
\newcommand{\CC}{\mathbb{C}}
\newcommand{\SL}{\mathrm{SL}_2(\CC)}
\newcommand{\dist}{\mathrm{dist}}
\newcommand{\ZT}{\ZZ/2}
\newcommand{\vv}{\mathbf{v}}
\newcommand{\T}{\mathcal{T}}
\newcommand{\bA}{\mathbf{A}}
\newcommand{\bB}{\mathbf{B}}
\newcommand{\D}{\mathcal{D}}
\newcommand{\E}{\mathcal{E}}
\newcommand{\M}{\mathcal{M}}
\newcommand{\Zl}{\mathcal{Z}}
\title{Perturbation and Pruning of Nondegenerate $\ZT$ Harmonic $1$-forms}
\begin{document}

\author{Jiahuang Chen} 

\address{AMSS}
\email{chenjiahuang@amss.ac.cn}

\begin{abstract}
  We prove that for any nondegenerate $\ZT$ harmonic $1$-form, there exists a metric perturbation producing a new nondegenerate $\ZT$ harmonic $1$-form whose ordinary zero set is discrete. As an application, we show that for generic smooth nondegenerate $\ZT$ harmonic $1$-forms, the leaf spaces are $\ZZ$-trees. Moreover, we show that if a $3$-dimensional rational homology sphere admits a smooth nondegenerate $\mathbb{Z}/2$-harmonic $1$-form, then there exists another nondegenerate $\mathbb{Z}/2$-harmonic $1$-form whose singular locus has exactly two connected components.
\end{abstract}

\maketitle

\section{Introduction}
Let $(M^n,g)$ be a oriented Riemannian $n$-manifold with metric $g$, where $n\ge 2$. A $\ZT$ harmonic $1$-form $v$ on $(M,g)$ is a bounded $1$-form that is $2$-valued and harmonic, whose zero set are closed subset of $M$ with Hausdorff dimension at most $n-2$. In particular, when $n=2$ and $(M,g)$ is a Riemann surface, every $\ZT$ harmonic $1$-form $v$ is the real part of the square root of some holomorphic quadratic differential $q$, i.e. $\re\sqrt{q}$. 

The concept of $\ZT$ harmonic $1$-forms was introduced by C.\,Taubes in \cite{TaubesPSL2C}, to study the limit behavior of degenerating sequences of $\SL$-connections on $2$- or $3$-dimensional closed manifolds. In particular, Taubes' work provides a compactification of flat $\SL$-connections, in where the ideal points are $\ZT$ harmonic $1$-forms. 

On the other hand, $\ZT$ harmonic $1$-forms have also appeared in the study of other gauge-theoretic equations \cite{taubes20,Taubes22,HaydysWalpuski,WalpuskiZhang,SamanLi} and in the geometry of calibrated submanifolds \cite{Hebranchdefor}. However, despite their importance, few properties of $\ZT$ harmonic $1$-forms are known, particularly concerning the regularity and topological restrictions of their zero sets. Assuming smoothness of the zero set and nonvanishing leading coefficients, S.\,Donaldson established the local deformation theory for $\ZT$ harmonic $1$-forms \cite{donaldsondeformation2019}. See also \cite{parker2025deformations}. To illustrate, we give precise definitions:

\begin{definition}
    A $\ZT$ harmonic $1$-form on an oriented manifold $M$ (not necessarily compact) is given by a closed subset $\Zl\subsetneq M$ and a two-valued section $v$ on $T^\ast M\setminus\Zl$, satisfying the following:
\begin{enumerate}[label=(\roman*)]
    \item For each $x\notin\Zl$,there exists an neighborhood $U\subset M\setminus\Zl$ of $x$ such that $v=\pm w$, where $v$ is a $1$-form and $dw=d\star_g w=0$.
    \item For each $x\in\Zl$, on any neighborhood $U$ of $x$, $v\ne\pm w$ for any $1$-form $w$ on $U\setminus\Zl$.
    \item For every precompact open subset $U$ of $M$, we have \[\int_{U\setminus\Zl}|v|^2+|\nabla v|^2<+\infty.\]
    \item There exists a constant $C$ and $\alpha>0$ such that for every $x\in\Zl$ and every $r>0$ less than the injectivity radius of $M$ at $x$, we have $\int_{B_r(x)}|v|^2<Cr^{n+\alpha}$. 
\end{enumerate}    
\end{definition}
 The closed set $\Zl$ is called the \textbf{singular locus} of $v$.
\begin{definition}
    A $\ZT$ harmonic $1$-form $(v,\Zl)$ on $(M,g)$ is said to be \textbf{smooth} if the singular locus $\Zl$ is a smooth co-oriented submanifolds with codimension $2$ in $M$.
    A smooth $\ZT$ harmonic $1$-form $(v,\Zl)$ is said to be \textbf{nondegenerate} if there is a tubular neighborhood $N$ of $\Zl$ and a positive constant $c$ such that $|v|(x)\ge c\cdot\mathrm{dist}(x,\Zl)^{1/2}$. 
    
    A point $y\in M\setminus\Zl$ at which $|v|(y)=0$ is called an \textbf{ordinary zero} of $v$. 
\end{definition}

We note that in general the singular locus $\Zl$ need not be smooth, although it is expected that there are some generic regularity results for $\Zl$. To the best of the author's knowledge, the best results on the regularity of $\Zl$ are given in \cite{taubes2014, Boyu17}. On the other hand, a number of $\ZT$ harmonic $1$-forms in $\RR^3$, whose singular loci has graphical singularities, are constructed in \cite{TWI,TWII,ChenHe}. Also, an index theorem was established in \cite{IndexHMT} when $\Zl$ is a graph. 

Nevertheless, throughout this article, all $\ZT$ harmonic $1$-forms are assumed to be \textbf{smooth}, without further mention.

The nondegeneracy condition for $\ZT$ harmonic spinors was studied in \cite{Takahashi,TakahashiIndex}. In \cite{donaldsondeformation2019}, Donaldson introduced the nondegeneracy for $\ZT$ harmonic $1$-forms, to apply a version of Nash-Moser implicit function theorem in proving a local deformation theorem. This approach has shed light on the study of $\ZT$ harmonic $1$-forms, inspiring a series subsequent works \cite{Hebranchdefor,HeParker,salm,parker2025deformations,parker2024gluing}.

\vspace{1ex}
We now highlight two observations: (i) \textit{$\ZT$ harmonic $1$-forms generalize quadratic differential to higher dimensions}; (ii) \textit{nondegeneracy is a key assumption in the study of $\ZT$ harmonic $1$-forms}. These two observations lead to the following two questions, respectively: 
\begin{question}\label{QUESTION}
    \begin{itemize}
        \item What topological constraints are imposed on the zero set of a $\ZT$ harmonic $1$-form? (Recall that the number of zeros of a quadratic differential is determined by the genus of the Riemann surface.)
        \item How general are smooth nondegnerate $\ZT$ harmonic $1$-forms? Are there always some smooth nondegnerate $\ZT$ harmonic $1$-forms in the ideal boundary of Taubes compactification of the $\SL$ character variety of a $3$-manifold $M$?
    \end{itemize}
\end{question}

In this paper, we establish results related to these questions, but does not attempt to fully resolve them (which is rather ambitious).

Let $\M$ be the space of Riemannian metrics on a closed oriented manifold $M^n~(n\ge 2)$ and $\mathcal{S}$ be the space of smooth codimension $2$ submanifolds in $M$. Suppose there is a nondegenerate $\ZT$ harmonic $1$-form $v_0$ with (smooth) singular locus $\Zl_0$ under the metric $g_0$ on $M$. Our first main result is:
\begin{theorem}[Perturbation]\label{main1}
    In a neighborhood $\mathcal{U}$ of $g_0$ in $\M$, there exists a dense open set $\mathcal{U}'\subset\mathcal{U}$ with the following property: for any $g\in\mathcal{U}'$, there exists a unique $\Zl$ in a neighborhood of $\Zl_0$ in $\mathcal{S}$, such that there exists a nondegenerate $\ZT$ harmonic $1$-form $v$ under the metric $g$, with singular locus $\Zl$ and discrete ordinary zero set.
\end{theorem}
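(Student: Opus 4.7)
My plan is to combine Donaldson's local deformation theorem \cite{donaldsondeformation2019} with a parametric Sard--Smale transversality argument, arranging that the ordinary zeros of the perturbed form become transverse (and hence automatically discrete).

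First, I would invoke Donaldson's deformation theorem to obtain, after shrinking $\mathcal{U}$ if necessary, a smooth family $\{(v_g, \Zl_g)\}_{g \in \mathcal{U}}$ of nondegenerate $\ZT$ harmonic $1$-forms extending $(v_0, \Zl_0)$; this already gives the uniqueness of $\Zl$ in a neighborhood of $\Zl_0$. The remaining task is to prove that the subset
\[
\mathcal{U}' := \{\, g \in \mathcal{U} : v_g \text{ is transverse to the zero section of } T^\ast M \text{ on } M \setminus \Zl_g \,\}
\]
is dense in $\mathcal{U}$. Openness is automatic since transversality is an open condition, and transversality of a section of a rank-$n$ bundle over the $n$-manifold $M \setminus \Zl_g$ forces its zero set to be $0$-dimensional, i.e. discrete.

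Density will come from the Sard--Smale parametric transversality theorem applied to the universal evaluation map
\[
\Phi : \mathcal{E} \longrightarrow T^\ast M, \qquad \Phi(g,x) = v_g(x), \qquad \mathcal{E} := \{(g,x) \in \mathcal{U} \times M : x \notin \Zl_g\},
\]
where the right-hand side is read in local single-valued branches (or lifted to the branched double cover); transversality to the zero section is branch-independent. Provided $\Phi$ is transverse to the zero section, $\Phi^{-1}(0)$ is a Banach submanifold of $\mathcal{E}$, and generic $g \in \mathcal{U}$ are regular values of the projection onto the first factor; each such $g$ lies in $\mathcal{U}'$.

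The main obstacle is verifying transversality of $\Phi$ at each $(g_0, x_0) \in \Phi^{-1}(0)$. Concretely, I need to show that the linearization
\[
T_{g_0}\mathcal{U} \longrightarrow T^\ast_{x_0} M, \qquad h \longmapsto \dot v_h(x_0),
\]
is surjective, where $\dot v_h = \tfrac{d}{dt}\big|_{t=0} v_{g_0 + th}$. Linearizing $dv_g = d\star_g v_g = 0$ yields
\[
d\dot v_h = 0, \qquad d\star_{g_0} \dot v_h = -\, d(\dot\star_h\, v_0),
\]
together with anti-invariance on the branched double cover; under the nondegeneracy hypothesis this is precisely the Fredholm problem Donaldson solves in his weighted Sobolev framework. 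To prove surjectivity I would dualize: if a covector $\ell \in (T^\ast_{x_0} M)^\ast$ annihilated the image, then the adjoint of the linearized Hodge system with a delta-source $\ell$ at $x_0$ would produce a symmetric $2$-tensor $\Theta_\ell$ on $M \setminus (\Zl_0 \cup \{x_0\})$ satisfying $\int_M \langle h, \Theta_\ell \rangle = 0$ for every $h$. Taking $h$ compactly supported in small balls away from $\Zl_0 \cup \{x_0\}$ forces $\Theta_\ell \equiv 0$ there, and unique continuation for the elliptic system it satisfies, combined with its prescribed singular profile at $x_0$, then forces $\ell = 0$. The delicate step will be carrying out this Green/adjoint construction rigorously in the presence of $\Zl_0$, and this is precisely where the nondegeneracy of $v_0$ and Donaldson's analytic framework become essential.
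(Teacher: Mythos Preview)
Your proposal has a genuine gap at the step where you invoke Sard--Smale. Donaldson's deformation theorem, which produces the family $g\mapsto(\Zl_g,v_g)$, is proved via Nash--Moser and is only available on the Fr\'echet manifold $\mathcal{M}$ of \emph{smooth} metrics; the underlying linearized problem exhibits loss of derivatives, so there is no Banach implicit-function version one can substitute. Consequently your parameter space $\mathcal{U}\subset\mathcal{M}$ is Fr\'echet, and the Sard--Smale theorem (which requires a second-countable Banach manifold and a Fredholm projection of finite index) does not apply to the projection $\Phi^{-1}(0)\to\mathcal{U}$. You cannot fix this by passing to $C^{\ell,\alpha}$ metrics, because the map $g\mapsto\Zl_g$ is not known to exist, let alone be $C^k$, at finite regularity. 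The paper flags exactly this obstacle as the main difficulty.

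The paper's route is different in a structural way. It decouples the two ingredients: first it \emph{fixes} the singular locus $\Zl_0$ and works in the Banach manifold $\mathcal{M}_0^{\ell,\alpha}$ of $C^{\ell,\alpha}$ metrics inducing a fixed normal structure on $\Zl_0$; there Sard--Smale is legitimate, and a Green's-function computation (after Honda) shows the evaluation map is transverse, yielding a nearby metric $g_1$ whose $2$-valued harmonic form $v_1=v(g_1,\Zl_0,[h])$ has only Morse-type ordinary zeros---but $v_1$ may have nonzero $A$-term and hence fail to be a $\ZT$ harmonic $1$-form. Second, it \emph{grafts} $v_1$ onto the original nondegenerate $v_0$ across an annular ``cambium'' $U_{\epsilon_0,\epsilon_1}$ near $\Zl_0$ on which both forms are bounded away from zero, simultaneously gluing the corresponding Hodge-duals, and then uses a pointwise linear-algebra construction (in the spirit of Calabi's intrinsic characterization) to manufacture a new smooth metric $g_2$ for which the glued form is genuinely harmonic. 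The result is nondegenerate (it agrees with $v_0$ near $\Zl_0$), transverse (it agrees with $v_1$ away from $\Zl_0$), and $g_2$ can be made arbitrarily close to $g_0$ in the Fr\'echet topology. Openness of $\mathcal{U}'$ is then checked separately, using tameness of $\underline{\Zl}$ and $\Psi$ and the fact that transversality on the compact complement of a tube around $\Zl$ is an open condition in $C^\infty$. Your surjectivity sketch (the duality/unique-continuation argument for $\Theta_\ell$) is also more elaborate than what is needed once the locus is fixed, but that is a secondary issue compared to the Fr\'echet/Banach mismatch.
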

    
This theorem simplifies the structure of the ordinary zero set of a nondegenerate $\ZT$ harmonic $1$-form and serves as a $\ZT$ version of the transversality theorem for harmonic $1$-forms proved in \cite{hondatrans}. The main difficulty in proving Theorem~\ref{main1} is the lack of Sard's lemma in the Fr\'echet category, which is different with \cite{hondatrans}, where the setting is within the Banach category. In this article, we use a gluing argument to circumvent this difficulty.

\vspace{1ex}
The simplification of the zero set, will also simplify the leaf space of $\ZT$ harmonic $1$-forms as we now explain.

The pullback $\tilde{v}$ of a $\ZT$ harmonic $1$-form $v$ to the universal cover space $\tilde{M}$ defines a singular foliation $\ker\tilde{v}$ on $\tilde{M}$. The leaf space $\T_{\tilde{M},\tilde{v}}$ is always an $\RR$-tree (see Section \ref{Sec4} for precise definitions), endowed with an isometric $\pi_1(M)$-action \cite{HeWentworthZhang}.

Consider the $2$-fold branched covering $p:\hat{M}\to M$ branched along the singular locus $\Zl$ of $v$. We identify the preimage $p^{-1}(\Zl)$ with $\Zl$. Then $\hat{v}:=p^\ast v$ is a harmonic $1$-form on the manifold $\hat{M}$ under the metric $\hat{g}=p^\ast g$, which has conical singularities along $\Zl$. Moreover, let $\tau$ be the canonical involution on $\hat{M}$, then the $1$-form $\hat{v}$ is anti-invariant under the action of $\tau^\ast$. Thus $[\hat{v}]\in H^1_-(\hat{M};\RR)=\{[\sigma]\in H^1(\hat{M};\RR):\tau^\ast[\sigma]=-[\sigma]\}$. Denote the cohomology space $H^1_-(\hat{M};\RR)$ by $H$. (Since we only deform the singular locus in a small neighborhood in $\mathcal{S}$, the topology of the branched cover does not change, and we retain the notation $\hat{M}$.) 

We will show that:
\begin{theorem}\label{main2}
    In a neighborhood $\mathcal{U}$ of $(g_0,[\hat{v}_0])$ in $\M\times H$, there is a dense open set $\mathcal{U}'\subset\mathcal{U}$, such that, for any $(g,h)\in\mathcal{U}'$, there exists a unique $\Zl$ in a neighborhood of $\Zl_0$ in $\mathcal{S}$, and a nondegenerate $\ZT$ harmonic $1$-form $v$ with respect to the metric $g$ satisfying:
    \begin{enumerate}[label=(\roman*)]
        \item $v$ has singular locus $\Zl$.
        \item $[\hat{v}]=h$ in $H^1(\hat{M};\RR)$.
        \item $\T_{\tilde{M},\tilde{v}}$ is a $\ZZ$-tree.
    \end{enumerate}
\end{theorem}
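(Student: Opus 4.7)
The plan is to combine Donaldson's local deformation theory with a $(g,h)$-parametrized version of Theorem~\ref{main1}, and then impose a rationality condition on $h$ to force the leaf space to be a $\ZZ$-tree.

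First, by Donaldson's local deformation theorem applied at $(g_0, v_0)$, after shrinking $\mathcal{U}$ one obtains a smooth family $(v_{g,h}, \Zl_{g,h})_{(g,h)\in\mathcal{U}}$ of nondegenerate smooth $\ZT$ harmonic $1$-forms with $[\hat{v}_{g,h}] = h$ and singular locus near $\Zl_0$; this already yields (i), (ii), and the asserted uniqueness. Next, I would adapt the proof of Theorem~\ref{main1} to the enlarged parameter space $\M \times H$: since $H$ is finite-dimensional, the extra $h$-parameters only enlarge the perturbation space, and the gluing/transversality argument of Theorem~\ref{main1} carries over with cosmetic changes. This produces a dense open subset $\mathcal{U}_1 \subset \mathcal{U}$ on which $v_{g,h}$ has discrete ordinary zero set.

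For condition (iii), the translation lengths of the $\pi_1(M)$-action on $\T_{\tilde{M}, \tilde{v}}$ coincide with the absolute periods of $\hat{v}_{g,h}$ along loops in $\hat{M}$; thus $\T_{\tilde{M}, \tilde{v}}$ is a $\ZZ$-tree precisely when these periods generate a cyclic subgroup of $\RR$. This is a condition on $h$ alone: it requires $h$ to lie in the dense subset $H^{\QQ} \subset H$ consisting of real rays through $H^1_-(\hat{M}; \QQ)$. Taking $\mathcal{U}' := \mathcal{U}_1 \cap (\M \times H^{\QQ})$ produces the required subset.

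The main obstacle will be the second step, transporting the gluing construction of Theorem~\ref{main1} to the parametrized setting $\M \times H$; I expect this to be essentially formal, since $h$ enters linearly in the $\ZT$ harmonic equation and only contributes finitely many extra parameters, so the parametrized linearization remains surjective. A subsidiary subtlety is that $H^{\QQ}$ is dense but not literally open in $H$, so the $\mathcal{U}'$ produced above is dense but only open with respect to the topology induced by cyclic-period strata; extracting a genuinely open dense $\mathcal{U}'$ would require a finer argument, perhaps exploiting the fact that the $\ZZ$-tree combinatorics is locally stable once the ordinary zero set and singular locus have generic structure.
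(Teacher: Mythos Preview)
Your three-step strategy---Donaldson's deformation, then the perturbation of Theorem~\ref{main1} to arrange discrete ordinary zeros, then rationality of $h$---matches the paper's approach (Lemma~\ref{rotionalandMorse} combined with Theorem~\ref{simplicialtree}), and $\mathcal{U}_1\cap(\M\times H^{\QQ})$ is indeed the right candidate set. The gap lies in your justification of step~(iii).

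The assertion that ``$\T_{\tilde M,\tilde v}$ is a $\ZZ$-tree precisely when the periods of $\hat v$ generate a cyclic subgroup of $\RR$,'' and that this is ``a condition on $h$ alone,'' is not correct. The paper's definition of $\ZZ$-tree is that the \emph{vertex set} is discrete---a statement about the local structure of the tree, not about translation lengths of the $\pi_1(M)$-action; and your identification of those translation lengths with periods of $\hat v$ on $\hat M$ conflates the universal cover $\tilde M\to M$ with the branched double cover $\hat M\to M$. Even granting both rationality and discrete ordinary zeros, one must still prove that the vertices of $\T_{\tilde M,\tilde v}$ are isolated, and this is where the substance lies. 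The paper uses rationality to integrate $\hat v$ to a circle-valued map $u\colon\hat M\to S^1$, so that leaves become \emph{compact} connected components of level sets of $u$; transversality and nondegeneracy then force $\Zl\cup I_v$, and hence the set of singular leaves on $M$, to have only finitely many components. A covering argument (Lemma~\ref{pointandleaf} through Theorem~\ref{simplicialtree}) then shows that points of $\T_{\tilde M,\tilde v}$ are precisely leaves of $\ker\tilde v$, that vertices project to singular leaves on $M$, and that any two distinct vertices are separated by a uniform $\epsilon_0>0$ coming from disjoint tubular neighborhoods $U_{\epsilon_0}(L)$ of the finitely many singular leaves downstairs. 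This leaf-space analysis is the missing content in your proposal.

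Your concern about openness is well placed: $H^{\QQ}$ is dense but not open in $H$, so $\mathcal{U}_1\cap(\M\times H^{\QQ})$ is only dense. The paper's Section~\ref{Sec4} establishes density via Lemma~\ref{rotionalandMorse} but does not supply a separate openness argument, so this subtlety is shared with the paper rather than peculiar to your attempt.
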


Let's explain the relation between Theorem \ref{main2} and Question \ref{QUESTION}. In \cite{MorganShalenI}, an algebraic compactification, referred to as the Morgan-Shalen compactification, was defined for the $\SL$ character variety of a $2$- or $3$-manifold $M$. Roughly speaking, the ideal points of the Morgan-Shalen compactification are $\RR$-trees equipped with $\pi_1(M)$-actions. It was proposed in \cite{TaubesPSL2C} and proved in \cite{HeWentworthZhang} that the Taubes compactification is related to the Morgan-Shalen compactification. The $2$-dimensional case was established earlier in \cite{DDW00}.

Indeed, one can associate an $\RR$-tree $\T$ with a $\pi_1(M)$-equvariant harmonic maps $u:\tilde{M}\to \T$, and the gradient field of $u$ then defines a $\ZT$ harmonic $1$-form on $M$. This correspondence yields a continuous surjection from the Morgan-Shalen compactification to the Taubes compactification. See \cite{DDW00,HeWentworthZhang} for details. 

Moreover, it is a fact that, roughly speaking, $\ZZ$-trees are dense in the ideal boundary of the Morgan-Shalen compactification \cite[Theorem II.4.3]{MorganShalenI}. Thus if a smooth nondegenerate $\ZT$ harmonic $1$-form is contained in the ideal boundary of the Taubes compactification, as in the second item in Question \ref{QUESTION}, it would follow that such a form can be perturbed so that its leaf space becomes a $\ZZ$-tree.

Given this, Theorem~\ref{main2} may therefore be viewed as a supporting evidence for the existence of nondegenerate $\ZT$ harmonic $1$-forms in the ideal boundary of the Taubes compactification.

\vspace{2ex}
Next we focus on $3$-manifolds. In this case, a singular locus $\Zl$ is a link in $M$. Some examples of nondegenerate $\ZT$ harmonic $1$-forms on rational homology spheres are found in \cite{Z3He,HeParker}. 

It is natural to conjecture that, given a link $\Zl$ in a rational homology sphere $M$, the existence of a $\ZT$ harmonic $1$-form 
with $\Zl$ as its singular locus imposes topological restrictions on $\Zl$. This is related to the first item in Question~\ref{QUESTION}.

A.\,Haydys \cite{Haydys} proved that if there is a $\ZT$ harmonic $1$-form on $M$, with singular locus $\Zl$, then the Alexander polynomial $\Delta_{\Zl}(t)$ must vanish at $t=-1$, implying that $\Zl$ has at least two connected components. We prove that this lower bound is sharp in the following sense:

\begin{theorem}[Pruning]\label{main3}
    Suppose that there is a nondegenerate $\ZT$ harmonic $1$-form $v$ with singular locus $\Zl$ on a rational homology sphere $(M,g)$. Then there exists another nondegenerate $\ZT$ harmonic $1$-form on $M$ with respect to another metric $g'$, whose singular locus is a sublink $\Zl'$ of $\Zl$, consisting of exactly two connected components.   
\end{theorem}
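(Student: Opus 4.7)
The plan is to argue by induction on the number $k$ of connected components of $\Zl$, reducing the component count by one at each step until exactly two remain. The base case $k=2$ is immediate: take $g'=g$ and $v'=v$. For the inductive step with $k\ge 3$, I will produce a new metric $g_\star$ on $M$ and a nondegenerate $\ZT$ harmonic $1$-form $v_\star$ on $(M,g_\star)$ whose singular locus is a $(k-1)$-component sublink of $\Zl$, obtained by removing one component $\Zl_i$.

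Near each component $\Zl_j$ of the singular locus, a nondegenerate $\ZT$ harmonic $1$-form admits a local expansion of the form $\re(a_j\sqrt{z}\,dz)+O(|z|^{3/2})$ in suitable coordinates, where the \emph{leading coefficient} $a_j$ is a section of a complex line bundle on $\Zl_j$; nondegeneracy is precisely the statement that every $a_j$ is nowhere vanishing. My strategy is to construct a smooth path $(g_t,h_t)\in \M\times H$ starting at $(g,[\hat v])$ along which, via the perturbation framework of Theorem~\ref{main1}, the associated nondegenerate $\ZT$ harmonic $1$-form $v_t$ exists with singular locus $\Zl_t$ a small deformation of $\Zl$, and the leading coefficient $a_i^{(t)}$ on the $i$-th component of $\Zl_t$ is driven to zero at some finite $t=t^\star$, while the remaining coefficients $a_j^{(t)}$ stay nowhere vanishing. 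At $t=t^\star$, the monodromy of $v_{t^\star}$ around the $i$-th component becomes trivial, so $v_{t^\star}$ extends smoothly across it, yielding a nondegenerate $\ZT$ harmonic $1$-form on $(M,g_{t^\star})$ whose singular locus is a $(k-1)$-component sublink of $\Zl$.

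Iterating this pruning step produces a sequence of nondegenerate $\ZT$ harmonic $1$-forms on $M$ whose singular loci have strictly decreasing numbers of components. By the Alexander-polynomial obstruction of Haydys quoted above, no nondegenerate $\ZT$ harmonic $1$-form on a rational homology sphere can have a singular locus with fewer than two components (a knot has $\Delta(-1)\ne 0$). Hence the iteration must terminate exactly at $k=2$, producing the desired pair $(g',v')$.

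The main technical difficulty is the construction of the path $(g_t,h_t)$ that drives $a_i$ to zero while the other $a_j$ remain nowhere vanishing. For fixed metric, the map $h\mapsto a_j$ is linear, and a local metric perturbation supported near $\Zl_i$ modifies $a_i$ with minimal coupling to the $\Zl_j$ for $j\ne i$ by interior elliptic estimates; combining these two sources of flexibility should realize any prescribed target profile for $a_i$, in particular the zero profile. A subtler issue is the limit analysis at $t=t^\star$, where the nondegeneracy hypothesis of Theorem~\ref{main1} fails on $\Zl_i$: one must verify directly that $v_{t^\star}$ extends as a bounded $\ZT$ harmonic $1$-form remaining nondegenerate on the surviving components, after which a further small application of Theorem~\ref{main1} places the pruned configuration in the open set of $(k-1)$-component nondegenerate forms, and the induction can proceed.
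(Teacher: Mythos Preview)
Your proposal rests on a misconception about the nature of the singular locus. The branching of a $\ZT$ harmonic $1$-form around a component $\Zl_i$ is a \emph{topological} property of the flat real line bundle $E\to M\setminus\Zl$: by construction $E$ has holonomy $-1$ around every small loop linking $\Zl_i$, and this is independent of which section of $E\otimes T^\ast(M\setminus\Zl)$ one considers. The expansion coefficient you call $a_i$ is the $B$-term in the paper's notation (the $A$-term already vanishes for a $\ZT$ harmonic form), and driving $B_i$ to zero does \emph{not} trivialize the monodromy. It simply forces $v$ to vanish to order $\ge 5/2$ along $\Zl_i$; the next term in the expansion is of the form $C_i\zeta^{5/2}$, still genuinely two-valued. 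Unless $v$ vanishes identically near $\Zl_i$ (and hence everywhere, by unique continuation), it cannot be written as $\pm w$ for a single-valued $w$ there. So the limiting object at your $t^\star$ is not a $(k-1)$-component nondegenerate form but a \emph{degenerate} $\ZT$ harmonic form still branched along all $k$ components.

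There is a second, compounding problem: Donaldson's deformation theorem (the engine behind Theorem~\ref{main1}) requires nondegeneracy as a hypothesis, so the smooth tame map $(g,[h])\mapsto\Zl$ is only defined on the open set where every $B_j$ is nowhere vanishing. Your path leaves this set precisely at $t^\star$, so you have no control over the limit, and the ``further small application of Theorem~\ref{main1}'' you invoke is unavailable. The cohomology parameter space $H=H^1_-(\hat M;\RR)$ also depends on the topology of the branched cover, which is fixed by $\Zl$; it does not vary continuously across a change in the number of components.

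The paper's argument is entirely different and does not proceed by induction. After arranging (via Theorems~\ref{main1} and \ref{distinguish}) that $v$ is rational, transverse, and nondegenerate with a well-behaved leaf space, one uses that $\T_{M,v}$ is a finite tree (Proposition~\ref{finitetreeonRHS}) whose boundary vertices each contain exactly one component of $\Zl$. One then selects two such components $\Sigma_1,\Sigma_2$, keeps $v$ unchanged on small neighborhoods $W_1,W_2$ of the corresponding leaves, and on the complement $W=M\setminus(W_1\cup W_2)$ \emph{replaces} $v$ by $\pm df_0$ for a Morse function $f_0$ with only index $1$ and $2$ critical points. This grafted two-valued closed $1$-form $v'$ has singular locus exactly $\Sigma_1\sqcup\Sigma_2$. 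To make it harmonic, the paper invokes Calabi's intrinsic characterization (Proposition~\ref{intrinsicallyharmoniccriterion} and its $\ZT$ version, Proposition~\ref{Z2intrinsic}): one verifies the transitivity condition by exhibiting an explicit $\hat v'$-positive loop on the branched double cover $\hat M'$, and this yields the desired metric $g'$.
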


The major tool used in the proofs of Theorem \ref{main1} and \ref{main2}, is the construction from Calabi's intrinsic characterization \cite{Calabi,intrinsic} of harmonic $1$-forms. For other applications of this method, see \cite{Yan}. More general results will be established in a forthcoming work \cite{ChenHeYan}.

\vspace{1ex}

\begin{flushleft}
    \textbf{Acknowledgements.}
    ~The author wish to express his gratitude to his advisor Siqi He for his insights and suggestions. He would also like to thank Dashen Yan for many helpful discussions.  The author is supported by National Key R\&D Program of China (No.2023YFA1010500).
\end{flushleft}

\vspace{1ex}
\begin{flushleft}
\textbf{Notations.} 
~Here is a summary of various notations used through the article:
\end{flushleft}
\begin{itemize}
    \item $M$ denotes a closed oriented smooth manifold. We write $(M,g)$ to refer to a Riemannian manifold $M$ with smooth metric $g$. 
    \item We use $\Zl$ to denote a co-oriented smooth submanifold of $M$ of codimension $2$. And $E\to M\setminus\Zl$ will always denote a real line bundle, with nontrivial monodromy along any small loop surrounding $\Zl$. 
    
    \item For the existence of $E$, we impose certain topological conditions on $\Zl$. Let $N\to \Zl$ be the normal (complex) line bundle of $\Zl$ in $M$. It suffices to assume that the first Chern class $c_1(N)$ is even in $H^2(\Zl;\ZZ)$. This is equivalent to the condition that $w(\Zl)=\iota^\ast w(M)$, where $\iota$ is the inclusion $\iota:\Zl\to M$, and $w(-)$ is the total Stiefel-Whitney class of a manifold. 

    In this article, we always assume codimension $2$ submanifolds of $M$ under consideration satisfy the above topological conditions.

    \item The real line bundle $E\to M\setminus\Zl$ is equipped with the canonical flat metric.

    \item Consider the $2$-fold branched covering $p:\hat{M}\to M$ with branched set $\Zl$. We use $\tau$ to denote the canonical involution on $\hat{M}$.

    \item We use $\dist_g(-,-)$ to denote the distance on $(M,g)$. 
\end{itemize}

\section{Preliminaries}

In this section, we gather some standard background material that will be used in the sequel. To begin with, we introduce a flat model and define multivalued functions and $1$-forms on it. We then extend these notions to general manifolds and define various norms for such multivalued objects. Finally, we give a more practical definition of nondegenerate $\ZT$ harmonic $1$-forms. This section serves as a brief review of the first two sections of \cite{donaldsondeformation2019}. For recent developments in the analysis of $\ZT$ objects, see \cite{Walpuski-Bera}.

\subsection{The flat model}
Consider the Euclidean space $\RR^n$, which we identify with $\CC\times\RR^{n-2}$. For a point $x\in\CC\times\RR^{n-2}$, we denote its coordinate by $(z,t)$, where $z\in\CC$ and $t=(t_i)_{1\le i\le n-2}\in\RR^{n-2}$. We further write $z$ in polar coordinates as $z=re^{i\theta}=(r,\theta)$. There is a $2$-fold branched covering $p:\CC\times\RR^{n-2}\to \CC\times\RR^{n-2}$, defined by $(r,\theta,t)\mapsto (r,2\theta,t)$. 

The pullback of the flat metric $g_{\mathrm{flat}}$ of $\CC\times\RR^{n-2}$ under $p$, which we henceforth denote by $\hat{g}_{\mathrm{flat}}$, is conical, with singularities along $\{0\}\times\RR^{n-2}$. Note that $\hat{g}_{\mathrm{flat}}$ is invariant under the involution $\tau$ which defined by $(z,t)\mapsto (-z,t)$.   

In coordinate $(z,t)=(r,\theta,t_i)$, we have $\hat{g}_{\mathrm{flat}}(r,\theta,t_i)=dr^2+4r^2d\theta^2+\sum_idt_i^2$.

\subsection{Multivalued functions and $1$-forms}
Firstly, let's discuss on the flat model. Consider a flat real line bundle $E\to\CC^\ast\times\RR^{n-2}$, with nontrivial monodromy along any loop surrounding $\{0\}\times\RR^{n-2}$. Here, the base space $\CC\times\RR^{n-2}$ is equipped with the flat metric $g_{\mathrm{flat}}$.

A section $f$ of $E$ can be lifted to be a function $\hat{f}$ on the manifold $(\CC^\ast\times\RR^{n-2},\hat{g}_{\mathrm{flat}})$, such that $\hat{f}(-z,t)=-\hat{f}(z,t)$. We say such a function is odd. Since $p(-z,t)=p(z,t)$, $f$ can be regarded as a $2$-valued function on $(\CC^\ast\times\RR^{n-2},g_{\mathrm{flat}})$, and the action of Laplacian on such a function $\Delta_{g_\mathrm{flat}} f$ is well-defined. Note that $\Delta_{g_\mathrm{flat}} f=0$ if and only if $\Delta_{\hat{g}_{\mathrm{flat}}}\hat{f}=0$.

Next, consider a manifold $(M,g)$ together with a codimension $2$ submanifold $\Zl$. Analogously, for the flat real line bundle $E\to M\setminus\Zl$, a section $f$ of $E$ is considered as a $2$-valued function on $M\setminus\Zl$, and corresponds to an odd function $\hat{f}$ on $\hat{M}\setminus\Zl$. We will say that $f$ is a $2$-valued function on $M$ with singular locus $\Zl$, and define $\Delta_g f$ accordingly. 

Similarly, sections of the twisted cotangent bundle $E\otimes T^\ast(M\setminus\Zl)$ is called $2$-valued $1$-forms on $M\setminus\Zl$. Their lift to $\hat{M}\setminus\Zl$ are called odd $1$-forms on $\hat{M}\setminus\Zl$. As above, we say a section $v$ of $E\otimes T^\ast(M\setminus\Zl)$ is a $2$-valued $1$-form on $M$ with singular locus $\Zl$, and its Laplacian $\Delta_g v$ is defined in the same manner.

\subsection{Section spaces}\label{sectionspaces}
In this subsection, we consider section spaces of the flat real line bundle $E\to M\setminus\Zl$, or equivalently, $2$-valued function spaces on $(\hat{M}\setminus\Zl,\hat{g})$, as defined in \cite{donaldsondeformation2019}. 

Firstly, let's discuss on the flat model. Fix an exponent $\alpha\in (0,\frac{1}{2})$. Consider a $2$-valued function $f$ on $\CC^\ast\times\RR^{n-2}$ and let $\hat{f}$ be the corresponding odd function on $(\CC^\ast\times\RR^{n-2},\hat{g}_{\mathrm{flat}})$.

We define the H\"older norm of a $2$-valued function $f$ to be \[\|\hat{f}\|_{,\alpha}=\sup_{x\ne x'}\frac{|\hat{f}(x)-\hat{f}(x')|}{|x-x'|^{\alpha}},\] where $x$ and $x'$ are any two distinct points on $\CC^\ast\times\RR^{n-2}$. The H\"older norm for $2$-valued functions on $M$ with singular locus $\Zl$ is defined similarly. 

Let $\underline{\mathcal{T}}$ be the set of $n$ commuting vector fields $r\frac{\partial}{\partial r},\,\frac{\partial}{\partial\theta},\frac{\partial}{\partial t_i}$, on the flat space $(\CC\times\RR^{n-2},g_{\mathrm{flat}})$. Note that the first two vector fields have norm $r$ and vanishes along $\{0\}\times \RR^{n-2}$. 

For $k\ge 1$, we define:
\begin{itemize}
    \item $\underline{\mathcal{T}}_k=\{L_1L_2\cdots L_k:L_i\in\underline{\mathcal{T}},1\le i\le k\}$.
    \item $\mathcal{T}_k=\mathrm{span}_{\RR}\cup_{j\le k}\underline{\mathcal{T}}_j$.
\end{itemize}

\begin{definition}
    We define the $\mathcal{D}^{k,\alpha}$ norm of a $2$-valued function $f$ to be \[\|f\|_{\mathcal{D}^{k,\alpha}}=\max_{0\le j\le k,\,L\in\underline{\mathcal{T}}_j}\|Df\|_{,\alpha}.\]
The $\mathcal{D}^{k,\alpha}$ norm of the corresponding odd function $\hat{f}$ is defined similarly.
\end{definition}

If an odd function $f$ has finite H\"older norm, then it's clear that $f$ can be extended continuously to the whole space $\CC\times\RR^{n-2}$, so that $f(0,t)\equiv 0$.

The definition of the $\mathcal{D}^{k,\alpha}$ norm naturally extends to $2$-valued functions with singular locus $\mathcal{Z}$ on a general manifold $M$. The construction proceeds via the standard localization approach sketched as follows:
\begin{enumerate}[label=(\roman*)]
    \item Choose an open cover $\{U_i\}$ of $M$ adapted to $\mathcal{Z}$.
    \item Select corresponding collections of vector fields $\mathcal{T}_k$ and $\underline{\mathcal{T}}_k$ in each neighborhood (retaining the same notation for simplicity).
    \item Employ a partition of unity $\{\rho_i\}$ subordinate to $\{U_i\}$ to piece together local definitions.
\end{enumerate}
Standard arguments show that the resulting function space is independent of the particular choices made in this process. 

One can also consider the sheaf of local $\D^{k,\alpha}$ sections, denoted by $\D^{k,\alpha}_{\mathrm{loc}}$, on $M$. The space of global sections is denoted by $\D^{k,\alpha}(M)$, which is clearly a Banach space, although its norm depends on the choice of:
(i) The vector field collections $\mathcal{T}_k$ and $\underline{\mathcal{T}}_k$, (ii) The partition of unity, (iii) The local coordinate charts.
However, different choices yield equivalent norms.

On the flat model $(\CC\times\RR^{n-2},g_{\mathrm{flat}})$, another sheaf $\mathcal{E}^{k+2,\alpha}_{\mathrm{loc}}$ was introduced in \cite{donaldsondeformation2019}, which consists of $2$-valued functions $f\in\D^{k+2,\alpha}$ such that $\Delta_{g_{\mathrm{flat}}} f\in\D^{k,\alpha}$. Subtleties arise when extending this definition to a general manifold $(M,g)$. One need to introduce the following notation:

\begin{definition}{\cite[Definition 3.1]{donaldsondeformation2019}}\label{normal}
    A normal structure on $\Zl\subset M$ consists of the following data:
    \begin{enumerate}
        \item A normal bundle $N\subset TM|_{\Zl}$, complementary to $T\Zl$.
        \item A Euclidean structure on $N$.
        \item A $2$-jet along $\Zl$ of diffeomorphisms from the total space of $N$ to $M$, extending the canonical $1$-jet.
    \end{enumerate}
\end{definition}

Any metric $g$ induces such a structure. Locally near $\mathcal{Z}$, we can identify an open ball $V$ with an open set in $\mathbb{C}\times\mathbb{R}^{n-2}$ (non-isometrically), so that $V\cap\mathcal{Z}\subset \{0\}\times\mathbb{R}^{n-2}$. The normal structure yields an orthonormal frame $\{\underline{Z}_1,\underline{Z}_2,Y_1,\ldots,Y_{n-2}\}$ on $V$ satisfying $\frac{1}{2}\sum_{i=1}^2 \mathrm{div}(\underline{Z}_i)\underline{Z}_i = \mu$ along $\mathcal{Z}\cap V$, where $\mu$ is the mean curvature of $\Zl$. This gives the decomposition $\Delta_g = \Delta_{g_{\mathrm{flat}}} + 2\mu + \mathcal{L}$ on $V$, with $\Delta_{g_{\mathrm{flat}}}$ the model Laplacian and $\mathcal{L}\in\mathcal{T}_2$ (see \cite[Section 3]{donaldsondeformation2019}).

For $f \in \mathcal{D}^{k+2,\alpha}_{\mathrm{loc}}(V)$ with $\Delta_{g_{\mathrm{flat}}} f \in \mathcal{D}^{k,\alpha}_{\mathrm{loc}}(V)$, the presence of the first-order term $2\mu$ in $\Delta_g$ means $\Delta_g f$ need not belong to $\mathcal{D}^{k,\alpha}_{\mathrm{loc}}(V)$. Therefore, the sheaf $\mathcal{E}^{k+2,\alpha}_{\mathrm{loc}}$ cannot be defined simply as those $f$ satisfying both $f \in \mathcal{D}^{k+2,\alpha}_{\mathrm{loc}}$ and $\Delta_g f \in \mathcal{D}^{k,\alpha}_{\mathrm{loc}}$, since such a definition would obstruct the reduction to flat local models in our analysis.

To address this issue, we must modify the Laplacian $\Delta_g$ to eliminate the first-order term $2\mu$. On $(M,g)$, we define an positive smooth function $W$, such that $\nabla W=-\mu$ along $\Zl$ while $W|_{\Zl}\equiv 1$. Then on the small open set $V$, $\tilde{\Delta}_g:=W\Delta_g W^{-1}=\Delta_{g_{\mathrm{flat}}}+\mathcal{L}$ for some $\mathcal{L}\in\mathcal{T}_2$. 

The restriction of the modified operator $\tilde{\Delta}_g$ on $V$, can be extended to $\CC\times\RR^{n-2}$ and becomes what is referred to as an ``admissible operator".
\begin{definition}
    A differential operator $\tilde{\Delta}$ over $(\CC\times\RR^{n-2},g_{\mathrm{flat}})$ is called admissible if:
    \begin{enumerate}
        \item $\tilde{\Delta}=\Delta_{g|_{\mathrm{flat}}}+\mathcal{L}$ with $\mathcal{L}\in\mathcal{T}_2$.
        \item $\tilde{\Delta}=W^{-1}\Delta_g W$, where $W$ is a smooth positive function and $g$ is a smooth Riemannian metric on $\CC\times\RR^{n-2}$.
        \item $\tilde{\Delta}=\Delta_{g|_{\mathrm{flat}}}$ outside a compact set in $\CC\times\RR^{n-2}$.
    \end{enumerate}
\end{definition}

With this setup, we have:

\begin{lemma}{\cite[Proposition 2.6]{donaldsondeformation2019}}\label{admissiblesheaf}
    Consider an admissible operator $\tilde{\Delta}$ over $(\CC\times\RR^{n-2},g_{\mathrm{flat}})$, if $f\in\D^{1,\alpha}_{\mathrm{loc}}$ and $\tilde{\Delta}f\in\D^{k,\alpha}_{\mathrm{loc}}$, then $f\in\D^{k+2,\alpha}_{\mathrm{loc}}$. As a result, $f\in\mathcal{E}^{k+2,\alpha}_{\mathrm{loc}}$ if and only if $f\in\D^{k+2,\alpha}_{\mathrm{loc}}$ and $\tilde{\Delta}f\in\D^{k,\alpha}_{\mathrm{loc}}$ for an admissible operator $\tilde{\Delta}$.
\end{lemma}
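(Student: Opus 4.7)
The plan is to bootstrap from an elliptic-gain estimate for the model operator $\Delta_{g_{\mathrm{flat}}}$ acting on $2$-valued functions, absorbing the perturbation $\mathcal{L}\in\T_2$ via commutator identities with the scale-adapted vector fields in $\underline{\T}$. The ``if and only if'' assertion then drops out essentially by definition: $\tilde{\Delta}$ and $\Delta_{g_{\mathrm{flat}}}$ differ only by $\mathcal{L}$, which maps $\D^{k+2,\alpha}_{\mathrm{loc}}$ into $\D^{k,\alpha}_{\mathrm{loc}}$.

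The base case $k=0$ is the analytic core: if $f\in\D^{1,\alpha}_{\mathrm{loc}}$ and $\tilde{\Delta}f\in\D^{0,\alpha}_{\mathrm{loc}}$, then $f\in\D^{2,\alpha}_{\mathrm{loc}}$. Lifting to the branched cover, $\hat{f}$ is odd on $(\CC\times\RR^{n-2},\hat{g}_{\mathrm{flat}})$, so its Fourier series in $\hat{\theta}$ contains only odd modes. Away from $\{r=0\}$ this is standard interior Schauder, since $r\partial_r,\partial_\theta,\partial_{t_i}$ are equivalent to ordinary derivatives up to bounded rescaling on compact annular regions. Near $\{r=0\}$ I would run a dyadic scaling argument on shells $\{r\sim 2^{-j}\}$: because $r\partial_r$ and $\partial_\theta$ are invariant and $\Delta_{g_{\mathrm{flat}}}$ is homogeneous of degree $-2$ under $(r,t)\mapsto(\lambda r,\lambda t)$, unit-scale Schauder estimates on the rescaled annulus (bounded away from the singular locus) recombine into a uniform $\D^{2,\alpha}_{\mathrm{loc}}$ bound down to the tip. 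Oddness kills the $n=0$ radial model and leaves $r^{1/2}$ as the lowest profile, which lies in $\D^{k,\alpha}_{\mathrm{loc}}$ for every $k$ (using $\alpha<1/2$ and $r\partial_r(r^{1/2})=\tfrac12 r^{1/2}$), so no marginal divergence obstructs the estimate.

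The general $k$ follows by induction. Assume the result for orders $\le k-1$. Given $f\in\D^{1,\alpha}_{\mathrm{loc}}$ and $\tilde{\Delta}f\in\D^{k,\alpha}_{\mathrm{loc}}\subset\D^{k-1,\alpha}_{\mathrm{loc}}$, the weaker statement already produces $f\in\D^{k+1,\alpha}_{\mathrm{loc}}$. For each $X\in\underline{\T}$,
\[
\tilde{\Delta}(Xf)=X(\tilde{\Delta}f)+[\tilde{\Delta},X]f.
\]
Since the members of $\underline{\T}$ commute pairwise and $\mathcal{L}\in\T_2$ is a constant-coefficient polynomial in them, $[\mathcal{L},X]=0$; for the flat model, $[\Delta_{g_{\mathrm{flat}}},\partial_\theta]=[\Delta_{g_{\mathrm{flat}}},\partial_{t_i}]=0$, while the radial homogeneity of the cone Laplacian gives $[\Delta_{g_{\mathrm{flat}}},r\partial_r]=2(\Delta_{g_{\mathrm{flat}}}-\Delta_t)$, with $\Delta_t=\sum_i\partial_{t_i}^2$. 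Using $f\in\D^{k+1,\alpha}_{\mathrm{loc}}$ and $\tilde{\Delta}f\in\D^{k,\alpha}_{\mathrm{loc}}$, each piece of $[\tilde{\Delta},X]f$ lies in $\D^{k-1,\alpha}_{\mathrm{loc}}$, so $\tilde{\Delta}(Xf)\in\D^{k-1,\alpha}_{\mathrm{loc}}$. The inductive hypothesis applied to $Xf$ yields $Xf\in\D^{k+1,\alpha}_{\mathrm{loc}}$, and running $X$ over $\underline{\T}$ gives $f\in\D^{k+2,\alpha}_{\mathrm{loc}}$. The ``if and only if'' claim is then a one-line check: $f\in\E^{k+2,\alpha}_{\mathrm{loc}}$ means $f\in\D^{k+2,\alpha}_{\mathrm{loc}}$ and $\Delta_{g_{\mathrm{flat}}}f\in\D^{k,\alpha}_{\mathrm{loc}}$, which by adding the term $\mathcal{L}f\in\D^{k,\alpha}_{\mathrm{loc}}$ is equivalent to the $\tilde{\Delta}$-version.

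The main obstacle is the scale-invariant Schauder estimate at the cone tip in the base case: $\{r=0\}$ is not a smooth boundary and the preferred derivatives $r\partial_r,\partial_\theta$ genuinely degenerate there, so standard boundary Schauder theory does not apply directly. I would handle this with the dyadic scaling argument above, supplemented if needed by mode-by-mode analysis of the $\hat{\theta}$-Fourier series: on each odd mode the radial operator is Bessel-type with explicit model solutions $r^{|n|/2}$, which combine with transverse $t$-Schauder estimates to control the full solution. Fourier summation is permissible because $\partial_\theta$ is itself one of the defining vector fields in $\underline{\T}$, so high-mode contributions are damped by the $\D^{k,\alpha}_{\mathrm{loc}}$ norm itself.
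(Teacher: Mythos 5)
This lemma appears in the paper as a direct citation of Donaldson's Proposition~2.6, so the paper gives no proof of its own to compare against. Your Schauder-plus-commutator outline is a plausible alternative route to the integral-kernel argument the cited source uses (and which Section~\ref{3.1} of the present paper recalls in the form of the model Green's function), but two things need repair.

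The claim $[\mathcal{L},X]=0$ is false in general: an admissible $\tilde{\Delta}=W^{-1}\Delta_g W$ arises from a variable metric $g$ and weight $W$, so the coefficients of $\mathcal{L}\in\T_2$ are genuinely position-dependent, not constants. Because each $X\in\underline{\T}$ commutes with the generating vector fields, the commutator only hits the coefficients, so $[\mathcal{L},X]$ is still in $\T_2$ and your conclusion $[\mathcal{L},X]f\in\D^{k-1,\alpha}_{\mathrm{loc}}$ survives --- but the reasoning as stated is wrong, and your induction tacitly requires the coefficients of $\mathcal{L}$ to be regular enough to absorb $k$ derivatives, which you should say. The more serious gap is the $k=0$ base case, which is the entire analytic content and is only sketched. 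The dyadic scaling $(r,t)\mapsto(\lambda r,\lambda t)$ you invoke does not match the $\D^{k,\alpha}$ norm, which is built from the scale-invariant $r\partial_r,\partial_\theta$ but the full-strength $\partial_{t_i}$; the unit-scale Schauder constant on a rescaled Whitney cube therefore does not transport back trivially, and the cross-shell H\"older seminorm needs its own argument. The fallback $\hat\theta$-mode decomposition likewise needs quantitative decay in the mode number for summation in the H\"older topology, plus separate control in the $t$-direction. All of this can be made to work, but as written the base case is an outline rather than a proof; the Green's function route Donaldson actually takes sidesteps these issues, which is presumably why both his paper and this one record that construction explicitly.
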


We now define the sheaf $\E^{k+2,\alpha}_{\mathrm{loc}}$ to consist of $2$-valued functions $f$ on $M$ with singular locus $\Zl$, satisfying $f\in\D^{k+2,\alpha}_{\mathrm{loc}}$ and $\tilde{\Delta}_g f\in\D^{k,\alpha}_{\mathrm{loc}}$. The space of global sections $\E^{k+2,\alpha}(M)$ is equipped with the norm $\|f\|_{\E^{k+2,\alpha}}:=\|\tilde{\Delta}_gf\|_{\D^{k,\alpha}}$, giving it a Banach space structure that, like $\mathcal{D}^{k,\alpha}(M)$, depends on various choices. On the other hand, the sheaf $\mathcal{E}^{k+2,\alpha}_{\mathrm{loc}}$ itself depends only on the normal structure on $\mathcal{Z}$.

\begin{lemma}{\cite[Proposition 3.4]{donaldsondeformation2019}}\label{isobetweenBanachweight}
   The map $\tilde{\Delta}_g:\E^{k+2,\alpha}(M)\to \D^{k,\alpha}(M)$ is an isomorphism between Banach spaces, for any $k\ge 0$. As a result, $\Delta_g:W\E^{k+2,\alpha}(M)\to \D^{k,\alpha}(M)$ is an isomorphism, since $W\D^{k,\alpha}(M)=\D^{k,\alpha}(M)$.
\end{lemma}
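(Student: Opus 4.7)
The plan is to prove the isomorphism $\tilde{\Delta}_g:\E^{k+2,\alpha}(M)\to\D^{k,\alpha}(M)$ in three stages: boundedness, injectivity, and surjectivity. Boundedness is tautological from the chosen norm $\|f\|_{\E^{k+2,\alpha}}:=\|\tilde{\Delta}_g f\|_{\D^{k,\alpha}}$, which makes $\tilde{\Delta}_g$ literally an isometry onto its image; indeed this is the whole reason the norm is defined that way. The corollary for $\Delta_g$ then follows by conjugation: since multiplication by the smooth positive function $W$ (and by $W^{-1}$) is an automorphism of $\D^{k,\alpha}(M)$, the identity $\tilde{\Delta}_g=W\Delta_g W^{-1}$ transports the isomorphism statement for $\tilde{\Delta}_g$ to the analogous one for $\Delta_g$ on the correspondingly rescaled domain.

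For injectivity, suppose $\tilde{\Delta}_g f=0$ with $f\in\E^{k+2,\alpha}(M)$. Writing $u:=W^{-1}f$ gives $\Delta_g u=0$, and since $f\in\D^{k+2,\alpha}$ extends continuously to vanish on $\Zl$, the odd lift $\hat{u}$ is a bounded harmonic function on $(\hat{M}\setminus\Zl,\hat{g})$ vanishing continuously across $\Zl$. Because $\Zl$ has codimension $2$ in $\hat{M}$ and the conical metric $\hat{g}$ is uniformly comparable to a smooth metric on a tubular neighborhood, the standard removable singularity theorem for bounded harmonic functions applies; $\hat{u}$ extends harmonically to the compact manifold $\hat{M}$, is therefore constant, and the $\tau^\ast$-antiinvariance forces $\hat{u}\equiv 0$. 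Hence $f=0$.

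For surjectivity, given $h\in\D^{k,\alpha}(M)$ I want to produce $f\in\E^{k+2,\alpha}(M)$ with $\tilde{\Delta}_g f=h$. Writing $u:=W^{-1}f$ reduces the equation to $\Delta_g u=W^{-1}h$, and lifting to the branched cover further reduces it to solving $\Delta_{\hat{g}}\hat{u}=\hat{h}'$ for an odd function $\hat{u}$ on $\hat{M}$, where $\hat{h}'$ denotes the odd lift of $W^{-1}h$. Existence of a weak solution is supplied by Hodge theory on the compact conical manifold $(\hat{M},\hat{g})$: the Laplacian $\Delta_{\hat{g}}$ is essentially self-adjoint on $L^2$, has discrete spectrum, and its kernel consists of the constants; since $\hat{h}'$ is $\tau^\ast$-antiinvariant it is $L^2$-orthogonal to constants, so the Green's operator yields a unique odd $L^2$ solution $\hat{u}$. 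To upgrade this to the required H\"older regularity, I use standard Schauder estimates away from $\Zl$, apply Lemma \ref{admissiblesheaf} inside each chart near $\Zl$ (where $\tilde{\Delta}_g$ becomes an admissible operator on the flat model after the choices of Definition \ref{normal}), and patch via the partition of unity underlying the $\D^{k,\alpha}$ norm; this places $u\in\D^{k+2,\alpha}(M)$, so $f:=Wu$ lies in $\E^{k+2,\alpha}(M)$ and is the desired preimage.

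The main obstacle I anticipate is the last step: globalizing the local regularity provided by Lemma \ref{admissiblesheaf} so as to place the Hodge-theoretic weak solution in $\E^{k+2,\alpha}(M)$. One must verify that the admissible extensions of $\tilde{\Delta}_g$ chosen in each chart near $\Zl$ are compatible with both the normal structure on $\Zl$ and the partition of unity used to build the $\D^{k,\alpha}$ norm. Fortunately, the machinery assembled in Section \ref{sectionspaces}, namely the admissible-operator framework together with the choice-independence of the sheaf $\mathcal{E}^{k+2,\alpha}_{\mathrm{loc}}$, is designed precisely for this compatibility, so the obstacle reduces to a careful bookkeeping of cutoffs rather than any genuinely new analytic input.
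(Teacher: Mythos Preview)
The paper does not supply its own proof of this lemma; it simply cites \cite[Proposition~3.4]{donaldsondeformation2019}. So there is no in-paper argument to compare against, and I will evaluate your proposal on its own terms and against Donaldson's original route.

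Your injectivity argument is morally correct but miscast. The pullback metric $\hat{g}$ on $\hat{M}$ is genuinely conical along $\Zl$; although it is quasi-isometric to a smooth metric, their Laplacians differ, so the classical removable-singularity theorem for smooth Laplacians does not apply verbatim to $\hat{u}$. The clean fix is the maximum principle: $\hat{u}$ is continuous on the compact manifold $\hat{M}$, vanishes on $\Zl$, and is $\Delta_{\hat{g}}$-harmonic on the connected open set $\hat{M}\setminus\Zl$; wherever the extremum of $\hat{u}$ is attained, one concludes $\hat{u}\equiv 0$.

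The surjectivity argument has a genuine gap. Lemma~\ref{admissiblesheaf} requires $f\in\D^{1,\alpha}_{\mathrm{loc}}$ as a \emph{hypothesis} before it can bootstrap to $\D^{k+2,\alpha}_{\mathrm{loc}}$. Your Hodge-theoretic solution $\hat{u}$ is only in $L^2$ a priori, and nothing in your outline bridges $L^2$ to $\D^{1,\alpha}$ near $\Zl$. This step is not bookkeeping: it is exactly the low-order conical Schauder estimate that Donaldson proves by explicit analysis of the model Green's function (the material summarized around Section~\ref{3.1} of this paper), and it is logically prior to Lemma~\ref{admissiblesheaf}, not a consequence of it. Donaldson's own proof of the isomorphism bypasses $L^2$ Hodge theory altogether: he constructs a right inverse on the flat model by convolution with the explicit Green's function, proves directly that this operator maps $\D^{k,\alpha}$ into $\E^{k+2,\alpha}$, and then perturbs to the curved admissible operator. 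Your route could be made to work, but only after you independently supply an $L^2\to\D^{1,\alpha}$ regularity statement for odd solutions of the model problem, and that statement is essentially equivalent to what you are trying to prove.
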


For an odd function $\hat{f}$ on $\hat{M}$ that lifts from a $2$-valued function $f$ on $M$ with singular locus $\mathcal{Z}$, we say $\hat{f}$ is of class $\mathcal{D}^{k,\alpha}$ or $\mathcal{E}^{k,\alpha}$ when $f$ belongs to the corresponding function class. The spaces of global odd functions are correspondingly denoted by $\mathcal{D}^{k,\alpha}(\hat{M})$ and $\mathcal{E}^{k,\alpha}(\hat{M})$.

\subsection{$L^2$ harmonic $1$-forms}\label{L2harm1form}

Fix a submanifold $\mathcal{Z} \subset M$ of codimension 2. The branched covering $p:\hat{M} \to M$ induces a conical metric $\hat{g}$ on $\hat{M}$. The Hodge decomposition theorem holds for $(\hat{M}, \hat{g})$ \cite{Teleman,ShuguangWang}.

In particular, for any cohomology class $[h] \in H^1_{-}(\hat{M}; \mathbb{R}) := {[h] \in H^1(\hat{M}; \mathbb{R}) : \tau^*[h] = -[h]}$, there exists a unique 1-form $\hat{v}$ on $\hat{M} \setminus \mathcal{Z}$ satisfying:
\begin{enumerate}[label=(\roman*)]
\item $\|\hat{v}\|_{L^2} < +\infty$.
\item $\tau^*\hat{v} = -\hat{v}$.
\item $d\hat{v} = d\star_{\hat{g}}\hat{v} = 0$.
\item $[\hat{v}] = [h]$ in $H^1_{-}(\hat{M}; \mathbb{R}) \cong H^1_{-}(\hat{M}\setminus\mathcal{Z}; \mathbb{R})$.
\end{enumerate}

\begin{definition}
An \emph{odd $L^2$ harmonic 1-form} representing $[h]$ is a 1-form $\hat{v}$ on $\hat{M}$ satisfying conditions (i)--(iv) above.
\end{definition}

\begin{definition}
A section $v$ of $E \otimes T^*(M \setminus \mathcal{Z})$ is a \emph{2-valued harmonic 1-form with singular locus $\mathcal{Z}$} if:
\begin{enumerate}[label=(\roman*)]
\item $\int_M |v|^2 < +\infty$;
\item For any open ball $B\subset M\setminus\Zl$, $v=\pm w$, where $w$ is some harmonic $1$-form on $(B,g|_B)$.
\end{enumerate}
\end{definition}

There are canonical bijections between each two of: (i) 2-valued harmonic 1-forms on $(M, g)$ with singular locus $\mathcal{Z}$, (ii) odd $L^2$ harmonic 1-forms on $(\hat{M}, \hat{g})$, (iii) cohomology classes in $H^1_{-}(\hat{M}; \mathbb{R})$.

For brevity, we denote by $v(g, \mathcal{Z}, [h])$ a 2-valued harmonic 1-form in metric $g$ with singular locus $\mathcal{Z}$ whose lift $\hat{v}$ represents $[h] \in H^1_{-}(\hat{M}; \mathbb{R})$.


Let $U$ be a tubular neighborhood of $\mathcal{Z}$, identified via a diffeomorphism $\zeta: U \to U' \subset N$ with a neighborhood of the zero section in the normal bundle $N \to \mathcal{Z}$ (which carries a natural almost complex structure). Under this identification, each point $x \in U$ corresponds to a normal vector $\zeta(x) \in N$. 

If $\sigma$ is a section of the dual bundle $N^{-1}$, then $\langle\sigma,\zeta\rangle=:\sigma\zeta$ is a complex-valued function defined on $U$. Similarly, given a section $\sigma$ of the square root $N^{-1/2}$ of the dual line bundle $N^{-1}$, $\sigma\zeta^{1/2}$ is a $2$-valued complex function defined on $U$, equivalently, is a section of the complexified bundle $E^\CC|_{U}$. The same construction applies to sections $\sigma_p$ of $N^{-p}$ yielding the (multivalued) function $\sigma_p\zeta^{p}$, for any positive integer or half-integer $p$. For more details, see \cite[Section 3]{donaldsondeformation2019}.

A $2$-valued harmonic $1$-form is exact in the tubular neighborhood $U$ of $\Zl$, namely, $v|_{U}=df$ on $U$ for some $2$-valued harmonic function $f$ on $(U,g|_U)$. With the notions introduced in the previous subsection, and by Lemma \ref{admissiblesheaf}, we have $f\in W\E^{k,\alpha}_{\mathrm{loc}}(U)$ for any $k\ge 0$. 
\begin{lemma}{\cite[Proposition 2.4]{donaldsondeformation2019}}\label{expansionforfunctionlem}
    On a tubular neighborhood of $\Zl$, we have 
    \begin{align}\label{expansionforfunction}
        f(x)=\re\left(A(t)\zeta^{\frac{1}{2}}+B(t)\zeta^{\frac{3}{2}}\right)(x)+E(x).
    \end{align}
    Here, $A$ and $B$ are sections of $N^{-1/2}$ and $N^{-3/2}$ respectively. $E$ is the error term, satisfying $|E(p)|=\mathcal{O}(r^{5/2})$, where $r=\dist(x,\Zl)$.
\end{lemma}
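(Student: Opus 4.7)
The plan is to establish the expansion via a Frobenius-type asymptotic analysis on the branched cover, leveraging the high regularity $f\in W\E^{k,\alpha}_{\mathrm{loc}}(U)$ for all $k\ge 0$ noted just before the lemma. Setting $h := W^{-1}f$, the harmonicity $\Delta_g f=0$ translates to $\tilde{\Delta}_g h = 0$, so Lemma \ref{admissiblesheaf} gives $h\in\D^{k,\alpha}_{\mathrm{loc}}$ for every $k\ge 0$. Using a normal structure (Definition \ref{normal}), a neighborhood of any point $p\in\Zl$ is identified with an open set in $\CC\times\RR^{n-2}$ where $\Zl\leftrightarrow\{0\}\times\RR^{n-2}$; on this chart $\tilde{\Delta}_g = \Delta_{g_{\mathrm{flat}}} + \mathcal{L}$ with $\mathcal{L}\in\mathcal{T}_2$ a lower-order perturbation (in the $\underline{\mathcal{T}}$-scaling). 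Lifting to the branched cover $(\CC^\ast\times\RR^{n-2},\hat g_{\mathrm{flat}})$ with polar coordinates $(r,\theta,t)$, the lift $\hat h$ remains $\tau$-odd and $\D^{k,\alpha}$-regular for all $k$.

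Next I would perform a Fourier expansion in the angular variable. Separable harmonic solutions $r^\alpha e^{in\theta}$ of $\Delta_{\hat g_{\mathrm{flat}}}$ satisfy the indicial equation $\alpha^2 = n^2/4$, and $\tau$-oddness forces $n$ odd. The $\D^{k,\alpha}$-regularity rules out negative indicial exponents, so the formal expansion reads
\begin{equation*}
\hat h(r,\theta,t) \sim \sum_{n\ge 1,\, n\text{ odd}} r^{n/2}\bigl[a_n(t)\cos(n\theta)+b_n(t)\sin(n\theta)\bigr] + (\text{radial corrections}).
\end{equation*}
The $n=1$ mode repackages geometrically as $\re(A(t)\zeta^{1/2})$ for a section $A$ of $N^{-1/2}$, and similarly $n=3$ gives $\re(B(t)\zeta^{3/2})$ for a section $B$ of $N^{-3/2}$. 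All remaining contributions are either $n\ge 5$ modes (leading order $r^{5/2}$) or higher radial corrections to the $n=1,3$ modes, which fall into $O(r^{5/2})$ after absorbing the first radial correction to the $n=1$ mode (of order $r^{3/2}$) into $B$. Multiplying back by $W$, which is smooth with $W|_\Zl\equiv 1$, preserves the form of the expansion.

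The main obstacle will be to promote this formal expansion to a genuine pointwise bound $|E|=\mathcal{O}(r^{5/2})$, handling both the transverse parameter $t$ and the non-constant perturbation $\mathcal{L}$. This is controlled by a weighted Schauder-type estimate on the $\D^{k,\alpha}$-scale: after subtracting the explicit $r^{1/2}$ and $r^{3/2}$ profiles, the residual $E$ satisfies a PDE whose source is of sufficiently high order in $r$, and elliptic regularity for the admissible operator $\tilde{\Delta}_g$ on the $\D^{k,\alpha}$-scale (Lemmas \ref{admissiblesheaf} and \ref{isobetweenBanachweight}) upgrades this to the desired pointwise decay of $E$.
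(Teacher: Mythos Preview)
The paper does not give its own proof of this lemma; it is quoted directly from \cite[Proposition 2.4]{donaldsondeformation2019} and used as a black box. Your outline---Fourier decomposition in the angular variable on the branched cover, indicial analysis picking out the exponents $n/2$ for odd $n$, and identification of the $n=1,3$ modes with sections of $N^{-1/2}$ and $N^{-3/2}$---is exactly the mechanism behind Donaldson's argument, so in spirit you are reproducing the cited proof rather than offering an alternative.

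One small correction: with $\tilde{\Delta}_g = W\Delta_g W^{-1}$, the harmonicity $\Delta_g f=0$ gives $\tilde{\Delta}_g(Wf)=0$, not $\tilde{\Delta}_g(W^{-1}f)=0$. Since $W|_{\Zl}\equiv 1$ and $W$ is smooth, multiplying by $W$ versus $W^{-1}$ does not alter the $r^{1/2}$ and $r^{3/2}$ coefficients, so this does not damage your argument, but the conjugation direction should be fixed.

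The part you flag as the main obstacle---upgrading the formal series to a genuine pointwise remainder $|E|=\mathcal{O}(r^{5/2})$ uniformly in $t$, in the presence of the perturbation $\mathcal{L}\in\mathcal{T}_2$---is indeed where the work lies in \cite{donaldsondeformation2019}. Your appeal to the $\D^{k,\alpha}$ regularity for all $k$ and to the mapping properties of admissible operators is the right lever; the actual execution uses rescaling and the fact that $\mathcal{L}$ contributes terms that are lower order in the $\underline{\mathcal{T}}$-grading, so that after subtracting the first two modes the residual equation has a right-hand side with enough extra vanishing to close the estimate. Your sketch captures this correctly at the level of strategy.
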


As a result, any $2$-valued harmonic $1$-forms has an expansion near the singular locus $\Zl$:
\begin{lemma}\label{expansionlem}
    Consider a $2$-valued harmonic $1$-form $v=v(g,\Zl,[h])$, then on a tubular neighborhood of $\Zl$, we have 
    \begin{align}\label{expansion}
        v(x)=\re\left(d\left(A(t)\zeta^{\frac{1}{2}}+B(t)\zeta^{\frac{3}{2}}\right)(x)\right) +E_v(x).
    \end{align}
    Here, $A$ and $B$ are sections of $N^{-1/2}$ and $N^{-3/2}$ respectively. $E_v$ is the error term, satisfying $|E(x)|=\mathcal{O}(r^{3/2})$, where $r=\dist(x,\Zl)$.
\end{lemma}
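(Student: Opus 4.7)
The plan is to deduce this expansion directly from Lemma \ref{expansionforfunctionlem} using the exactness of $v$ in a tubular neighborhood. The paragraph just before Lemma \ref{expansionforfunctionlem} notes that on a tubular neighborhood $U$ of $\Zl$ we may write $v|_U = df$ for a $2$-valued harmonic function $f$ on $(U, g|_U)$, and moreover that $f \in W\E^{k,\alpha}_{\mathrm{loc}}(U)$ for every $k \ge 0$. So I will start from there and simply differentiate the known expansion.

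Applying Lemma \ref{expansionforfunctionlem} to this potential $f$ yields
\begin{equation*}
    f(x) = \re\!\left(A(t)\zeta^{1/2} + B(t)\zeta^{3/2}\right)(x) + E(x),
\end{equation*}
with $A \in \Gamma(N^{-1/2})$, $B \in \Gamma(N^{-3/2})$, and $|E(x)| = \mathcal{O}(r^{5/2})$. Taking the exterior derivative gives
\begin{equation*}
    v(x) = df(x) = \re\!\left(d\!\left(A(t)\zeta^{1/2} + B(t)\zeta^{3/2}\right)(x)\right) + dE(x),
\end{equation*}
so it remains only to set $E_v := dE$ and establish the pointwise bound $|E_v(x)|_g = \mathcal{O}(r^{3/2})$.

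For the decay of $E_v$, the key point is that $E = f - \re(A\zeta^{1/2} + B\zeta^{3/2})$ inherits from $f$ (and from the explicit, smooth-away-from-$\Zl$ leading terms) the regularity $E \in W\E^{k,\alpha}_{\mathrm{loc}}(U)$ for all $k \ge 0$. In the local model coordinates $(r,\theta,t)$ adapted to $\Zl$, the metric has the form $dr^2 + r^2 d\theta^2 + \sum dt_i^2$ to leading order, and the vector fields $r\tfrac{\partial}{\partial r}$, $\tfrac{\partial}{\partial\theta}$, $\tfrac{\partial}{\partial t_i}$ of $\underline{\T}$ have $g$-norms $r$, $r$, and $1$ respectively. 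Because $E$ and its iterated $\underline{\T}$-derivatives are controlled in the Hölder norm of Section~\ref{sectionspaces}, and because the leading monomial of $E$ is of order $r^{5/2}$, the derivatives $r\tfrac{\partial}{\partial r}E$ and $\tfrac{\partial}{\partial\theta}E$ are again of order $r^{5/2}$, while $\tfrac{\partial}{\partial t_i}E$ is of order $r^{5/2}$ as well. Converting these into the components of $dE$ in an orthonormal frame dual to $\tfrac{\partial}{\partial r}$, $\tfrac{1}{r}\tfrac{\partial}{\partial\theta}$, $\tfrac{\partial}{\partial t_i}$ costs one factor of $r^{-1}$ in the first two components and none in the rest, yielding $|dE(x)|_g = \mathcal{O}(r^{3/2})$, as desired.

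The only real obstacle is the last step: extracting the sharp pointwise decay of $dE$ from the pointwise decay of $E$ together with higher regularity. I would handle this by working in the $2$-fold cover and using the $\D^{k,\alpha}$ bounds on $\hat{E}$ (via Lemma \ref{admissiblesheaf} applied to the admissible operator associated with $\hat g$), since the odd lift $\hat{E}$ on $(\hat M,\hat g)$ is a smooth function vanishing to order $5/2$ along $\Zl$ and all of the rescaled derivatives $r\tfrac{\partial}{\partial r}\hat E$, $\tfrac{\partial}{\partial\theta}\hat E$, $\tfrac{\partial}{\partial t_i}\hat E$ remain in the appropriate Hölder class. Everything else is a direct term-by-term computation.
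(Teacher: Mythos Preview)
Your approach is exactly what the paper intends: it states Lemma~\ref{expansionlem} with the preface ``As a result'' immediately after Lemma~\ref{expansionforfunctionlem}, treating it as the formal consequence obtained by writing $v=df$ on a tubular neighborhood and differentiating the expansion of $f$. Your discussion of why $|dE|=\mathcal{O}(r^{3/2})$ (via the $\D^{k,\alpha}$ regularity of $E$ and the scaling of the frame $\underline{\T}$) supplies more detail than the paper itself gives, and is in line with how \cite[Proposition~2.4]{donaldsondeformation2019} handles the error term.
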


\begin{definition}
    When $A(t)\equiv0$, the corresponding $2$-valued harmonic $1$-form $v$ is called a $\ZT$ harmonic $1$-form. If, in addition, $B(t)$ is nonvanishing everywhere on $\Zl$, then $v$ is called a \textbf{nondegenerate} $\ZT$ harmonic $1$-form.
\end{definition}

Recall that the sheaf $W\E^{k+2,\alpha}_{\mathrm{loc}}$ depends on the normal structure of $\Zl\subset M$, and hence so does the expansions \eqref{expansionforfunction} and \eqref{expansion}. The regularity of $A$- and $B$- terms may change as the metric varies.

Let's fix a singular locus $\Zl_0$ with a given normal structure and consider the Fr\'echet manifold $\M_0$ consisting of smooth Riemannian metrics $g$ on $M$, that induces the same normal structure on $\Zl_0$. Given a metric $g\in\M_0$ and a cohomology class $[h]\in H:=H^1_-(\hat{M};\RR)$, there exists a unique $2$-valued harmonic $1$-form $v=v(g,\Zl_0,[h])$. And $v$ has an expansion as \eqref{expansion}, although the metric varies in $\M_0$. 

With this setup, the coefficients $A(t)$ and $B(t)$ in \eqref{expansion}, can be regarded as maps: 
\begin{align}\label{ABunderline}
    \underline{A}:\M_0\times H\to C^\infty(\Zl_0, N^{-\frac{1}{2}}),\,\ \underline{B}:\M_0\times H\to C^\infty(\Zl_0, N^{-\frac{3}{2}}),
\end{align}
where $\underline{A}(g,[h])$ and $\underline{B}(g,[h])$ give the corresponding coefficient functions for the harmonic form $v(g,\mathcal{Z}_0,[h])$ at each metric $g \in \mathcal{M}_0$ and cohomology class $[h] \in H$.

\section{Perturb nondegenerate $\ZT$ harmonic $1$-forms}

In this section, we present the proof of Theorem \ref{main1}, which proceeds in several steps. 

In Section \ref{3.1}, we introduce an appropriate Green's function that will be essential for our analysis. This Green's function is then applied in Section \ref{3.2} to establish a transversality result (Theorem \ref{singulartrans}), showing that through a metric perturbation while preserving the singular locus $\mathcal{Z}$, we can ensure the corresponding $2$-valued harmonic 1-form has only discrete zeros on $M\setminus\mathcal{Z}$. The proof follows arguments similar to \cite{hondatrans}. However, this may introduce non-zero $A$-terms in the expansion \eqref{expansion}.

On the other hand, in Section \ref{3.3}, we recall Donaldson's deformation theorem (Theorem \ref{deformation}), which allows metric perturbations that maintain vanishing $A$-terms.  While one might attempt to directly combine this with our transversality result, a significant technical obstacle arises: Donaldson's theorem is formulated in the Fréchet category, where Sard's lemma and the parametric transversality theorem—key tools in the proof of Theorem~\ref{singulartrans}—fail to hold.

To overcome this issue, in Section~\ref{3.4} we employ a grafting argument.  
Our goal is to construct a nondegenerate $\mathbb{Z}/2$-harmonic $1$-form with isolated ordinary zeros. 
To achieve this, we cut the ``bad'' part of the nondegenerate $\ZT$ harmonic $1$-form (where ordinary zeros may accumulate), and then graft onto the remained part the ``good" part (which is away from the singular locus, having isolated zeros) of the $2$-valued harmonic $1$-form given in Section \ref{3.2}. 

This construction yields a new $2$-valued $1$-form together with a new metric on $M$ under which the form is harmonic. 
The resulting $2$-valued $1$-form has the desired properties, thus proving the density part of Theorem \ref{main1}.
Finally, we establish the openness part using Donaldson's deformation theorem.

\subsection{Green's function}\label{3.1}
Consider the flat model $(\CC\times\RR^{n-2},\hat{g}_{\mathrm{flat}})$. In \cite{kahlercone,donaldsondeformation2019}, the Green's function of the singular Laplacian $\Delta_{\hat{g}_{\mathrm{flat}}}$ has been constructed:

\begin{theorem}
    There exists a function $G(x,x')$ defined on $(\CC\times \RR^{n-2})^2$ away from the diagonal, satisfying the following properties:
    \begin{enumerate}[label=(\roman*)]
        \item For odd functions $\rho\in \E^{2,\alpha}_{\mathrm{loc}}$ such that $\Delta \rho$ is compactly supported:
        \begin{align}\label{greenflat}
            \rho(x)=\int_{\CC\times\RR^{n-2}}G(x,x')\Delta_{\hat{g}_{\mathrm{flat}}}\rho(x')dx'.
        \end{align}
        
        \item $G(x,x')=G(x',x)$.
        \item Fix $x$, $G(x,x')$ is an odd function that is in $\E^{k,\alpha}_{\mathrm{loc}}$ for any $k\ge 0$ as long as $x'\ne x$. And it satisfies $\Delta_{\hat{g}_{\mathrm{flat}}}G(x,x')=0$.
        \item For $x\in \CC^\ast\times\RR^{n-2}$, if $\dist(x,x')<\dist(x,\Zl)/2$, we have $G(x,x')=c_n\dist(x,x')^{2-n}+f_x(x')$, where $c_n=((n-2)\omega_{n-1})^{-1}$ (with $\omega_n$ is the volume of unit sphere $S^{n-1}$), and $f_x$ is a harmonic function defined locally near $x$. 
    \end{enumerate}
\end{theorem}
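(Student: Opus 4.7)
The plan is to construct $G$ by separation of variables, exploiting the product structure of $(\mathbb{C}\times\mathbb{R}^{n-2}, \hat{g}_{\mathrm{flat}})$ in which the $\mathbb{C}$-factor is a flat cone of cone angle $4\pi$. The odd condition $\tau^* f = -f$ selects exactly the angular Fourier modes $e^{ik\theta}$ with $k$ odd, which is the key structural input.

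For $x = (r_0, \theta_0, t_0)$ fixed, I would expand
\begin{equation*}
G(x, x') = \sum_{k\in\mathbb{Z}} e^{i(2k+1)(\theta'-\theta_0)}\, g_k(r_0, r'; t_0 - t'),
\end{equation*}
so that each coefficient satisfies
\begin{equation*}
\Bigl(\partial_{r'}^2 + \tfrac{1}{r'}\partial_{r'} - \tfrac{(2k+1)^2}{4r'^2} + \Delta_{t'}\Bigr)\, g_k = 0
\end{equation*}
away from $(r', t') = (r_0, t_0)$, with a $\delta$-source jump at that locus. Fourier-transforming in $t'$ reduces the radial equation to a modified Bessel equation of order $|2k+1|/2$, whose bounded-at-$0$ and decaying-at-$\infty$ solutions are $I_{|2k+1|/2}(|\xi| r')$ and $K_{|2k+1|/2}(|\xi| r')$. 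Matching these across $r' = r_0$ with the correct jump yields an explicit formula for $g_k$; summing the modes and inverting the $t$-Fourier transform then defines $G$.

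Next I would verify the four properties. Symmetry (ii) is immediate from the explicit formula, since $g_k(r_0, r'; s) = g_k(r', r_0; -s)$. The harmonicity and odd parity in (iii) hold by construction; the regularity $G(x, \cdot) \in \mathcal{E}^{k,\alpha}_{\mathrm{loc}}$ away from $x$ then follows, via Lemma \ref{admissiblesheaf}, from a mode-by-mode $\mathcal{D}^{1,\alpha}_{\mathrm{loc}}$ estimate on the truncated sum. For (iv), within the ball $\{\dist(x, x') < \dist(x, \mathcal{Z})/2\}$ the covering map $p$ is a local isometry to Euclidean space and exactly one preimage of $p(x)$ meets this ball, so the mode sum reconstructs the standard Newtonian kernel $c_n \dist(x, x')^{2-n}$ up to a harmonic correction $f_x$. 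Finally (i) is a Green's identity computation: apply $\int_{(\mathbb{C}\times\mathbb{R}^{n-2})\setminus B_\epsilon(x)} (G\,\Delta\rho - \rho\,\Delta G)$, use the compact support of $\Delta\rho$ together with the Newtonian singularity from (iv), and let $\epsilon \to 0$.

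The main technical obstacle will be the regularity claim in (iii). Because $G(x, \cdot)$ vanishes only to order $r^{1/2}$ along $\mathcal{Z}$ (coming from the lowest modes $k = 0, -1$), uniform $\mathcal{D}^{k,\alpha}$ control demands mode-uniform estimates on $I_{|2k+1|/2}, K_{|2k+1|/2}$ and their derivatives under the vector fields in $\underline{\mathcal{T}}$, together with sufficient decay in $k$ to justify termwise differentiation of the series. This uniform-in-mode Bessel analysis, rather than any single conceptual ingredient, is where the bulk of the work sits.
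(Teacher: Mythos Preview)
Your proposal is essentially correct and follows the standard construction. Note, however, that the paper does not actually prove this theorem: it is stated as a quotation from \cite{kahlercone,donaldsondeformation2019}, where the Green's function on the cone is constructed precisely by the separation-of-variables and Bessel-function approach you outline. So there is nothing to compare against in the paper itself; your sketch matches the argument in the cited references.
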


Using this model Green's function, we can construct a parametrix near the singular locus $\Zl\times\Zl\subset \hat{M}\times \hat{M}$. For the nonsingular part $(\hat{M}\setminus\Zl)\times(\hat{M}\setminus \Zl)$, the parametrix is defined as usual. Now glue them together by partition of unity, we then obtain a global parametrix defined near the diagonal in $\hat{M}\times\hat{M}$. Applying the standard iteration method as in \cite{Aubin}, we can finally construct a Green's function $G_g$ on the conical manifold $(\hat{M},\hat{g})$:

\begin{theorem}\label{greenM}
    There exists a function $G_g(x,x')$ defined on $\hat{M}\times\hat{M}$ away from the diagonal, satisfying the following properties:
    \begin{enumerate}[label=(\roman*)]
        \item For all odd functions $\rho\in W\E^{2,\alpha}(\hat{M},\Zl)$ such that $\Delta_{\hat{g}}\rho\in \D^{,\alpha}(\hat{M},\Zl)$:
        \begin{align}\label{greendelta}
            \rho(x)=\int_{\hat{M}}G_g(x,x')\Delta_{\hat{g}}\rho(x')dx'.
        \end{align}
        
        \item $G_g(x,x')=G_g(x',x)$.
        \item Fix $x$, $G_g(x,x')$ is an odd function that is in $\E^{k,\alpha}_{\mathrm{loc}}$ for any $k\ge 0$ as long as $x'\ne x$. And it satisfies $\Delta_{\hat{g},x'}G_g(x,x')=0$.
        \item For $x\in \hat{M}\setminus\Zl$ and $x'$ sufficiently close to $x$, then we have $G_g(x,x')=c_n\dist(x,x')^{2-n}+G_{g,1}(x,x')$, where $c_n=((n-2)\omega_{n-1})^{-1}$, and $G_{g,1}$ satisfying:
        \begin{align*}
            &|G_{g,1}(x,x')| \le C, && \text{if } n = 3, \\
            &|G_{g,1}(x,x')| = \mathcal{O}(|\log(\dist(x,x'))|), && \text{if } n = 4, \\
        &|G_{g,1}(x,x')| = \mathcal{O}(\dist(x,x')^{4-n}), && \text{if } n > 4.
        \end{align*}
        Here, $C$ is some constant.
        \item $G_{g}(\tau(x),x')=-G_{g}(x,x')$.
    \end{enumerate}
\end{theorem}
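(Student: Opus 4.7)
The plan is to construct $G_g$ by the classical parametrix-plus-iteration method of Aubin, adapted to the conical setting by using the flat model Green's function $G$ near $\Zl$. Throughout, all kernels are made $\tau$-anti-invariant by symmetrization, which gives (v) by construction and also sidesteps the usual invertibility issue for $\Delta_{\hat{g}}$ on the closed manifold $\hat{M}$: the null space of $\Delta_{\hat{g}}$ consists of constants, which are $\tau$-invariant and hence orthogonal to the odd sector in which we work.

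First I would cover a neighborhood of the diagonal in $\hat{M}\times\hat{M}$ by finitely many singular charts (centered at points of $\Zl$, modeled on the flat picture via the normal structure) and regular charts (centered at points of $\hat{M}\setminus\Zl$, geodesic normal coordinates). In each singular chart, use the flat model Green's function $G(x,x')$; in each regular chart, use the Euclidean parametrix $c_n\dist(x,x')^{2-n}$. Multiply each by a cut-off supported near the diagonal, combine via a partition of unity, and symmetrize under $\tau$ by setting $P(x,x') := \tfrac{1}{2}(P_0(x,x') - P_0(\tau x, x'))$ to obtain a $\tau$-anti-invariant parametrix with the correct leading singularity. The error kernel $K(x,x') := \Delta_{\hat{g},x'}P(x,x') - \delta_x(x')$ is smooth off the diagonal, and near the diagonal its singularity is strictly milder than that of $P$: in the singular charts this uses property (i) of the flat Green's function together with Lemma \ref{admissiblesheaf}, which provides the decomposition $\Delta_{\hat{g}} = \Delta_{\hat{g}_{\mathrm{flat}}} + \mathcal{L}$ with $\mathcal{L}$ an admissible lower-order operator on the $\D^{k,\alpha}$ scale.

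Iterating formally as a Neumann series $G_g := P + \sum_{j\ge 1} P \ast K^{\ast j}$, where $\ast$ denotes convolution of kernels on $\hat{M}$, produces a kernel satisfying $\Delta_{\hat{g},x'}G_g(x,x') = \delta_x(x')$ on the odd sector; convergence in an appropriate H\"older norm is the standard Aubin estimate, and the correction $G_g - P$ is locally in $\E^{k,\alpha}_{\mathrm{loc}}$ for every $k$, which gives (iii) and (iv) immediately. Properties (i) and (ii) then follow from Green's identity applied on $\hat{M}$ with small neighborhoods of the relevant points and of $\Zl$ excised, letting the excision radii shrink to zero. The boundary contributions along $\Zl$ vanish because odd sections in $W\E^{2,\alpha}$ vanish on $\Zl$ at the appropriate rate, while the boundary contribution at $x$ converges to $\rho(x)$ by the Euclidean-type asymptotics in (iv); (ii) follows by the same identity applied to $G_g(x,\cdot)$ and $G_g(x',\cdot)$.

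The main obstacle I expect is controlling the iteration in a H\"older norm adapted to the conical geometry: the iterates $K^{\ast j}$ must improve in regularity by a definite amount at each step, and sharply enough that the Neumann series converges in a norm strong enough to legitimize the representation formula (i). This is exactly where the admissible operator formalism of Section \ref{sectionspaces} is essential, since it guarantees that the discrepancy $\Delta_{\hat{g}} - \Delta_{\hat{g}_{\mathrm{flat}}}$ has the right structure in the $\D^{k,\alpha}$ scale; combined with the fact that the flat Green's function lies in $\E^{k,\alpha}_{\mathrm{loc}}$ for every $k$ (property (iii) of the model theorem), this lets the estimates propagate from the model to $G_g$.
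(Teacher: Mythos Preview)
Your proposal is correct and follows essentially the same approach as the paper: the paper's argument is the short paragraph immediately preceding the theorem statement, which sketches precisely the parametrix-near-$\Zl$ plus standard-parametrix-elsewhere construction, glued by partition of unity and followed by Aubin's iteration. Your write-up fills in the details of that sketch (the $\tau$-symmetrization for (v), the odd-sector observation to avoid the constant kernel, and the Green's-identity arguments for (i) and (ii)) in a manner fully consistent with what the paper outlines.
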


\subsection{A transversailty result}\label{3.2}
K. Honda proved a transversailty theorem for harmonic $1$-forms on closed manifolds in \cite{hondatrans}. In this subsection, we generalize his result to the case of $2$-valued harmonic $1$-forms. A key ingredient in \cite{hondatrans} is the following well-known infinite dimensional parametric transversality theorem: 
\begin{lemma}\label{translem}
    Let $X$ be a Banach manifold, and let $N_1$ and $N_2$ be finite dimensional manifolds (without boundary). Let $f:X\times M_1\to M_2$ be a $C^k$-map for $k$ sufficiently large. If $f$ is transverse to a submanifold $Z$ of $M_2$, then there is a dense subset $U$ of $X$, such that for all $x\in U$, $f_x=f(x,-):M_1\to M_2$ is transverse to $Z$.   
\end{lemma}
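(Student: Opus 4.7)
My plan is to reduce the statement to the Sard--Smale theorem for Fredholm maps between Banach manifolds, by passing to the ``universal preimage'' and projecting to the parameter space.

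First I would form the universal preimage
$$W := f^{-1}(Z) \subset X \times M_1.$$
Because $f \pitchfork Z$ by hypothesis, the implicit function theorem in the Banach category exhibits $W$ as a $C^k$ Banach submanifold of $X \times M_1$ whose codimension equals $\mathrm{codim}_{M_2} Z$. I would then study the restricted projection $\pi := \mathrm{pr}_X|_W : W \to X$.

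Next I would verify that $\pi$ is a Fredholm map and identify its regular values. At any $(x,p) \in W$ one has $d\pi_{(x,p)}(u,v) = u$ on the tangent space
$$T_{(x,p)} W = \{(u,v) \in T_x X \oplus T_p M_1 : df_{(x,p)}(u,v) \in T_{f(x,p)} Z\}.$$
A direct computation gives
$$\ker d\pi_{(x,p)} \cong \bigl(df_x|_{T_p M_1}\bigr)^{-1}\bigl(T_{f(x,p)} Z\bigr), \qquad \mathrm{coker}\, d\pi_{(x,p)} \cong \frac{T_{f(x,p)} M_2}{df_x(T_p M_1) + T_{f(x,p)} Z},$$
where the cokernel identification uses the transversality $f \pitchfork Z$. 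Both spaces are finite-dimensional (the first because $M_1$ is, the second because $Z$ has finite codimension in $M_2$), so $\pi$ is Fredholm of index $\dim M_1 - \mathrm{codim}_{M_2} Z$. The cokernel formula further shows that $d\pi_{(x,p)}$ is surjective exactly when $df_x(T_p M_1) + T_{f(x,p)} Z = T_{f(x,p)} M_2$; since $\mathrm{pr}_{M_1}(\pi^{-1}(x)) = f_x^{-1}(Z)$, this yields the key equivalence
$$x \text{ is a regular value of } \pi \quad\Longleftrightarrow\quad f_x \pitchfork Z.$$

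Finally I would invoke the Sard--Smale theorem for $\pi$: provided $f$ is $C^k$ with $k > \max\{0, \dim M_1 - \mathrm{codim}_{M_2} Z\}$, the set $U$ of regular values of $\pi$ is residual in $X$, and hence dense since Banach manifolds are Baire. By the equivalence above, this $U$ is exactly the set of parameters for which $f_x \pitchfork Z$, completing the proof. I do not anticipate a serious obstacle beyond bookkeeping the regularity threshold, which is precisely what the ``$k$ sufficiently large'' hypothesis is designed to supply.
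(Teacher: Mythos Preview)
Your argument is correct and is precisely the classical proof of the parametric transversality theorem via Sard--Smale: form the universal preimage, check that the projection to the parameter space is Fredholm with the stated index, identify its regular values with the transverse parameters, and conclude by Sard--Smale. The paper itself does not supply a proof of this lemma; it is quoted as a ``well-known infinite dimensional parametric transversality theorem'' and attributed (implicitly) to standard sources such as \cite{hondatrans}, so there is nothing further to compare.
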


For an sufficiently large $\ell$, let $\mathcal{M}^{\ell,\alpha}$ denote the Banach manifold of $C^{\ell,\alpha}$ H\"older Riemannian metrics on manifold $M$. Fix the singular locus $\Zl_0\subset M$, a reference metric $g_0$ of $M$ and a class $[h]\in H=H^1_-(\hat{M};\RR)$. Let $\M^{\ell,\alpha}_0$ be the subset of $\M^{l,\alpha}$, consisting of those metrics that induce the same normal structure on $\Zl_0$ as the one induced by $g_0$.

By definition of the normal structure, fixing a normal structure is equivalent to imposing algebraic conditions on the $1$-jet of metrics, along $\Zl_0$. Therefore, the space $\mathcal{M}_0^{\ell,\alpha}$ forms a Banach submanifold of $\mathcal{M}^{\ell,\alpha}$.

We can define the following evaluation map:
\begin{align}
    \label{evm} ev:\mathcal{M}_0^{\ell,\alpha}\times(\hat{M}\setminus\Zl_0)\to T^\ast (\hat{M}\setminus\Zl_0): (g,x)\mapsto \hat{v}_g(x),
\end{align}
where $\hat{v}_g$ is the unique $L^2$ harmonic $1$-form representing $[h]$, with respect to the conical metric $\hat{g}$. Let $v_g=v_g(g,\Zl_0,[h])$ be the corresponding $2$-valued harmonic $1$-form on $M$ with singular locus $\Zl_0$.

Following \cite{hondatrans}, we will show that $ev$ is transverse to the zero section $Z=Z(\hat{M}\setminus\Zl_0)\subset T^\ast (\hat{M}\setminus\Zl_0)$. The next result then follows as a corollary of Lemma \ref{translem}.

\begin{proposition}\label{singulartrans}
    Fix the singular locus $\Zl_0$ and a class $[h]\in H^1_-(\hat{M};\RR)$. There is a dense open set $\mathcal{U}\subset\mathcal{M}_0^{\ell,\alpha}$, such that for any $g\in \mathcal{U}$, the ordinary zero set of the $2$-valued harmonic $1$-form $v_g=v_g(g,\Zl_0,[h])$, is a discrete subset of the open manifold $M\setminus\Zl_0$. 
\end{proposition}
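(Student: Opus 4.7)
The plan is to apply the parametric transversality theorem (Lemma~\ref{translem}) to the evaluation map $ev$ of \eqref{evm}: once $ev$ itself is shown to be transverse to the zero section $Z\subset T^\ast(\hat{M}\setminus\Zl_0)$, Lemma~\ref{translem} produces a dense subset $\mathcal{U}\subset\M_0^{\ell,\alpha}$ on which $ev_g:=ev(g,\cdot)$ is transverse to $Z$. Transversality of $ev_g$ says precisely that every zero of $\hat{v}_g$ on $\hat{M}\setminus\Zl_0$ is nondegenerate, hence isolated; descending through $\tau$ gives discreteness of the ordinary zero set of $v_g$ on $M\setminus\Zl_0$. Openness of $\mathcal{U}$ will then follow from the standard fact that nondegeneracy of isolated zeros is preserved under small perturbations of the metric, together with the expansion \eqref{expansion}, which prevents ordinary zeros from migrating out of $\Zl_0$ after a small perturbation of $g$ in $\mathcal{U}$.

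The core task is transversality of $ev$ at a point $(g_1,x_1)$ with $\hat{v}_{g_1}(x_1)=0$. Here the normal fiber of $Z$ is identified with $T^\ast_{x_1}\hat{M}$, and it suffices to show that already the metric-variation component of $Dev_{(g_1,x_1)}$, namely $\dot{g}\mapsto\dot{v}(x_1)\in T^\ast_{x_1}\hat{M}$, is surjective. Since $[\hat{v}_g]=[h]\in H$ is constant in $g$ and $d\hat{v}_g=0$ is preserved, the variation $\dot{v}=\frac{d}{dt}\big|_{t=0}\hat{v}_{g_1+t\dot{g}}$ is exact, $\dot{v}=d\eta$ for some odd $\eta\in W\E^{2,\alpha}(\hat{M})$. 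Linearizing the coclosedness relation $d\star_{\hat{g}}\hat{v}_g=0$ yields a Poisson-type equation $\Delta_{\hat{g}_1}\eta=F(\dot{g})$, where $F(\dot{g})$ is a first-order linear differential expression in $\dot{g}$ coupled to $\hat{v}_{g_1}$ through the variation of the Hodge star. Inverting via the Green's function of Theorem~\ref{greenM} (whose applicability near $\Zl_0$ is ensured by Lemma~\ref{isobetweenBanachweight}),
\[
d\eta(x_1)=\int_{\hat{M}} d_y G_{g_1}(y,z)\big|_{y=x_1}\,F(\dot{g})(z)\,\vol_{\hat{g}_1}(z).
\]

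Suppose, towards a contradiction, that some nonzero $V\in T_{x_1}\hat{M}$ pairs trivially with $d\eta(x_1)$ for every admissible $\dot{g}$. The function $\Phi(z):=\langle d_y G_{g_1}(x_1,z),V\rangle$ is $\tau$-odd and $\hat{g}_1$-harmonic on $\hat{M}\setminus\{x_1,\tau(x_1)\}$, with a nontrivial dipole-type singularity at $x_1$ coming from item (iv) of Theorem~\ref{greenM}. By unique continuation, $\hat{v}_{g_1}$ is nonzero on a dense open subset of $\hat{M}\setminus\Zl_0$, so $\dot{g}$ may be chosen supported in arbitrarily small balls inside that set; as $\dot{g}$ varies, $F(\dot{g})$ realizes a dense family of smooth odd functions compactly supported away from $\Zl_0\cup\{x_1,\tau(x_1)\}$. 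Pairing the vanishing hypothesis against this family forces $\Phi$ to vanish on an open subset of $\hat{M}\setminus\{x_1,\tau(x_1)\}$, contradicting unique continuation together with the singularity of $\Phi$ at $x_1$. Hence $\dot{g}\mapsto d\eta(x_1)$ is surjective, transversality of $ev$ holds, and Lemma~\ref{translem} delivers the desired dense set $\mathcal{U}$.

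The main obstacle I anticipate is the density step: one must identify rigorously which odd functions on $\hat{M}$ arise as source terms $F(\dot{g})$ for $\dot{g}\in T_{g_1}\M_0^{\ell,\alpha}$, and verify that this class is rich enough to test against the singular harmonic function $\Phi$. A secondary point is justifying the Green's function representation of $\eta$ at the point $x_1$ when $\hat{g}_1$ has conical singularities along $\Zl_0$, which is precisely where the weighted spaces $W\E^{k+2,\alpha}$ and the elliptic isomorphism from Lemma~\ref{isobetweenBanachweight} are used. The overall strategy parallels~\cite{hondatrans}; the new features are the conical geometry of $(\hat{M},\hat{g})$ and the need to respect the $\tau$-equivariance of both the metric variation and the source.
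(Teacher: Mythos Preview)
Your framework matches the paper's exactly: apply Lemma~\ref{translem} to $ev$, linearise to a Poisson equation $\Delta_{\hat g_1}\eta=F(\dot g)$, and invert via the Green's function of Theorem~\ref{greenM}. The divergence is in the surjectivity argument. You propose a duality/unique-continuation route: if some $V\in T_{x_1}\hat M$ annihilated the image, the harmonic function $\Phi(z)=\langle d_yG(x_1,z),V\rangle$ would be orthogonal to every $F(\dot g)$, and density of the latter among odd functions supported away from $\{x_1,\tau(x_1)\}$ would force $\Phi\equiv 0$, contradicting its dipole singularity. The paper instead integrates by parts once more to reach
\[
d\hat f(x)=\int_{\hat M}\bigl\langle d_xd_{x'}G_{g_0}(x,x'),\,\star_{g_0}(D_\gamma\star_{g_0})\hat v_0(x')\bigr\rangle\,dx'
\]
and then quotes Honda directly: by \cite[Proposition~2.13]{hondatrans} the pointwise map $\gamma\mapsto\star(D_\gamma\star)\hat v_0(y)$ is onto $T^*_y\hat M$ whenever $\hat v_0(y)\neq 0$; one then localises $\gamma$ near a sequence $y_i\to x$ and extracts surjectivity from the leading singularity of $d_xd_{x'}G$ (item~(iv) of Theorem~\ref{greenM}), following \cite[Theorem~2.19]{hondatrans}. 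Your density claim---which you rightly flag as the main obstacle---would itself require Honda's pointwise surjectivity to establish (one must realise an arbitrary $(n-1)$-form as $(D_{\dot g}\star)\hat v_0$ on $\{\hat v_0\neq 0\}$, and then take its codifferential). So both routes ultimately rest on the same algebraic ingredient; the paper's packaging is more direct and spares the separate density verification.
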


Note that the ordinary zero set of $v_g$ may have limit points in $M$, and any such limit points must lie in $\Zl_0$.

\begin{definition}
    Let $x$ be an ordinary zero of a $2$-valued $1$-form $v$. In a small neighborhood $B$ of $x$, $v=\pm df$, where $f$ is a well-defined function on $B$. When there is such a neighborhood $B$ of $p$, that $f$ is a Morse function on it, we say that $x$ is of Morse-type. If all the ordinary zeros of $v$ are of Morse type, we say that  $v$ is \textbf{transverse}. In particular, the set of ordinary zeros of $v$ is a discrete subset of $M\setminus\Zl$, where $\Zl$ is the singular locus of $v$.
\end{definition}

Let's compute the derivative of $ev$. The singular locus $\Zl_0$, metric $g_0$ and odd cohomology class $[h]$ are fixed as above. Let $v_0=v_0(g_0,\Zl_0,[h])$ be the corresponding $2$-valued harmonic $1$-form, and $\hat{v}_0$ be its lift to $\hat{M}$. Consider a $C^{\ell,\alpha}$ symmetric tensor field that is compactly supported on $M\setminus\Zl_0$, say $\gamma\in C_c^{\ell,\alpha}(\mathrm{Sym}^2T^\ast (M\setminus\Zl_0))$. Then $g_t=g_0+t\gamma$ is a smooth path in $\M_0^{\ell,\alpha}$ with velocity $\gamma$. Denote by $v_t=v_t(g_t,\Zl_0,[h])$ the corresponding path of $2$-valued harmonic $1$-forms, and $\hat{v}_t$ the corresponding path of $L^2$ harmonic $1$-forms. Then we have $v_t-v_0=df_t$, where $f_t$ is a path of $2$-valued functions on $M$ with singular locus $\Zl_0$. Differentiating the equation $\Delta_{g_t}v_t=0$ with respect to $t$ at $t=0$, we have:
\begin{align}
    0=\Delta_{g_0}\frac{d}{dt}\Big|_{t=0}df_t+\left(\frac{d}{dt}\Big|_{t=0}\Delta_{g_t}\right)v_0.
\end{align}
Since $dv_0=0$, it follows that $\left(\frac{d}{dt}|_{t=0}\Delta_{g_t}\right)v_0=d(\frac{d}{dt}|_{t=0}\delta_{g_t})v_0$, where $\delta_{g_t}$ is the formal adjoint of $d$ with respect to the metric $g_t$. 

Thus we have 
    $\Delta_{g_0}\dot{f}+\left(\frac{d}{dt}|_{t=0}\delta_{g_0+t\gamma}\right)v_0=0$,
where $\dot{f}=\frac{d}{dt}|_{t=0}f_t$. Let $\hat{f}$ denote the odd function on $\hat{M}$ corresponding to $\dot{f}$. It follows that $\hat{f}$ solves:
\begin{align}\label{tderiv}
    \Delta_{\hat{g}_0}\hat{f}+\left(\frac{d}{dt}\Big|_{t=0}\delta_{g_0+t\gamma}\right)\hat{v}_0=0.
\end{align}
The partial derivative of $ev$ is then given by $\partial_g ev(g_0,x)(\gamma,0)=\frac{d}{dt}|_{t=0}\hat{v}_t(x)=d\hat{f}(x)$. 

\begin{lemma}\label{partialderivativeofevissurj}
    Let $x\in\hat{M}\setminus\Zl_0$ be a zero of $\hat{v}_0$. Then the partial derivative $\partial_gev(g_0,x)(-,0):T_{g_0}\M^{\ell,\alpha}_0\times\{0\}\to T^\ast_x(\hat{M}\setminus\Zl_0)$ is surjective. 
\end{lemma}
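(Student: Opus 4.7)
The plan is to argue by contradiction. Suppose the map fails to be surjective; then there exists a nonzero vector $w \in T_x(\hat{M}\setminus\Zl_0)$ with $D_w\hat{f}|_x = 0$ for every admissible variation $\gamma \in T_{g_0}\M_0^{\ell,\alpha}$, where $\hat{f}$ is determined by $\gamma$ via \eqref{tderiv}. I will restrict to test tensors $\gamma \in C^{\ell,\alpha}_c(\mathrm{Sym}^2 T^\ast(M\setminus(\Zl_0\cup\{p(x)\})))$; such tensors automatically lie in $T_{g_0}\M_0^{\ell,\alpha}$ because the normal-structure conditions constrain only the $1$-jet of $g$ along $\Zl_0$, and are vacuous off the support of $\gamma$.

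The key algebraic input is the harmonicity of $\hat{v}_0$ (i.e.\ $\delta_{\hat{g}_0}\hat{v}_0=0$), from which a direct computation of the variation of the codifferential gives
\[
\rho \;:=\; \Big(\tfrac{d}{dt}\Big|_{t=0}\delta_{g_0+t\gamma}\Big)\hat{v}_0 \;=\; \delta_{\hat{g}_0}W_\gamma, \qquad W_\gamma \;:=\; \tfrac{1}{2}(\mathrm{tr}_{g_0}\gamma)\,\hat{v}_0 \;-\; \gamma(\hat{v}_0^{\sharp},\cdot),
\]
and $W_\gamma$ is smooth and compactly supported in $\hat{M}\setminus(\Zl_0\cup\{x,\tau(x)\})$. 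Applying Theorem~\ref{greenM} to invert $\Delta_{\hat{g}_0}\hat{f}=-\rho$ and setting $H(x'):=\partial_w^{(1)}G_{g_0}(x,x')$---an odd function on $\hat{M}$, harmonic on $\hat{M}\setminus\{x,\tau(x)\}$, with dipole-type singularity $\sim|x'-x|^{1-n}$ at $x$ (and at $\tau(x)$)---an integration by parts (with no boundary contribution, since $W_\gamma$ is supported where $H$ is smooth) converts the hypothesis $D_w\hat{f}|_x=0$ into
\[
0 \;=\; \int_{\hat{M}}\langle dH, W_\gamma\rangle\,\mathrm{dvol}_{\hat{g}_0} \quad \text{for every admissible }\gamma.
\]

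Expanding the integrand in indices and using the symmetry of $\gamma$, the fundamental lemma of the calculus of variations yields the pointwise tensor identity
\[
\langle dH,\hat{v}_0\rangle\,g_0^{-1} \;=\; \hat{v}_0\otimes dH \;+\; dH\otimes\hat{v}_0
\]
on $\hat{M}\setminus(\Zl_0\cup\{x,\tau(x)\})$. Taking the $g_0$-trace gives $(n-2)\langle dH,\hat{v}_0\rangle=0$; for $n\geq 3$ this forces $\langle dH,\hat{v}_0\rangle=0$, and the residual rank-$2$ identity then forces $dH=0$ wherever $\hat{v}_0\neq 0$ (a brief direct computation covers $n=2$). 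Since $[h]\neq 0$, $\hat{v}_0$ is a nontrivial harmonic $1$-form, so its zero locus is nowhere dense by unique continuation; hence $dH \equiv 0$ on the connected open set $\hat{M}\setminus(\Zl_0\cup\{x,\tau(x)\})$, making $H$ constant there and contradicting the blow-up of $H$ as $x'\to x$. Therefore $w=0$ and the map is surjective.

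The main obstacle, in my view, is the clean derivation of the identity $\rho = \delta_{\hat{g}_0}W_\gamma$, which crucially exploits $\delta_{\hat g_0}\hat v_0 = 0$; once that is in place, the remaining Green's-function and linear-algebra steps closely parallel the Banach-category argument of \cite{hondatrans}, with the odd-sector Green's function of Theorem~\ref{greenM} replacing the standard one.
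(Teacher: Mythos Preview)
Your proof is correct and takes a genuinely different route from the paper's. Both arguments start from the same Green's-function representation, expressing $D_w\hat{f}|_x$ as an integral pairing of $W_\gamma$ (equivalently $\star_{g_0}(D_\gamma\star_{g_0})\hat{v}_0$) against $\partial_w d_{x'}G_{g_0}(x,\cdot)$. The paper then follows Honda's constructive approach: it uses the pointwise surjectivity of $\gamma\mapsto W_\gamma(y)$ at points where $\hat{v}_0(y)\neq 0$ \cite[Prop.~2.13]{hondatrans}, concentrates $\gamma$ near a sequence $y_i\to x$, and invokes the sharp diagonal asymptotics of Theorem~\ref{greenM}(iv) together with the limiting machinery of \cite[Prop.~2.10, Thm.~2.19]{hondatrans}. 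You instead argue the contrapositive: the fundamental lemma of variations converts the vanishing integral into the pointwise identity $\langle dH,\hat{v}_0\rangle\, g_0 = dH\otimes\hat{v}_0 + \hat{v}_0\otimes dH$ (note: $g_0$, not $g_0^{-1}$, so that the tensor types match), whose trace and residual force $dH=0$ wherever $\hat{v}_0\neq 0$; Aronszajn unique continuation on the smooth locus $\hat{M}\setminus\Zl_0$ and the genuine singularity of $H$ at $x$ then yield the contradiction. Your approach is cleaner and more self-contained---it replaces Honda's concentration estimates by elementary linear algebra plus unique continuation, and needs only the qualitative blow-up of $H$ rather than the full expansion in Theorem~\ref{greenM}(iv)---while the paper's route is directly constructive and transfers verbatim from the single-valued case. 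One small check worth making explicit: since $\gamma$ lives on $M$ (not $\hat{M}$), the fundamental lemma is applied to the $\tau$-invariant descended integrand on $M\setminus(\Zl_0\cup\{p(x)\})$, and the resulting identity is then pulled back; this works because $dH\otimes\hat{v}_0+\hat{v}_0\otimes dH$ is $\tau$-invariant.
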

\begin{proof}
    Since $C_c^{\ell,\alpha}(\mathrm{Sym}^2T^\ast M)\subset T_{g_0}\M^{\ell,\alpha}_0$, it is sufficient to show that $\partial_gev(g_0,x)(-,0):C_c^{\ell,\alpha}(\mathrm{Sym}^2T^\ast M)\to T^\ast_x(\hat{M}\setminus\Zl_0)$ is surjective.
    
    Fix $\gamma\in C_c^{\ell,\alpha}(\mathrm{Sym}^2T^\ast M)$, if $\hat{f}$ solves \eqref{tderiv}, then by $(i)$ of Theorem \ref{greenM}, we have \begin{align}
        \hat{f}(x)&=\int_{\hat{M}}G_{g_0}(x,x')\left(\frac{d}{dt}\Big|_{t=0}\star_{g_t}d\star_{g_t}\right)\hat{v}_0(x')dx' \nonumber\\
                  &=\int_{\hat{M}}G_{g_0}(x,x')\left(\star_{g_0}d\frac{d}{dt}\Big|_{t=0}\star_{g_t}\right)\hat{v}_0(x')dx' \nonumber\\
                  &=\int_{\hat{M}}\langle d_{x'}G_{g_0}(x,x'),\star_{g_0}(D_\gamma\star_{g_0}) \hat{v}_0(x')\rangle_{x'}dx',
    \end{align}
    where $D_\gamma\star_{g_0}=\frac{d}{dt}|_{t=0}\star_{g_0+t\gamma}$. Thus $d\hat{f}(x)=\int_{\hat{M}}\langle d_xd_{x'}G_{g_0}(x,x'),\star(D_\gamma\star) \hat{v}_0(x')\rangle_{x'}dx'$. By literally repeating the computation in \cite[Proposition 2.13]{hondatrans}, one can show that the map \[i_{\hat{v}_0(y)}:\mathrm{Sym}^2T_{p(y)}^\ast{M}\to T^\ast_y(\hat{M}\setminus\Zl_0):\gamma(p(y))\mapsto\star_{g_0} (D_\gamma\star_{g_0})\hat{v}_0(y),\] is surjective for any $y\in \hat{M}\setminus\Zl_0$ such that $\hat{v}_0(y)\ne 0$. Here, $p(y)\in M\setminus\Zl_0$. 
    
    The remaining steps follow from $(iv)$ of Theorem \ref{greenM}, \cite[Proposition 2.10]{hondatrans}, and the proof of \cite[Theorem 2.19]{hondatrans}. We choose a sequence $y_i\to x$ such that $\hat{v}(y_i)\ne 0$, and then study the limit of the image of partial derivative $\partial_gev(g_0,x)(-,0)$ along this sequence. Finally, we can show that $\partial_gev(g_0,x)(-,0)|_{C_c^{\ell,\alpha}(\mathrm{Sym}^2T^\ast M)}$ is surjective.
\end{proof}

\begin{corollary}
    The map $ev$ defined in \eqref{evm} is transverse to the zero section $Z(\hat{M}\setminus\Zl_0)\subset T^\ast(\hat{M}\setminus\Zl_0)$.
\end{corollary}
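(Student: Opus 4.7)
The plan is to deduce this corollary directly from Lemma \ref{partialderivativeofevissurj}. Transversality to the zero section only needs to be checked at points $(g,x) \in \M_0^{\ell,\alpha}\times(\hat{M}\setminus\Zl_0)$ where $ev(g,x) = 0$, that is, where $\hat{v}_g(x) = 0$. At such a point, one must show that the composition of $d(ev)_{(g,x)}$ with the projection from $T_{0_x}T^\ast(\hat{M}\setminus\Zl_0)$ onto the normal bundle of $Z$ is surjective. At the zero $0_x$, the tangent space splits canonically as
\begin{equation*}
T_{0_x}T^\ast(\hat{M}\setminus\Zl_0) \;\cong\; T_x Z \;\oplus\; T^\ast_x(\hat{M}\setminus\Zl_0),
\end{equation*}
so the normal bundle fiber is naturally identified with the vertical summand $T^\ast_x(\hat{M}\setminus\Zl_0)$.

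Next, I would observe that varying $g$ while fixing $x$ does not move the base point, so the partial derivative $\partial_g ev(g,x)(-,0)$ takes values purely in the vertical summand $T^\ast_x(\hat{M}\setminus\Zl_0)$. The computation carried out in the proof of Lemma \ref{partialderivativeofevissurj} — based on the Green's function $G_g$ from Theorem \ref{greenM} and the pointwise surjectivity of $i_{\hat{v}_0(y)}$ — never used that the reference metric was specifically $g_0$; it used only that $\hat{v}_g(x) = 0$ and that $(g,\Zl_0,[h])$ produces an $L^2$ harmonic representative. Consequently, the same argument yields surjectivity of $\partial_g ev(g,x)(-,0)$ onto $T^\ast_x(\hat{M}\setminus\Zl_0)$ at an arbitrary $(g,x)$ with $\hat{v}_g(x)=0$.

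Combining the previous two observations, the composition of the full derivative $d(ev)_{(g,x)}$ with the vertical projection surjects onto $T^\ast_x(\hat{M}\setminus\Zl_0)$, which is precisely the transversality condition. There is no substantive obstacle remaining: the corollary is a formal unpacking of the definition once the surjectivity of the $g$-partial derivative at zeros has been established, and this surjectivity — which was the actual content of the argument — is supplied by the preceding lemma.
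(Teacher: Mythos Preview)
Your proposal is correct and matches the paper's approach: the paper states the corollary without proof, treating it as an immediate consequence of Lemma \ref{partialderivativeofevissurj}, and your argument is exactly the standard unpacking of why surjectivity of the $g$-partial derivative onto the fiber at every zero yields transversality to the zero section. Your remark that the lemma's proof applies with $g_0$ replaced by an arbitrary $g\in\M_0^{\ell,\alpha}$ is the one point the paper leaves implicit, and you have identified it correctly.
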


This concludes Theorem \ref{singulartrans}.

\subsection{Tame estimates and the deformation theorem}\label{3.3}
Let $\M$ be the Fr\'echet manifold consisting of smooth Riemannian metrics on $M$. Fix a metric $g_0\in \M$, the singular locus $\Zl_0$ and a cohomology class $[h]$. Let $\M_0$ be the Fr\'echet submanifold of $\M$, consisting of metrics that induces on $\Zl_0$ the same normal structure as that induced by $g_0$.

We first recall the fundamental notion of tame estimates from \cite{Hamilton}.
\begin{definition}\label{defoftame}
    Consider graded Fr\'echet spaces \footnote{Indeed, the spaces involved in this article are all assumed to be “tame”; however, this notion will not play a role in our discussion. For details, see \cite[II.1.3]{Hamilton}.} $\mathcal{F}_1$ and $\mathcal{F}_2$. For simplicity, denote the norms on both spaces by the same notion $\|-\|_l$ for $l\ge 0$. For $f\in\mathcal{F}_i$, $\|f\|_0\le \|f\|_1\le \|f\|_2\le\cdots$

    Let $S:U\subset\mathcal{F}_1\to\mathcal{F}_2$ be a map between Fr\'echet spaces. We say that $S$ satisfies \textbf{tame estimates} if there exist constants $l_0 \ge 0$ and $k \ge 0$ 
    such that for every $l \ge l_0$, there is a constant $C_l$ (depending on $l$) satisfying \[\|S(f)\|_l\le C_l(1+\|f\|_{l+k}).\]

    A map $S$ is called \textbf{smooth tame}, if it is smooth and all its derivatives satisfy tame estimates.
\end{definition}

There is a graded sequence of norms on the Fr\'echet space $C^\infty(\mathrm{Sym}^2T^\ast M)$, namely the H\"older norms. For any $\gamma\in C^\infty(\mathrm{Sym}^2T^\ast M)$, we define $\|\gamma\|_l=\sum_{i=0}^l\|\gamma\|_{C^{i,\alpha}}$. One can define a metric $d$ on $C^\infty(\mathrm{Sym}^2T^\ast M)$ by \[d(\gamma_1,\gamma_2)=\sum_{i=0}^\infty 2^{-i}\frac{\|\gamma_1-\gamma_2\|_i}{\|\gamma_1-\gamma_2\|_i+1},\]
this metric induces a topology on $C^\infty(\mathrm{Sym}^2T^\ast M)$, and hence on Fr\'echet manifolds $\M_0\subset\M\subset C^\infty(\mathrm{Sym}^2T^\ast M)$. 

A neighborhood of $g_0$ on $\M$, can be chosen to be an open ball $B_{\varepsilon}(0):=\{g\in\M: d(g-g_0,0)<\varepsilon\}$ for some small $\varepsilon>0$. Similar for $\M_0$.

Since $2^{-i}\to 0$ as $i\to \infty$, there exists constants $l_0>0$ and $\delta<0$, such that for any $l\ge l_0$, if $\gamma\in C^\infty(\mathrm{Sym}^2T^\ast M)$ satisfies $\|\gamma\|_l<\delta$, then $\gamma\in B_{\varepsilon}(0)$. Conversely, fix $l>0$, for any $\epsilon>0$, we can choose $\varepsilon$ sufficiently small, such that for any $\gamma\in B_{\varepsilon}(0)$, we have $\|\gamma\|_{l}<\epsilon$. 

\vspace{2ex}

In the following discussion, we will make use of the Fr\'echet spaces \begin{align*}\E^{\infty,\alpha}(M)=\cap_{l\ge 2}\E^{l,\alpha}(M),\text{ and }C^{\infty}(N^{j})=\cap_{l\ge 0} C^{l,\alpha+\frac12}(N^j),\end{align*} where $N$ is the normal bundle of $\Zl_0$ equipped with Euclidean structure induced by $g_0$, and $j=1,-\frac{1}{2},-\frac{3}{2}$. The graded structures on $\E^{\infty,\alpha}(M)$ is given by the norms $\|-\|_{\E^{l,\alpha}(M)}$ defined before Lemma \ref{isobetweenBanachweight}. And the graded structure on spaces $C^{\infty}(N^{j})$ are given by the corresponding H\"older norms in the obvious way. 

\vspace{2ex}
We now aim to control the variation of $2$-valued $1$-forms as the metric varies; specifically, we seek tame estimates for these forms, their first two leading coefficients, and the associated error terms. 
Such estimates are established in~\cite[Section~4]{donaldsondeformation2019}.

Suppose $g_1=g_0+\gamma\in \M_0$, where $\gamma$ is a tensor field in $C^\infty(\mathrm{Sym}^2T^\ast M)$. Let $v_0=v_0(g_0,\Zl_0,[h])$ and $v_1=v_1(g_1,\Zl_0,[h])$. Then $v_1-v_0=df$ for some $2$-valued function $f$ with singular locus $\Zl_0$. In a tubular neighborhood $U$ of $\Zl_0$, both $v_0$ and $v_1$ are exact, hence we may write $v_i=df_i$ on $U$. Here, each $f_i$ is a $2$-valued harmonic function with respect to the metric $g_i$ for $i=0,1$, and both of them has singular locus $\Zl_0$. 

As a result, $f=f_1-f_0\in W\E^{l,\alpha}(M)$ for any $l\ge 2$. On the other hand, $\Delta_{g_0}f=\delta_{g_0}v_1$ and $\Delta_{g_1}f=-\delta_{g_1}v_0$, and the right-hand sides are sections in $\D^{l,\alpha}(M)$ for all $l\ge 0$. By definition of $\|-\|_{\E^{l+2,\alpha}}$, we have $\|W^{-1}f\|_{\E^{l+2,\alpha}}=\|W^{-1}\delta_{g_1}v_0\|_{\D^{l,\alpha}}$. The following tame estimates was proved in \cite[Section 4]{donaldsondeformation2019}:

\begin{lemma}\label{tameestforform}
     Assume $g_1$ lies in a sufficiently small neighborhood $\mathcal{U}_1$ of $g_0$ in $\M_0$. There exists a constant $k>0$ independent of $g_1$, such that for each $l\ge 0$, there is a constant $C_l$ satisfying $\|W^{-1}f\|_{\E^{l+2,\alpha}}\le C_l(1+\|\gamma\|_{l+k})$, where $\gamma=g_1-g_0$ and $\|-\|_{l+k}$ denotes the graded norm defined by H\"older norm on the Fr\'echet space of smooth symmetric tensor fields.
\end{lemma}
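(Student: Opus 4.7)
The plan is to reduce the estimate to a tame bound on the simpler quantity $\delta_{g_1} v_0$, which is possible because of the identity $\Delta_{g_1} f = -\delta_{g_1} v_0$. This identity follows since $v_1 = v_0 + df$ is $g_1$-co-closed while $\delta_{g_0} v_0 = 0$, and the Hodge Laplacian on a function is $\delta d$. Combined with the identification $\|W^{-1}f\|_{\E^{l+2,\alpha}} = \|W^{-1}\delta_{g_1}v_0\|_{\D^{l,\alpha}}$ coming from the admissible operator $\tilde\Delta_{g_1} = W^{-1}\Delta_{g_1}W$ of Lemma \ref{isobetweenBanachweight} (legitimate because $g_1\in\M_0$ induces the same normal structure on $\Zl_0$ as $g_0$, so $W$ and the sheaf $\E^{l+2,\alpha}_{\mathrm{loc}}$ are common to both metrics), the lemma reduces to
$$\|W^{-1}(\delta_{g_0}-\delta_{g_1})v_0\|_{\D^{l,\alpha}} \le C_l(1 + \|\gamma\|_{l+k}),$$
where we have used $\delta_{g_0}v_0 = 0$ to replace $\delta_{g_1}v_0$ by the difference $(\delta_{g_1}-\delta_{g_0})v_0$.

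The next step is a direct local computation near $\Zl_0$. In any chart, the first-order operator $\delta_{g_1}-\delta_{g_0}$ has coefficients that are universal polynomial expressions in $g_0$, $g_1$, $\nabla g_0$, $\nabla g_1$ together with their inverses and $\sqrt{\det g_0}$, $\sqrt{\det g_1}$, and these coefficients vanish identically at $\gamma = 0$. Consequently their $C^{l,\alpha}$ norms on a fixed small neighbourhood of $g_0$ are bounded by $C_l\|\gamma\|_{l+1}$. Applied to $v_0$, whose asymptotics near $\Zl_0$ are given by the expansion \eqref{expansion} and which lies in the appropriate $W$-weighted H\"older class for every $l$, the Leibniz rule produces an output whose $W^{-1}$-weighted $\D^{l,\alpha}$ norm is bounded by a constant (depending on $v_0$ and $l$ only) times $\|\gamma\|_{l+k}$ for some fixed $k$ independent of $l$. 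Since $v_0$ is the fixed data of the problem, this yields the claimed tame estimate.

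The main obstacle I expect is the bookkeeping of the degenerate weights in this final step: one must verify, directly in the flat model and then by partition of unity, that after multiplying by $W^{-1}$ each term produced by applying the variation $(\delta_{g_1}-\delta_{g_0})$ to $v_0$ genuinely lands in $\D^{l,\alpha}_{\mathrm{loc}}$ rather than a less regular space. Here the hypothesis $g_0,g_1\in\M_0$ is essential: having a common normal structure along $\Zl_0$ forces the leading singular parts of $\delta_{g_0}$ and $\delta_{g_1}$ to agree to sufficient order on $\Zl_0$, so that $(\delta_{g_0}-\delta_{g_1})v_0$ gains exactly the extra power of $\mathrm{dist}(\cdot,\Zl_0)$ needed for the $W^{-1}$-weighted $\D^{l,\alpha}$ bound to close. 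Once this regularity is established, the dependence on $\gamma$ is explicit and the tameness of the estimate is automatic.
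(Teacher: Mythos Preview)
Your proposal is correct and follows essentially the same approach as the paper. Note that the paper does not give its own proof of this lemma---it states the lemma immediately after the identity $\|W^{-1}f\|_{\E^{l+2,\alpha}}=\|W^{-1}\delta_{g_1}v_0\|_{\D^{l,\alpha}}$ and attributes the tame estimate to \cite[Section~4]{donaldsondeformation2019}; your outline (reduce to $(\delta_{g_1}-\delta_{g_0})v_0$, then use local computations together with the shared normal structure to control the weighted $\D^{l,\alpha}$ norm) is precisely Donaldson's argument. One small caveat: you write $\tilde\Delta_{g_1}=W^{-1}\Delta_{g_1}W$, whereas the paper's convention is $\tilde\Delta_g=W\Delta_gW^{-1}$; this is cosmetic, but be aware that the uniformity of the norm equivalence as $g_1$ varies (not just the coincidence of the sheaves $\E^{l+2,\alpha}_{\mathrm{loc}}$) is part of what must be checked, and this is exactly where the smallness of $\mathcal U_1$ enters.
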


As a corollary of Lemma \ref{tameestforform} and \cite[Theorem II.3.1.1]{Hamilton}, one have:
\begin{lemma}\label{reallytame}
    Fix the reference metirc $g_0$, the map $\M_0\to \E^{\infty,\alpha}(M):g_1\mapsto W^{-1}f$ is smooth tame.
\end{lemma}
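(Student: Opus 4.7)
The plan is to deduce smooth tameness of $\Phi: g_1 \mapsto u := W^{-1}f$ from the zeroth-order tame estimate of Lemma~\ref{tameestforform} by combining it with the abstract machinery \cite[Theorem II.3.1.1]{Hamilton}, which converts tame estimates on a map together with tame estimates on all its derivatives into the smooth tame property. Lemma~\ref{tameestforform} already supplies the bound at zeroth order, so the bulk of the work lies in verifying Fr\'echet smoothness of $\Phi$ and producing analogous tame estimates for every G\^ateaux derivative $D^k\Phi$.

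To generate those higher-order estimates, I would differentiate the defining elliptic equation for $u$. Because $\Delta_{g_1} = W\tilde{\Delta}_{g_1}W^{-1}$, and $\delta_{g_1}v_1 = 0$ with $v_1 = v_0 + df$, the function $u = u(g_1)$ satisfies
\begin{equation}
    \tilde{\Delta}_{g_1}\,u \;=\; -\,W^{-1}\,\delta_{g_1}\,v_0.
\end{equation}
Both $\tilde{\Delta}_{g_1}$ and the right-hand side are polynomial differential expressions in the components of $g_1$, $g_1^{-1}$ and their derivatives, applied to the fixed smooth data $v_0$ and $W$. Formal differentiation in $g_1$ shows that $D^k u(g_1)(\gamma_1,\dots,\gamma_k)$ solves a linear equation of the form $\tilde{\Delta}_{g_1}(\cdot) = R_k$, where $R_k$ is built polynomially from $g_1$, the variations $\gamma_1,\dots,\gamma_k$ and the lower-order derivatives $D^j u$ for $j<k$. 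Inverting $\tilde{\Delta}_{g_1}$ via Lemma~\ref{isobetweenBanachweight} and repeating the interpolation and H\"older multiplication inequalities that drove the proof of Lemma~\ref{tameestforform} then yields tame control of each $D^k u$ in terms of the $\gamma_i$ and $g_1 - g_0$, by induction on $k$.

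The main technical obstacle is justifying that this formal differentiation is legitimate at the Fr\'echet level. I would handle this by descending to the Banach approximations $\M_0^{\ell,\alpha}$ and $\E^{\ell+2,\alpha}(M)$ already used in Section~\ref{3.2}: the map $(g_1, u)\mapsto \tilde{\Delta}_{g_1}u + W^{-1}\delta_{g_1}v_0$ is smooth between Banach spaces (being polynomial in $g_1$ and $g_1^{-1}$), and its partial derivative in $u$ is precisely the Banach isomorphism of Lemma~\ref{isobetweenBanachweight}. The Banach implicit function theorem therefore yields a $C^\infty$ dependence $g_1 \mapsto u(g_1)$ on each H\"older level $\ell$, and the tame estimates force these level-by-level derivatives to piece together into a single Fr\'echet-smooth map $\Phi$ whose G\^ateaux derivatives of every order satisfy tame estimates. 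Hamilton's Theorem II.3.1.1 then delivers smooth tameness of $\Phi$, completing the plan.
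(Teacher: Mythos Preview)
Your proposal is correct and lands on the same two ingredients as the paper---Lemma~\ref{tameestforform} and \cite[Theorem II.3.1.1]{Hamilton}---so the overall route matches. The paper's proof is literally one sentence invoking these two facts.

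That said, you are doing more work than the black-box application requires, and your invocation of Hamilton's theorem is slightly off-target. Theorem~II.3.1.1 in \cite{Hamilton} concerns smooth tame \emph{families of linear maps}: its hypotheses are (a) the forward family $(g_1,u)\mapsto \tilde{\Delta}_{g_1}u$ is smooth tame, and (b) the family of inverses $(g_1,\rho)\mapsto \tilde{\Delta}_{g_1}^{-1}\rho$ satisfies a \emph{zeroth-order} tame estimate. From these alone it concludes that the inverse family is smooth tame; composing with the smooth tame map $g_1\mapsto -W^{-1}\delta_{g_1}v_0$ then gives the lemma. Hypothesis~(a) is routine (the coefficients are polynomial in $g_1,g_1^{-1}$ and their derivatives), and hypothesis~(b) is what Lemma~\ref{tameestforform}, or rather the estimates behind it in \cite[Section~4]{donaldsondeformation2019}, provides. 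The explicit induction on derivative order, the repeated inversion of $\tilde{\Delta}_{g_1}$ on the differentiated right-hand sides, and the level-by-level Banach implicit function argument you outline are precisely the \emph{content} of Hamilton's proof of II.3.1.1---so by the time you finish your sketch you have essentially reproved the theorem in this special case rather than applied it. Your final sentence citing II.3.1.1 is therefore redundant: once you have smoothness plus tame bounds on every $D^k\Phi$, smooth tameness holds by definition.
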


This yields:
\begin{lemma}{\cite[Section 4]{donaldsondeformation2019}}\label{coefficientsaretame}
The maps $\underline{A}:\M_0\times H\to C^\infty(N^{-1/2})$ and $\underline{B}:\M_0\times H\to C^\infty(N^{-3/2})$ are smooth and tame.
\end{lemma}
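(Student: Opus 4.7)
The plan is to deduce this from Lemma~\ref{reallytame} by postcomposing with a linear, tame ``coefficient-extraction'' operator. The dependence of $v(g, \Zl_0, [h])$ on $[h] \in H$ is $\RR$-linear and $H$ is finite-dimensional, so once the metric dependence is shown to be smooth tame the full statement follows; I will therefore fix $[h] \in H$ throughout and analyze only the $g$-dependence. Choose a tubular neighborhood $U$ of $\Zl_0$ on which $v_g = v(g, \Zl_0, [h])$ is exact, say $v_g = df_g$ with $f_g$ a $2$-valued function on $U$ with singular locus $\Zl_0$. By Lemma~\ref{reallytame}, $g \mapsto W^{-1}(f_g - f_{g_0})$ is smooth tame into $\E^{\infty,\alpha}(M)$; since $W$ is smooth, positive, and equal to $1$ on $\Zl_0$, multiplication by $W$ is bounded linear on each $\E^{l,\alpha}$ and respects the shape of~\eqref{expansionforfunction}. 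It therefore suffices to construct a linear map
\[
\Pi \;:\; W\E^{\infty,\alpha}_{\mathrm{loc}}(U) \;\longrightarrow\; C^{\infty}(\Zl_0, N^{-1/2}) \oplus C^{\infty}(\Zl_0, N^{-3/2}),
\]
sending $f$ to the pair $(A,B)$ from~\eqref{expansionforfunction}, and to verify that $\Pi$ is tame with a fixed loss of derivatives.

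To build $\Pi$, I pass to the $2$-fold branched cover $\hat U \to U$, on which the odd lift $\hat f$ admits an angular Fourier expansion in odd integer frequencies. Its leading two harmonic modes $r^{1/2}e^{\pm i\theta}$ and $r^{3/2}e^{\pm 3 i\theta}$ correspond, under the identifications $\zeta^{1/2} \leftrightarrow r^{1/2}e^{i\theta}$ and $\zeta^{3/2} \leftrightarrow r^{3/2}e^{3i\theta}$, to the $A$- and $B$-terms in~\eqref{expansionforfunction}. Pairing $\hat f$ against fixed test functions $\chi(r)e^{\pm i\theta}$ and $\chi(r)e^{\pm 3i\theta}$ with $\chi$ compactly supported near $r = 0$, and then absorbing the corrections from the error term $E$ and from higher-frequency modes via Donaldson's local expansion~\cite[\S 2]{donaldsondeformation2019}, expresses $A(t)$ and $B(t)$ as integrals of $\hat f$ against smooth weights. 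Differentiation in $t$ commutes with these integrals and pulls $\partial/\partial t_i \in \underline{\T}$ inside, so control of $L_1\cdots L_j f$ with $L_i \in \underline{\T}$ by the $\E^{l+2,\alpha}(U)$ norm should yield a tame bound of the form
\[
\|A\|_{C^{l,\alpha+1/2}(\Zl_0)} + \|B\|_{C^{l,\alpha+1/2}(\Zl_0)} \;\le\; C_l\, \|f\|_{\E^{l+2,\alpha}(U)}.
\]

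This bound exhibits $\Pi$ as a continuous linear map between graded Fr\'echet spaces with uniform loss of two derivatives, and such bounded linear maps are automatically smooth tame in the sense of~\cite{Hamilton}. Composing with the smooth tame map $g \mapsto W^{-1}(f_g - f_{g_0})$ and restoring the $g$-independent constant $(\underline A(g_0,[h]), \underline B(g_0,[h]))$ then yields smooth tame maps $\underline A$ and $\underline B$ in $g$, and the linear dependence on $[h]$ concludes the proof. The main obstacle I foresee is justifying the displayed tame estimate for $\Pi$: one must rule out uncontrolled contributions from the infinitely many higher Fourier modes of $\hat f$ after truncation against the cutoffs, and verify that the error $E$ in~\eqref{expansionforfunction} is genuinely negligible at the level of the $\E^{l,\alpha}$-norm. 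Both reductions should ultimately run through the admissible-Laplacian decomposition and the mode-by-mode elliptic estimates already developed in~\cite[\S\S 2--3]{donaldsondeformation2019}.
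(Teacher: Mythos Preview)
Your proposal is correct and takes essentially the same approach as the paper: the paper gives no independent proof of this lemma, merely writing ``This yields:'' after Lemma~\ref{reallytame} and citing \cite[Section~4]{donaldsondeformation2019} for the details. Your sketch of the coefficient-extraction operator $\Pi$ and its tame estimate is precisely the content filled in by Donaldson's \cite[Proposition~2.4]{donaldsondeformation2019}, so you have reconstructed the intended argument in more detail than the paper itself provides.
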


By Lemma \ref{expansionforfunctionlem}, the function $f$ has an expansion:
\[f(x)=\re\left(\tilde{A}(t)\zeta^{\frac{1}{2}}+\tilde{B}(t)\zeta^{\frac{3}{2}}\right)+\tilde{E}(x),\]
where $|\tilde{E}(x)|\le C\dist(x,\Zl)^{\frac{5}{2}}$. Thus $\underline{A}(g_1,[h])-\underline{A}(g_0,[h])=\tilde{A}(t)$ and $\underline{B}(g_1,[h])-\underline{B}(g_0,[h])=\tilde{B}(t)$. If we denote the error terms in \eqref{expansion} by $E_i$ for $v_i$ respectively, we then have $E_1-E_0=d\tilde{E}$.

The following estimates can be derived from Lemma \ref{coefficientsaretame} and \cite[Lemma III.1.2.4]{Hamilton}.

\begin{lemma}
There exists a constant $k>0$, such that for each $g_1\in\mathcal{U}_1$ and $l\ge 0$, there is a constant $C_l$ satisfying:
\begin{align}\label{tameestforAB}
    &\|\underline{A}(g_1,[h])-\underline{A}(g_0,[h])\|_{C^{l,\alpha+\frac{1}{2}}}+\|\underline{B}(g_1,[h])-\underline{B}(g_0,[h])\|_{C^{l,\alpha+\frac{1}{2}}}\le C_l(\|g_1-g_0\|_{l+k}).
\end{align}
\end{lemma}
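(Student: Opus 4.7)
The plan is to derive \eqref{tameestforAB} from the smooth tameness of $\underline{A}$ and $\underline{B}$ (Lemma \ref{coefficientsaretame}) by invoking the fundamental theorem of calculus in the tame Fr\'echet category. Since $[h]$ is fixed throughout, I treat $\underline{A}(\cdot,[h]):\M_0\to C^\infty(N^{-1/2})$ and $\underline{B}(\cdot,[h]):\M_0\to C^\infty(N^{-3/2})$ as smooth tame maps of a single Fr\'echet variable.

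First, I would shrink the neighborhood $\mathcal{U}_1$ of $g_0$ from Lemma~\ref{tameestforform} (if necessary) so that it is star-shaped about $g_0$ inside $\M_0$; since $\M_0$ is cut out of the affine space of metrics by algebraic $1$-jet conditions along $\Zl_0$, this is possible. For any $g_1\in\mathcal{U}_1$, the straight-line path $g_t=g_0+t(g_1-g_0)$, $t\in[0,1]$, stays in $\mathcal{U}_1$ with velocity $\gamma:=g_1-g_0$. Applying \cite[Lemma III.1.2.4]{Hamilton} (the tame-category fundamental theorem of calculus, which is available because $\underline{A}$ is smooth tame), I would write
\begin{align*}
\underline{A}(g_1,[h])-\underline{A}(g_0,[h])=\int_0^1 D_g\underline{A}(g_t,[h])(\gamma)\,dt,
\end{align*}
and similarly for $\underline{B}$.

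Next, the Gateaux derivative $D_g\underline{A}$ is itself smooth tame in its two arguments (the basepoint $g_t$ and the direction $\gamma$), so there exists a constant $k>0$ independent of $l$ and constants $C_l$ such that
\begin{align*}
\|D_g\underline{A}(g_t,[h])(\gamma)\|_{C^{l,\alpha+1/2}}\le C_l\bigl(\|\gamma\|_{l+k}+\|\gamma\|_0(1+\|g_t\|_{l+k})\bigr).
\end{align*}
Since $g_t$ varies in the bounded neighborhood $\mathcal{U}_1$ of $g_0$, the factor $1+\|g_t\|_{l+k}$ is uniformly bounded by a constant depending on $l$ and $\mathcal{U}_1$ only, and trivially $\|\gamma\|_0\le\|\gamma\|_{l+k}$. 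Therefore
\begin{align*}
\|D_g\underline{A}(g_t,[h])(\gamma)\|_{C^{l,\alpha+1/2}}\le C'_l\,\|g_1-g_0\|_{l+k}
\end{align*}
uniformly in $t\in[0,1]$. Integrating over $t$ gives the desired bound on $\|\underline{A}(g_1,[h])-\underline{A}(g_0,[h])\|_{C^{l,\alpha+1/2}}$, and the identical argument applied to $\underline{B}$ completes the proof.

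The main obstacle, and the only nontrivial point, is verifying that the derivative $D_g\underline{A}$ enjoys a tame estimate in the stated bilinear form; but this is precisely the content of Hamilton's characterization of smooth tame maps (the derivative of a smooth tame map is smooth tame as a map of two Fr\'echet variables), so once Lemma~\ref{coefficientsaretame} is in hand the rest is a routine application of the tame mean-value inequality. No delicate analysis of the Green's function or of the expansion coefficients is needed at this stage: all such analysis has already been absorbed into the smooth-tame statement of Lemma~\ref{coefficientsaretame}.
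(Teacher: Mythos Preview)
Your proposal is correct and follows exactly the route the paper indicates: the paper's entire proof is the single sentence ``derived from Lemma~\ref{coefficientsaretame} and \cite[Lemma III.1.2.4]{Hamilton},'' and you have spelled out precisely that argument (smooth tameness of $\underline{A},\underline{B}$ plus the tame mean-value inequality along the segment $g_t$). One small imprecision: a Fr\'echet neighborhood $\mathcal{U}_1$ does not by itself bound the high norms $\|g_t\|_{l+k}$, but since $\|g_t\|_{l+k}\le\|g_0\|_{l+k}+\|\gamma\|_{l+k}$ and $\|\gamma\|_0$ is bounded on $\mathcal{U}_1$, the product $\|\gamma\|_0(1+\|g_t\|_{l+k})$ is still controlled by $C_l\|\gamma\|_{l+k}$, so your conclusion stands.
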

Moreover, one can obtain an estimate for the error term, by combining Lemma \ref{reallytame} with the proof of $(2)$ in \cite[Proposition 2.4]{donaldsondeformation2019}:
\begin{lemma}
    There exists a $k>0$, such that in a tubular neighborhood $U$ of $\Zl$, we have 
\begin{align}\label{EstforEterm}
        |E_1(x)-E_0(x)|\le C\dist(x,\Zl_0)^{\frac{3}{2}}\|g_1-g_0\|_{k},
\end{align}
for any $g_1\in\mathcal{U}$ and some constant $C$.
\end{lemma}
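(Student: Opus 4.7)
My plan is to estimate $E_1 - E_0$ by combining two ingredients: a \emph{linear} tame estimate on $W^{-1} f$, refining Lemma~\ref{tameestforform}, and a quantitative version of the pointwise error bound in Lemma~\ref{expansionforfunctionlem} extracted from the proof of \cite[Proposition~2.4(2)]{donaldsondeformation2019}. Since $v_i = df_i$ on the tubular neighborhood $U$, the error term satisfies $E_1 - E_0 = d\tilde{E}$, where $\tilde{E} = E'_1 - E'_0$ is the error term in the function expansion for $f = f_1 - f_0$. The problem thus reduces to bounding $|d\tilde{E}(x)|$ by $C\,\dist(x,\Zl_0)^{3/2}\,\|g_1 - g_0\|_k$.

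First I would upgrade Lemma~\ref{reallytame} to a \emph{linear} statement. The map $g_1 \mapsto W^{-1} f$ is smooth tame (Lemma~\ref{reallytame}) and vanishes at $g_1 = g_0$. Applying the tame mean value inequality along the line segment from $g_0$ to $g_1$ in $\M_0$ (integrating the derivative, whose tameness is built into the definition of smooth tame), I obtain, in a sufficiently small neighborhood of $g_0$,
\begin{align*}
    \|W^{-1} f\|_{\E^{l+2,\alpha}} \le C_l\,\|g_1 - g_0\|_{l+k_0}
\end{align*}
for every $l \ge 0$ and some fixed $k_0$ independent of $l$. This replaces the inhomogeneous $1 + \|\gamma\|_{l+k}$ factor of Lemma~\ref{tameestforform} by a genuinely linear $\|\gamma\|_{l+k_0}$.

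Second, I would extract a quantitative version of Lemma~\ref{expansionforfunctionlem}. Reopening the proof of \cite[Proposition~2.4(2)]{donaldsondeformation2019}, one sees that the implicit constants in $|\tilde{E}(x)| = \mathcal{O}(r^{5/2})$ are controlled linearly by a fixed $\E^{l_1,\alpha}$ norm of the underlying $2$-valued harmonic function: the expansion is built from a Taylor expansion in the $\zeta$-variable on concentric cylindrical shells around $\Zl_0$ and a Schauder-type summation, both of which depend linearly on $\|W^{-1} f\|_{\E^{l_1,\alpha}}$. Hence
\begin{align*}
    |\tilde{E}(x)| \le C\,\dist(x,\Zl_0)^{5/2}\,\|W^{-1} f\|_{\E^{l_1,\alpha}}.
\end{align*}
To pass to the gradient, I would apply interior elliptic regularity on the Euclidean ball $B_{r/2}(x) \subset M \setminus \Zl_0$ with $r = \dist(x,\Zl_0)$. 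The equation $\Delta_{g_0} f = (\Delta_{g_0} - \Delta_{g_1}) f_1$ has a source term controlled tamely by $\|g_1 - g_0\|$, so a standard gradient estimate for second-order elliptic equations on a ball of radius comparable to $r$ yields
\begin{align*}
    |d\tilde{E}(x)| \le C\,\dist(x,\Zl_0)^{3/2}\,\|W^{-1} f\|_{\E^{l_2,\alpha}}
\end{align*}
for some $l_2 \ge l_1$. Alternatively, one may differentiate Donaldson's construction of the expansion termwise and obtain the same bound directly.

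Chaining the first and third displays with $k := l_2 - 2 + k_0$ gives $|E_1(x) - E_0(x)| = |d\tilde{E}(x)| \le C\,r^{3/2}\,\|g_1 - g_0\|_k$, as required. The main technical obstacle lies in the quantitative step: verifying that \cite[Proposition~2.4(2)]{donaldsondeformation2019} really is linear in a fixed norm of $f$, rather than in an $f$-dependent constant, forces one to track the linearity through each stage of Donaldson's construction (the shellwise Taylor expansion, the matching of boundary data between shells, and the Schauder summation). Once this linearity is confirmed, the jump from the $r^{5/2}$ pointwise bound on $\tilde{E}$ to the $r^{3/2}$ bound on $d\tilde{E}$ is routine interior elliptic regularity rescaled to balls of radius comparable to $r$.
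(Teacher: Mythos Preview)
Your approach is essentially the same as the paper's, which simply states that the estimate follows from combining Lemma~\ref{reallytame} with the proof of \cite[Proposition~2.4(2)]{donaldsondeformation2019}; you have correctly unpacked what this combination entails. One small remark: your Step~3 via interior elliptic regularity on balls of radius $r/2$ is a valid but slightly roundabout route, since Donaldson's proof of Proposition~2.4 already tracks the error in the graded $\D^{k,\alpha}$ norms (which control tangential and weighted radial derivatives), so the gradient bound $|d\tilde{E}| \lesssim r^{3/2}$ comes out of his construction directly---this is your ``alternative'' option, and it is the cleaner one.
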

We emphasize that all the norms for $A$- and $B$-terms and error terms here are induced by the reference metric $g_0$.

\vspace{2ex}
We now begin to recall Donaldson's deformation theorem for nondegenerate $\ZT$ harmonic $1$-forms. Let $\mathcal{S}$ be the Fr\'echet manifold that consists of smooth codimension $2$ submanifolds in $M$. Given the pair $(g_0,\Zl_0)\in \M\times\mathcal{S}$ as above, we have:

\begin{lemma}{\cite[Proposition 4.2]{donaldsondeformation2019}}\label{tamenessofDiff}
    There exists a neighborhood $\mathcal{U}\subset\M\times\mathcal{S}$ of $(g_0,\Zl_0)$ such that we can find a smooth \textbf{tame} map $\Psi:\mathcal{U}\to \mathrm{Diff}(M)$, where $\mathrm{Diff}(M)$ is the Fr\'echet Lie group of smooth diffeomorphism of $M$. Th map $\Psi$ has the property that, for each pair $(g,\Zl)\in \M$, the diffeomorphism $\Psi(g,\Zl)$ sends $\Zl_0$ to $\Zl$, and carries the normal structure of $\Zl_0$ under $g_0$ to that of $\Zl$ under $g$.
\end{lemma}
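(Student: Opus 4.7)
The plan is to construct $\Psi(g,\Zl)$ in three stages by successively matching the three pieces of normal-structure data of Definition \ref{normal}, keeping each correction supported in a fixed tubular neighborhood of $\Zl_0$. In the first stage I would fix, once and for all, a smooth tubular-neighborhood identification of some open set $T\supset \Zl_0$ with a disc bundle in $N_0 := TM|_{\Zl_0}/T\Zl_0$ using $g_0$. Provided $(g,\Zl)$ is sufficiently close to $(g_0,\Zl_0)$, the submanifold $\Zl$ is the graph of a unique smooth section $\sigma_\Zl$ of this disc bundle, and the assignment $\Zl\mapsto \sigma_\Zl$ is smooth tame because it is just the read-off of a graph in a fixed chart. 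I would then let $\phi_0(\Zl)\in\mathrm{Diff}(M)$ be fiberwise translation by $\sigma_\Zl$ in the trivialization of $T$, multiplied by a fixed radial cutoff and extended by the identity off $T$. This sends $\Zl_0$ to $\Zl$, and depends smooth-tamely on $\Zl$ because every operation involved is either linear, pointwise multiplication by a fixed bump, or composition with a fixed chart.

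In the second stage I would correct the first-order behavior along $\Zl_0$ so that the combined diffeomorphism carries the $g_0$-orthogonal normal bundle $N_0$ to the $g$-orthogonal normal bundle of $\Zl$ isometrically. Pointwise at $x\in\Zl_0$, the differential $(d\phi_0)_x$ sends $N_{0,x}$ to a $2$-plane in $T_{\phi_0(x)}M$ that is close to but in general distinct from the $g$-orthogonal complement of $T_{\phi_0(x)}\Zl$; I would correct this by orthogonally projecting onto the latter and then polar-decomposing to extract the $g$-isometric part, producing a smooth bundle automorphism $F$ of $TM|_{\Zl_0}$. This $F$ is then realized as the $1$-jet along $\Zl_0$ of a diffeomorphism $\phi_1$ supported in $T$, obtained by applying $F$ linearly in normal coordinates and damping with a fixed radial cutoff. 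Composing with $\phi_0$ yields a diffeomorphism with the correct normal bundle and Euclidean structure. The third stage is analogous: the residual discrepancy between the two $2$-jets is encoded by a symmetric tensor $Q\in C^\infty(\Zl_0;\mathrm{Sym}^2 N_0^\ast \otimes TM|_{\Zl_0})$ depending only on finite jets of $g$ and $\Zl$ along $\Zl_0$, which I would realize by a further correction of the form $x\mapsto x+\chi(|\nu(x)|)Q(\pi(x))(\nu(x),\nu(x))$ in normal coordinates, where $\pi,\nu$ are the base and fiber projections in the tubular neighborhood and $\chi$ is a fixed cutoff.

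The main obstacle is the tameness verification, not the existence of each stage: Fr\'echet-smoothness is straightforward because tubular-neighborhood graphing, polar decomposition, and cutoff extension are all smooth operations on their data, but one must check that each step satisfies the tame estimate of Definition \ref{defoftame}. This will follow at each stage from standard H\"older interpolation together with the smooth tameness of the fundamental operations involved---pointwise multiplication of H\"older sections, composition with fixed smooth maps, finite-order differentiation, and polar decomposition of a matrix-valued function close to the identity---all of which are smooth tame on Fr\'echet spaces of sections, as catalogued in \cite[II.2]{Hamilton}. Composing the three stages and using the fact that composition in $\mathrm{Diff}(M)$ is itself smooth tame then produces the desired map $\Psi$.
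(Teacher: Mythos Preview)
The paper does not actually prove this lemma: it is stated with the citation \cite[Proposition~4.2]{donaldsondeformation2019} and used as a black box, so there is no ``paper's own proof'' to compare against. Your three-stage construction---graph translation to match the submanifold, polar decomposition to match the normal bundle with its Euclidean structure, and a quadratic fiber correction to match the $2$-jet---is exactly the kind of argument one would give, and it tracks the three pieces of data in Definition~\ref{normal} one at a time. The appeal to \cite[II.2]{Hamilton} for tameness of multiplication, composition with fixed smooth maps, polar decomposition near the identity, and composition in $\mathrm{Diff}(M)$ is correct in principle; these are all in Hamilton's catalogue. So as an outline your proposal is sound and is presumably close to what Donaldson does, though you should be aware that in this paper the result is simply imported rather than reproved.
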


Note that $\Psi(g,\Zl)^\ast g\in\M_0$. By Lemma \ref{coefficientsaretame}, we can define smooth tame maps between Fr\'echet manifolds as follows:
\begin{definition}
    \begin{align}
        &\bA:\mathcal{U}\times H\to C^\infty(\Zl_0,N^{-\frac{1}{2}}):(g,\Zl,[h])\mapsto \underline{A}\left(\Psi(g,\Zl)^\ast g,[h]\right),\\
        &\bB:\mathcal{U}\times H\to C^\infty(\Zl_0,N^{-\frac{3}{2}}):(g,\Zl,[h])\mapsto \underline{B}\left(\Psi(g,\Zl)^\ast g,[h]\right),
    \end{align}
    where $H=H^1_-(\hat{M};\RR)$ and $\underline{A}$ and $\underline{B}$ are defined as in \eqref{ABunderline}.
\end{definition}

The following is Donaldson's deformation theorem:
\begin{theorem}{\cite[Theorem 1.1]{donaldsondeformation2019}}\label{deformation}
    Given a smooth metric $g_0$, a singular locus $\Zl_0$ and cohomology class $[h_0]$, such that $v_0=v_0(g_0,\Zl_0,[h_0])$ is a nondegenerate $\ZT$ harmonic $1$-form. There is a neighborhood $\mathcal{U}_1$ of $(g_0,[h_0])$ in $\mathcal{M}\times H$, and a neighborhood $\mathcal{U}_2$ of $\Zl_0$ in $\mathcal{S}$, such that for any $(g,[h])\in\mathcal{U}_1$, there exists a unique $\Zl\in\mathcal{U}_2$ such that the corresponding $2$-valued harmonic $1$-form $v=v(g,\Zl,[h])$ is a nondegenerate $\ZT$ harmonic $1$-form, i.e. $\bA(\Zl,g,[h])\equiv 0$ and $\bB(\Zl,g,[h])$ does not vanish everywhere. Moreover, the map $\underline{\Zl}:\mathcal{U}_1\to\mathcal{U}_2:(g,[h])\mapsto \Zl$ is smooth and tame.
\end{theorem}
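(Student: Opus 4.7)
The plan is to apply the Nash-Moser inverse function theorem in the Fréchet category to the smooth tame map $\bA:\mathcal{U}\times H\to C^\infty(\Zl_0, N^{-\frac{1}{2}})$ furnished by Lemma \ref{coefficientsaretame} (combined with the diffeomorphism $\Psi$ of Lemma \ref{tamenessofDiff}), viewing $\bA(g,\Zl,[h])=0$ as the equation to be solved for $\Zl$ as a function of $(g,[h])$. The nondegeneracy of $v_0$ gives $\bA(g_0,\Zl_0,[h_0])=0$, so the whole theorem reduces to verifying that the $\Zl$-partial derivative at this base point is invertible with smooth tame inverse, together with analogous invertibility throughout a neighborhood (which the tame estimates of Section \ref{3.3} will supply).

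The central computation is the linearization $L := \partial_{\Zl}\bA\bigl|_{(g_0,\Zl_0,[h_0])}:C^\infty(\Zl_0, N)\to C^\infty(\Zl_0, N^{-\frac{1}{2}})$, after identifying $T_{\Zl_0}\mathcal{S}$ with sections of the normal bundle $N$. For a normal variation $\nu\in C^\infty(\Zl_0, N)$, I would deform $\Zl_0$ along $\nu$ to obtain a family $\Zl_s$ and set $v_s := v(g_0,\Zl_s,[h_0])$, with expansion $v_s = d\re(A_s\zeta_s^{1/2}+B_s\zeta_s^{3/2})+E_s$ near $\Zl_s$ guaranteed by Lemma \ref{expansionlem}. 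Working in local coordinates $(z,t)$ with $\Zl_s=\{z=s\nu(t)\}$, expanding $(z-s\nu)^{1/2}$ and $(z-s\nu)^{3/2}$ in $s$ for $z$ bounded away from $\Zl_0$, using $A_0\equiv 0$, and differentiating at $s=0$ yields formally
\begin{align}
\dot v_0 \;=\; d\re\Bigl(\bigl(\dot A_0 - \tfrac{3}{2}\nu B_0\bigr)z^{1/2} + \dot B_0\, z^{3/2}\Bigr) + \dot E_0.
\end{align}
Since $g_0$ and $[h_0]$ are held fixed along the family, $\dot v_0$ is an exact, $L^2$, $\tau$-anti-invariant harmonic $1$-form on $\hat M$; by the Hodge theorem recalled in Section \ref{L2harm1form}, it must vanish identically. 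Matching the leading coefficients of the expansion then forces $\dot A_0 = \tfrac{3}{2}B_0\nu$ and $\dot B_0 \equiv 0$, so $L(\nu) = \tfrac{3}{2}B_0\nu$.

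Because $B_0$ is nowhere zero on $\Zl_0$ by nondegeneracy, $L$ is fiberwise multiplication by a nowhere-vanishing section, and its inverse $\sigma\mapsto\tfrac{2}{3}B_0^{-1}\sigma$ is manifestly smooth and tame. In a small neighborhood of $(g_0,\Zl_0,[h_0])$ in $\M\times\mathcal{S}\times H$, the tame estimates \eqref{tameestforAB} keep $\bB(g,\Zl,[h])$ uniformly $C^0$-close to $B_0$ and hence nowhere zero; the same computation shows that the $\Zl$-derivative of $\bA$ there remains multiplication by $\tfrac{3}{2}\bB$ modulo a lower-order correction, producing a family of right inverses with uniform tame bounds. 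The Nash-Moser inverse function theorem \cite{Hamilton} then delivers the neighborhoods $\mathcal{U}_1,\mathcal{U}_2$ and the smooth tame map $\underline{\Zl}$. Nondegeneracy of $v(g,\underline{\Zl}(g,[h]),[h])$ follows from $\bA\equiv 0$ by construction and the persistence of $\bB$ being nowhere zero on the chosen neighborhood.

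The principal obstacle is making the formal computation of $L$ rigorous within the $W\E^{k+2,\alpha}$ framework: one must verify that the $s$-derivative $\dot v_0$ genuinely defines an $L^2$ form (the $z^{1/2}$ term has locally integrable gradient in every dimension $n\ge 2$), that the coefficients $A_s, B_s, E_s$ depend differentiably on $s$ as the leading-order expansion suggests, and that replacing $g$ by $\Psi(g,\Zl)^\ast g$ in the definition of $\bA$ does not obscure the computation. A related subtlety is extending $\nu$ from $\Zl_0$ to a normal-structure-preserving isotopy of $M$; independence of $L$ from the choice of extension is forced by the uniqueness of $L^2$ harmonic representatives, so once that uniqueness is invoked the application of Nash-Moser becomes routine.
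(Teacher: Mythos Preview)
The paper does not prove this theorem; it is quoted from \cite{donaldsondeformation2019} without argument. Your strategy---apply Nash--Moser after showing that $\partial_\Zl\bA$ at the base point is multiplication by $\tfrac32 B_0$---is Donaldson's, and the formula $L(\nu)=\tfrac32 B_0\nu$ is correct.

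The heuristic ``$\dot v_0$ is exact, $L^2$, harmonic, hence zero by Hodge uniqueness'' is the right idea but, as you have written it, has a gap: when you vary $\Zl$ directly while holding $g_0$ fixed, $v_s$ is a section of $E_s\otimes T^*(M\setminus\Zl_s)$ with monodromy around $\Zl_s$, not $\Zl_0$, so near $\Zl_0$ the pointwise difference $v_s-v_0$ is not a section of any fixed bundle and $\dot v_0$ is ill-defined there. The repair is precisely the pullback you allude to in your last paragraph: set $v'_s:=\Psi(g_0,\Zl_s)^*v_s$, harmonic for $g'_s:=\Psi(g_0,\Zl_s)^*g_0$ with fixed singular locus $\Zl_0$, and let $X$ generate $\Psi_s$. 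Then $w:=\dot v'_0-\mathcal L_Xv_0$ is exact (both terms are $d$ of something), $L^2$ (both have $r^{-1/2}$ leading behavior), and $\hat g_0$-harmonic because naturality gives $\tfrac{d}{ds}\big|_{s=0}\Delta_{g'_s}=[\mathcal L_X,\Delta_{g_0}]$, whence $\Delta_{g_0}w=0$. Now the Hodge uniqueness of Section~\ref{L2harm1form} forces $w=0$, so $\dot f'_0=\iota_Xv_0+\mathrm{const}$, and reading off the $z^{1/2}$ coefficient of $\iota_Xv_0\sim\re(\tfrac32 B_0\nu\, z^{1/2})$ gives $\dot A'_0=\tfrac32 B_0\nu$. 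This is essentially Donaldson's computation, streamlined.

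A second point: the $w=0$ argument uses $A_0=0$ essentially---if $A_0\ne0$ then $\mathcal L_Xv_0$ contains a term of size $r^{-3/2}$ and drops out of $L^2$---so it computes $\partial_\Zl\bA$ only at the base point. Nash--Moser needs a tame family of inverses on a full neighborhood in $\M\times\mathcal{S}\times H$, including points where $\bA\ne0$; Donaldson supplies this via separate tame continuity estimates for the linearization. Your phrase ``modulo a lower-order correction'' is right in spirit, but this step is not a consequence of the base-point computation and needs its own argument.
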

The neighborhood $\mathcal{U}$ of $(g_0,\Zl_0)$ can be chosen to be identified with a product $\mathcal{U}_1\times \mathcal{U}_2$, where $\mathcal{U}_1$ is a neighborhood of $0$ in the Fr\'echet space of smooth tensor fields $C^\infty(\mathrm{Sym}^2(T^\ast M))$, and $\mathcal{U}_2$ is a neighborhood of $0$ in the Fr\'echet space $C^\infty(N)$ of smooth section of the normal bundle $N\to\Zl_0$, which equipped with Euclidean structure induced by $g_0$. Moreover, $\mathcal{U}_1$ and $\mathcal{U}_2$ can be chosen as metric open balls with respect to the natural metrics on the corresponding Fr\'echet manifolds.

\subsection{Grafting of $2$-valued harmonic $1$-forms}\label{3.4}

To fix notations, we set:
\begin{itemize}
    \item $U_{\epsilon_0,\epsilon_1}:=\{x\in M:\epsilon_0<\dist_{g_0}(x,\Zl_0)<\epsilon_1\}$, where $\epsilon_1>\epsilon_0\ge 0$.
    \item $U_{\epsilon}:=\{x\in M:\dist_{g_0}(x,\Zl_0)<\epsilon\}$, where $\epsilon>0$.
    \item $\hat{U}_{\epsilon_0,\epsilon_1}$ and $\hat{U}_{\epsilon}$ are preimages of $U_{\epsilon_0,\epsilon_1}$ and $U_{\epsilon}$ in $\hat{U}$, respectively.
\end{itemize}

\begin{lemma}\label{forbiddenband}
    Assume $v_0=v_0(g_0,\Zl_0,[h])$ is a nondegenerate $\ZT$ harmonic $1$-form. In a sufficiently small neighborhood $\mathcal{U}_1$ of $g_0$ in $\M_0$, there are constants $0<\epsilon_0<\epsilon_1$ depending only on $\mathcal{U}_1$, such that for any $g\in\mathcal{U}_1$, the corresponding $2$-valued harmonic $1$-form $v=v(g,\Zl_0,[h])$ has a positive lower bound $c_0$ on $U_{\epsilon_0,\epsilon_1}$.
\end{lemma}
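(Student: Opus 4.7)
The plan is to exploit the nondegeneracy of $v_0$, which forces $|v_0|(x)\ge c\,r^{1/2}$ on a tubular neighborhood of $\Zl_0$, together with the tame continuity of the expansion~\eqref{expansion} in the metric $g$, to conclude that $|v|$ stays uniformly bounded away from zero on a fixed annular region whenever $g$ varies only slightly around $g_0$.

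First, by the definition of nondegeneracy applied to $v_0=v_0(g_0,\Zl_0,[h])$, there exist a tubular neighborhood $U$ of $\Zl_0$ and a constant $c>0$ such that $|v_0|(x)\ge c\,\dist_{g_0}(x,\Zl_0)^{1/2}$ on $U$. I would fix $\epsilon_1>0$ small enough that $U_{\epsilon_1}\subset U$ and then pick any $\epsilon_0\in(0,\epsilon_1)$; on the annulus $U_{\epsilon_0,\epsilon_1}$ we immediately obtain the uniform lower bound $|v_0|\ge c\,\epsilon_0^{1/2}$.

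For the perturbation step, I would take $v=v(g,\Zl_0,[h])$ for $g\in\M_0$ near $g_0$ and subtract the expansions~\eqref{expansion} of $v$ and $v_0$, noting that $\underline{A}(g_0,[h])\equiv 0$ because $v_0$ is nondegenerate $\ZT$ harmonic:
$v-v_0 = \re\,d\!\left(\underline{A}(g,[h])\,\zeta^{1/2}+\bigl(\underline{B}(g,[h])-\underline{B}(g_0,[h])\bigr)\zeta^{3/2}\right)+(E_v-E_{v_0}).$
Expanding the exterior derivative produces terms of sizes $|\underline{A}|\,r^{-1/2}$, $|d\underline{A}|\,r^{1/2}$, $|\underline{B}-\underline{B}(g_0,[h])|\,r^{1/2}$, and $|d(\underline{B}-\underline{B}(g_0,[h]))|\,r^{3/2}$, all of which remain harmless on $U_{\epsilon_0,\epsilon_1}$ because of the lower bound $r\ge\epsilon_0>0$. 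Applying the tame estimate~\eqref{tameestforAB} with $l=1$ together with~\eqref{EstforEterm} then yields, for some $k\ge 1$ and some constant $C=C(\epsilon_0,\epsilon_1)$,
$|v-v_0|(x) \le C\,\|g-g_0\|_{k+1}\quad\text{for all }x\in U_{\epsilon_0,\epsilon_1}.$

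Finally, by the description of the Fr\'echet topology on $\M_0$ recalled at the beginning of Section~\ref{3.3}, I can shrink the neighborhood $\mathcal{U}_1$ of $g_0$ so that $C(\epsilon_0,\epsilon_1)\,\|g-g_0\|_{k+1}<\tfrac12 c\,\epsilon_0^{1/2}$ for every $g\in\mathcal{U}_1$. The reverse triangle inequality then gives $|v|\ge|v_0|-|v-v_0|\ge\tfrac12 c\,\epsilon_0^{1/2}=:c_0$ throughout $U_{\epsilon_0,\epsilon_1}$. I do not anticipate a serious obstacle; the only delicate point is the $r^{-1/2}$ singularity produced by $d(\underline{A}\zeta^{1/2})$ once $\underline{A}\not\equiv0$, and this is precisely the reason why the lemma is formulated on an annulus bounded away from $\Zl_0$ rather than on a full neighborhood.
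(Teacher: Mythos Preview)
Your proposal is correct and follows essentially the same approach as the paper: both use the expansion~\eqref{expansion} together with the tame estimates~\eqref{tameestforAB} and~\eqref{EstforEterm} to control the perturbed form on a fixed annulus. The only organizational difference is that the paper expands $|v|$ directly and shows that the $B$-term dominates the (now possibly nonzero) $A$-term and the error terms, deriving explicit inequalities that pin down $\epsilon_0$ and $\epsilon_1$, whereas you invoke the nondegeneracy bound $|v_0|\ge c\,r^{1/2}$ as a black box and estimate the difference $|v-v_0|$ via the triangle inequality. Your route is slightly cleaner and yields the same conclusion; the paper's route makes the dependence of $\epsilon_0$ on the size of the $A$-term more transparent, which is used again in the proof of Lemma~\ref{transdegzerodiscrete} to push $\epsilon_0$ down to $0$ when $A\equiv 0$.
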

\begin{proof}
    By Lemma \ref{expansionlem}, for $\epsilon>0$ sufficiently small, the $2$-valued harmonic $1$-form $v$ admits an expansion in a tubular neighborhood $U_{\epsilon}$ of $\Zl_0$:
    \[v(x)=d\re\left(A(t)\zeta^{\frac{1}{2}}+B(t)\zeta^{\frac{3}{2}}\right)+E(x),\]
    where $A=\underline{A}(g,[h])$ and similarly for $B$. For simplicity we set $r=\dist_{g_0}(x,\Zl_0)$. It follows that
    \begin{align}\label{abslower}
        |v(x)|\ge&\left|\re \left(B(t)\zeta^{\frac{1}{2}}d\zeta\right)\right|-\left(\left|\re\left)A(t)\zeta^{-\frac{1}{2}}d\zeta\right)\right|+\left|\re\left(dA(t)\zeta^{\frac{1}{2}}\right)\right|\right)\\
        &-\left(\left|dB(t)\zeta^{\frac{3}{2}}\right|+|E(x)|\right)\nonumber\\
        \ge&|B(t)|r^{\frac{1}{2}}-\left(r^{-\frac{1}{2}}+r^{\frac{1}{2}}\right)\|\underline{A}(g,[h])\|_{C^{1}}\nonumber\\
        &-r^{\frac{3}{2}}\left(\|\underline{B}(g,[h])\|_{C^{1}}+C\|g-g_0\|_k\right)-|E_0(x)|.\nonumber
    \end{align}

    We now aim to find a neighborhood $\mathcal{U}_1$ of $g_0$, and constants $\epsilon_0$ and $\epsilon_1$, such that for any $g\in\mathcal{U}_1$ and $x\in U_{\epsilon_0,\epsilon_1}$, the right-hand side of the last inequality in \eqref{abslower} has a positive lower bound $c_0$. This reduces to choosing these data suitable so that, for $\epsilon_0<r<\epsilon_1$, the following two inequalities hold:
    \begin{align}
        &\frac{1}{2}|B(t)|r^{\frac{1}{2}}>\left(r^{-\frac{1}{2}}+r^{\frac{1}{2}}\right)\|\underline{A}(g,[h])\|_{C^{1}}+c_0,\label{epsilon1}\\
        &\frac{1}{2}|B(t)|r^{\frac{1}{2}}>r^{\frac{3}{2}}\left(\|\underline{B}(g,[h])\|_{C^{1}}+C\|g-g_0\|_k\right)-|E_0(p)|.\label{epsilon2}
    \end{align}
    By assumption, $B_0=\underline{B}(g_0,[h])$ is nonvanishing, hence there exists a constant $c > 0$ such that $|B_0(t)|>c$ for all $t\in\Zl_0$.
    From the estimate \eqref{tameestforAB}, we can choose a metric ball $\mathcal{U}_1$ centered at $g_0$ with radius $\varepsilon > 0$ (with respect to the natural metric on the Fréchet manifold $\M_0$, as discussed after Definition~\ref{defoftame}), such that $|B(t)| > \frac{c}{2} > 0$ for all $g \in \mathcal{U}_1$.
    
    On the other hand, since $A_0=\underline{A}(g_0,[h])\equiv 0$, hence the radius $\varepsilon$ of $\mathcal{U}_1$ can be chosen so that $\|\underline{A}(g, [h])\|_{C^1} \ll \frac{c}{16}$ for all $g \in \mathcal{U}_1$. Moreover, by setting $\epsilon_0\ge 8C_1\varepsilon/c(1-C_1\varepsilon)>0$, where $C_1$ is the constant in \eqref{tameestforAB} and and assuming $C_1 \varepsilon$ is sufficiently small, we obtain $\frac{1}{8}|B(t)|r^{1/2}>r^{-1/2}\|\underline{A}(g,[h])\|_{C^1}$. With these choices, 
    \[\frac{1}{2}|B(t)|r^{\frac{1}{2}}-\left(r^{-\frac{1}{2}}+r^{\frac{1}{2}}\right)\|\underline{A}(g,[h])\|_{C^{1}}>\frac{1}{8}cr^{\frac{1}{2}},\] hence we can choose some $0<c_0<c\epsilon_0^{1/2}/2$.

    Now the \eqref{epsilon2} determines the constant $\epsilon_1$. Indeed, we require 
    \[r<\frac{1}{4}|B(t)|/\left(\|\underline{B}(g,[h])\|_{C^{1}}+C\|g-g_0\|_k\right),\,~r^{\frac{1}{2}}>4|E_0(x)|/|B(t)|.\]
    For the first inequality, it suffices to take $\epsilon_1<\frac{1}{4}c/(C_1\varepsilon/(1-C_1\varepsilon)+\|B_0\|_{C^1}+C\varepsilon)$. Since $|E_0(x)|=\mathcal{O}(r^{3/2})$, we may also choose $\epsilon_1$ appropriately to satisfy the second inequality, and such that $\epsilon_1>\epsilon_0>0$. 
\end{proof}

\begin{lemma}\label{transdegzerodiscrete}
    If $v_0=v_0(g_0,\Zl_0,[h])$ is a transverse and nondegenerate $\ZT$ harmonic $1$-form, then the ordinary zero set is discrete in $M$.  
\end{lemma}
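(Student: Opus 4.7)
The plan is to decompose $M$ into a tubular neighborhood of $\Zl_0$, where nondegeneracy forbids ordinary zeros, and its complement, where transversality and compactness force the zero set to be finite. Combining these observations yields discreteness in $M$.

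First, I would invoke the nondegeneracy hypothesis directly. By the definition of a nondegenerate $\ZT$ harmonic $1$-form, there exists a tubular neighborhood $U_{\epsilon}$ of $\Zl_0$ and a constant $c>0$ such that $|v_0|(x)\ge c\cdot\dist_{g_0}(x,\Zl_0)^{1/2}$ for all $x\in U_\epsilon$. In particular $|v_0|(x)>0$ for every $x\in U_\epsilon\setminus\Zl_0$, so $v_0$ has no ordinary zeros inside $U_\epsilon\setminus\Zl_0$. This rules out any accumulation of ordinary zeros along $\Zl_0$. (Alternatively, one could apply Lemma~\ref{forbiddenband} with $g=g_0$ to obtain the same conclusion on an annular shell, and shrink $\epsilon_0$ toward $0$; but the definition itself suffices for a single metric.)

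Next, I would handle the complement $K:=M\setminus U_{\epsilon}$, which is compact. By the transversality hypothesis, every ordinary zero of $v_0$ is of Morse type; locally we may write $v_0=\pm df$ for a Morse function $f$, whose critical points are isolated. Hence the ordinary zero set of $v_0$ restricted to the open set $M\setminus\Zl_0$ is discrete in $M\setminus\Zl_0$. In particular its intersection with the compact set $K$ is a discrete subset of $K$, which must therefore be finite.

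Combining these two steps, the ordinary zero set of $v_0$ is contained in $K$ and is finite, hence discrete in $M$. There is really no serious obstacle here; the only point that deserves care is recognizing that ``discrete in $M$'' is stronger than the tautological consequence of transversality (``discrete in $M\setminus\Zl_0$''), and that the missing piece -- the exclusion of accumulation onto the singular locus -- is precisely what the nondegeneracy assumption provides via the lower bound $|v_0|\gtrsim r^{1/2}$.
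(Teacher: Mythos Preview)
Your proposal is correct and follows essentially the same approach as the paper: use nondegeneracy to show $v_0$ has no ordinary zeros in a tubular neighborhood of $\Zl_0$, then use transversality plus compactness on the complement to conclude the ordinary zero set is finite. The paper phrases the first step by noting that in the proof of Lemma~\ref{forbiddenband} one may take $\epsilon_0\to 0$ when the $A$-term vanishes, whereas you invoke the definitional lower bound $|v_0|\ge c\,r^{1/2}$ directly; these are equivalent observations.
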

\begin{proof}
    In the proof of Lemma \ref{forbiddenband}, we note that the constant $\epsilon_0$ depends on the $A$-term. In particular, when $v_0$ is a nondegenerate $\ZT$ harmonic $1$-form, we can prove that $v_0(g_0,\Zl_0,[h])$ is nonvanishing on $U_{0,\epsilon_1}$. Recall that the ordinary zero set of $v_0$ is discrete on $M\setminus \Zl_0$, and any limit point must lie on $\Zl_0$. But since $v_0$ has no ordinary zeros near $\Zl_0$, there can be no limit points of ordinary zeros. Hence, the zero set is discrete in $M$.
\end{proof}

The classical Schauder estimates imply the following:
\begin{lemma}\label{Holderconvergeonforbiddenband}
    Fix $\epsilon_1 > \epsilon_0 > 0$. For any $l \ge 2$, if a sequence of $2$-valued functions $\{f_i\}_{i}$ with singular locus $\Zl_0$ satisfies
$\|f_i\|_{\E^{l,\alpha}} \to 0$, then
\[
    \|f_i\|_{C^{l,\alpha}(U_{\epsilon_0,\epsilon_1})} \to 0.
\]
Here, \[\|f_i\|_{C^{l,\alpha}(U_{\epsilon_0,\epsilon_1})}:=\|\hat{f}_i\|_{C^{l,\alpha}(\hat{U}_{\epsilon_0,\epsilon_1})},\] where $\hat{f}_i$ is the lift of $f_i$ to $\hat{M}$, and $C^{l,\alpha}$ denotes the standard H\"older norm induced by $g_0$.
\end{lemma}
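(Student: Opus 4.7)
The plan is to observe that on $\hat{U}_{\epsilon_0, \epsilon_1}$, which is compactly contained in $\hat{M} \setminus \Zl_0$, all of the conical structure becomes irrelevant: the modified Laplacian $\tilde{\Delta}_g = W \Delta_g W^{-1}$ is a uniformly elliptic operator of order two with smooth coefficients, and the vector fields $r\partial_r, \partial_\theta, \partial_{t_i}$ spanning $\underline{\mathcal{T}}$ form a smooth nondegenerate frame (since $r \ge \epsilon_0 > 0$ on this band). In particular, on any compact region bounded away from $\Zl_0$, the $\D^{k,\alpha}$-norm is comparable to the standard H\"older norm $C^{k,\alpha}$, with a constant depending only on $\epsilon_0$ and the geometry of $(M,g_0)$.

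First, from the hypothesis $\|f_i\|_{\E^{l,\alpha}} \to 0$ and the Banach space isomorphism in Lemma~\ref{isobetweenBanachweight}, I would deduce the two pieces of information $\|f_i\|_{\D^{l,\alpha}} \to 0$ and $\|\tilde{\Delta}_g f_i\|_{\D^{l-2,\alpha}} \to 0$. Since each $\hat{f}_i$ is odd and therefore vanishes along $\Zl_0$, the H\"older seminorm appearing in the $\D^{0,\alpha}$-norm, applied between a point in $\hat{M}$ and a nearest point on $\Zl_0$, controls $\|\hat{f}_i\|_{C^0(\hat{M})}$, so this quantity tends to zero as well.

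Next, choose $\delta > 0$ small enough that $\hat{U}_{\epsilon_0, \epsilon_1} \Subset \hat{U}_{\epsilon_0 - \delta, \epsilon_1 + \delta} \Subset \hat{M} \setminus \Zl_0$. On the enlarged band, the norm equivalence noted above turns the $\D^{l-2,\alpha}$-convergence of $\tilde{\Delta}_g f_i$ into standard convergence:
\[
\|\tilde{\Delta}_g \hat{f}_i\|_{C^{l-2,\alpha}(\hat{U}_{\epsilon_0-\delta, \epsilon_1+\delta})} \to 0.
\]
I then apply classical interior Schauder estimates for the uniformly elliptic operator $\tilde{\Delta}_g$ on the pair $\hat{U}_{\epsilon_0, \epsilon_1} \Subset \hat{U}_{\epsilon_0 - \delta, \epsilon_1 + \delta}$, obtaining
\[
\|\hat{f}_i\|_{C^{l,\alpha}(\hat{U}_{\epsilon_0, \epsilon_1})} \le C\Big(\|\tilde{\Delta}_g \hat{f}_i\|_{C^{l-2,\alpha}(\hat{U}_{\epsilon_0-\delta, \epsilon_1+\delta})} + \|\hat{f}_i\|_{C^0(\hat{U}_{\epsilon_0-\delta, \epsilon_1+\delta})}\Big),
\]
whose right-hand side tends to zero. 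Recalling that $\|f_i\|_{C^{l,\alpha}(U_{\epsilon_0,\epsilon_1})}$ was defined as $\|\hat{f}_i\|_{C^{l,\alpha}(\hat{U}_{\epsilon_0,\epsilon_1})}$, the conclusion follows.

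There is no substantial obstacle here; the only two points requiring care are verifying the equivalence of $\D^{k,\alpha}$ with standard $C^{k,\alpha}$ on the forbidden band (which reduces to checking that the frame $\underline{\mathcal{T}}$ is smooth and uniformly nondegenerate when $r$ is bounded below), and making sure that the Schauder constant is uniform in $i$ (which is automatic since the coefficients of $\tilde{\Delta}_g$ do not depend on $i$). The statement is essentially a packaging of standard elliptic regularity, as already suggested by the phrasing of the lemma.
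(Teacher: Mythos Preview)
Your proposal is correct and follows the same approach as the paper, which simply remarks that the lemma follows from classical Schauder estimates. In fact your argument is slightly more than is needed: once you have $\|f_i\|_{\D^{l,\alpha}} \to 0$ from Lemma~\ref{isobetweenBanachweight}, the norm equivalence between $\D^{l,\alpha}$ and $C^{l,\alpha}$ on the band (your first observation) already gives the conclusion without invoking the interior Schauder estimate.
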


Now we have all the ingredients needed to construct transverse and nondegenerate $\ZT$ harmonic $1$-forms. 

Fix a pair $(g_0,\Zl_0)$, such that $v_0=v_0(g_0,\Zl_0,[h])$ is nondegenerate. We also fix the cohomology class $[h]$ throughout the rest of this subsection.

\vspace{1ex}
We begin the grafting process:

\begin{large}
    \textbf{Step 1.}
\end{large}
 By Proposition \ref{singulartrans}, for any $\ell\in\NN$ sufficiently large and any small neighborhood $V$ of $g_0$ in $\M_0^{\ell,\alpha}$, we can find a metric $g_1$ in $V$, such that $v_1(g_1,\Zl_0,[h])$ is a transverse $2$-valued $1$-form. Since smooth metric are dense in $\M_0^{\ell,\alpha}$, we may assume $g_1$ is smooth, i.e. $g_1\in\M_0$. 

Moreover, by the discussion after Definition \ref{defoftame}, $g_1$ can be chosen to lie in any prescribed neighborhood $\mathcal{U}$ of $g_0$ in $\M_0$, provided $\ell$ is large enough. Thus we have:
\begin{lemma}\label{transversecloseenough}
    For any $\delta_1>0$, there exists a metric $g_1\in \M_0$, such that $v_1=v_1(g_1,\Zl_0,[h])$ is transverse and $d(g_1-g_0,0)<\delta_1$,  where $d$ is the metric on the Fr\'echet space $C^{\infty}(Sym^2T^\ast M)$ defined as in the discussion after Definition \ref{defoftame}. 
\end{lemma}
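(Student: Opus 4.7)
The plan is a straightforward Banach-to-Fréchet density upgrade. The key inputs are: (i) the Fréchet metric $d$ on $C^\infty(\mathrm{Sym}^2 T^\ast M)$ is controlled by any sufficiently high-order Hölder norm, which is precisely the observation recorded after Definition~\ref{defoftame}; (ii) Proposition~\ref{singulartrans} yields a dense open set of ``good'' metrics inside the Banach manifold $\mathcal{M}_0^{\ell,\alpha}$; (iii) smooth metrics inducing the prescribed normal structure on $\Zl_0$ are $C^{\ell,\alpha}$-dense in $\mathcal{M}_0^{\ell,\alpha}$.

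Given $\delta_1 > 0$, I would first choose an integer $\ell \ge \ell_0$ together with $\delta_2 > 0$ such that any $\gamma \in C^\infty(\mathrm{Sym}^2 T^\ast M)$ satisfying $\|\gamma\|_{C^{\ell,\alpha}} < \delta_2$ automatically lies in the Fréchet ball $B_{\delta_1}(0)$; this is possible because the tail $\sum_{i > N} 2^{-i}$ can be made arbitrarily small. I would then consider the $C^{\ell,\alpha}$ open ball $V = \{g \in \mathcal{M}_0^{\ell,\alpha} : \|g - g_0\|_{C^{\ell,\alpha}} < \delta_2\}$ and the dense open subset $\mathcal{U} \subset \mathcal{M}_0^{\ell,\alpha}$ produced by Proposition~\ref{singulartrans}. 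Although the stated conclusion there is only discreteness of the ordinary zero set, the underlying parametric transversality argument through Lemma~\ref{translem} in fact produces the sharper statement that $v_g$ is \emph{transverse} (Morse-type) for every $g \in \mathcal{U}$, because transversality of the closed $1$-form $\hat{v}_g$ to the zero section of $T^\ast(\hat{M}\setminus\Zl_0)$ is exactly the non-degeneracy of the Hessian of its local potential. Hence $\mathcal{U} \cap V$ is a non-empty open subset of $\mathcal{M}_0^{\ell,\alpha}$.

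To produce a smooth metric I would invoke density of $\mathcal{M}_0$ in $\mathcal{M}_0^{\ell,\alpha}$: any $C^{\ell,\alpha}$ metric can be approximated by mollification and, if the $1$-jet along $\Zl_0$ is disturbed, corrected by a smooth tensor supported in a tubular neighborhood of $\Zl_0$ whose $C^{\ell,\alpha}$ norm is arbitrarily small (the normal-structure condition is algebraic on a finite jet). Consequently $\mathcal{M}_0 \cap \mathcal{U} \cap V$ is non-empty, and any element $g_1$ is smooth, lies in $\mathcal{M}_0$, makes $v_1 = v(g_1,\Zl_0,[h])$ transverse, and satisfies $d(g_1 - g_0, 0) < \delta_1$ by the choice of $\ell$ and $\delta_2$.

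The only mildly delicate point I anticipate is the upgrade from ``discrete ordinary zero set'' to ``transverse'' in the output of Proposition~\ref{singulartrans}, which must be read off from its proof rather than its statement. Everything else is a routine application of the density of smooth tensors in $C^{\ell,\alpha}$ tensors with a prescribed finite jet, combined with the defining comparability between the Fréchet topology on $\mathcal{M}_0$ and its ambient Hölder topologies.
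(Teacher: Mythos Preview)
Your proposal is correct and follows essentially the same route as the paper: invoke Proposition~\ref{singulartrans} inside a $C^{\ell,\alpha}$ ball, use density of smooth metrics in $\mathcal{M}_0^{\ell,\alpha}$ to upgrade to $g_1\in\mathcal{M}_0$, and convert $C^{\ell,\alpha}$-closeness to Fr\'echet-closeness via the comparison recorded after Definition~\ref{defoftame}. You are also right that the ``transverse'' conclusion must be read off from the proof of Proposition~\ref{singulartrans} rather than its stated conclusion; the paper simply uses this without comment.
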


As shown in Lemma \ref{forbiddenband} and \ref{transdegzerodiscrete}, we have the following result:
\begin{lemma}\label{gluingband}
    For the $2$-valued harmonic $1$-forms $v_1=v_1(g_1,\Zl_0,[h])$, there exists two constants $\epsilon_1>\epsilon_0>0$, such that $|v_1(x)|\ge c_0$, for all points $x$ in the region $U_{\epsilon_0,\epsilon_1}$. This region will be referred to as the \textbf{cambium}. The constants $\epsilon_0$ and $\epsilon_1$ are uniform with respect to $\delta_1$, or equivalently, independent of $g_1$ that sufficiently close to $g_0$ in $\M_0$. Moreover, the original nondegenerate $\ZT$ harmonic $1$-form $v_0(g_0,\Zl_0,[h])$ is nonvanishing on $U_{0,\epsilon_1}$.
\end{lemma}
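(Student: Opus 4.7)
The lemma is essentially a packaging statement that consolidates Lemma \ref{forbiddenband} and Lemma \ref{transdegzerodiscrete} into a single form suited to the grafting construction that will follow in Section \ref{3.4}, so my plan is to extract both conclusions with explicit attention to the uniformity of the constants.

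First, I would invoke Lemma \ref{forbiddenband} directly to produce constants $\epsilon_0, \epsilon_1, c_0$ attached to some Fr\'echet neighborhood $\mathcal{U}_1$ of $g_0$ in $\M_0$. The decisive observation to verify is that these constants depend only on $\mathcal{U}_1$ and not on the particular metric chosen inside it. Tracing the proof of Lemma \ref{forbiddenband}: the choice of $\epsilon_0$ uses only the uniform lower bound $|B(t)| > c/2$ (from \eqref{tameestforAB} applied to $\underline{B}$ with $B_0$ nonvanishing) and the uniform smallness $\|\underline{A}(g,[h])\|_{C^1} \ll c/16$ (from \eqref{tameestforAB} and $A_0 \equiv 0$); the choice of $\epsilon_1$ uses only the uniform upper bounds on $\|\underline{B}(g,[h])\|_{C^1}$ and $\|g-g_0\|_k$ for $g \in \mathcal{U}_1$. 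Since none of these quantities distinguish between metrics in $\mathcal{U}_1$, the same triple $(\epsilon_0,\epsilon_1,c_0)$ controls every $v = v(g,\Zl_0,[h])$ with $g \in \mathcal{U}_1$. Then I apply Lemma \ref{transversecloseenough} with $\delta_1$ small enough to place $g_1$ inside $\mathcal{U}_1$, giving the required lower bound on the cambium.

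Second, for the moreover clause that $v_0$ is nonvanishing on all of $U_{0,\epsilon_1}$, I would substitute $g = g_0$ into the chain of inequalities \eqref{abslower}. Because $A_0 = \underline{A}(g_0,[h]) \equiv 0$ and $\|g_0-g_0\|_k = 0$, the terms $(r^{-1/2}+r^{1/2})\|\underline{A}(g,[h])\|_{C^1}$ and $r^{3/2}C\|g-g_0\|_k$ vanish, leaving
\begin{align*}
|v_0(x)| \ge \tfrac{1}{2}|B_0(t)|\,r^{1/2} - r^{3/2}\|\underline{B}(g_0,[h])\|_{C^1} - |E_0(x)|.
\end{align*}
The definition of $\epsilon_1$ via \eqref{epsilon2} was chosen precisely to make this right-hand side positive for all $0 < r < \epsilon_1$, independently of any lower cutoff $\epsilon_0$. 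Hence $v_0$ is nonvanishing on the entire punctured tubular neighborhood $U_{0,\epsilon_1}$, which is the content of Lemma \ref{transdegzerodiscrete} restricted to a neighborhood of $\Zl_0$.

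The only genuine point requiring care, rather than an obstacle, is ensuring that the single pair $(\epsilon_0,\epsilon_1)$ simultaneously serves both purposes: uniform lower bound of $v_1$ on the cambium and non-vanishing of $v_0$ on all of $U_{0,\epsilon_1}$. Since $g_0 \in \mathcal{U}_1$ and the selection of $\epsilon_1$ in Lemma \ref{forbiddenband} is dictated purely by $\mathcal{U}_1$, the same $\epsilon_1$ automatically works for $g_0$ as well, so no further adjustment is needed. No additional analytic input beyond Lemmas \ref{forbiddenband} and \ref{transdegzerodiscrete} is required.
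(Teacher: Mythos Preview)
Your proposal is correct and matches the paper's approach exactly: the paper presents Lemma \ref{gluingband} with the preface ``As shown in Lemma \ref{forbiddenband} and \ref{transdegzerodiscrete}, we have the following result,'' and provides no further proof. You have correctly unpacked these two references, including the key observation (already made in the proof of Lemma \ref{transdegzerodiscrete}) that setting $g=g_0$ eliminates the $A$-term and the $\|g-g_0\|_k$ term in \eqref{abslower}, allowing $\epsilon_0$ to be taken as $0$ for $v_0$.
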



\begin{large}
    \textbf{Step 2.}
\end{large}
We graft $v_1(g_1,\Zl_0,[h])$, which is transverse but not necessarily a $\ZT$ harmonic $1$-form, onto $v_0(g_0,\Zl_0,[h])$, which is nondegenerate but not necessarily transverse, along the cambium $U_{\epsilon_0,\epsilon_1}$, to obtain a new $2$-valued $1$-form $v_2$ on $M$, with the same singular locus $\Zl_0$. 

Consider a smooth cut-off function $\chi\ge 0$ on $M$, such that $\chi(x)=1$ if $\dist_{g_0}(x,\Zl_0)\le \epsilon_0$, $\chi(x)=0$ if $\dist_{g_0}(x,\Zl_0)\ge \epsilon_1$, and $|d\chi|_{g_0}\le 1/(\epsilon_1-\epsilon_0)$ on $M$.

In a tubular neighborhood $U$ of $\Zl_0$, $v_i(g_i,\Zl_0,[h])~(i=0,1)$ are exact, so we can write $v_i=df_i$ on $U$, where each $f_i$ is a $2$-valued function with singular locus $\Zl_0$, harmonic with respect to $g_i$ on $U$. Suppose $U_{\epsilon_0,\epsilon_1}\subset U$, we define
\begin{align}
    v_2(x)=\begin{cases}
        d\left(\chi f_0+(1-\chi)f_1\right)(x),& x\in U_{\epsilon_1},\\
        v_1(x),&x\notin U_{\epsilon_1}.
    \end{cases}
\end{align}

\begin{lemma}\label{nonvanishing1form}
    With appropriate choices of $g_1$, $v_2$ is nonvanishing on $U_{0,\epsilon_1}$.
\end{lemma}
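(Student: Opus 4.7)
The plan is to split $U_{0,\epsilon_1}$ into the ``core'' region $U_{0,\epsilon_0}$ and the cambium $U_{\epsilon_0,\epsilon_1}$ and handle each separately. On the core, $\chi \equiv 1$, so by construction $v_2 = df_0 = v_0$ there, and the last assertion of Lemma \ref{gluingband} already guarantees that $v_0$ is nonvanishing on all of $U_{0,\epsilon_1}$, in particular on $U_{0,\epsilon_0}$. No additional condition on $g_1$ is needed for this piece.

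The real work is on the cambium, where I would expand
\begin{align*}
v_2 \;=\; d(\chi f_0 + (1-\chi) f_1) \;=\; v_1 + \chi(v_0 - v_1) + (f_0 - f_1)\, d\chi,
\end{align*}
and apply the triangle inequality. Lemma \ref{gluingband} provides the uniform lower bound $|v_1| \ge c_0$ on the \emph{fixed} annulus $U_{\epsilon_0,\epsilon_1}$, with $c_0, \epsilon_0, \epsilon_1$ independent of $g_1$ as long as $g_1$ is sufficiently close to $g_0$. The strategy is to force the last two terms to be much smaller than $c_0$ by choosing $g_1$ close enough to $g_0$ in $\M_0$.

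For the error terms, set $f := f_1 - f_0$, so that $df = v_1 - v_0$. By Lemma \ref{reallytame}, the map $g_1 \mapsto W^{-1} f$ from $\M_0$ to $\E^{\infty,\alpha}(M)$ is smooth tame and vanishes at $g_1 = g_0$, hence $\|W^{-1} f\|_{\E^{l,\alpha}} \to 0$ as $g_1 \to g_0$ for every $l \ge 2$. I would then invoke Lemma \ref{Holderconvergeonforbiddenband} on the fixed annulus $U_{\epsilon_0,\epsilon_1}$, where $W$ is smooth and bounded away from $0$, to upgrade this to $\|f\|_{C^{1,\alpha}(U_{\epsilon_0,\epsilon_1})} \to 0$; in particular both $\|f_1 - f_0\|_{C^0(U_{\epsilon_0,\epsilon_1})}$ and $\|v_1 - v_0\|_{C^0(U_{\epsilon_0,\epsilon_1})}$ tend to $0$. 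Since $|d\chi|_{g_0} \le 1/(\epsilon_1 - \epsilon_0)$ is a fixed constant, the expansion above yields
\begin{align*}
|v_2|(x) \;\ge\; |v_1|(x) - |v_0 - v_1|(x) - |d\chi|(x)\,|f_0 - f_1|(x) \;\ge\; c_0 - o(1)
\end{align*}
uniformly on the cambium as $g_1 \to g_0$. Combining with Lemma \ref{transversecloseenough}, I would refine the choice of $g_1$ so that it lies in the dense set producing transverse $v_1$ \emph{and} is close enough to $g_0$ that the $o(1)$ above is less than $c_0/2$, giving $|v_2| \ge c_0/2 > 0$ on the cambium.

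The main, if mild, obstacle is coordinating three quantitative inputs on the \emph{same} fixed pair $(\epsilon_0,\epsilon_1)$: the uniform lower bound on $|v_1|$ from Lemma \ref{gluingband}, the Fr\'echet continuity of $g_1 \mapsto W^{-1} f$ from Lemma \ref{reallytame}, and the passage from the global $\E^{l,\alpha}$ norm to the interior $C^0$ norm via Lemma \ref{Holderconvergeonforbiddenband}. Fortunately, Lemma \ref{gluingband} is stated precisely so that the cambium and the constant $c_0$ are uniform over a neighborhood of $g_0$ in $\M_0$, which is exactly the compatibility one needs to execute the above estimate.
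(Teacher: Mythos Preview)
Your proposal is correct and follows essentially the same approach as the paper: split into the core $U_{0,\epsilon_0}$ (where $v_2=v_0$ and Lemma~\ref{gluingband} applies directly) and the cambium $U_{\epsilon_0,\epsilon_1}$ (where the triangle inequality together with Lemmas~\ref{reallytame} and~\ref{Holderconvergeonforbiddenband} forces the error terms $|v_0-v_1|$ and $|f_0-f_1|$ small). The only cosmetic difference is that on the cambium you expand $v_2$ around $v_1$ and use $|v_1|\ge c_0$, whereas the paper expands around $v_0$ and uses $|v_0|\ge c_0$; since Lemma~\ref{forbiddenband} gives the uniform lower bound for \emph{all} $g\in\mathcal{U}_1$, both versions are equally valid.
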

\begin{proof}
    On $U$, we have $v_2=v_0+(1-\chi)(v_1-v_0)+(f_0-f_1)d\chi$. When $x\in U_{\epsilon_0,\epsilon_1}$, \begin{align}\label{v3lower}
        |v_2(x)|\ge& |v_0(x)|-|v_0-v_1|(x)-\frac{1}{\epsilon_1-\epsilon_0}|f_0-f_1|(x).
    \end{align}
    Recall that $v_0-v_1=df$ and $f_0-f_1=f$ for some $2$-valued function $f\in W\E^{\infty,\alpha}$, as discussed before Lemma \ref{tameestforform}. Since $|v_0(x)|\ge c_0$ on $U_{\epsilon_0,\epsilon_1}$, by Lemma \ref{reallytame} and \ref{Holderconvergeonforbiddenband}, if $g_1$ is sufficiently close to $g_0$, then $|df|$ and $|f|$ can be made arbitrarily small, so that the right-hand side of \eqref{v3lower} remains positive.

    On the other hand, when $0<\dist_{g_0}(x,\Zl_0)\le \epsilon_0$, by definition of $\chi$, we have $v_2=v_0$. By Lemma \ref{gluingband}, $v_2$ has no zeros in this region.
\end{proof}

\begin{large}
    \textbf{Step 3.}
\end{large}
Next, we glue the $(n-1)$-forms $\star_{g_0}v_0$ and $\star_{g_1}v_1$ together, along the cambium $U_{\epsilon_0,\epsilon_1}$, to obtain a candidate for $\star_{g_2} v_2$, where $g_2$ is a new metric to be determined. 

Consider the tubular neighborhood $U$ of $\Zl_0$. There is a $2$-fold branched covering $p:\hat{U}\to U$, branched along $\Zl_0$. The pullbacks $p^\ast\star_{g_i}v_i$ are well-defined closed $(n-1)$-forms on $\hat{U}$. On the other hand, we have a homotopic equivalence $\hat{U}\simeq \Zl_0$, hence $H^{n-1}(\hat{U};\RR)\cong H^{n-1}(\Zl_0;\RR)\cong 0$. Consequently, the forms $p^\ast\star_{g_i}v_i=\star_{\hat{g}_i}\hat{v}_i$ are exact. Let $w_i$ be the $(n-2)$-form on $\hat{U}$, such that $\star_{\hat{g}_i}\hat{v}_i=dw_i$ for $i=0,1$. 

For some estimates required in Lemma~\ref{wedgepositiveonbandlem}, we need an explicit expression for $w_i$ on the region $U_{\epsilon_0,\epsilon_1}$. Thus, we proceed to construct $w_i$ directly from $\star_{\hat{g}_i}\hat{v}_i$ and give its local coordinate representation. 

We choose local coordinates as follows. The region $\hat{U}_{\epsilon_0,\epsilon_1}$ is diffeomorphic to $SN^{1/2}\times(\epsilon_0,\epsilon_1)$, where $SN^{1/2}$ is the unit circle bundle of the normal bundle $N^{1/2}$ of $\Zl_0$ in $\hat{U}$. There is a natural involution $\tau$ on $\hat{U}_{\epsilon_0,\epsilon_1}$, which induces an orientation-preserving involution on $SN^{1/2}$. 

Let $x=(x_s)_{1\le s\le n-2}$ be local coordinates on $\Zl_0$, $\theta$ be the coordinate of the fiber of $SN^{1/2}\to\Zl_0$, and $r$ be the radial coordinate on $(\epsilon_0,\epsilon_1)$. Then $(x,\theta,r)$ forms a system of local coordinates on $U_{\epsilon_0,\epsilon_1}$.

Recall that $\hat{v}_i=d\hat{f}_i$, where $\hat{f}_i$ is the lift of $f_i$ to $\hat{U}$. In the chosen coordinates, we have $\hat{v}_i(x,\theta,r)=\frac{\partial \hat{f}_i}{\partial x_s}(x,\theta,r)dx_s+\frac{\partial \hat{f}_i}{\partial \theta}(x,\theta,r)d\theta+\frac{\partial \hat{f}_i}{\partial r}(x,\theta,r)dr$.

Then 
\begin{align}
    \star_{\hat{g}_i}\hat{v}_i(x,\theta,r)=&\frac{\partial \hat{f}_i}{\partial x_s}(x,\theta,r)\star_{\hat{g}_i}dx_s+\frac{\partial \hat{f}_i}{\partial \theta}(x,\theta,r)\star_{\hat{g}_i}d\theta+\frac{\partial \hat{f}_i}{\partial r}(x,\theta,r)\star_{\hat{g}_i}dr
\end{align}
Recall that $\star_{\hat{g}_i}dx_s=(-1)^{j-1}\hat{g}_i^{sj}\widehat{dx_j}\wedge d\theta\wedge dr+(-1)^{n-2}\hat{g}_i^{s\theta}dx\wedge dr+(-1)^{n-1}\hat{g}_i^{sr}dx\wedge d\theta$, where $\widehat{dx_j}=dx_1\wedge\cdots\wedge dx_{j-1}\wedge dx_{j+1}\wedge\cdots\wedge dx_{n-1}$ is the $(n-2)$-form obtained by omitting $dx_j$, and $\hat{g}_i^{sj}=\langle dx_s,dx_j\rangle_{\hat{g}_i},\hat{g}_i^{s\theta}=\langle dx_s,d\theta\rangle_{\hat{g}_i}$ and $\hat{g}_i^{sr}=\langle dx_s,dr\rangle_{\hat{g}_i}$. Similar expressions hold for $\star_{\hat{g}_i} d\theta$ and $\star_{\hat{g}_i} dr$.

Therefore, we may further write 
\begin{align}
    \star_{\hat{g}_i}\hat{v}_i(x,\theta,r)=&F^j_{i,1}(x,\theta,r)\widehat{dx_j}\wedge d\theta\wedge dr+F_{i,2}(x,\theta,r)dx\wedge dr+F_{i,3}(x,\theta,r)dx\wedge d\theta, 
\end{align}
where $F^j_{i,1},F_{i,2},F_{i,3}$ are smooth functions depending on derivatives of $\hat{f}_i$ and metric $\hat{g}_i$. For example, 
\begin{align*}
    F^j_{i,1}(x,\theta,r)=&(-1)^{j-1}\frac{\partial \hat{f}_i}{\partial x_s}\hat{g}_i^{sj}+(-1)^{n-2}\frac{\partial \hat{f}_i}{\partial \theta}\hat{g}_i^{s\theta}+(-1)^{n-1}\frac{\partial \hat{f}_i}{\partial r}\hat{g}_i^{sr}.
\end{align*}
The remaining coefficients $F_{i,2}$ and $F_{i,3}$ have analogous expressions. Since $\hat{f}_i(x,\theta+\pi,r)=-\hat{f}_i(x,\theta,r)$, we have $F^j_{i,1}(x,\theta+\pi,r)=-F^j_{i,1}(x,\theta,r)$. This anti-symmetry under $\theta \mapsto \theta + \pi$
also holds for $F_{i,2}$ and $F_{i,3}$.

Let $\epsilon_2=(\epsilon_0+\epsilon_1)/2$. Let $p:U_{\epsilon_0,\epsilon_1}\to SN^{1/2}$ be the projection and $\iota:SN^{1/2}\to U_{\epsilon_0,\epsilon_1}:(x,\theta)\mapsto (x,\theta,\epsilon_2)$ be the inclusion. As in the classical Poincar\'e lemma
in de Rham cohomology (see, e.g.,~\cite{BottTu}), we have the following:
\begin{align}\label{poincare}
    &\star_{\hat{g}_i}\hat{v}_i(x,\theta,r)-p^\ast\iota^\ast\star_{\hat{g}_i}\hat{v}_i(x,\theta,r)\\
    =&d\left(\left( \int_{\epsilon_2}^rF^j_{i,1}(x,\theta,u)du\right)\widehat{dx_j}\wedge d\theta +\left( \int_{\epsilon_2}^rF_{i,2}(x,\theta,u)du\right)dx\right).\nonumber
\end{align}
 For the second term of the left-hand side of \eqref{poincare}, note that $p^\ast\iota^\ast\star_{\hat{g}_i}\hat{v}_i(x,\theta,r)=F_{i,3}(x,\theta,\epsilon_2)dx\wedge d\theta$. Therefore, integrating over the fiber, we obtain 
\begin{align*}
    p^\ast\iota^\ast\star_{\hat{g}_i}\hat{v}_i(x,\theta,r)=d\left(\frac{1}{2}\left(\int_{\theta+\pi}^{\theta}F_{i,3}(x,\varphi,\epsilon_2)d\varphi \right)dx\right).
\end{align*}
 In conclusion, in the coordinates $(x,\theta,r)$, the $(n-2)$-form $w_i$ has an expression:
 \begin{align}\label{w_i}
     w_i(x,\theta,r)=&\left( \int_{\epsilon_2}^rF^j_{i,1}(x,\theta,u)du\right)\widehat{dx_j}\wedge d\theta +\left( \int_{\epsilon_2}^rF_{i,2}(x,\theta,u)du\right)dx\\
     &+\frac{1}{2}\left(\int_{\theta+\pi}^{\theta}F_{i,3}(x,\varphi,\epsilon_2)d\varphi \right)dx.\nonumber
 \end{align}

Next, we define a new $(n-2)$-form $w_2=(\chi\circ p) w_0+(1-\chi\circ p)w_1$ and consider its exterior derivative $dw_2$. It turns out that $p_\ast dw_2$ is a $2$-valued $(n-1)$-form on $U$ that coincides with $\star_{g_1}v_1$ on $U_{\epsilon_0}$ and with $\star_{g_0}v_0$ outside $U_{\epsilon_1}$. 

Define a $2$-valued $(n-1)$-form $\omega$ on $M$ with singular locus $\Zl_0$ by:
\begin{align}
    \omega(x)=\begin{cases}
        p_\ast dw_2(x),& x\in U_{\epsilon_1},\\
        \star_{g_1}v_1(x),&x\notin U_{\epsilon_1}.
        \end{cases}
\end{align}
Although $v_2$ and $\omega$ are $2$-valued, the $n$-form $v_2\wedge\omega$ is well-defined.
\begin{lemma}\label{wedgepositiveonbandlem}
We can choose $\delta_1$ sufficiently small, such that on the region $U_{0,\epsilon_1}$, we have $v_2\wedge \omega>0$.     
\end{lemma}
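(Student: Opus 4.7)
The plan is to split $U_{0,\epsilon_1}$ as the union of the inner zone $U_{0,\epsilon_0}$ and the cambium $U_{\epsilon_0,\epsilon_1}$ and argue each piece separately. On the inner zone, the cutoff satisfies $\chi\equiv 1$, so by construction $v_2 = v_0$ and $\omega = \star_{g_0} v_0$; hence $v_2\wedge\omega = |v_0|_{g_0}^2\,\vol_{g_0}$, which is strictly positive because $v_0$ is nonvanishing on all of $U_{0,\epsilon_1}$ by Lemma~\ref{gluingband}. The entire difficulty therefore lies on the cambium.

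On $U_{\epsilon_0,\epsilon_1}$ I would write $v_2$ and $\omega$ as perturbations of $v_0$ and $\star_{g_0}v_0$. From $v_i = df_i$ and the definitions,
\begin{align*}
v_2 &= v_0 + (1-\chi)(v_1 - v_0) + (f_0 - f_1)\,d\chi,\\
\omega &= \star_{g_0} v_0 + (1-\chi)\bigl(\star_{g_1} v_1 - \star_{g_0} v_0\bigr) + p_\ast\bigl(d(\chi\circ p)\wedge (w_0 - w_1)\bigr).
\end{align*}
Multiplying out yields $v_2\wedge\omega = |v_0|_{g_0}^2\,\vol_{g_0} + R$, where every summand of $R$ contains at least one factor from the list of difference quantities
\[
f_1 - f_0,\qquad v_1 - v_0 = df,\qquad \star_{g_1} v_1 - \star_{g_0} v_0,\qquad w_1 - w_0,
\]
while the remaining factors ($\chi$, $d\chi$, $v_0$, $\star_{g_0} v_0$) are bounded on the cambium uniformly in $g_1$ by constants depending only on $g_0$, $v_0$, and the fixed $\epsilon_0,\epsilon_1$. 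By Lemma~\ref{gluingband} one has $|v_0|_{g_0}^2 \geq c_0^2 > 0$ on the cambium, so the claim reduces to forcing $|R|$ pointwise below $\tfrac12 c_0^2\,|\vol_{g_0}|$ by shrinking $\delta_1$.

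To control the four differences I would combine Lemmas~\ref{reallytame} and~\ref{Holderconvergeonforbiddenband}. Tameness of $g_1 \mapsto W^{-1}f$ into $\E^{\infty,\alpha}(M)$ shows that small $d(g_1 - g_0, 0)$ forces $\|W^{-1}f\|_{\E^{l,\alpha}}$ to be small; since $W$ is bounded away from zero on $U_{\epsilon_0,\epsilon_1}$, Lemma~\ref{Holderconvergeonforbiddenband} then yields $\|f\|_{C^{l,\alpha}(U_{\epsilon_0,\epsilon_1})} \to 0$, handling both $f_1 - f_0$ and $v_1 - v_0 = df$. The Hodge-star difference is bounded pointwise by $C\|g_1 - g_0\|_{C^1}\,|v_0| + C\,|v_1 - v_0|$, both already small. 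For $w_1 - w_0$ I would invoke the coordinate formula~\eqref{w_i}: the coefficients $F^j_{i,1}, F_{i,2}, F_{i,3}$ depend polynomially on the first partials of $\hat f_i$ and on the entries of $\hat g_i^{-1}$ restricted to the cambium, so their differences, and hence $|w_1 - w_0|_{C^0}$, are controlled by $\|f\|_{C^1(U_{\epsilon_0,\epsilon_1})}$ together with $\|g_1 - g_0\|_{C^1(U_{\epsilon_0,\epsilon_1})}$.

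The main subtlety I anticipate is precisely this $w_1 - w_0$ estimate, because $w_i$ is not an intrinsic object but is built from the integral representation~\eqref{w_i}; one must verify that neither the radial integration over $[\epsilon_2,r]$ nor the fiberwise integration in $\varphi$ contributes a constant that degenerates as $g_1$ varies. The crucial point is that $\epsilon_0, \epsilon_1, \epsilon_2$ were fixed once and for all, uniformly in $g_1$, by Lemma~\ref{gluingband}, so the domains of integration are independent of the perturbation. Once this bookkeeping is done, choosing $\delta_1$ so that all four difference quantities have $C^1$ norm on the cambium below a threshold determined by $c_0$, $\|v_0\|_{C^0}$, $\|\star_{g_0} v_0\|_{C^0}$, and $\|d\chi\|_\infty \leq (\epsilon_1 - \epsilon_0)^{-1}$ absorbs $R$ into the leading term and yields $v_2\wedge\omega > 0$ on all of $U_{0,\epsilon_1}$.
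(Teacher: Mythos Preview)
Your proposal is correct and follows essentially the same approach as the paper: split into the inner zone (where $\chi\equiv 1$ makes the identity trivial) and the cambium, expand $v_2\wedge\omega$ as $|v_0|_{g_0}^2\vol_{g_0}$ plus cross terms, and control each cross term using the smallness of $f$, $df$, $\star_{g_1}-\star_{g_0}$, and $w_0-w_1$, the last via the explicit integral formula~\eqref{w_i}. The paper carries out the same expansion term by term (its equation~\eqref{wedgepositive} lists six explicit summands and treats them in pairs), whereas you group them abstractly as a single remainder $R$; the ingredients and estimates are identical.
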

\begin{proof}
    When $0<\dist_{g_0}(x,\Zl_0)<\epsilon_0$, $\omega(x)=\star_{g_0}v_0(x)$, and hence $v_2\wedge\omega(x)>0$. 

    On the cambium $U_{\epsilon_0,\epsilon_1}$, we compute: 
    \begin{align}\label{wedgepositive}
        v_2\wedge\omega=&\left(v_0-(1-\chi)df+fd\chi\right)\wedge \\
        &\left(\star_{g_0}v_0+(1-\chi)(\star_{g_1}v_1-\star_{g_0}v_0)+d\chi\wedge p_\ast(w_0-w_1)\right)\nonumber\\
        =&|v_0|^2\mathrm{vol}_{g_0}-(1-\chi)df\wedge\star_{g_0}v_0-(1-\chi)^2df\wedge(\star_{g_1}v_1-\star_{g_0}v_0)\nonumber\\
        &-(1-\chi)df\wedge d\chi\wedge p_\ast(w_0-w_1)+fd\chi\wedge\star_{g_0}v_0\nonumber\\
        &+(1-\chi)fd\chi\wedge(\star_{g_1}v_1-\star_{g_0}v_0).\nonumber
    \end{align}
    Let's analyze the right-hand side of the last equality term by term.

    The first term is positive, and we shall show that all the remaining terms are sufficiently small to be dominated by it.

    For the second and third terms of the right-hand side of the final equality in \eqref{wedgepositive}, we have: 
    \begin{align}\label{wedgepositive1}
        &(1-\chi)df\wedge\star_{g_0}v_0+(1-\chi)^2df\wedge(\star_{g_1}v_1-\star_{g_0}v_0)\\
        =&(1-\chi)\left((df,v_0)_{g_0}-(1-\chi)|df|^2\right)\mathrm{vol}_{g_0}-(1-\chi)^2df\wedge(\star_{g_0}-\star_{g_1})v_1.\nonumber
    \end{align}
    Since $|df|$ can be made arbitrarily small as in Lemma \ref{nonvanishing1form}, we may assume that the first term of the right-hand side of \eqref{wedgepositive1} is dominated by $\frac{1}{12}|v_0|^2\mathrm{vol}_{g_0}$, i.e., \[\left|(1-\chi)\left((df,v_0)_{g_0}-(1-\chi)|df|^2\right)\mathrm{vol}_{g_0}/\frac{1}{12}|v_0|^2\mathrm{vol}_{g_0}\right|<1.\]
    On the other hand, since \[\|g_0-g_1\|_{C^{\ell,\alpha}}\le \frac{C_{\ell}2^{\ell}\delta_1}{1-C_{\ell}2^{\ell}\delta_1},\] we may may choose $\delta_1$ sufficiently small so that $\star_{g_0}-\star_{g_1}$ is pointwise small enough on $U_{\epsilon_0,\epsilon_1}$, ensuring that the second term of the right-hand side of \eqref{wedgepositive1} is dominated by $\frac{1}{12}|v_0|^2\mathrm{vol}_{g_0}$ either.

    A similar argument applies to the fifth and sixth terms of the right-hand side in the final equality of \eqref{wedgepositive}.

    Finally, we estimate the fourth term. It suffices to bound $|w_1-w_0|_{\hat{g}_0}$ on $\hat{U}_{\epsilon_0,\epsilon_1}= p^{-1}U_{\epsilon_0,\epsilon_1}$.

    Recall the local expression \eqref{w_i}, where the coefficients involves first-order derivatives of $\hat{f}_i$.
    For example, \begin{align*}
         \int_{\epsilon_2}^rF^j_{i,1}(x,\theta,u)du=\int_{\epsilon_2}^r\left((-1)^{j-1}\frac{\partial \hat{f}_i}{\partial x_s}\hat{g}_i^{sj}+(-1)^{n-2}\frac{\partial \hat{f}_i}{\partial \theta}\hat{g}_i^{s\theta}+(-1)^{n-1}\frac{\partial \hat{f}_i}{\partial r}\hat{g}_i^{sr}\right)du.
    \end{align*}
    Thus there is an upper bound \[\left|\int_{\epsilon_2}^rF^j_{1,1}(x,\theta,u)du-\int_{\epsilon_2}^rF^j_{2,1}(x,\theta,u)du\right|\le C,\]where $C$ is a constant depends on $\|f_0-f_1\|_{C^{1,\alpha}(U_{\epsilon_0,\epsilon_1})},\|g_0-g_1\|_{C^1},\epsilon_0$ and $\epsilon_1$.  A similar estimate holds for the other coefficients.
    
    It follows that $|w_1-w_0|_{\hat{g}_0}$ is uniformly bounded on $U_{\epsilon_0,\epsilon_1}$. Indeed, as $\delta_1$ becomes sufficiently small, the derivatives of $f=f_0-f_1$ on $U_{\epsilon_0,\epsilon_1}$ can be made arbitrarily small. And since the coefficients of $w_i$ depend continuously
    on the first derivatives of $\hat{f}_i$ and the metric $\hat{g}_i$, we conclude $|w_1-w_0|_{\hat{g}_1}$ tends to zero on $\hat{U}_{\epsilon_0,\epsilon_1}$ as $\delta_1\to 0$. 

    Therefore, since $|df|_{\hat{g}_0}$ and $|w_0 - w_1|_{\hat{g}_0}$ can both be made arbitrarily small,
    and $|d\chi|$ is bounded, we can assume that $(1-\chi)df\wedge d\chi\wedge p_\ast(w_0-w_1)$ is dominated by $\frac{1}{6}|v_0|^2\mathrm{vol}_{g_0}$. 

    In conclusion, $v_2\wedge\omega>\frac{1}{2}|v_0|^2\mathrm{vol}_{g_0}>0$ on $U_{\epsilon_0,\epsilon_1}$.
\end{proof}

\begin{large}
    \textbf{Step 4.}
\end{large}
We now glue the metrics $g_0$ and $g_1$, to obtain a new metric $g_2\in\M_0$, such that $\omega=\star_{g_2}v_2$. Since $d\omega=0$, it will follow that $v_2$ is harmonic with respect to $g_2$.

Moreover, the resulting metric $g_2$ can be made arbitrarily close to $g_0$, provided the metric $g_1$ used in the construction is sufficiently close to $g_0$ in the Fr\'echet topology.


Fix a point $x\in \hat{U}_{\epsilon_0,\epsilon_1}$, let's construct a metric on the vector space $T^\ast_x \hat{M}$. Such a metric determines,
and is determined by, an isomorphism $T^*_x \hat{M} \to \wedge^{n-1} T^*_x \hat{M}$ that is compatible with the Hodge star operator.

We have a vector $\hat{v}_2(x)\ne 0$ in $T^\ast_x \hat{M}$. Extend it to a basis $\{y_1=\hat{v}_2(x),y_2,\cdots,y_n\}$. There is also a nonzero vector $\hat{\omega}(x)\in \wedge^{n-1}T^\ast_x\hat{M}$. 

Using the dual basis $\widehat{y_i}=y_1\wedge\cdots\wedge y_{i-1}\wedge y_{i+1}\wedge\cdots y_n$ for $\wedge^{n-1}T^\ast_x\hat{M}$, we may write $\hat{\omega}(x)=\sum a^{1i}(x)\widehat{y_i}$. It turns out that if we extend $y_1,\cdots,y_n$ to a smooth local frame of cotangent bundle, then the coefficients $a^{1i}$ also extend locally as smooth functions.

Now define a linear isomorphism $L_x: T^\ast_x\hat{M}\to \wedge^{n-1} T^\ast_x\hat{M}$ by mapping $y_1$ to $\hat{\omega}(x)$, and for other vectors by
\begin{align*}
    L_x(y_i)= \left((\chi\circ p(x)) a_0^{ij}(x)+(1-\chi\circ p(x))a_1^{ij}(x)\right)\widehat{y_j},~~2\le i\le n,1\le j\le n,
\end{align*}
where the coefficients $a_s^{ij}(x)~(s=0,1)$ are the entries of the matrix $(\langle y_k,y_l \rangle_{\hat{g}_s(x)})$, which is the metric matrix of the metric induced by $\hat{g}_s~(s=0,1)$ on $T^\ast_x \hat{M}$, with respect to the basis $y_1,\cdots,y_n$. Here, $\chi$ is the cut-off function defined earlier.  

Next we define a symmetric bilinear form on $T^\ast_x\hat{M}$ by setting $\langle y_1,y_i\rangle=\langle y_i,y_1\rangle=a^{1i}(x)$ for $1\le i\le n$, and $\langle y_i,y_j\rangle=(\chi\circ p(x)) a_2^{ij}(x)+(1-\chi\circ p(x))a_1^{ij}(x)$ for $2\le i\le n,1\le j\le n$. 

This bilinear form extends smoothly to the cotangent bundle $T^\ast |_{\hat{U}_{\epsilon_0,\epsilon_1}}\hat{M}$. Moreover, near $\partial \hat{U}_{\epsilon_0}$, this bilinear form coincides with the metric on $T^\ast\hat{M}$ induced by $\hat{g}_0$, and near $\partial \hat{U}_{\epsilon_1}$, it coincides with the metric induced by $\hat{g}_1$.

We now show that this bilinear form is indeed a metric. Since $\chi g_0+(1-\chi)g_1$ defines a Riemannian metric on $U_{\epsilon_0,\epsilon_1}$, provided $\delta_1$ is sufficiently small, it suffices to show that the difference between the linear maps $L_p$ and $\left((\chi\circ p)\star_{\hat{g}_0}+(1-\chi\circ p)\star_{\hat{g}_1}\right)$, can be made uniformly and arbitrarily small for $x\in U_{\epsilon_0,\epsilon_1}$. It will then follow that the bilinear form defined by $L_p$ is positive definite.

For $2\le i\le n$, $L_p(y_i)=\left((\chi\circ p)\star_{\hat{g}_0}+(1-\chi\circ p)\star_{\hat{g}_1}\right)(y_i)$, hence we only need to analyze the action on the vector $y_1=\hat{v}_2(p)$. We compute:
\begin{align}\label{erroofmetric}
    &L_p(y_1)-\left((\chi\circ p)\star_{\hat{g}_0}+(1-\chi\circ p)\star_{\hat{g}_1}\right)(y_1)\\
    =&(\chi\circ p)\star_{\hat{g}_0}\hat{v}_1+(1-\chi\circ p)\star_{\hat{g}_1}\hat{v}_2+d(\chi\circ p)(w_0-w_1)\nonumber\\
     &-\left((\chi\circ p)\star_{\hat{g}_0}+(1-\chi\circ p)\star_{\hat{g}_1}\right)\left((\chi\circ p)\hat{v}_0+(1-\chi\circ p)\hat{v}_1+d(\chi\circ p)(\hat{f}_0-\hat{f}_1)\right)\nonumber\\
    =&-(1-\chi\circ p)\star_{\hat{g}_1}d\hat{f}+d(\chi\circ p)(w_0-w_1)\nonumber\\
    &+\left((\chi\circ p)\star_{\hat{g}_0}+(1-\chi\circ p)\star_{\hat{g}_1}\right)\left((1-\chi\circ p)d\hat{f}+d(\chi\circ p)\hat{f}\right)\nonumber,
\end{align}
where $d\hat{f}=\hat{v}_0-\hat{v}_1$. Each term of the right-hand side of the last equality of \eqref{erroofmetric} tends to zero as $\delta_1\to 0$, due to the tame estimates and the smallness of $|d\hat{f}|$, $|\hat{f}|$, and $|w_0 - w_1|$.

As a result, we have the following:
\begin{lemma}\label{newmetricsmall}
    When $\delta_1$ is sufficiently small, i.e. metrics $g_1$ is sufficiently close to $g_0$ in $\M_0$, there is a metric smooth Riemannian $g_2$ on $M$, such that $\star_{g_2}v_2=\omega$. Moreover, the distance $d(g_2-g_0,0)$ can be made arbitrarily small by choosing $\delta_1$ appropriately.
\end{lemma}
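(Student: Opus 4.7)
The plan is to assemble $\hat g_2$ on $\hat M$ from three pieces and then descend to $g_2$ on $M$. On $\hat M \setminus \hat U_{\epsilon_1}$ I set $\hat g_2 := \hat g_1$; on $\hat U_{\epsilon_0}$ I set $\hat g_2 := \hat g_0$; and on the cambium $\hat U_{\epsilon_0,\epsilon_1}$ I take $\hat g_2$ to be the symmetric bilinear form whose induced Hodge star is the linear map $L_x$ built pointwise in the discussion preceding the lemma. By the explicit definition of $L_x$, this bilinear form coincides with $\hat g_0$ wherever $\chi \circ p \equiv 1$ and with $\hat g_1$ wherever $\chi \circ p \equiv 0$, so the three pieces glue into a smooth symmetric $(0,2)$-tensor on $\hat M$.

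I would then verify three structural properties. \emph{Positive definiteness}: identity \eqref{erroofmetric} exhibits $L_x$ as a perturbation of $(\chi\circ p)\star_{\hat g_0} + (1-\chi\circ p)\star_{\hat g_1}$ by a quantity controlled by $|d\hat f|$, $|\hat f|$ and $|w_0 - w_1|$ on $\hat U_{\epsilon_0,\epsilon_1}$, all of which tend to zero as $\delta_1 \to 0$ by Lemmas \ref{reallytame} and \ref{Holderconvergeonforbiddenband}; the unperturbed operator is the Hodge star of the convex combination $(\chi\circ p)\hat g_0 + (1-\chi\circ p)\hat g_1$, itself a Riemannian metric for $\delta_1$ small, so a sufficiently small perturbation stays positive definite. \emph{$\tau$-invariance}: $\hat v_2$ and $\hat\omega$ are both odd under $\tau$, while $\chi\circ p$, $\hat g_0$ and $\hat g_1$ are all $\tau$-invariant, so the defining relation $\star_{\hat g_2}\hat v_2 = \hat\omega$ forces $\hat g_2$ to be $\tau$-invariant, hence to descend to a smooth metric $g_2$ on $M$. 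Since $g_2 = g_0$ throughout the tubular neighborhood $U_{\epsilon_0}$ of $\Zl_0$, the metric $g_2$ induces the same normal structure on $\Zl_0$ as $g_0$, so $g_2 \in \mathcal M_0$; and $\star_{g_2}v_2 = \omega$ holds on each piece by construction.

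To bound $d(g_2 - g_0, 0)$, note that only the cambium contributes, as $g_2 = g_0$ on $U_{\epsilon_0}$ and outside $U_{\epsilon_1}$ the difference $g_1 - g_0$ is already controlled by $\delta_1$. On the cambium I split $g_2 - g_0$ into the convex-combination term $(1-\chi\circ p)(g_1 - g_0)$, whose $C^{\ell,\alpha}$ norm is bounded by $\|g_1 - g_0\|_{C^{\ell,\alpha}}$ and hence by $\delta_1$ via the discussion after Definition \ref{defoftame}, plus the pointwise correction in \eqref{erroofmetric}. The correction is a polynomial expression in the first derivatives of $\hat f = \hat f_0 - \hat f_1$, the values of $\hat f$, and the forms $w_0 - w_1$ on the fixed compact region $\hat U_{\epsilon_0,\epsilon_1}$, and Lemmas \ref{tameestforform} and \ref{reallytame} bound all of these in every $C^{\ell,\alpha}$ norm by powers of $\|g_1 - g_0\|_{l+k}$. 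Summing the resulting estimates against the weights $2^{-i}$ in the definition of $d$ yields $d(g_2 - g_0, 0) \to 0$ as $\delta_1 \to 0$.

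The main obstacle I anticipate is the smooth, $\tau$-equivariant globalization of the pointwise construction of $L_x$, which a priori depends on the arbitrary basis $\{y_1 = \hat v_2(x), y_2, \dots, y_n\}$ and on locally defined coefficients $a^{ij}_s$ and $a^{1i}$. The remedy is to fix a smooth local frame over $\hat U_{\epsilon_0,\epsilon_1}$ produced by Gram--Schmidt with respect to $\hat g_0$ starting from the $\tau$-odd vector field $\hat v_2$, which is non-vanishing on the cambium by Lemma \ref{nonvanishing1form}; in this frame the matrix entries depend intrinsically on $(\hat g_0, \hat g_1, \hat\omega)$, vary smoothly in $x$, and inherit the appropriate $\tau$-equivariance, so $L_x$ descends correctly. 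Once this point is handled, the positivity, compatibility and smallness estimates above combine to give the lemma.
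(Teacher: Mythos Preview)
Your proposal is correct and follows essentially the same approach as the paper's Step~4 construction preceding the lemma: assemble $\hat g_2$ from $\hat g_0$, $\hat g_1$, and the bilinear form produced by $L_x$ on the cambium, invoke~\eqref{erroofmetric} for positive definiteness, and descend. Two minor imprecisions worth noting: (i) the operator $(\chi\circ p)\star_{\hat g_0} + (1-\chi\circ p)\star_{\hat g_1}$ is not literally the Hodge star of the convex-combination metric (the Hodge star is not linear in $g$); the positive-definiteness argument still goes through because the bilinear form built from $L_x$ is a small perturbation of the genuinely positive-definite matrix $\chi(a_0^{ij}) + (1-\chi)(a_1^{ij})$, which is the point the paper is making; (ii) the single relation $\star_{\hat g_2}\hat v_2 = \hat\omega$ does not by itself force $\tau$-invariance, since it underdetermines $\hat g_2$ --- but your later frame-based remedy via Gram--Schmidt starting from the $\tau$-odd field $\hat v_2$ is exactly what is needed and resolves this.
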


Finally, we summarize the above four steps as follows:
\begin{theorem}\label{normalfixdense}
    Given a metric $g_0$, a singular locus $\Zl_0$ and cohomology class $[h]$, such that $v_0=v_0(g_0,\Zl_0,[h])$ is a nondegenerate $\ZT$ harmonic $1$-form. Then for any neighborhood $\mathcal{U}\subset\M_0$ of $g_0$, there always exists another metric $g\in\mathcal{U}$, such that the $2$-valued harmonic $1$-form $v=v(g,\Zl_0,[h])$ is a transverse and nondegenerate $\ZT$ harmonic $1$-form.
\end{theorem}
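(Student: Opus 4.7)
The plan is to show that the pair $(g_2, v_2)$ produced by Steps 1--4 already realizes the conclusion of the theorem; the only remaining work is to identify $v_2$ with the canonical form $v(g_2, \Zl_0, [h])$ and to verify the two properties (transversality and nondegeneracy).

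First I would argue that $g_2 \in \mathcal{U} \subset \M_0$. The grafting in Step 4 modifies the metric only on the cambium $U_{\epsilon_0,\epsilon_1}$, which is disjoint from $\Zl_0$, so $g_2 = g_0$ on a neighborhood of $\Zl_0$; in particular $g_2$ induces the same normal structure on $\Zl_0$ as $g_0$, hence lies in $\M_0$. Lemma \ref{newmetricsmall} allows $d(g_2 - g_0, 0)$ to be made arbitrarily small by shrinking $\delta_1$, so after a further shrinking of $\delta_1$ I can force $g_2 \in \mathcal{U}$.

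Next I would verify that $v_2 = v(g_2, \Zl_0, [h])$. The form $v_2$ is closed everywhere on $M \setminus \Zl_0$: it is locally exact on $U_{\epsilon_1}$ by construction, and equals $v_1$ outside. By Lemma \ref{newmetricsmall}, $\omega = \star_{g_2} v_2$, and $\omega$ is closed since it equals $\star_{g_1} v_1$ outside $U_{\epsilon_1}$ and the pushforward of an exact $(n-1)$-form on the branched cover over $U_{\epsilon_1}$, compatibly on the overlap. Hence $v_2$ is harmonic with respect to $g_2$ on $M \setminus \Zl_0$. For the cohomology class, on $\hat{U}_{\epsilon_1}$ one has $\hat{v}_2 - \hat{v}_1 = d(\chi \hat f)$ with $\hat f = \hat f_0 - \hat f_1$, while $\hat{v}_2 = \hat{v}_1$ elsewhere, so globally $\hat{v}_2 - \hat{v}_1 = d(\tilde\chi \hat f)$ on $\hat M \setminus \Zl_0$ for $\tilde\chi$ the extension of $\chi$ by zero; therefore $[\hat v_2] = [\hat v_1] = [h]$ in $H^1_-(\hat M; \RR)$, and uniqueness of odd $L^2$ harmonic representatives (Section \ref{L2harm1form}) identifies $v_2$ with $v(g_2, \Zl_0, [h])$.

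Finally I would check the two defining properties. For nondegeneracy: $\chi \equiv 1$ on $U_{\epsilon_0}$, so $v_2 = v_0$ there, and the expansion \eqref{expansion} of $v_2$ near $\Zl_0$ inherits the vanishing $A$-term and the nonvanishing $B$-term from $v_0$. For transversality: by Lemma \ref{nonvanishing1form}, $v_2$ has no ordinary zero in $U_{0,\epsilon_1}$, so every ordinary zero of $v_2$ lies outside $U_{\epsilon_1}$, where $v_2 = v_1$; since $v_1$ is transverse by the choice of $g_1$ in Step 1 via Proposition \ref{singulartrans}, each such zero is of Morse type.

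The main obstacle is not this final assembly but the preceding construction: one must fix $\delta_1$ small enough that the pointwise smallness of $|df|$, $|f|$ and $|w_0 - w_1|$ on the cambium --- controlled through the smooth tameness of Lemma \ref{reallytame} and the Schauder-type estimate of Lemma \ref{Holderconvergeonforbiddenband} --- simultaneously sustains Lemma \ref{nonvanishing1form}, Lemma \ref{wedgepositiveonbandlem}, and the positive definiteness of the bilinear form built in Step 4 that underlies Lemma \ref{newmetricsmall}. Once such a $\delta_1$ is selected, Steps 1--4 combine to produce $(g_2, v_2)$ with all required properties, completing the argument.
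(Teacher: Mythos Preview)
Your proposal is correct and mirrors the paper's approach exactly: the paper states Theorem~\ref{normalfixdense} simply as a summary of Steps~1--4, and you have made explicit the verifications (harmonicity, cohomology class, nondegeneracy, transversality) that are implicit there. One small inaccuracy worth fixing: $g_2$ differs from $g_0$ not only on the cambium but also outside $U_{\epsilon_1}$, where $g_2 = g_1$; your conclusion that $g_2 = g_0$ on a neighborhood of $\Zl_0$ still holds, since on $U_{\epsilon_0}$ one has $v_2 = v_0$ and $\omega = \star_{g_0} v_0$, forcing $g_2 = g_0$ there.
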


Combing Theorem \ref{normalfixdense} with Donaldson's deformation theorem, we obtain our main result:
\begin{theorem}\label{perturbation}
    In a neighborhood $\mathcal{U}$ of $g_0$ in $\M$, there exists a dense open set $\mathcal{U}'\subset\mathcal{U}$, such that for any $g\in\mathcal{U}'$, $v=v(g,\underline{\Zl}(g),[h])$ is a transverse and nondegenerate $\ZT$ harmonic $1$-form.
\end{theorem}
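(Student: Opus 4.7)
The plan is to combine Theorem \ref{normalfixdense}—which gives density of transverse nondegenerate forms in the fixed-singular-locus slice $\M_0$—with Donaldson's deformation theorem \ref{deformation} and the tame diffeomorphism family $\Psi$ of Lemma \ref{tamenessofDiff}, so as to transfer the density to the full Fr\'echet manifold $\M$. Openness of $\mathcal{U}'$ must be argued separately, since Donaldson's theorem ensures only nondegeneracy, not transversality, of the deformed form.

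For density, I begin with an arbitrary $g \in \mathcal{U}$. Donaldson's theorem produces a unique $\underline{\Zl}(g)$ such that $v:=v(g,\underline{\Zl}(g),[h])$ is nondegenerate. Setting $g^\sharp := \Psi(g,\underline{\Zl}(g))^{\ast} g$, I obtain a smooth metric in $\M_0$ close to $g_0$ (by tameness of $\Psi$), and the pulled back form $\Psi^{\ast} v = v(g^\sharp,\Zl_0,[h])$ is again nondegenerate. Theorem \ref{normalfixdense} then yields $\tilde g^\sharp \in \M_0$ arbitrarily close to $g^\sharp$ for which $v(\tilde g^\sharp,\Zl_0,[h])$ is both transverse and nondegenerate. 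Pushing forward by $\Psi(g,\underline{\Zl}(g))^{-1}$ produces a smooth metric $\tilde g$ on $M$ close to $g$, whose corresponding $\ZT$-harmonic $1$-form has singular locus $\underline{\Zl}(g)$; uniqueness in Donaldson's theorem then forces $\underline{\Zl}(\tilde g) = \underline{\Zl}(g)$, and transversality and nondegeneracy transfer across the diffeomorphism. Hence $\tilde g \in \mathcal{U}'$, giving density.

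For openness, suppose $g_1 \in \mathcal{U}'$. Lemma \ref{forbiddenband}, transported via $\Psi$ to the singular locus $\underline{\Zl}(g_1)$, yields a neighborhood $\mathcal{V}_1$ of $g_1$ in $\M$ and radii $\epsilon_1 > \epsilon_0 > 0$ such that $v(g,\underline{\Zl}(g),[h])$ is nonvanishing on the tubular neighborhood $U_{\epsilon_1}(\underline{\Zl}(g))$ for every $g \in \mathcal{V}_1$. On the compact complement $M\setminus U_{\epsilon_0}$, the ordinary zeros of $v_1 := v(g_1,\underline{\Zl}(g_1),[h])$ form a finite set of Morse-type critical points of the local $2$-valued primitive. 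The tame estimates of Lemmas \ref{tameestforform} and \ref{Holderconvergeonforbiddenband}, combined with the smoothness and tameness of $\underline{\Zl}$, imply that $g \mapsto v(g,\underline{\Zl}(g),[h])$ is continuous into $C^{2,\alpha}$ on the compact region $M\setminus U_{\epsilon_0}$. The classical implicit function theorem for non-degenerate critical points then shows that these Morse zeros persist, stay Morse, and no new zeros appear, for $g$ sufficiently close to $g_1$; combined with nonvanishing inside $U_{\epsilon_1}$ this proves $\mathcal{V}_1 \subset \mathcal{U}'$.

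The main obstacle is the openness step. Nondegeneracy near the singular locus is inherited from Donaldson's theorem, but transversality is a global property involving the full form. The proof therefore needs to coordinate two distinct regimes: the behavior close to $\underline{\Zl}(g)$, controlled by the forbidden-band estimate of Lemma \ref{forbiddenband} applied uniformly in $g$, and the behavior on the compact region away from $\underline{\Zl}(g)$, where Schauder-type convergence and the implicit function theorem for Morse zeros apply. The tame estimates of Section \ref{3.3} are what make this coordination possible, since they guarantee $C^{k,\alpha}$-control of $v(g,\underline{\Zl}(g),[h])$ away from $\underline{\Zl}(g)$ in terms of $g$ itself.
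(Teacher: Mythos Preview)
Your proposal is correct and follows essentially the same route as the paper: density via Theorem~\ref{normalfixdense} applied at the pair $(g,\underline{\Zl}(g))$ together with uniqueness in Theorem~\ref{deformation}, and openness via a uniform nonvanishing band near the singular locus combined with stability of Morse zeros on the compact complement. The only cosmetic difference is in the openness step: the paper makes the comparison of forms with varying singular loci explicit by pulling back through $\Psi$ to define the tame map $\underline{V}(g)=\Psi(g,\underline{\Zl}(g))^\ast v(g,\underline{\Zl}(g),[h])$ as a section over the \emph{fixed} compact region $\hat{M}\setminus\hat{U}_{\epsilon_1}$ (and then invokes openness of transversality there), whereas you leave this identification via $\Psi$ implicit when asserting $C^{2,\alpha}$-continuity of $g\mapsto v(g,\underline{\Zl}(g),[h])$ on $M\setminus U_{\epsilon_0}$.
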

\begin{proof}
    By Theorem \ref{deformation}, we have neighborhoods $\mathcal{U}\subset\M$ of $g_0$ and $\mathcal{V}\subset\mathcal{S}$ of $\Zl_0$, such that for any $g\in\mathcal{U}$, $v=v(g,\underline{\Zl}(g),[h])$ is a nondegenerate $\ZT$ harmonic $1$-form. 

    Next, by Theorem \ref{normalfixdense}, we have a sequence of metrics $\{g_m\}_m$, such that $g_m\to g$ and $v_m(g_m,\underline{\Zl}(g),[h])$ is transeverse, nondegenerate and with discrete ordinary zero set. Note that the singular locus $\underline{\Zl}(g)$ is fixed. This proves that the subset $\mathcal{U}'$ is dense.
    
    We show that $\mathcal{U}'$ is open. Let $g_1\in \mathcal{U}'$, and let $\Zl_1=\underline{\Zl}(g_1)$. $v_1=v(g_1,\Zl_1,[h])$ is transverse and nondegenerate. 

    We have neighborhoods $\mathcal{U}_1\times \mathcal{V}_1$ of $(g_1,\Zl_1)$, and a tame map $\Psi:\mathcal{U}_1\times\mathcal{V}_1\to \mathrm{Diff}(M)$ as in Lemma \ref{tamenessofDiff}, such that for any $(g,\Zl)\in \mathcal{U}_1\times \mathcal{V}_1$, $\Psi(g,\Zl)$ maps $\Zl$ together with its normal structure to $\Zl_1$ and its normal structure induced by $g_1$. We may also assume that the map $\underline{\Zl}$ sends $\mathcal{U}_1$ into $\mathcal{V}_1$.
    
    By tameness of $\Psi$ and $\underline{\Zl}$, and by Lemma \ref{gluingband}, there exists a constant $\epsilon_1>0$, such that for any $g\in\mathcal{U}_1$, the form $v_g=\Psi(g,\underline{\Zl}(g))^\ast v(g,\underline{\Zl}(g),[h])$ is nonvanishing on $U_{0,\epsilon_1}$. Furthermore, $|v_g|_{g_1}$ has a positive uniform lower bound lower bound on $U_{\epsilon_1/2,\epsilon_1}$. 

    Now we have a tame map $\underline{V}:\mathcal{U}_1\to C^{\infty}(T^\ast(\hat{M}\setminus \hat{U}_{\epsilon_1})):g\mapsto v_g$. $\underline{V}(g)$ is nonvanishing on the boundary of $\hat{M}\setminus \hat{U}_{\epsilon_1}$ for any $g\in\mathcal{U}_1$, and $\underline{V}(g_1)$ is transverse with the zero section of $T^\ast(\hat{M}\setminus \hat{U}_{\epsilon_1})$. Since transversailty is an open condition, we know that there is a neighborhood $\mathcal{U}_2\subset\mathcal{U}_1$ of $g_1$, such that $\underline{V}(g)$ is transverse to the zero section for any $g\in\mathcal{U}_2$.

    Therefore, for every $g \in \mathcal{U}_2$, the form $v(g, \underline{\Zl}(g), [h])$ is both transverse and nondegenerate, with discrete ordinary zero set. Hence, $\mathcal{U}_2 \subset \mathcal{U}'$, and this completes the proof. 
\end{proof}

Theorem \ref{main1} is a corollary of Theorem \ref{perturbation} and Lemma \ref{transdegzerodiscrete}.

\section{Simplify the leaf space}\label{Sec4}
In this section, we first recall the definitions of the leaf space associated to a $\ZT$ harmonic $1$-form, and of $\RR$-trees. Then we prove in Section \ref{4.1} that if a nondegenerate $\ZT$ harmonic $1$-form is transverse and its pullback to the $2$-fold branched cover represents a rational cohomology class, then its leaf space on the universal cover is a $\ZZ$-tree (Theorem \ref{simplicialtree}). We also observe that the leaf space of such a $\ZT$ harmonic $1$-form on the underlying manifold, is a finite graph (Theorem \ref{simplicialgraph}).

We then focus on the $3$-dimensional case. On a $3$-dimensional rational homology sphere, the corresponding leaf space is indeed a finite tree (Proposition \ref{finitetreeonRHS}). we show that by perturbing both the $\mathbb{Z}/2$-harmonic $1$-form and the metric,
we can arrange that in the resulting leaf space (still a finite tree), each boundary vertex corresponds exactly to one connected component of the singular locus.

Finally, we describe a pruning process (Theorem \ref{prunning}) on this leaf space: we cut away a large portion of the form and graft the exterior derivative of a harmonic function in its place. The resulting $\mathbb{Z}/2$-harmonic $1$-form has a singular locus consisting of exactly two connected components.

\vspace{2ex}
Consider a $\ZT$-harmonic $1$-form $v=v(g,\Zl,[h])$ on a closed, connected, and oriented manifold $M^n$ $(n>1)$. On the universal cover $ p:\tilde{M}\to M$, we have the pullback $2$-valued $1$-form $\tilde{v}=\pi^\ast v$, which is harmonic with respect to $\tilde{g}=\pi^\ast g$, and has singular locus $\tilde{\Zl}=\pi^{-1}(\Zl)$.

\begin{definition}
   We define a pseudo-metric on $M$ by \[d_{v}(x,y)=\inf_{\gamma}\int_{0}^1|v(\dot{\gamma}(t))|dt,\]
   where $x,y\in M$, and $\gamma$ ranges over all piecewise $C^1$ curves from $x$ to $y$. Similarly, define a pseudo-metric on $\tilde{M}$ by 
   \[d_{\tilde{v}}(x,y)=\inf_{\gamma}\int_{0}^1|\tilde{v}(\dot{\gamma}(t))|dt.\]
\end{definition}

\begin{definition}
    Define a equivalence relation on $M$ by setting: $x\sim y$ if and only if $d_v(x,y)=0$. Denote the resulting quotient space by $\T_{M,v}$. Similarly, define an equivalence relation on $\tilde{M}$, and denote the quotient space by $\T_{\tilde{M},\tilde{v}}$. The pseudo-metrics $d_v$ and $d_{\tilde{v}}$ induce genuine metrics on $\T_{M,v}$ and $\T_{\tilde{M},\tilde{v}}$, which we again denote by $d_v$ and $d_{\tilde{v}}$ for simplicity.
    
    We call the metric space $(\T_{M,v},d_v)$ (resp. $(\T_{\tilde{M},\tilde{v}},d_{\tilde{v}})$) the leaf space of $v$ on $M$ (resp. of $\tilde{v}$ on $\tilde{M}$). 

    There is a natural projection map $p_{\tilde{v}}:\tilde{M}\to \T_{\tilde{M},\tilde{v}}$ (resp. $p_{v}:M\to \T_{M,v}$).
\end{definition}

\begin{lemma}\label{mono}
    For any two points $x,y\in \tilde{M}$, we have $d_{\tilde{v}}(x,y)\ge d_{v}(\pi(x),\pi(y))$.
\end{lemma}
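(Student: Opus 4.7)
The proof should be essentially a direct unpacking of the definitions, since $\pi:\tilde M\to M$ is a local isometry with $\tilde g=\pi^*g$ and $\tilde v=\pi^*v$. The plan is to start with any admissible competitor curve for $d_{\tilde v}(x,y)$, push it down to a competitor curve for $d_v(\pi(x),\pi(y))$, and check that the integrand is preserved pointwise; the inequality then follows because the pushed-down curves form a (not necessarily exhaustive) subfamily of all admissible curves between $\pi(x)$ and $\pi(y)$.

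More concretely, first I would fix $\varepsilon>0$ and choose a piecewise $C^1$ curve $\tilde\gamma:[0,1]\to\tilde M$ from $x$ to $y$ with
\[
\int_0^1|\tilde v(\dot{\tilde\gamma}(t))|\,dt<d_{\tilde v}(x,y)+\varepsilon.
\]
(Note that since $|\tilde v|$ is defined as a nonnegative function off $\tilde{\mathcal Z}$ and extends by $0$ across the singular locus, the integrand is measurable and the integral is well defined even when $\tilde\gamma$ meets $\tilde{\mathcal Z}$ in a set of measure zero; the 2-valuedness does not affect $|\tilde v|$.) Next I would set $\gamma:=\pi\circ\tilde\gamma$, a piecewise $C^1$ curve in $M$ from $\pi(x)$ to $\pi(y)$, and use $\tilde v=\pi^*v$ together with $d\pi(\dot{\tilde\gamma}(t))=\dot\gamma(t)$ to obtain the pointwise identity $|\tilde v(\dot{\tilde\gamma}(t))|=|v(\dot\gamma(t))|$ for a.e.\ $t$.

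Integrating and using the definition of $d_v$ as an infimum over piecewise $C^1$ curves joining $\pi(x)$ and $\pi(y)$, I then get
\[
d_v(\pi(x),\pi(y))\le\int_0^1|v(\dot\gamma(t))|\,dt=\int_0^1|\tilde v(\dot{\tilde\gamma}(t))|\,dt<d_{\tilde v}(x,y)+\varepsilon.
\]
Letting $\varepsilon\to0$ gives the claimed inequality. There is no serious obstacle here; the only points to verify carefully are the measure-zero behavior at the singular locus (harmless since $|v|$ is locally bounded away from $\mathcal Z$ and the integrand is controlled there) and the fact that every projected curve is indeed an admissible competitor for $d_v$, which is immediate because $\pi$ is smooth and carries piecewise $C^1$ curves to piecewise $C^1$ curves with matching endpoints.
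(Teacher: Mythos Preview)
Your proposal is correct and follows essentially the same argument as the paper: push an admissible curve $\tilde\gamma$ down via $\pi$ to obtain a competitor for $d_v(\pi(x),\pi(y))$, use $\tilde v=\pi^*v$ to see the integrand is preserved, and take the infimum. The paper's proof is in fact a one-line version of what you wrote, without the explicit $\varepsilon$ argument or the remarks about the singular locus.
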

\begin{proof}
    Let $\gamma$ be a piecewise $C^1$ arc between $x$ and $y$, then $\pi\circ\gamma$ is a piecewise $C^1$ arc between $\pi(x)$ and $\pi(y)$ on $M$. By definition we have $d_{\tilde{v}}(x,y)\ge d_{v}(\pi(x),\pi(y))$.
\end{proof}

\begin{definition}\label{Rtrees}
    A metric space $(X,d)$ is called an $\RR$-tree if any two points in $X$ is connected by a unique shortest arc, and this arc is supposed to be isometric with a closed interval in $\RR$.
\end{definition}

\begin{lemma}{\cite[Theorem 4.4]{HeWentworthZhang}}
    $(\T_{\tilde{M},\tilde{v}},d_{\tilde{v}})$ is always an $\RR$-tree. Moreover, the deck transformation of the fundamental group $\pi_1(M)$ on $\tilde{M}$, induces an isometrical $\pi_1(M)$-action on $(\T_{\tilde{M},\tilde{v}},d_{\tilde{v}})$.
\end{lemma}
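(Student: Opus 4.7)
The plan is to follow the standard strategy for producing $\RR$-trees from the pseudo-metric associated to a closed (multi-valued) $1$-form on a simply connected space, in the spirit of the Morgan--Shalen construction. There are three ingredients to check: that $d_{\tilde{v}}$ is a pseudo-metric, that the quotient is a geodesic space, and that it is $0$-hyperbolic (equivalently, has unique arcs between any two points). The invariance under $\pi_1(M)$ is then essentially formal.

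First, I would verify the pseudo-metric axioms for $d_{\tilde{v}}$: symmetry by reversing parametrizations, the triangle inequality by concatenating piecewise $C^1$ arcs, and nonnegativity by definition. These descend to a genuine metric on $\T_{\tilde{M},\tilde{v}}$. The equivalence classes are precisely the leaves of the singular foliation $\ker\tilde{v}$: on any simply connected open set $U\subset \tilde{M}\setminus\tilde{\Zl}$ avoiding the ordinary zeros, the closed $1$-form $\tilde{v}$ has a single-valued harmonic primitive $F$ (up to an overall sign), and
\[
\int_{\gamma}|\tilde{v}(\dot{\gamma})|\,dt \;\ge\; \bigl|F(\gamma(1))-F(\gamma(0))\bigr|
\]
for every piecewise $C^1$ path $\gamma\subset U$, so $d_{\tilde{v}}$ separates points lying on distinct local level sets of $F$.

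Next, to show $\T_{\tilde{M},\tilde{v}}$ is a geodesic space, I would take $\alpha,\beta\in\T_{\tilde{M},\tilde{v}}$ with lifts $\tilde{x},\tilde{y}\in\tilde{M}$, choose a minimizing sequence of piecewise $C^1$ curves joining the lifts, and reparametrize by arclength with respect to the (degenerate) line element $|\tilde{v}|$. After projecting to $\T_{\tilde{M},\tilde{v}}$ and applying Arzel\`a--Ascoli, one extracts an isometric embedding $[0,d_{\tilde{v}}(\alpha,\beta)]\hookrightarrow\T_{\tilde{M},\tilde{v}}$ realizing the distance.

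The main step, and the principal obstacle, is uniqueness: any two simple arcs in $\T_{\tilde{M},\tilde{v}}$ from $\alpha$ to $\beta$ must coincide, for then together with the previous paragraph Definition~\ref{Rtrees} is satisfied. If two distinct arcs existed, their union would be an embedded loop $\ell$ in $\T_{\tilde{M},\tilde{v}}$; I would lift $\ell$ to a loop in $\tilde{M}$ transverse to the foliation, then use that $\tilde{M}$ is simply connected and $\tilde{v}$ is closed to conclude $\oint_{\ell}\tilde{v}=0$, contradicting the fact that traversing a nonconstant loop in the leaf space accumulates strictly positive contributions from the local harmonic primitives. The technical subtlety is the handling of the singular locus $\tilde{\Zl}$ (which has real codimension $2$ in $\tilde{M}$ and therefore does not disconnect) together with the ordinary zeros of $\tilde{v}$ (where leaves of the foliation may branch); here one uses the local expansion~\eqref{expansion} from Lemma~\ref{expansionlem} to control the model near $\tilde{\Zl}$, and the local picture as branched level sets of a harmonic function to control the model near ordinary zeros, confirming that the quotient branches only in a tame, tree-like manner. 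Finally, since $\tilde{v}=\pi^{\ast}v$ is manifestly invariant under the deck action $\pi_1(M)\curvearrowright\tilde{M}$, so is $d_{\tilde{v}}$, and the induced $\pi_1(M)$-action on $(\T_{\tilde{M},\tilde{v}},d_{\tilde{v}})$ is isometric by construction.
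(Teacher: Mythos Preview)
The paper does not prove this lemma: it is stated with a citation to \cite[Theorem 4.4]{HeWentworthZhang} and used as a black box, so there is no ``paper's own proof'' to compare against. Your proposal is therefore an attempt to reconstruct the argument of the cited reference rather than to match anything in the present paper.

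That said, your sketch has a genuine gap in the uniqueness step. You propose to lift an embedded loop $\ell\subset\T_{\tilde{M},\tilde{v}}$ to a loop in $\tilde{M}$, but the projection $p_{\tilde{v}}:\tilde{M}\to\T_{\tilde{M},\tilde{v}}$ is not a fibration and has no path-lifting property; there is no reason a loop downstairs should come from a loop upstairs. Even granting such a lift $\gamma$, the conclusion $\oint_\gamma\tilde{v}=0$ (which in any case needs care since $\tilde{v}$ is only defined up to sign) does not contradict $\ell$ being nonconstant: the pseudo-metric is built from $\int|\tilde{v}(\dot\gamma)|$, not from $\int\tilde{v}(\dot\gamma)$, and a lift of a loop in the leaf space would have the signed integral vanish automatically since it returns to the same leaf. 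The argument in \cite{HeWentworthZhang} (and in the measured-foliation literature it draws on) instead proceeds via the $0$-hyperbolicity/four-point condition, or by showing directly that any two points are joined by a unique monotone arc obtained from a local primitive, rather than by a loop-lifting contradiction. Your treatment of the geodesic property via Arzel\`a--Ascoli is also underspecified, since local compactness of the leaf space is not established a priori; one typically produces geodesics constructively from monotone transversal arcs rather than by compactness. The $\pi_1(M)$-equivariance paragraph is fine.
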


\subsection{The leaf space of a rational $\ZT$ harmonic $1$-form}\label{4.1}

\begin{definition}
    Consider a point $x$ of an $\RR$-tree $(X,d)$. If there exists a positive number $\epsilon>0$ such that the metric ball $B_{\epsilon}(x)=\{y\in X:d(x,y)<\epsilon\}$ is isometric with an open interval of $\RR$, then we call such a point $x$ an \textbf{edge point} of $(X,d)$. Any point $x$ of $(X,d)$ that is not an edge point is called a \textbf{vertex}. The set of all vertices is denoted by $\mathfrak{V}(X,d)$. 
\end{definition}

\begin{definition}
    An $\RR$-tree $(X,d)$ is called a $\ZZ$-tree if its vertex set $\mathfrak{V}(X,d)$ is a discrete subset of $(X,d)$. On a $\ZZ$-tree, a shortest arc between two vertices that contains no other vertices in the interior, is called an \textbf{edge}.
\end{definition}

Note that on a $\ZZ$-tree, there may be infinitely many edges emerging from one vertex.

The following result is a straightforward corollary of Theorem \ref{deformation} and \ref{perturbation}:
\begin{lemma}\label{rotionalandMorse}
Suppose there is a nondegenerate $\ZT$ harmonic $1$-form $v_0=v_0(g_0,\Zl_0,[h_0])$ on $M$, then there exists a pair $(g,[h])$ in a small neighborhood of $(g_0,[h_0])$ in $\M\times H$, such that $[h]\in H^1_-(\hat{M};\QQ)$, and $v=v(g,\underline{\Zl}(g),[h])$ is a transverse and nondegenerate $\ZT$ harmonic $1$-form on $M$.    
\end{lemma}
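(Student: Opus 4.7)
The plan is to combine the two main theorems already proved in this section with the elementary density of rational cohomology classes. Concretely, I will first perturb the cohomology class $[h_0]$ to a nearby rational class while keeping the metric fixed, and then perturb the metric to achieve transversality.

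First, I invoke Donaldson's deformation theorem (Theorem \ref{deformation}) at $(g_0,[h_0])$ to obtain a neighborhood $\mathcal{U}_1\times\mathcal{V}_1\subset\M\times H$ of $(g_0,[h_0])$ and a smooth tame map $\underline{\Zl}:\mathcal{U}_1\times\mathcal{V}_1\to\mathcal{S}$ such that for every $(g,[h])\in\mathcal{U}_1\times\mathcal{V}_1$, the form $v(g,\underline{\Zl}(g,[h]),[h])$ is a nondegenerate $\ZT$ harmonic $1$-form. Since $H=H^1_-(\hat{M};\RR)$ is a finite-dimensional real vector space and $H^1_-(\hat{M};\QQ)$ is dense in it, I can choose a rational class $[h]\in \mathcal{V}_1\cap H^1_-(\hat{M};\QQ)$ arbitrarily close to $[h_0]$. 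For this choice, $v_0':=v(g_0,\underline{\Zl}(g_0,[h]),[h])$ is a nondegenerate $\ZT$ harmonic $1$-form to which I can apply Theorem \ref{perturbation}.

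Next, I apply Theorem \ref{perturbation} with the new base point $(g_0,\underline{\Zl}(g_0,[h]),[h])$ (note that $[h]$ is now fixed throughout this step). This produces a neighborhood $\mathcal{U}_2\subset\M$ of $g_0$ and a dense open subset $\mathcal{U}_2'\subset\mathcal{U}_2$ such that for every $g\in\mathcal{U}_2'$, the form $v(g,\underline{\Zl}(g,[h]),[h])$ is both transverse and nondegenerate. I then select any $g\in\mathcal{U}_1\cap\mathcal{U}_2'$ sufficiently close to $g_0$; this intersection is nonempty since $\mathcal{U}_2'$ is dense in $\mathcal{U}_2$. The resulting pair $(g,[h])$ lies in an arbitrarily small prescribed neighborhood of $(g_0,[h_0])$ in $\M\times H$, satisfies $[h]\in H^1_-(\hat{M};\QQ)$, and yields a transverse, nondegenerate $\ZT$ harmonic $1$-form.

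There is no substantive obstacle here: the only point requiring any care is the order of the two perturbations. One must first fix the rational cohomology class and then apply the metric transversality result, because Theorem \ref{perturbation} is stated for a fixed cohomology class (the dense open set $\mathcal{U}'$ depends on $[h]$). Reversing the order would force the subsequent rational perturbation of $[h]$ to potentially destroy transversality. The shrinking of neighborhoods is harmless since both Theorem \ref{deformation} and Theorem \ref{perturbation} allow one to work inside any prescribed open neighborhood of the base point.
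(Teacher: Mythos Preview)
Your proposal is correct and follows exactly the approach the paper intends: the paper presents this lemma as ``a straightforward corollary of Theorem \ref{deformation} and \ref{perturbation}'' with no further argument, and your write-up simply spells out that corollary, including the one point worth noting---that the rational perturbation of $[h]$ must precede the metric perturbation since Theorem \ref{perturbation} is stated for fixed $[h]$.
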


\begin{definition}
    A $\mathbb{Z}/2$-harmonic $1$-form $v = v(g, \Zl, [h])$ is called \textbf{rational} if the cohomology class $[h]$ lies in
    $H^1_-(\hat{M}; \mathbb{Q})$.
\end{definition}

Through out this subsection, we shall consider a rational, transverse and nondegenerate $\ZT$ harmonic $1$-form $v=v(g,\Zl,[h])$, and let $p:\hat{M}\to M$ be the $2$-fold branched cover branching along $\Zl$. And $\hat{v}=p^\ast v$. Since $[\hat{v}]=[h]\in H^1_-(\hat{M};\QQ)$, there exists a continuous map $u:\hat{M}\to S^1$, such that $du=u^\ast d\theta=\mu \hat{v}$ for some $\mu\in\QQ$. 
This property implies that the singular foliation induced by $v$ is significantly simpler than that of non-rational $\mathbb{Z}/2$-harmonic $1$-forms.

We now analyze the structure of singular foliations.

The well-defined $1$-form $\hat{v}$ induces a singular foliation, namely $\ker \hat{v}$ on $\hat{M}$. Similarly, $\ker v$ defines a singular foliation on $M$, and $\ker \tilde{v}$ on $\tilde{M}$.

\begin{definition}
    Let $I_v$ denote the set of isolated zeros of $v$ on $M$, and let $\hat{I}_v = p^{-1}(I_v)$ be the corresponding zero set of $\hat{v}$ on $\hat{M}$.
\end{definition}

A point $x\in \hat{M}$ is a critical point of $u:\hat{M}\to S^1$ if and only if $\hat{v}(x)=0$. Hence, the critical point set of $u$ is precisely the zero set of $\hat{v}$, namely $\Zl \cup \hat{I}_v$.

For the purpose of the following discussion, we define:
\begin{definition}
    A \textbf{leaf} $L$ of $\ker \hat{v}$ is a connected component of a level set of $u$, which is a compact subset of $\hat{M}$. $L$ is called \textbf{regular} if every point $y \in L$ is a regular point of $u$; otherwise, $L$ is \textbf{singular}.
\end{definition}
This definition is adapted to the present context and may not align with general conventions in foliation theory.

A regular leaf $L$ of $\ker\hat{v}$ is always a compact codimension-$1$ submanifold of $\hat{M}$. It carries a volume form $\star_{\hat{g}}\hat{v}$ on such a leaf $L$, and is hence oriented, with trivial normal bundle in $\hat{M}$. On the other hand, if $L$ is singular, then $L$ contains at least one component of $\Zl\cup \hat{I}_v$. Since $\Zl \cup \hat{I}_v$ has only finitely many connected components,
there are only finitely many singular leaves. The union of singular leaves is called the \textbf{critical graph} of $\ker\hat{v}$ on $\hat{M}$.

Similarly, we define regular, singular leaves as well as the critical graph for singular foliations $\ker v$ and $\ker\tilde{v}$. A leaf of $\ker v$ is called \textbf{regular} if it is the image under $p:\hat{M} \to M$ of a regular leaf of $\ker \hat{v}$; otherwise, it is \textbf{singular}.
For the foliation $\ker \tilde{v}$ on $\tilde{M}$, a leaf is \textbf{regular} if it is a connected component of $\pi^{-1}(L)$, where $L$ is a regular leaf of $\ker v$.

Let's study the structure of $\ker v$. If $L$ is a regular leaf of $\ker v$, then one can see that $p^{-1}(L)$ has two components, interchanged by the involution $\tau$ on $\hat{M}$, and each is a regular leaf of $\ker \hat{v}$. Thus $L$ is a smooth orientable submanifold of $M$ with codimension $1$ and trivial normal bundle. 

On the other hand, if $L$ is a singular leaf of $\ker v$, then $L\setminus(\Zl\cup I_v)$ is a smooth noncompact submanifold of $M$ with codimension $1$ and trivial normal bundle, but may have multiple connected components.

The local structure of a singular leaf $L$ near an isolated zero $y \in I_v$ or near a component of $\Zl$ is described as follows. 

Let $B$ be a small open ball centered at $y\in I_v$. By definition of transverse $\ZT$ harmonic $1$-form, there is a Morse function $f$ on $B$, such that $v|_{B}=\pm df$. Therefore, we may choose Morse coordinate on $B$. If $y\in L$, then $B\cap L$ is diffeomorphic to the cone $\{(x_1,\cdots,x_n)\in\RR^n:x_1^2=x_2^2+\cdots+x_n^2\}$.

Let $\Sigma$ be a component of $\Zl$, and suppose $\Sigma\subset L$. For any $y\in \Sigma$, there is a neighborhood $B\subset M$ of $y$, and a diffeomorphism $\phi:B\to \CC\times\RR^{n-2}$, such that $\phi(\Sigma\cap U)=\{0\}\times\RR^{n-2}$. We may choose this chart so that $\phi(L\cap B)=\{(z,t)\in \CC\times\RR^{n-2}:\re z^{3/2}=0\}$. See Figure \ref{trivalent}.

\begin{figure}[!h]
    \centering

\tikzset{every picture/.style={line width=0.75pt}} 

\begin{tikzpicture}[x=0.5pt,y=0.5pt,yscale=-1,xscale=1]

\draw  [dash pattern={on 4.5pt off 4.5pt}] (50,141.08) .. controls (50,86.45) and (94.29,42.17) .. (148.92,42.17) .. controls (203.55,42.17) and (247.83,86.45) .. (247.83,141.08) .. controls (247.83,195.71) and (203.55,240) .. (148.92,240) .. controls (94.29,240) and (50,195.71) .. (50,141.08) -- cycle ;
\draw    (28.33,140.67) -- (148.92,141.08) ;
\draw    (148.92,141.08) -- (211.83,36.67) ;
\draw    (148.92,141.08) -- (211.33,243.17) ;
\draw  [dash pattern={on 4.5pt off 4.5pt}] (348.67,140.92) .. controls (348.67,86.29) and (392.95,42) .. (447.58,42) .. controls (502.21,42) and (546.5,86.29) .. (546.5,140.92) .. controls (546.5,195.55) and (502.21,239.83) .. (447.58,239.83) .. controls (392.95,239.83) and (348.67,195.55) .. (348.67,140.92) -- cycle ;
\draw    (327,140.5) -- (447.58,140.92) ;
\draw    (447.58,140.92) -- (510.5,36.5) ;
\draw    (447.58,140.92) -- (510,243) ;
\draw [color={rgb, 255:red, 74; green, 144; blue, 226 }  ,draw opacity=1 ]   (325.83,120.17) .. controls (438.33,120.17) and (437.83,117.17) .. (497.83,22.67) ;
\draw [color={rgb, 255:red, 74; green, 144; blue, 226 }  ,draw opacity=1 ]   (528.33,236.67) .. controls (469.83,141.17) and (470.33,141.67) .. (527.83,41.17) ;
\draw [color={rgb, 255:red, 74; green, 144; blue, 226 }  ,draw opacity=1 ]   (327.33,161.17) .. controls (439.83,161.17) and (437.83,164.67) .. (496.83,257.67) ;

\draw (68,113) node [anchor=north west][inner sep=0.75pt]    {$\mathrm{Re} z^{3/2} =0$};
\draw (368,113) node [anchor=north west][inner sep=0.75pt]    {$\mathrm{\textcolor[rgb]{0,0.45,0.98}{Re}}\textcolor[rgb]{0,0.45,0.98}{z}\textcolor[rgb]{0,0.45,0.98}{^{3/2}}\textcolor[rgb]{0,0.45,0.98}{< \epsilon }$};

\end{tikzpicture}

    \caption{The set $\re z^{3/2}=0$ and the region $|\re z^{3/2}|<\epsilon$}
    \label{trivalent}
\end{figure}

Denote the critical graph of $\ker v$ by $\mathrm{G}_v$, and let $\mathrm{G}_{\hat{v}}:=p^{-1}(\mathrm{G}_v)$ be the critical graph of $\ker \hat{v}$. Suppose the leaf $L$ is a component of $\mathrm{G}_v$ and has nonempty intersection with $\Zl$, then $p^{-1}(L)$ is connected. If $L\cap\Zl=\varnothing$, then $p^{-1}(L)$ has two components. 

Let's define a neighborhood $U_\epsilon$ of a leaf $L$ for $\epsilon>0$ small. 

Firstly, consider a tubular neighborhood $U$ of $\Zl\cap L$, on which $v|_U = df$, where $f$ is a $2$-valued harmonic function, and we have $f^{-1}(0)=\Zl\cap L$. Define an open subset $U^1_\epsilon(L)=\{x\in U:|f(x)|<\epsilon\}$. When $\epsilon>0$ is sufficiently small, the number of connected components equals that of $\Zl\cap L$. If $L\cap\Zl=\varnothing$, we set $U_\epsilon^1(L)=\varnothing$. 

Suppose $L\cap I_v=\{x_1,\cdots,x_j\}$. Let $B_i$ be a small open ball centered at $x_i\in L\cap I_v$. $v|_{B_i}=\pm df_i$ for some Morse function $f_i$ on $B_i$. We may assume $f_i^{-1}(0)=x_i$. Define a neighborhood $U^2_\epsilon(L)$ of $L\cap (\cup_{1\le i\le j}B_i)$ by $\{x\in \cup_{1\le i\le j}B_i:|f_i(x)|<\epsilon\}$. If $L \cap I_v = \varnothing$, set $U^2_\epsilon(L) = \varnothing$.

Let $L':=L\setminus \left(U\cup(\cup_{1\le i\le j}B_i)\right)$, which is a codimension-$1$ submanifold (with boundary, if $L$ is singular) of $M$. Let $U'$ be a neighborhood of $L'$, that deformation retracts onto $L'$ and on which $v$ is nonvanishing.  These conditions imply that $v|_{U'}=\pm df$ for some well-defined function $f$ such that $f^{-1}(0)=L'$. We define $U^3_\epsilon(L)=\{x\in U':|f(x)|<\epsilon\}$.

Then $U_\epsilon^1(L)\cup U_\epsilon^2(L)\cup U^3_\epsilon(L)=:U_\epsilon(L)$ is a connected neighborhood of $L$. Moreover, $U_\epsilon(L)$ deformation retracts onto $L$, and $\partial U_\epsilon(L)$ is a union of regular leaves of $\ker v$. For a regular leaf $L$, $U_\epsilon(L)=U^3_\epsilon(L)\approx L\times(-\epsilon,\epsilon)$.

Consider two distinct leaves $L_1,L_2$, then we can find $\epsilon>0$ so that $U_\epsilon(L_1)\cap U_\epsilon(L_2)=\varnothing$. Therefore, for any arc $\gamma$ connecting $L_1$ and $L_2$, $\int_0^1|v(\dot{\gamma}(t))|dt\ge 2\epsilon$.

Now consider $\tilde{v}$ on $\tilde{M}$ and the $\RR$-tree $(\T_{\tilde{M},\tilde{v}},d_{\tilde{v}})$. 

\begin{lemma}\label{pointandleaf}
    Let $x$ be a point of $\tilde{M}$, then the equivalent class $[x]\in\T_{\tilde{M},\tilde{v}}$ is the leaf $L_x$ of $\ker\tilde{v}$ passing through $x$. 
\end{lemma}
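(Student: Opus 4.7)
The plan is to prove both inclusions: (a) every point $y\in L_x$ satisfies $d_{\tilde{v}}(x,y)=0$, and (b) if $y\notin L_x$ then $d_{\tilde{v}}(x,y)>0$.

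For (a), the first step is to observe that the leaf $L_x$ is path-connected. This uses the local description of leaves given just before the lemma: away from $\tilde{\Zl}\cup\pi^{-1}(I_v)$ a leaf is a smooth codimension-$1$ submanifold; near an isolated zero it has the Morse cone structure $\{x_1^2=x_2^2+\cdots+x_n^2\}$; and near a component of $\tilde{\Zl}$ it has the trivalent "$\operatorname{Re}z^{3/2}=0$" structure of Figure~\ref{trivalent}. Each of these local models is path-connected, so $L_x$ is locally path-connected, and hence (being connected) path-connected. Then one chooses a piecewise $C^1$ arc $\gamma\subset L_x$ from $x$ to $y$: at every regular point of the leaf, $\dot{\gamma}\in\ker\tilde{v}$; at the finitely many singular points (where $\tilde{v}$ itself vanishes), the integrand vanishes; so $\int_0^1|\tilde{v}(\dot{\gamma})|\,dt=0$.

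For (b), assume $y\notin L_x$, and let $\bar{x}=\pi(x)$, $\bar{y}=\pi(y)$ with leaves $L_{\bar{x}}$, $L_{\bar{y}}$ of $\ker v$ on $M$. I would split into two cases. \emph{Case A:} $L_{\bar{x}}\ne L_{\bar{y}}$. The discussion preceding the lemma gives $\epsilon>0$ with $U_\epsilon(L_{\bar{x}})\cap U_\epsilon(L_{\bar{y}})=\varnothing$, whence any arc from $\bar{x}$ to $\bar{y}$ must traverse the "level set wall" $\{|f|=\epsilon\}$ and therefore accumulates integral at least $2\epsilon$. Combined with Lemma~\ref{mono} this gives $d_{\tilde{v}}(x,y)\ge d_v(\bar{x},\bar{y})\ge 2\epsilon>0$. \emph{Case B:} $L_{\bar{x}}=L_{\bar{y}}=:\bar{L}$ but $L_x\ne L_y$. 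Since $U_\epsilon(\bar{L})$ deformation retracts onto $\bar{L}$, the preimage $\pi^{-1}(U_\epsilon(\bar{L}))$ splits as a disjoint union of open sets, one neighborhood $\tilde{U}(L)$ for each component $L$ of $\pi^{-1}(\bar{L})$; hence $x\in\tilde{U}(L_x)$ and $y\in\tilde{U}(L_y)$ lie in disjoint components. Any path $\gamma$ from $x$ to $y$ must leave $\tilde{U}(L_x)$, and when it first does so at time $t_1$, the local $2$-valued primitive $\pi^*f$ of $\tilde{v}$ (coming from the decomposition $U_\epsilon(\bar L)=U^1_\epsilon\cup U^2_\epsilon\cup U^3_\epsilon$ defined via harmonic, Morse, or regular-leaf primitives) satisfies $\pi^*f(x)=0$ and $|\pi^*f(\gamma(t_1))|=\epsilon$, forcing $\int_0^{t_1}|\tilde{v}(\dot{\gamma})|\,dt\ge\epsilon$. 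Hence $d_{\tilde{v}}(x,y)\ge\epsilon>0$.

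The main obstacle I expect is \emph{Case B}, namely correctly handling the $2$-valued primitive $\pi^* f$ across the three local models that make up $U_\epsilon(\bar L)$ and ensuring the lower bound $\int|\tilde v(\dot\gamma)|\ge\epsilon$ is uniform, independently of whether $\gamma$ enters $U^1_\epsilon$ (near $\Zl$), $U^2_\epsilon$ (near $I_v$), or $U^3_\epsilon$ (regular part). The key observation to make this go through is that in each of the three local models the primitive $f$ (resp.\ $f_i$) is honestly a function with $|f|=\epsilon$ on $\partial U_\epsilon(\bar L)$, so its pullback to any connected component $\tilde{U}(L_x)$ of the preimage is a well-defined $2$-valued function with the same boundary values, and the fundamental bound $\int|df|\ge|f(\text{end})-f(\text{start})|$ applies on that component.
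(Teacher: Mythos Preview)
Your proposal is correct and follows essentially the same approach as the paper. Part (a) is identical; for part (b) both you and the paper split according to whether $\pi(L_x)=\pi(L_y)$, and in Case B the paper phrases the argument via the fundamental group (a short arc projects into $U_\epsilon(\bar L)$, hence represents a class in $\operatorname{im}\pi_1(\bar L)$, forcing $y=\sigma.x\in L_x$), whereas you argue directly via the disconnectedness of $\pi^{-1}(U_\epsilon(\bar L))$ and the primitive $|f|$---these are equivalent formulations, both resting on the deformation retraction of $U_\epsilon(\bar L)$ onto $\bar L$.
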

\begin{proof}
    By definition, $[x]=\{x'\in\tilde{M}:d_{\tilde{v}}(x,x')=0\}$. For any point $y\in L_x$, there is an arc $\gamma\subset L_x$ from $x$ to $y$, such that $|\tilde{v}(\dot{\gamma}(t))|\equiv 0$. Thus $L_x\subset [x]$.

    Suppose for contradiction that there exists $y \in [x] \setminus L_x$. Then there is a sequence of arcs $\gamma_i$ connecting $x$ and $y$, such that $\int_0^1|\tilde{v}(\dot{\gamma}_i)(t)|dt\to 0$ as $i\to \infty$.

    These induce a sequence of arcs $\underline{\gamma_i}=\pi\circ\gamma_i$ from $\pi(x)$ to $\pi(y)$. Since $\pi(L_x)$ is a leaf of $\ker v$, for $\epsilon>0$ sufficiently small, we can define a neighborhood $U_\epsilon(\pi(L_x))$ of $\pi(L_x)$ as above. 
    
    If $\underline{\gamma_i}$ is not contained in $U_\epsilon(\pi(L_x))$, then $\int_0^1|\tilde{v}(\dot{\gamma}_i)(t)|dt=\int_0^1|v\left(\underline{\dot{\gamma}_i}\right)(t)|dt\ge \epsilon$. As a result, $\pi(y)$ must be contained in $U_\epsilon(\pi(L_x))$. Since $\epsilon>0$ is arbitrary, we conclude that $\pi(y)\in \pi(L_x)$.

    Consequently, $y$ lies in another component of $\pi^{-1}(\pi(L_x))$. We may assume there is a $\sigma\in\pi_1(M)$ such that $\sigma.x=y$. For any arc $\gamma$ connecting $x$ and $y$, $\underline{\gamma}=\pi\circ\gamma$ represents $\sigma$. And similarly, we may assume $\underline{\gamma}\subset U_\epsilon(\pi(L_x))$. 
    
    However, by construction, $U_\epsilon(\pi(L_x))$ deformation retracts onto $\pi(L_x)$. Thus $\sigma\in \mathrm{im}(\pi_1(L_x))$, a subgroup of $\pi_1(M)$ that is the image of $\pi_1(L_x)$ under the inclusion map. Therefore $y=\sigma.x\in L_x$, leading to a contradiction.
\end{proof}

\begin{lemma}
    A point $[x]$ of $\T_{\tilde{M},\tilde{v}}$ is an edge point if the corresponding leaf $L_x$ is regular. As a result, $[x]$ is a vertex only if $L_x$ is singular. 
\end{lemma}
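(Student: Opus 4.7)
My plan is to produce an isometric embedding $\phi\colon(-\delta,\delta)\hookrightarrow\T_{\tilde M,\tilde v}$ whose image is the open metric ball of radius $\delta$ around $[x]$. By Lemma~\ref{pointandleaf} the class $[x]$ is exactly the leaf $L_x$, and regularity of $L_x$ means it avoids $\tilde\Zl\cup\pi^{-1}(I_v)$. Pushing down, $\pi(L_x)$ is a regular compact codimension-$1$ leaf of $\ker v$, so the tubular neighborhood construction preceding the lemma provides $U^3_{\delta'}(\pi L_x)\cong\pi L_x\times(-\delta',\delta')$ for some $\delta'>0$, on which $v=\pm df$ with $f$ the interval coordinate. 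Let $\tilde U'$ be the component of $\pi^{-1}(U^3_{\delta'}(\pi L_x))$ containing $L_x$; since $U^3_{\delta'}$ deformation retracts onto $\pi L_x$, this component is diffeomorphic to $L_x\times(-\delta',\delta')$, and on it $\tilde v=d\tilde f$ is single-valued with $\tilde f=f\circ\pi$. Each slice $L_x\times\{t\}$ is a connected component of $\pi^{-1}(\pi L_x\times\{t\})$, hence a regular leaf of $\ker\tilde v$; combining this with Lemma~\ref{pointandleaf} makes $\phi(t):=[L_x\times\{t\}]$ a well-defined injection with $\phi(0)=[x]$.

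Next, fix $\delta\in(0,\delta'/2)$ and show that $\phi$ restricted to $(-\delta,\delta)$ is an isometry. The upper bound $d_{\tilde v}(\phi(t_1),\phi(t_2))\le|t_1-t_2|$ follows from the straight path $s\mapsto(x_0,(1-s)t_1+st_2)$ inside $\tilde U'$. For the lower bound, consider any piecewise $C^1$ path $\gamma$ from $y_1\in L_x\times\{t_1\}$ to $y_2\in L_x\times\{t_2\}$. If $\gamma$ never leaves $\tilde U'$, then $\int|\tilde v(\dot\gamma)|\ge\bigl|\int d\tilde f(\dot\gamma)\bigr|=|t_2-t_1|$. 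If $\gamma$ exits $\tilde U'$, then its initial and final arcs inside $\tilde U'$ each join a point with $|\tilde f|<\delta$ to $\partial\tilde U'$ where $|\tilde f|=\delta'$, so together they force $\int|\tilde v(\dot\gamma)|\ge 2(\delta'-\delta)\ge 2\delta\ge|t_2-t_1|$. In either case $d_{\tilde v}(\phi(t_1),\phi(t_2))=|t_1-t_2|$.

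The same exit-cost estimate shows any $[y]$ with $d_{\tilde v}([x],[y])<\delta$ has a representative $y$ reachable from $x$ by a path of total $\tilde v$-length less than $\delta<\delta'$, which cannot exit $\tilde U'$ since doing so would require $\tilde f$-variation at least $\delta'$; hence $y\in\tilde U'$ and $[y]=\phi(\tilde f(y))$ lies in the image of $\phi$. Therefore the open ball of radius $\delta$ around $[x]$ in $\T_{\tilde M,\tilde v}$ coincides isometrically with $\phi((-\delta,\delta))$, so $[x]$ is an edge point. The second assertion is the contrapositive. The only delicate point is bounding $\int|\tilde v|$ along paths that wander outside the model tube, and the device of choosing the tube radius $\delta'$ substantially larger than the target neighborhood radius $\delta$ is what lets the two-case accounting close cleanly.
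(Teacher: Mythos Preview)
Your argument is correct and follows the same approach as the paper: lift the product neighborhood $U_\epsilon(\pi L_x)\cong \pi L_x\times(-\epsilon,\epsilon)$ to the component $\tilde U'\cong L_x\times(-\epsilon,\epsilon)$ containing $L_x$, and identify $p_{\tilde v}(\tilde U')$ with an interval. The paper states this identification in one line, while you have carefully verified that the map $t\mapsto[L_x\times\{t\}]$ is an isometry onto the full metric ball $B_\delta([x])$; your exit-cost dichotomy (taking $\delta<\delta'/2$) is exactly the detail the paper suppresses.

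One small imprecision: you write ``$\tilde v=d\tilde f$ is single-valued'', but in general the pullback bundle $\pi^\ast E|_{\tilde U'}$ need not be trivial (it is governed by $H^1(L_x;\ZZ/2)$, and $L_x$ need not be simply connected), so one only has $\tilde v=\pm d\tilde f$. This does not affect your argument, since every estimate you use depends only on $|\tilde v(\dot\gamma)|=|d\tilde f(\dot\gamma)|$.
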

\begin{proof}
    If $L_x$ is regular, then so is $\pi(L_x)$. For sufficiently small $\epsilon>0$, the neighborhood $U_\epsilon(\pi(L_x))$ is diffeomorphic to $\pi(L_x)\times(-\epsilon,\epsilon)$. And $\pi^{-1}\left(U_\epsilon(\pi(L_x))\right)$ has the same number of components as $\pi^{-1}(\pi(L_x))$, since $U_\epsilon(\pi(L_x))$ deformation retracts onto $\pi(L_x)$.
    
    One of these components contains $L_x$; denote it by $U_\epsilon(L_x)$.
    Then $U_\epsilon(L_x) \approx L_x \times (-\epsilon, \epsilon)$ is a tubular neighborhood of $L_x$.

    Note that $U_\epsilon(L_x)\approx L_x\times(-\epsilon,\epsilon)$, and $p_{\tilde{v}}(U_\epsilon(L_x))=(-\epsilon,\epsilon)$. By definition, we know that the metric open ball $B_{\epsilon}([x])$ can be identified with $(-\epsilon,\epsilon)$, and hence $[x]$ is an edge point of $\T_{\tilde{M},\tilde{v}}$.
\end{proof}

\begin{theorem}\label{simplicialtree}
    The $\RR$-tree $(\T_{\tilde{M},\tilde{v}},d_{\tilde{v}})$ is a $\ZZ$-tree.
\end{theorem}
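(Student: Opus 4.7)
The plan is to exploit the rationality of $[\hat v]$ to reduce discreteness of the vertex set to a finite combinatorial datum. Since $[\hat v] \in H^1_-(\hat M; \QQ)$, there exist $\mu \in \QQ \setminus \{0\}$ and a smooth $u: \hat M \to S^1$ with $du = \mu \hat v$. I would first observe that $u$ has only finitely many critical values: its critical set is $\Zl \cup \hat I_v$; the submanifold $\Zl$ has finitely many connected components (and $u$ is constant on each, as connectedness and $u \circ \tau = -u$ force $u$ to take a value in the discrete $\tau$-fixed set), while $\hat I_v$ is finite by transversality, Lemma~\ref{transdegzerodiscrete}, and compactness of $M$. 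Consequently only finitely many level set components of $u$ are singular, so by pushing down via $p$ there are finitely many singular leaves $L_1, \ldots, L_N$ of $\ker v$ on $M$; since distinct singular leaves are distinct points of $\T_{M, v}$, we have $\delta_1 := \min_{i \neq j} d_v(L_i, L_j) > 0$.

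By the preceding lemma, each vertex of $\T_{\tilde M, \tilde v}$ corresponds to a singular leaf of $\ker \tilde v$ on $\tilde M$, which projects to some $L_i$. For two distinct singular leaves $\tilde L \neq \tilde L'$ on $\tilde M$, I would split into cases. If they project to distinct $L_i, L_j$, Lemma~\ref{mono} yields $d_{\tilde v}(\tilde L, \tilde L') \geq d_v(L_i, L_j) \geq \delta_1$. The subtle case is when $\pi(\tilde L) = \pi(\tilde L') = L$, so $\tilde L' = \sigma \tilde L$ for some $\sigma \in \pi_1(M) \setminus \mathrm{Stab}(\tilde L)$; any connecting arc descends to a loop $\underline\gamma$ on $L$ representing $\sigma$.

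For this intra-orbit case, I would prove a uniform $\delta_2 > 0$ such that any based loop on $L$ with $v$-length less than $\delta_2$ is homotopic rel basepoint into $L$, forcing $\sigma \in \mathrm{image}(\pi_1(L) \to \pi_1(M)) \subseteq \mathrm{Stab}(\tilde L)$, a contradiction. Choosing $\epsilon$ small enough that the neighborhood $U_\epsilon(L)$ constructed in Section~\ref{3.4} deformation retracts onto $L$, I would confine any $v$-short loop inside $U_\epsilon(L)$ via the primitive inequality $|f(\underline\gamma(t))| \leq \int_0^t |v(\dot{\underline\gamma})|\,ds$ for local primitives $f$ of $v$ vanishing on $L$; once confined, the deformation retract supplies the required homotopy. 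Setting $\rho := \min(\delta_1, \delta_2)/2 > 0$, the $d_{\tilde v}$-ball of radius $\rho$ around any vertex of $\T_{\tilde M, \tilde v}$ contains no other vertex, proving discreteness of $\mathfrak{V}(\T_{\tilde M, \tilde v}, d_{\tilde v})$.

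The main obstacle is the intra-orbit confinement step: propagating the primitive-length bound across chart transitions in $U_\epsilon(L)$ requires reconciling the three local model types (interior primitives away from the critical locus, Morse primitives near points of $I_v$, and $\re(B \zeta^{3/2})$-type primitives near $\Zl$) and ensuring a single $\epsilon$ works uniformly over the compact leaf $L$. Once this bootstrap is established, the deformation-retract property of $U_\epsilon(L)$ immediately converts confinement into the desired homotopy, and the rest of the argument is a case division on $\pi_1(M)$-orbits.
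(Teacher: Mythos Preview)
Your proposal is correct and follows essentially the same route as the paper. The paper likewise uses finiteness of singular leaves on $M$ to obtain a single $\epsilon_0>0$ with $U_{\epsilon_0}(L)\cap U_{\epsilon_0}(L')=\varnothing$ for distinct singular leaves, and then argues that any arc in $\tilde M$ of $\tilde v$-length $<\epsilon_0$ between distinct singular leaves descends to a loop confined in $U_{\epsilon_0}(\pi(L))$, whence the deformation retract forces $\sigma\in\mathrm{im}(\pi_1(L)\to\pi_1(M))$, a contradiction. Your split into $\delta_1$ (inter-orbit, via Lemma~\ref{mono}) and $\delta_2$ (intra-orbit, via confinement) is just a cosmetic unbundling of the paper's single $\epsilon_0$.

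Regarding your ``main obstacle'': the chart-transition worry is already absorbed by the construction of $U_\epsilon(L)=U_\epsilon^1(L)\cup U_\epsilon^2(L)\cup U_\epsilon^3(L)$ given just before Lemma~\ref{pointandleaf}. Each piece is defined as a sublevel set $\{|f|<\epsilon\}$ of a local primitive of $v$ vanishing on $L$, and the paper observes that $\partial U_\epsilon(L)$ is a union of regular leaves of $\ker v$; hence the primitives on the three pieces agree on overlaps up to sign, and a path starting on $L$ cannot exit $U_\epsilon(L)$ without accumulating $v$-length at least $\epsilon$. So no separate bootstrap is needed---you may simply invoke that construction and the argument of Lemma~\ref{pointandleaf} verbatim.
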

\begin{proof}
    Let $[x_1]$ and $[x_2]$ be two distinct vertices of $(\T_{\tilde{M},\tilde{v}},d_{\tilde{v}})$. We shall show that there is a positive constant $\epsilon_0$ independent of $[x_1]$ and $[x_2]$, such that $d_{\tilde{v}}([x_1],[x_2])\ge\epsilon_0$.

    Indeed, since the number of components of $\mathrm{G}_v$ is finite, there is a $\epsilon_0>0$ such that, for any two distinct singular leaves $L,L'\subset\mathrm{G}_v$, $U_{\epsilon_0}(L)\cap U_{\epsilon_0}(L')=\varnothing$.
    
    Let $L_1$ and $L_2$ be the two singular leaves of $\ker\tilde{v}$ corresponding to $[x_1]$ and $[x_2]$, respectively. If $d_{\tilde{v}}([x_1],[x_2])<\epsilon_0$, then there is an arc $\gamma$ connecting $x_1$ and $x_2$, such that $\int_0^1|\tilde{v}(\dot{\gamma}(t))|dt<\epsilon_0$. 
    As in the proof of Lemma \ref{pointandleaf}, this will imply that $\pi(L_1)=\pi(L_2)$. Thus we may assume $x_2=\sigma.x_1$ for some $\sigma\in\pi_1(M)$.

    Then similarly, one can show that $\pi\circ\gamma$ represents $\sigma$. Since $\pi\circ\gamma\subset U_{\epsilon_0}(\pi(L_1))$, which deformation retracts to $\pi(L_1)$, we have $x_2=\sigma.x_1\in L_1$. This leads to a contradiction.
\end{proof}

\begin{theorem}\label{simplicialgraph}
    The metric space $(\T_{M,v},d_v)$ is a finite graph.
\end{theorem}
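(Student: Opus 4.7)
The plan is to realize $\T_{M,v}$ as a finite $1$-complex whose vertices correspond to singular leaves of $\ker v$ and whose open edges correspond to the connected components of $M \setminus \mathrm{G}_v$. This parallels the proof of Theorem~\ref{simplicialtree}, but the new input is the rationality of $[\hat{v}]$, which I will use to show that $M \setminus \mathrm{G}_v$ has only finitely many components.

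For the vertices: by the same reasoning as in the proof of Theorem~\ref{simplicialtree}, a point $[x] \in \T_{M,v}$ can be a vertex only if $L_x$ is singular, i.e.\ $L_x \subset \mathrm{G}_v$. Each singular leaf contains at least one connected component of $\Zl \cup I_v$, and the latter has only finitely many components (the singular locus $\Zl$ is a compact submanifold, and $I_v$ is finite on the compact manifold $M$ by transversality of $v$). Hence there are only finitely many vertices.

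For the edges I invoke rationality: since $[\hat{v}] \in H^1_-(\hat{M};\QQ)$, there exist $\mu \in \QQ_{>0}$ and a smooth map $u : \hat{M} \to S^1$ with $du = \mu \hat{v}$. The critical set of $u$ equals $\Zl \cup \hat{I}_v$, so $u$ has only finitely many critical values $c_1, \dots, c_k \in S^1$. Ehresmann's theorem, applied to the compact manifold $\hat{M}$, implies that the restriction of $u$ to $\hat{M} \setminus \mathrm{G}_{\hat{v}}$ is a locally trivial fibration over each of the finitely many open arcs $A_j \subset S^1 \setminus \{c_i\}$; contractibility of the arcs then yields trivializations $u^{-1}(A_j) \cong F_j \times A_j$, so $\hat{M} \setminus \mathrm{G}_{\hat{v}}$ has finitely many connected components. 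Because $\tau^*\hat{v} = -\hat{v}$, we have $u \circ \tau = -u$ after a suitable normalization, and $\tau$ acts on $S^1$ by $\theta \mapsto -\theta$; the two fixed points $\{0, 1/2\}$ lie among the critical values since $\Zl \subset u^{-1}\{0, 1/2\}$. Consequently each arc $A_j$ is swapped with a distinct arc by $\tau$, every connected component $\hat{B}$ of $\hat{M} \setminus \mathrm{G}_{\hat{v}}$ is disjoint from $\tau(\hat{B})$, and $p|_{\hat{B}} : \hat{B} \to p(\hat{B}) =: B$ is a diffeomorphism. Therefore $M \setminus \mathrm{G}_v$ has finitely many connected components, each a band $B \cong F \times A_j$ whose leaves are the slices $F \times \{t\}$.

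Finally, the image $p_v(B) \subset \T_{M,v}$ should be isometric to an open interval of length $|A_j|/\mu$. The upper bound $d_v([t_1], [t_2]) \le |t_1 - t_2|/\mu$ is obtained from a transverse path inside $B$, where $|v|$ integrates against $du/\mu$; the lower bound follows by lifting an almost-minimizing path in $M$ to $\hat{M}$ and comparing $\int |\hat{v}(\dot{\hat{\gamma}})|\,dt$ with the total variation of $u$ along the lift. Gluing the finitely many open-interval edges to the finitely many vertices produces the desired finite graph structure on $\T_{M,v}$. \textbf{The main obstacle} is this last lower-bound argument: a minimizing path in $M$ could a priori leave a band through $\mathrm{G}_v$ and re-enter, so one must use the $\tau$-equivariance of $u$, together with careful tracking of the two possible lifts to $\hat{M}$, to rule out such shortcuts and to ensure the metric on each edge really reduces to the flat metric on an interval.
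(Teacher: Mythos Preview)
Your approach is essentially the paper's: identify vertices with singular leaves and edges with connected components of $M\setminus\mathrm{G}_v$. You are more careful than the paper about why there are only finitely many such components, invoking Ehresmann for $u:\hat{M}\to S^1$ where the paper simply writes $M\setminus\mathrm{G}_v=W_1\sqcup\cdots\sqcup W_m$ and relies on the earlier remark that there are finitely many singular leaves.

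Your ``main obstacle'' rests on a misreading of the target statement. A finite metric graph is a length space in which every edge point has a neighborhood isometric to an interval; it is \emph{not} required that each edge $p_v(B)$, with the metric induced from $\T_{M,v}$, be globally isometric to an interval. Shortcuts through $\mathrm{G}_v$ and other bands are precisely what the path metric on a graph permits, so there is nothing to rule out. What remains is the local statement, and this needs neither lifts to $\hat{M}$ nor tracking of $\tau$-equivariance: for $[x]$ with $L_x$ regular, take $\epsilon$ small enough that the neighborhood $U_\epsilon(L_x)\approx L_x\times(-\epsilon,\epsilon)$ constructed before Lemma~\ref{pointandleaf} lies inside a single band, so that $v=\pm df$ there with $f$ the interval coordinate. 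Any path from $x$ to a point $y$ at parameter $t\in(-\epsilon,\epsilon)$ either stays in $U_\epsilon(L_x)$, in which case $\int|v(\dot\gamma)|\ge\bigl|\int df(\dot\gamma)\bigr|=|t|$, or exits $U_\epsilon(L_x)$, in which case the segment up to first exit already contributes $\int|v(\dot\gamma)|\ge\epsilon>|t|$. Together with the obvious upper bound this shows $B_\epsilon([x])$ is isometric to $(-\epsilon,\epsilon)$, which is all that is needed.
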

\begin{proof}
    The complement $M\setminus\mathrm{G}_v$ has several components. Let's write $M\setminus\mathrm{G}_v=W_1\sqcup W_2\sqcup\cdots \sqcup W_m$. 
    Since any loop $\gamma$ that $E|_{\gamma}$ is nontrivial must have nontrivial intersection with a singular leaf, we see that on each component $W_i$, $v|_{W_i}=\pm w_i$ for some well-defined and nonvanishing $1$-form $w_i$.

    Therefore, each $W_i$ is foliated by regular leaves of $\ker v$, and admits a natural product structure $W_i \approx L_i \times (0, d_i)$,
    where $L_i$ is any regular leaf in $W_i$, and $d_i = \sup_{x,y \in W_i} d_v(x,y)$ is the diameter of $W_i$ in the leaf space metric.

    We now define a graph $\mathcal{G}$. The vertices of $\mathcal{G}$ are the singular leaves of $\ker v$. There is an edge between two vertices $L_1$ and $L_2$ if and only if there exists a component $W_i$ such that both $L_1$ and $L_2$ are contained in $\overline{W}_i$. We equip $\mathcal{G}$ with a metric by assigning to each edge the length $d_i$ of the corresponding region $W_i$. 
    
    The projection $p_v \colon M \to \T_{M,v}$ collapses each regular leaf to a point, and maps each $W_i$ to an open interval of length $d_i$,
    with endpoints corresponding to the boundary singular leaves. Hence, $\T_{M,v}$ is isometric to $\mathcal{G}$.

    There is a subtlety that the definitions of vertices are different on $\T_{\tilde{M},\tilde{v}}$ and $\T_{M,v}$. Note that for a vertex $[x]$ of $\T_{M,v}\cong\mathcal{G}$, it is possible that there is a small neighborhood $B_{\epsilon}([x])$ that is isometric with the open interval $(-\epsilon,\epsilon)$. 
\end{proof}

\begin{remark}A subtlety arises in Theorem \ref{simplicialgraph}: unlike in $\T_{\tilde{M},\tilde{v}}$, a vertex $[x] \in \T_{M,v}$ may have a neighborhood isometric to an open interval $(-\epsilon, \epsilon)$. Also, note that $(\T_{M,v},d_v)$ is isometric with the quotient space of the  $\ZZ$-tree $(\T_{\tilde{M},\tilde{v}},d_{\tilde{v}})$ under the isometric $\pi_1(M)$-action. Thus a vertex of $\T_{\tilde{M},\tilde{v}}$ can be folded to be a vertex of $\T_{M,v}$ whose neighborhood isometric to some open interval.
\end{remark}

\subsection{The leaf space on $3$-manifolds.}\label{4.2}

In this subsection, we study rational, transverse and nondegenerate $\ZT$ harmonic $1$-forms on closed oriented $3$-dimensional manifolds. In this case, any singular locus $\Zl$ is a disjoint union of $S^1$ in the $3$-manifold. And the local structure of singular foliation near a component of $\Zl$ is simple.

\begin{theorem}\label{distinguish}
    If $v=v(g,\Zl,[h])$ is a rational, transverse and nondegenerate $\ZT$ harmonic $1$-forms on a closed oriented $3$-dimensional manifold $M^3$. Then there exists another $\ZT$ harmonic $1$-form $v'=v'(g',\Zl,[h])$ with the same singular locus $\Zl$ such that: 
    \begin{enumerate}[label=(\roman*)]
        \item $v'$ is still rational, transverse and nondegenerate.
        \item Each singular leaf $L$ of $\ker v'$ contains at most one component of $\Zl$. Moreover, if $L$ contains a component of $\Zl$, we may assume that $L\cap I_{v'}=\varnothing$.
        \item The metric $g'$ can be chosen arbitrarily close to $g$ in the Fr\'echet topology on $\M$, i.e., for any $\varepsilon>0$, we can make $d(g'-g,0)<\epsilon$.
    \end{enumerate}
\end{theorem}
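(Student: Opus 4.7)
The plan is to exploit the rationality of $v$ to reduce the question to controlling an $S^1$-valued function on the branched double cover. Since $v$ is rational, there is $\mu \in \mathbb{Q}^{\times}$ and a continuous map $u : \hat{M} \to S^1$ with $\mu\hat{v} = du$; the $\tau$-antiinvariance $\tau^{\ast}\hat{v} = -\hat{v}$ forces $u \circ \tau = -u + c$ for a constant, and after shifting we may arrange $u \circ \tau = -u$ and $u|_{\Zl} \equiv 0$. The critical points of $u$ are exactly $\Zl \cup \hat{I}_{v}$, and singular leaves of $\ker \hat{v}$ are connected components of critical level sets of $u$. In these terms, conclusion (ii) translates into: each connected component of $u^{-1}(0) \subset \hat{M}$ contains at most one component $\Sigma_i \subset \Zl$ and no isolated zero.

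First I would perturb $g$ in a Fr\'echet neighborhood so that the associated $u$ takes pairwise distinct values at all isolated zeros, and none of these values equals $0$. Using the Green's function representation of the variation $\dot{f}$ of the lifted harmonic function (cf.\ the proof of Lemma~\ref{partialderivativeofevissurj}), one checks that metric variations supported in small balls around an isolated zero change the value of $u$ at that zero in a submersive manner; then a parametric transversality argument in the Banach setting $\M_0^{\ell,\alpha}$, followed by smoothing as in Lemma~\ref{transversecloseenough}, yields the desired genericity. Invoking Lemma~\ref{rotionalandMorse}, we may simultaneously preserve rationality, transversality, and nondegeneracy. After this stage, every singular leaf containing a $\Sigma_i$ contains no isolated zero, so only the separation of $\Zl$-components remains.

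Next, I would argue by induction on the number of pairs $(\Sigma_i, \Sigma_j)$ that share a connected component of $u^{-1}(0)$. Suppose $\Sigma_i, \Sigma_j$ still lie in the same component $L$ of the current $u^{-1}(0)$; then $L$ contains a two-dimensional sheet joining them. Choose a small ball $B \subset M \setminus (\Zl \cup I_v)$ whose lift meets this sheet in a disk $D$ separating the $\Sigma_i$- and $\Sigma_j$-ends. Since $v$ is exact on $B$, write $v|_B = df$ with $f|_{D} = 0$. The plan is to graft, in the sense of Section~\ref{3.4}, a locally constructed odd harmonic model on $B$ whose $0$-level set is disconnected across $D$ (for instance, a model whose lift realizes a pair of canceling Morse critical points of opposite index with levels $\pm \delta \neq 0$ placed on either side of $D$). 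Gluing this model to $v$ along a cambium inside $B$ produces a new $2$-valued $1$-form $v_2$, and the Hodge dual is glued in parallel to build a compatible metric $g_2$ exactly as in Steps~2--4 of Section~\ref{3.4}; Donaldson's deformation theorem (Theorem~\ref{deformation}) then promotes $v_2$ to a nondegenerate $\ZT$ harmonic $1$-form $v'$ in the same cohomology class with singular locus still $\Zl$, and a repetition of the first step restores rationality and transversality while keeping the new critical values off $0$. Because the surgery was local and the $0$-level set inside $B$ is genuinely disconnected, the invariant ``number of $\Zl$-connections'' strictly decreases.

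The main obstacle is the grafting step: constructing an odd harmonic local model on $B$ whose insertion actually disconnects the $0$-level set of the combined form, and verifying that the metric-adjustment argument of Section~\ref{3.4} carries through in this new setting without reintroducing $\Zl$-connections elsewhere. The harmonicity and the nondegeneracy are handled by the gluing machinery already developed, but one must check that the local model can be chosen so that the two Morse critical points created remain in $B$ (so that globally no new singular leaves interact with $\Zl$) and that, upon the subsequent small perturbation pushing their critical values off $0$, the disconnection of $u^{-1}(0)$ persists. This persistence follows from the stability of level-set topology away from critical values, which reduces the remaining work to a careful local analysis inside $B$.
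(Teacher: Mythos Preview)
Your second step (pushing the $u$-values at the isolated zeros off $0$) is a reasonable way to handle the second clause of (ii), but the surgery in your third step cannot work, for a purely topological reason. Let $\hat L$ be the component of $u^{-1}(0)\subset\hat M$ containing $\Sigma_i$ and $\Sigma_j$; after your second step it is a smooth surface away from $\Zl$. Because your ball $B$ lies in $M\setminus\Zl$, its lift $\hat B=B^{+}\sqcup B^{-}$ meets $\hat L$ in two embedded disks $D^{\pm}$, and any path in $\hat L$ from $\Sigma_i$ to $\Sigma_j$ that crosses $D^{\pm}$ can be rerouted along the boundary circle $\partial D^{\pm}$. Hence $\hat L\setminus(D^{+}\cup D^{-})$ is still connected and contains both knots. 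Since your modification has $u'=u$ outside $\hat B$, the new zero set $(u')^{-1}(0)$ contains $\hat L\setminus(D^{+}\cup D^{-})$, so $\Sigma_i$ and $\Sigma_j$ lie in the \emph{same} singular leaf of $\ker v'$ no matter what you do inside $\hat B$. The cancelling index-$1$/index-$2$ pair you insert can at best split off a small closed component of the level set inside $\hat B$; it can never separate the two knots, and the induction collapses.

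The paper avoids this obstruction by modifying $v$ not in a ball away from $\Zl$ but in three solid tori $B_{j,1},B_{j,2},B_{j,3}$ placed on the three sheets of the singular leaf inside a tubular neighbourhood of a chosen component $\Sigma_j$. These tori meet the leaf in \emph{annuli} rather than disks, and removing all three genuinely disconnects the leaf near $\Sigma_j$, separating the trivalent core around $\Sigma_j$ from the exterior part. The modification itself is a zero-free shear $h_{\epsilon_2}(x_1,x_2)=x_2+\eta(x_1)\chi(x_2)$, so no new isolated zeros are created, and the compatible metric $g'$ is obtained directly by taking $\star_{g'}v'=\star_g v$ and checking $v'\wedge\star_g v>0$ for small parameters---Donaldson's theorem is not used. (It would also be superfluous in your sketch: since your grafting is supported away from $\Zl$, the expansion of $v_2$ near $\Zl$ agrees with that of $v$, so the $A$-term is already identically zero and $\Zl$ is already the correct singular locus.)
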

\begin{proof}
    Let's write $\Zl=\Sigma_1\sqcup\cdots\sqcup \Sigma_n$ for some $n\ge 1$, each $\Sigma_j$ is diffeomorphic with $S^1$.

    For each $\Sigma_j~(1\le j\le n)$, choose a tubular neighborhood $U_i$ and a diffeomorphism $\phi:S^1\times D^2\to U_j$, such that $\phi^\ast v=d\re z^{3/2}$. Let $L_j$ be the singular leaf containing $\Sigma_j$, then we have $L_j\cap U_j\approx S^1\times\{z\in D^2:\re z^{3/2}=0\}$. Note that it could happen that $L_j=L_k$ for $j\ne k$. Actually, we will perturb $v$ together with $g$, to make sure that $L_j\ne L_k$ whenever $j\ne k$. The argument is similar to the grafting of transverse $2$-valued harmonic $1$-forms (Theorem \ref{singulartrans}).

    For each $j$, there exists open sets $B_{j,l}$ in $U_j$, such that $\phi^{-1}(B_{j,l})=e^{(2l-1)\pi i/3}=S^1\times\{e^{(2l+1)\pi i/3}(z+1-2\epsilon_1):z\in Q_{\epsilon_1}\}$, where $Q_{\epsilon_1}=\{(x_1,x_2):-\epsilon_1\le x_1,x_2\le\epsilon_1\}$ for $1\le l\le 3$. See Figure \ref{boxes}. 
    
    We can choose coordinates $(\theta,x_1,x_2)$ on each open set $B_{i,l}\approx S^1\times Q_\epsilon$ such that $v|_{B_{i,l}}=\pm dx_2$. 

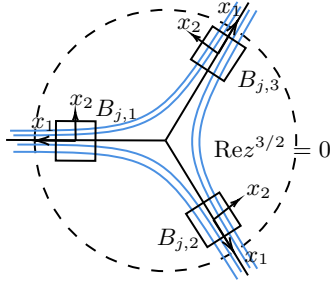
\begin{figure}[!h]
        \centering

\tikzset{every picture/.style={line width=0.75pt}} 

\begin{tikzpicture}[x=0.5pt,y=0.5pt,yscale=-1,xscale=1]

\draw  [dash pattern={on 4.5pt off 4.5pt}] (70,161.08) .. controls (70,106.45) and (114.29,62.17) .. (168.92,62.17) .. controls (223.55,62.17) and (267.83,106.45) .. (267.83,161.08) .. controls (267.83,215.71) and (223.55,260) .. (168.92,260) .. controls (114.29,260) and (70,215.71) .. (70,161.08) -- cycle ;
\draw    (48.33,160.67) -- (168.92,161.08) ;
\draw    (168.92,161.08) -- (231.83,56.67) ;
\draw    (168.92,161.08) -- (231.33,263.17) ;
\draw [color={rgb, 255:red, 74; green, 144; blue, 226 }  ,draw opacity=1 ]   (53.83,157.17) .. controls (166.33,157.17) and (165.83,154.17) .. (225.83,59.67) ;
\draw [color={rgb, 255:red, 74; green, 144; blue, 226 }  ,draw opacity=1 ]   (51.33,153.67) .. controls (163.83,153.67) and (163.33,150.67) .. (223.33,56.17) ;
\draw [color={rgb, 255:red, 74; green, 144; blue, 226 }  ,draw opacity=1 ]   (56.83,164.17) .. controls (169.33,164.17) and (167.33,167.67) .. (226.33,260.67) ;
\draw [color={rgb, 255:red, 74; green, 144; blue, 226 }  ,draw opacity=1 ]   (53.83,169.67) .. controls (166.33,169.67) and (164.33,173.17) .. (223.33,266.17) ;
\draw [color={rgb, 255:red, 74; green, 144; blue, 226 }  ,draw opacity=1 ]   (232.83,257.67) .. controls (174.33,162.17) and (174.83,162.67) .. (232.33,62.17) ;
\draw [color={rgb, 255:red, 74; green, 144; blue, 226 }  ,draw opacity=1 ]   (238.33,258.17) .. controls (179.83,162.67) and (180.33,163.17) .. (237.83,62.67) ;
\draw   (86.08,146.34) -- (116.04,146.54) -- (115.84,176.5) -- (85.89,176.3) -- cycle ;
\draw   (188.14,99.71) -- (204.54,74.64) -- (229.61,91.04) -- (213.21,116.11) -- cycle ;
\draw   (209.67,200.44) -- (225.81,225.67) -- (200.58,241.81) -- (184.44,216.58) -- cycle ;
\draw    (100.96,161.42) -- (100.69,141.33) ;
\draw [shift={(100.67,139.33)}, rotate = 89.23] [color={rgb, 255:red, 0; green, 0; blue, 0 }  ][line width=0.75]    (6.56,-1.97) .. controls (4.17,-0.84) and (1.99,-0.18) .. (0,0) .. controls (1.99,0.18) and (4.17,0.84) .. (6.56,1.97)   ;
\draw    (100.96,161.42) -- (72,161.11) ;
\draw [shift={(70,161.08)}, rotate = 0.62] [color={rgb, 255:red, 0; green, 0; blue, 0 }  ][line width=0.75]    (10.93,-3.29) .. controls (6.95,-1.4) and (3.31,-0.3) .. (0,0) .. controls (3.31,0.3) and (6.95,1.4) .. (10.93,3.29)   ;
\draw    (208.87,95.38) -- (190.61,82.01) ;
\draw [shift={(189,80.83)}, rotate = 36.19] [color={rgb, 255:red, 0; green, 0; blue, 0 }  ][line width=0.75]    (6.56,-1.97) .. controls (4.17,-0.84) and (1.99,-0.18) .. (0,0) .. controls (1.99,0.18) and (4.17,0.84) .. (6.56,1.97)   ;
\draw    (205.12,221.13) -- (222.93,207.07) ;
\draw [shift={(224.5,205.83)}, rotate = 141.72] [color={rgb, 255:red, 0; green, 0; blue, 0 }  ][line width=0.75]    (6.56,-1.97) .. controls (4.17,-0.84) and (1.99,-0.18) .. (0,0) .. controls (1.99,0.18) and (4.17,0.84) .. (6.56,1.97)   ;
\draw    (208.87,95.38) -- (222.49,72.06) ;
\draw [shift={(223.5,70.33)}, rotate = 120.29] [color={rgb, 255:red, 0; green, 0; blue, 0 }  ][line width=0.75]    (10.93,-3.29) .. controls (6.95,-1.4) and (3.31,-0.3) .. (0,0) .. controls (3.31,0.3) and (6.95,1.4) .. (10.93,3.29)   ;
\draw    (205.12,221.13) -- (219.43,243.65) ;
\draw [shift={(220.5,245.33)}, rotate = 237.58] [color={rgb, 255:red, 0; green, 0; blue, 0 }  ][line width=0.75]    (10.93,-3.29) .. controls (6.95,-1.4) and (3.31,-0.3) .. (0,0) .. controls (3.31,0.3) and (6.95,1.4) .. (10.93,3.29)   ;

\draw (203.33,154.57) node [anchor=north west][inner sep=0.75pt]  [font=\footnotesize]  {$\mathrm{Re} z^{3/2} =0$};
\draw (113.5,128) node [anchor=north west][inner sep=0.75pt]  [font=\footnotesize]  {$B_{j,1}$};
\draw (160,225.9) node [anchor=north west][inner sep=0.75pt]  [font=\footnotesize]  {$B_{j,2}$};
\draw (221,106.4) node [anchor=north west][inner sep=0.75pt]  [font=\footnotesize]  {$B_{j,3}$};
\draw (94,124.4) node [anchor=north west][inner sep=0.75pt]  [font=\footnotesize]  {$x_{2}$};
\draw (65,142) node [anchor=north west][inner sep=0.75pt]  [font=\footnotesize]  {$x_{1}$};
\draw (175.5,66.9) node [anchor=north west][inner sep=0.75pt]  [font=\footnotesize]  {$x_{2}$};
\draw (228,192.4) node [anchor=north west][inner sep=0.75pt]  [font=\footnotesize]  {$x_{2}$};
\draw (226.5,242.9) node [anchor=north west][inner sep=0.75pt]  [font=\footnotesize]  {$x_{1}$};
\draw (206,54.9) node [anchor=north west][inner sep=0.75pt]  [font=\footnotesize]  {$x_{1}$};

\end{tikzpicture}

      \caption{The singular foliation near $\Sigma_j$}
        \label{boxes}
    \end{figure}

    Let's define a function on the square $Q_{\epsilon_1}$, as follows. First, choose a constant $\epsilon_2>0$ smaller that $\epsilon_1$, let $\eta_{\epsilon_2}:[-\epsilon_1,\epsilon_1]\to [0,\epsilon_2]$ be a smooth function that $\eta_{\epsilon_2}(x)=0$ for $-\epsilon_1\le x\le -\epsilon_1/2$ and $\eta_{\epsilon_2}(x)={\epsilon_2}$ for $\epsilon_1/2\le x\le \epsilon_1$. Additionally, we assume $|d\eta_{\epsilon_2}|\le 2\epsilon_2/\epsilon_1$. 
    
    Second, let $\chi:[-\epsilon_1,\epsilon_1]\to [0,1]$ be a smooth bump function, such that the support of $\chi$ is contained in $(-\epsilon_1,\epsilon_1)$ and $|d\chi|\le 2/\epsilon_1$. We define $h_{\epsilon_2}:Q_{\epsilon_1}\to [-\epsilon_1,\epsilon_1]:(x_1,x_2)\mapsto x_2+\eta_{\epsilon_2}(x_1)\chi(x_2)$.  

    Then $|dh_{\epsilon_2}|\ge 1-|d\eta_{\epsilon_2}|-\epsilon_2|d\chi|\ge 1-4\epsilon_2/\epsilon_1>0$, provided $\epsilon_2<\epsilon_1/4$. The level sets of $h_{\epsilon_2}$ are described in Figure \ref{hc}. 

\begin{figure}[!h]
    \centering

\tikzset{every picture/.style={line width=0.75pt}} 

\begin{tikzpicture}[x=0.5pt,y=0.5pt,yscale=-1,xscale=1]

\draw    (152.17,132) .. controls (168.17,132.5) and (173.67,132) .. (184.17,135) .. controls (194.67,138) and (232.67,150) .. (240.67,152.5) .. controls (248.67,155) and (258.17,155.5) .. (273.17,155.5) ;
\draw [color={rgb, 255:red, 74; green, 144; blue, 226 }  ,draw opacity=1 ]   (152.33,113) .. controls (168.33,113.5) and (177.83,116) .. (186.83,119) .. controls (195.83,122) and (225.33,129.5) .. (233.33,132) .. controls (241.33,134.5) and (258.33,135.5) .. (273.33,135.5) ;
\draw [color={rgb, 255:red, 74; green, 144; blue, 226 }  ,draw opacity=1 ]   (152.67,97) .. controls (168.67,97.5) and (177.83,98) .. (187.33,100) .. controls (196.83,102) and (217.83,105) .. (229.33,107) .. controls (240.83,109) and (257.33,110) .. (272.33,110) ;
\draw [color={rgb, 255:red, 74; green, 144; blue, 226 }  ,draw opacity=1 ]   (151.67,84) -- (272.17,84) ;
\draw [color={rgb, 255:red, 74; green, 144; blue, 226 }  ,draw opacity=1 ]   (151.33,160.5) .. controls (167.33,161) and (182.33,162) .. (192.33,164.5) .. controls (202.33,167) and (217.83,171) .. (228.83,173) .. controls (239.83,175) and (246.83,177) .. (272.33,177) ;
\draw [color={rgb, 255:red, 74; green, 144; blue, 226 }  ,draw opacity=1 ]   (152.5,190) -- (273.33,190) ;
\draw    (152.17,132) -- (300.17,132.49) ;
\draw [shift={(302.17,132.5)}, rotate = 180.19] [color={rgb, 255:red, 0; green, 0; blue, 0 }  ][line width=0.75]    (10.93,-3.29) .. controls (6.95,-1.4) and (3.31,-0.3) .. (0,0) .. controls (3.31,0.3) and (6.95,1.4) .. (10.93,3.29)   ;
\draw    (213.5,195.5) -- (212.68,42) ;
\draw [shift={(212.67,40)}, rotate = 89.69] [color={rgb, 255:red, 0; green, 0; blue, 0 }  ][line width=0.75]    (10.93,-3.29) .. controls (6.95,-1.4) and (3.31,-0.3) .. (0,0) .. controls (3.31,0.3) and (6.95,1.4) .. (10.93,3.29)   ;
\draw   (152,74.5) -- (273,74.5) -- (273,195.5) -- (152,195.5) -- cycle ;

\draw (308,122.9) node [anchor=north west][inner sep=0.75pt]    {$x_{1}$};
\draw (221.5,31.9) node [anchor=north west][inner sep=0.75pt]    {$x_{2}$};
\draw (277,144.9) node [anchor=north west][inner sep=0.75pt]    {$\epsilon_2$};

\end{tikzpicture}

    \caption{The level set of $h_{\epsilon_2}$.}
    \label{hc}
\end{figure}
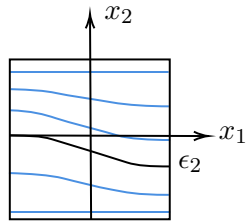

    Note that near the boundary of $Q_{\epsilon_1}$, $h_{\epsilon_2}$ is independent of $x_1$, and hence $dh_{\epsilon_2}=dx_2$.
    Let $p_2:S^1\times \{e^{(2l+1)\pi i/3}(z+1-2\epsilon_1):z\in Q_{\epsilon_1}\}\to Q_\epsilon$ be the projection that maps $(\theta,e^{(2l+1)\pi i/3}(z+1-2\epsilon_1))$ to $z$. Define a nonvanishing $1$-form $(\phi^{-1})^\ast p_2^\ast dh_{\epsilon_2}$ on each $B_{i,l}$. We need to choose different $\epsilon_2$ for each $\Sigma_j~(1\le j\le n)$ in the following, hence we write $\epsilon_2=\epsilon_2(j)$.

    The $1$-form $\sum_j(\phi^{-1})^\ast p_2^\ast dh_{\epsilon_2(j)}$ is defined on the open subset $\cup_{j,l}B_{j,l}$ of $M$, we can extend it to be a $2$-valued closed $1$-form $v'$, that coincides with $v$ outside $\cup_{j,l}B_{j,l}$. For each singular leaf $L_j~(j\ne 1)$, $p_2\circ\phi^{-1} (L_j\cap B_{k,l})=:C_{j,k,l}$ is a disjoint union of curves in $Q_{\epsilon_1}$. Moreover, each component of $C_{j,k,l}$ is a level set of $h_{\epsilon_2(k)}$ on $Q_{\epsilon_1}$. 

    If $\epsilon_2(j)=0$ for $j\ne1$, then $v'$ coincides $v$ outside the open sets $B_{1,l}~(1\le l\le 3)$. Note that $h_{\epsilon_2(1)}|_{C_{1,1,l}}$ is invariant as $\epsilon(2)$ varies. On the other hand, for $j\ne 1$, if $L_j=L_1$, $h_{\epsilon_2(1)}|_{C_{j,1,l}}$ will change for $\epsilon_2(1)>0$. 
    
    Since the set $\{h_{\epsilon_2(1)}|_{C_{j,1,l}}:2\le j\le n\}$ is finite, we can choose $\epsilon_2(1)>0$ ($\epsilon_2(j)=0$ for $j\ne 1$) such that the values of $h_{\epsilon_2(1)}$ on $C_{j,1,l}~(j\ne 1)$ differ from those on $C_{1,1,l}$. 
    
    Therefore, with this choice of $\epsilon_1(k)~(1\le k\le n)$, for the new $2$-valued closed $1$-form $v'$,the singular leaf $L_j'$ of $\ker v'$ containing $\Sigma_j$ is distinct from $L_1'$ for all $j \ne 1$.
    
    By induction on $j = 2, \dots, n$, we can choose $\epsilon_2(j) > 0$ sequentially so that in the final $v'$, each singular leaf of $\ker v'$ contains at most one component of $\Zl$. By a similar argument, we can choose $\epsilon_2(j)~(1\le j\le n)$ appropriately, such that if a singular leaf $L$ of $\ker v'$ contains a component of $\Zl$, then $L\cap I_{v'}=\varnothing$.

    Next we construct a metric $g'$ on $M$, depending on the parameters $\epsilon_2(j)~(1\le j\le n)$, such that $v'$ is harmonic with respect to $g'$. As in Theorem \ref{singulartrans}, we find a candidate for $\star_{g'}v'$, and then use it to determine the metric $g'$.

    Our candidate is $\star_{g}v$. Near the boundary of $\cup_{j,l}B_{j,l}$, $v'=v$, and hence $v'\wedge \star_{g}v=v\wedge star_{g}v>0$. And in the interior of $B_{j,l}$, if $\epsilon_2(j)$ is sufficiently small, then $v'$ is close to $v$ for any $C^{\ell,\alpha}$-norm (with $\ell\ge 1$ fixed), and hence $v'\wedge \star_{g}v$ close to $v\wedge \star_{g}v$. As a result, $v'\wedge \star_{g}v>0$ on $\cup_{j,l}B_{j,l}$.

    Similar to the Step $6$ before Theorem \ref{normalfixdense}, when $\epsilon_2(j)~(1\le j\le n)$ are sufficiently small, we can define a metric $g'$ that coincides with $g$ outside $\cup_{j,l}B_{j,l}$, such that $\star_{g'}v'=\star_{g}v$. Moreover, $g-g'$ is supported in $\cup_{j,l}B_{j,l}$, and $||g-g'||_{C^{\ell,\alpha}}$ can be made arbitrarily small as $\epsilon_2(j)\to 0$. 
\end{proof}

Next, we consider closed $3$-manifolds with vanishing first Betti number, i.e., rational homology spheres.
\begin{proposition}\label{finitetreeonRHS}
    Let $M^3$ be a rational homology sphere, and let $v = v(g, \Zl, [h])$ be a rational, transverse, and nondegenerate $\mathbb{Z}/2$-harmonic $1$-form on $M$.
    Then the leaf space $\T_{M,v}$ on $M$ is a finite tree.
\end{proposition}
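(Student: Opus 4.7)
The plan is to combine Theorem \ref{simplicialgraph}, which already gives that $\T_{M,v}$ is a finite connected graph, with the vanishing $H_1(M;\QQ)=0$ to force the first rational homology of $\T_{M,v}$ to vanish, making $\T_{M,v}$ a tree.

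The main step will be to show that the projection $p_v:M\to \T_{M,v}$ is surjective on fundamental groups, i.e., $(p_v)_\ast:\pi_1(M)\twoheadrightarrow \pi_1(\T_{M,v})$. I would argue this by an explicit lifting of edge-path loops: given a based loop $\gamma^\ast=e_1 e_2\cdots e_k$ in $\T_{M,v}$ through vertices $V_0,V_1,\ldots,V_k=V_0$, each edge $e_i$ corresponds to a product region $W_{e_i}\approx L_{e_i}\times(0,d_{e_i})$ in $M$ and each $V_i$ to a connected singular leaf $L_i^\ast$, as described in the proof of Theorem \ref{simplicialgraph}. Picking a transverse arc $\alpha_i\subset \overline{W_{e_i}}$ from $L_{i-1}^\ast$ to $L_i^\ast$, I would splice consecutive arcs together by paths inside the singular leaves $L_i^\ast$, producing a loop $\gamma\subset M$ that projects onto $\gamma^\ast$.

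Once the $\pi_1$-surjection is in hand, abelianizing and tensoring with $\QQ$ yields a surjection $H_1(M;\QQ)\twoheadrightarrow H_1(\T_{M,v};\QQ)$. Since $M$ is a rational homology sphere, $H_1(M;\QQ)=0$, so $H_1(\T_{M,v};\QQ)=0$. The first integral homology of a finite connected graph is free abelian, so rational vanishing implies integral vanishing, and a finite connected graph with trivial $H_1$ is a tree. The only place requiring real care is the splicing step in the preceding paragraph; it reduces to checking path-connectedness of singular leaves, which follows from their definition as images of connected components of level sets of $u:\hat{M}\to S^1$ together with the local cone-type descriptions near isolated Morse-type zeros and near components of $\Zl$ recalled before Theorem \ref{simplicialgraph}. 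I do not anticipate any genuine obstacle.
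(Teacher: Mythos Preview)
Your proposal is correct and essentially the same as the paper's argument. The paper builds an explicit embedding $\iota:\T_{M,v}\hookrightarrow M$ by sending each vertex to a point in its leaf and each edge to a $|v|$-positive arc in the corresponding region $W_i$, then observes that $p_v\circ\iota\simeq\mathrm{id}$, making $\iota_\ast$ injective on $H_1(-;\QQ)$; your loop-by-loop lifting is exactly $\iota$ applied to edge-path loops, and the surjectivity of $(p_v)_\ast$ you deduce is the other half of the same homotopy-section statement.
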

\begin{proof}
    By Theorem \ref{simplicialgraph}, $\T_{M,v}$ is a finite graph, with edges corresponding to the connected components of $M \setminus \mathrm{G}_v$. We construct an embedding $\iota:\T_{M,v} \to M$ as follows.
    
    First, map each vertex $[x] \in \T_{M,v}$ to a point $x \in [x] \subset M$. Second, recall that an edge between vertices $[x_1]$ and $[x_2]$
    corresponds to a component $W_i \subset M \setminus \mathrm{G}_v$. Map this edge to an arc $\gamma \subset \overline{W}_i$ such that $\gamma(0) = x_1$, $\gamma(1) = x_2$, and $|v(\dot{\gamma}(t))| > 0$ for all $t$.

    This defines a continuous embedding $\iota:\T_{M,v} \to M$ such that the composition $p_v \circ \iota: \T_{M,v} \to \T_{M,v}$
    is homotopic to the identity map. Therefore, the induced map on homology $\iota_* \colon H_1(\T_{M,v}; \mathbb{Q}) \to H_1(M; \mathbb{Q})$
    is injective.

    Since $M$ is a rational homology sphere, $H_1(M; \mathbb{Q}) \cong 0$, so $H_1(\T_{M,v}; \mathbb{Q}) = 0$. Hence, $\T_{M,v}$ is a finite graph with no cycles, i.e., a finite tree.
\end{proof}

\begin{lemma}\label{1bdry1knot}
    Let $v = v(g, \Zl, [h])$ be a nondegenerate $\mathbb{Z}/2$-harmonic $1$-form on a closed $3$-manifold $M$.
    Then there exists a rational, transverse, and nondegenerate $\mathbb{Z}/2$-harmonic $1$-form $v'=v'(g',\Zl',[h])$
    such that each boundary vertex of the leaf space $\T_{M,v'}$, viewed as a singular leaf of $\ker v'$, contains exactly one connected component of $\Zl'$ and no isolated zeros of $v'$.
\end{lemma}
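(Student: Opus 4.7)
The plan proceeds in four steps.

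First, I would apply Lemma \ref{rotionalandMorse} and Theorem \ref{perturbation} to obtain a rational, transverse, and nondegenerate $\ZT$ harmonic $1$-form $v_1$ close to $v$; the class ``$[h]$'' in the conclusion will be interpreted as this rational class, a small perturbation of the original. Then I would apply Theorem \ref{distinguish} to obtain $v_2$ whose singular leaves each contain at most one connected component of $\Zl$, and no isolated zeros when they contain such a component.

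Second, I would classify the vertices of the finite graph $\T_{M,v_2}$ into type $V_{\Zl}$ (the leaf contains a $\Zl$-component) and type $V_I$ (the leaf contains only isolated Morse saddles). Both the local model $\re z^{3/2}=0$ at $\Zl$ and the double cone $x_1^2=x_2^2+x_3^2$ at a Morse saddle split a small $3$-ball into exactly three pieces, so every vertex of $\T_{M,v_2}$ has degree at most $3$. A boundary vertex (degree $1$) therefore requires a topological degeneracy: all three local sectors around the singular leaf merge globally into a single component of $M\setminus\mathrm{G}_{v_2}$. The remaining task is to eliminate boundary vertices of type $V_I$.

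Third, for each such boundary vertex $[L]$ with saddles $y_1,\dots,y_k$, unique adjacent region $W$, and neighboring tree vertex $[L']$, I would perform a local surgery adapting Section \ref{3.4}: pick a regular leaf $L_\ast\subset W$ very close to $L$, and let $N\subset\bar W$ be the compact subregion bounded by $L_\ast$ with the $y_i$ in its interior. I would replace $v_2|_N$ with a nonvanishing exact $2$-valued $1$-form $df_0$ that matches $v_2$ in a collar of $L_\ast$ and whose potential $f_0$ has no critical points in $N$; then glue $df_0$ to $v_2|_{M\setminus N}$ and adjust the metric inside $N$ exactly as in Step 4 of Section \ref{3.4} to produce a new nondegenerate $\ZT$ harmonic $1$-form $v_3$. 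Nondegeneracy and the singular locus are preserved because the surgery is localized strictly away from $\Zl$, so the expansion \eqref{expansion} along $\Zl$ is unchanged. Iterating, with Theorems \ref{distinguish} and \ref{perturbation} reapplied between iterations if needed, should eventually eliminate all type-$V_I$ boundary vertices; termination follows from finiteness of the tree and the fact that $\Zl\neq\varnothing$ forces at least one $V_{\Zl}$-vertex to exist.

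The hard part will be constructing the nonvanishing $df_0$ on $N$. The relative Poincar\'e--Hopf obstruction equals $\pm\chi(L_\ast)/2$ (using $\chi(N)=\chi(\partial N)/2$ for $3$-manifolds with boundary), which need not vanish. To handle this, my plan is first to reduce the index-sum of the saddles on $L$ by Morse cancellations along gradient lines within $W$, pairing index-$1$ with index-$2$ saddles and removing such pairs via a small metric perturbation, before attempting the nonvanishing extension; any residual essential saddles would then be pushed into $[L']$ via a similar metric shift (moving their critical values to $u(L')$), reducing the problem to a smaller tree. Carrying out these Morse cancellations and metric shifts simultaneously in the $\ZT$-harmonic setting, while preserving both nondegeneracy and harmonicity through tame estimates of the type developed in Section \ref{3.3}, is the principal technical challenge.
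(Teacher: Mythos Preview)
Your first two steps are fine, but Step~3 is unnecessary because of an observation you have missed: after applying Theorem~\ref{distinguish}, boundary vertices of type $V_I$ simply cannot exist. This is the entire content of the paper's proof of the remaining assertion, and it is a one-paragraph maximum-principle argument rather than a surgery.

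Suppose for contradiction that $[x]$ is a boundary vertex of $\T_{M,v'}$ with $L_x\cap\Zl'=\varnothing$. For small $\epsilon$ the region $W_\epsilon=p_{v'}^{-1}(\overline{B}_\epsilon([x]))$ is disjoint from $\Zl'$ and has \emph{connected} boundary (a single regular leaf, since $[x]$ has degree~$1$). Because $W_\epsilon$ misses the branch set, its preimage in $\hat M'$ splits into two sheets and $v'|_{W_\epsilon}$ becomes an honest single-valued harmonic $1$-form; by rationality and smallness of $\epsilon$ it is exact, say $v'|_{W_\epsilon}=d\underline{u}$ with $\underline{u}$ harmonic. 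Now $\underline{u}$ is constant on the connected boundary $\partial W_\epsilon$ but nonconstant on $W_\epsilon$ (since $L_x\neq\partial W_\epsilon$), so it must attain an interior extremum, i.e.\ a Morse critical point of index~$0$ or~$3$. But the isolated zeros of a transverse $\ZT$ \emph{harmonic} $1$-form on a $3$-manifold are all saddles of index~$1$ or~$2$, by the maximum principle. Contradiction.

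So the surgery of Step~3, the Morse cancellations, and the Poincar\'e--Hopf obstruction you flagged as ``the hard part'' never arise; you were in effect proposing to engineer around a configuration that is already impossible. The correct move is to recognise the impossibility directly. (Incidentally, your degree bound of~$3$ in Step~2 is not quite right either: a type-$V_I$ leaf may contain several saddles, so its degree in the graph can exceed~$3$. This does not affect the argument above.)
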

\begin{proof}
    The existence of a rational, transverse, and nondegenerate $\mathbb{Z}/2$-harmonic $1$-form $v'=v'(g',\Zl',[h])$, such that each vertex of $\T_{M,v'}$, when viewed as a singular leaf of $\ker v'$, contains at most one connected component of $\Zl'$, was established in Theorem~\ref{distinguish}. It remains to show that each boundary vertex of the tree $\T_{M,v'}$ contains \emph{exactly} one component of $\Zl'$.

    Let $[x]$ be a boundary point of $\T_{M,v'}$. For $\epsilon>0$ sufficiently small, the preimage of the closed neighborhood $\overline{B}_{\epsilon}([x])=\{[y]\in\T_{M,v'}:d_{v'}([y],[x])\le \epsilon\}\subset \T_{M,v'}$, under the projection $p_{v'}:M\to \T_{M,v'}$, is a submanifold $W_{\epsilon}=p_{v'}^{-1}(\overline{B}_{\epsilon}([x]))\subset M$ with connected boundary $\partial W_{\epsilon}=p^{-1}_{v'}(\partial B_{\epsilon}([x]))$. Let $L_x$ be the singular leaf corresponding to $[x]$.
    
    Suppose, for contradiction, that $\Zl' \cap L_x = \varnothing$. Consider the $2$-fold branched covering $p':\hat{M}'\to M$, branched along $\Zl'$. Since $\Zl' \cap L_x = \varnothing$, for $\epsilon>0$ sufficiently small, we have $W_\epsilon\cap \Zl'=\varnothing$, and hence $(p')^{-1}(W_{\epsilon})$ has two connected components in $\hat{M}'$, say $(p')^{-1}(W_{\epsilon})=W_\epsilon^+\sqcup W_\epsilon^-$. 

    The pullback $1$-form $\hat{v}'=(p')^\ast v'$ is well-defined. Thus $v|_{W_\epsilon}=\pm \hat{v}'|_{W^+_\epsilon}=\hat{v}'|_{W^\pm_\epsilon}$.
    On the other hand, by assumption $\hat{v}'$ is rational, there exists a map $u:\hat{M}'\to S^1$ such that $u^\ast d\theta=\mu\hat{v}'$ for some $\mu\in\QQ$. Therefore, the image of $W_\epsilon^+$ under the map $u$ has length $|u(W_{\epsilon}^+)|=|\mu|\cdot\epsilon$ on $S^1$. Consequently, for $\epsilon$ sufficiently small, $\hat{v}'|_{W^\pm_\epsilon}$ is exact.  

    It follows that $u$ defines a Morse function $\underline{u}:(W_{\epsilon},\partial W_{\epsilon})\to ([0,\epsilon],\{\epsilon\})$, and $\underline{u}^{-1}(0)=L_x$. 
    Since $\partial W_\epsilon$ is connected, and $\underline{u}|_{\partial W_\epsilon}\equiv \epsilon$, there must exist a critical point of $\underline{u}$ with Morse index $0$. However, critical points of $\underline{u}$ are exactly the isolated zeros of $v'$, none of which has Morse index $0$ or $3$, leading to a contradiction.
\end{proof}

Before proving the pruning result for nondegenerate $\ZT$ harmonic $1$-form, let's recall the intrinsic characterization of harmonic (single-valued) $1$-forms on closed oriented manifolds. 

Let $\omega$ be a closed (single-valued) $1$-form on some closed oriented manifold $N$. Denote the zero set of $\omega$ by $S_\omega$.

\begin{definition}[\cite{intrinsic}]
    A closed $1$-form $\omega$ is called \textbf{transitive} if for any point $x\in N\setminus S_\omega$ there is a closed $\omega$-positive smooth path $\gamma:S^1\to N$ through $x$. Here, ``$\omega$-positive" means that $\omega(\dot{\gamma}(t))>0$ for all $t\in S^1$.
\end{definition}

\begin{definition}[\cite{intrinsic}]\label{localharmonic}
    A closed $1$-form is called \textbf{locally intrinsically harmonic} if there exists an open neighborhood $U$ of its zero set $S_{\omega}$ and a Riemannian metric $g_U$ on $U$ which makes the restriction $\omega|_U$ being harmonic, i.e. $d\star_{g|_U}\omega=0$. 
\end{definition}

\begin{lemma}\label{omegapositive}
    Let $x,y\in N\setminus S_{\omega}$ and suppose that there is an $\omega$-positive arc $\gamma$ from $x$ to $y$, i.e., $\gamma(0)=x$, $\gamma(1)=y$ ad $\omega(\dot{\gamma}(t))>0$ for all $0\le t\le 1$. Let $L_x$ and $L_y$ denote the leaves of $\ker \omega$ passing through $x$ and $y$, respectively. Then, for any $x' \in L_x$ and $y' \in L_y$, there exists an $\omega$-positive arc $\gamma'$ from $x'$ to $y'$.
\end{lemma}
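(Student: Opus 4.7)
The plan is to combine a local perturbation argument at the endpoints with an open-closed argument on the leaves $L_x$ and $L_y$, both of which are connected by definition as leaves of the foliation $\ker\omega$ (which is regular on $N\setminus S_\omega$).

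First I would establish a local perturbation lemma: if $p\in N\setminus S_\omega$ and $\gamma\colon[0,1]\to N$ is $\omega$-positive with $\gamma(0)=p$, then for every $p''\in L_p$ sufficiently close to $p$ there is an $\omega$-positive arc $\tilde\gamma$ with $\tilde\gamma(0)=p''$ that agrees with $\gamma$ on $[\epsilon,1]$ for some $\epsilon>0$. Since $p\notin S_\omega$ and $d\omega=0$, the Poincar\'e lemma yields coordinates $(u,v)\in\mathbb{R}\times\mathbb{R}^{n-1}$ on a neighborhood $U\ni p$ with $\omega|_U=du$ and with the component of $L_p\cap U$ through $p$ equal to $\{u=0\}$. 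The hypothesis $\omega(\dot\gamma(0))>0$ gives $\gamma(\epsilon)=(u_1,v_1)$ with $u_1>0$ for small $\epsilon$. For $p''=(0,v'')$ with $v''$ close to $0$, set
\[\alpha(t)=(tu_1,\,(1-t)v''+tv_1),\qquad t\in[0,1],\]
so that $\omega(\dot\alpha)\equiv u_1>0$. Concatenating $\alpha$ with $\gamma|_{[\epsilon,1]}$ yields a piecewise-smooth $\omega$-positive path; since $\omega$-positivity of a tangent vector is an open condition, smoothing the corner by a convex blending of the two tangent vectors preserves strict positivity. The symmetric statement at the endpoint follows by reversing time.

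Next, fix the given $\omega$-positive arc from $x$ to $y$ and set
\[B=\{\,x'\in L_x : \exists\,y'\in L_y \text{ and an } \omega\text{-positive arc from } x' \text{ to } y'\,\}.\]
Applying the perturbation lemma at the starting point shows $B$ is open in $L_x$. For closedness, let $x'_\infty\in\overline{B}\cap L_x$, pick $x'\in B$ close to $x'_\infty$ along $L_x$ together with a witness arc from $x'$ to some $y'\in L_y$, and apply the perturbation lemma at $x'$ (whose leaf $L_{x'}=L_x$ also contains $x'_\infty$) to move the start to $x'_\infty$ while keeping the endpoint $y'$. Hence $B$ is clopen and nonempty in the connected leaf $L_x$, so $B=L_x$.

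Finally, for the prescribed $x'\in L_x$, the previous step yields some $y'_0\in L_y$ and an $\omega$-positive arc from $x'$ to $y'_0$. Running the analogous open-closed argument at the endpoint, using the perturbation lemma applied at the terminal point within $L_y$, shows that the set of $y'\in L_y$ reachable from $x'$ by an $\omega$-positive arc is open, closed, and nonempty in $L_y$, hence all of $L_y$. This produces the required $\gamma'$. The only delicate point is the corner-smoothing step in the local perturbation lemma; the remainder is routine connectedness.
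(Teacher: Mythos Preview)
Your argument is correct and takes a genuinely different route from the paper's. The paper proceeds in one step: given $x'\in L_x$, it picks an arc $\gamma_1\subset L_x$ from $x'$ to $x$, builds a product neighborhood $U_\epsilon\cong Q_\epsilon\times[0,1]$ of the \emph{entire} arc $\gamma_1$ in which $\omega=ds_2$, and then tilts $\gamma_1$ within this tube so that it rises monotonically from $x'$ (at $s_2=0$) to the point $\gamma(\epsilon)$ (at $s_2=\epsilon$); it does the same near $y$ and concatenates. Your approach instead isolates a purely local perturbation lemma (only a single flow box at the endpoint is needed) and then globalizes via a connectedness argument on $L_x$ and $L_y$. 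The paper's construction is more direct and avoids the open--closed bookkeeping, at the cost of needing a tubular neighborhood along a possibly long arc in the leaf; your version is more modular and uses only pointwise flow-box charts, which makes the local step cleaner but adds the connectedness layer.

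One small point worth tightening in your closedness step: the ``sufficiently close'' in your perturbation lemma depends a priori on the witness arc at $x'$, so the sentence ``pick $x'\in B$ close to $x'_\infty$ and apply the perturbation lemma at $x'$'' has a slight circularity. This is easily repaired by first fixing a flow box $U_\infty$ around $x'_\infty$, then choosing $x'\in B\cap U_\infty$ and using $U_\infty$ itself as the flow box in the perturbation lemma; since any $\omega$-positive arc from $x'$ enters $\{u>0\}\cap U_\infty$ for small time, the straight-line construction inside $U_\infty$ connects $x'_\infty$ to that arc. With this adjustment the argument is complete.
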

\begin{proof}
    Let $\gamma_1$ be an arc from $x'$ to $x$ contained in $L_x$, let $\gamma_2$ be a portion of $\gamma$ from $\gamma(\epsilon)$ to $\gamma(1-\epsilon)$ for some $\epsilon>0$ sufficiently small, and let $\gamma_3$ be an arc from $y$ to $y'$ contained in $L_y$.

    There exists a closed neighborhood $U_\epsilon$ of $\gamma_1$ that is diffeomorphic to $Q_{\epsilon}\times[0,1]$, where $Q_\epsilon\{(s_1,s_2):-\epsilon\le s_1,s_2\le \epsilon\}$ is a square. Under this diffeomorphism, $\omega$ is identified with $ds_2$. We can assume that $\gamma_1$ lies in $\{(s_1,0):-\epsilon\le s_1\le\epsilon\}\times [0,1]$. Moreover, we may assume $\gamma_1(\epsilon)$ is contained in the segment $\{(s_1,\epsilon):-\epsilon\le s_1\le\epsilon\}\times\{1\}$. 

    Since $\omega = ds_2$ in this chart, and $\gamma_1$ moves from $s_2 = 0$ to $s_2 = \epsilon$, we can perturb $\gamma_1$ slightly within $U_\epsilon$ to obtain a smooth arc $\gamma_1'$ from $x'$ to $x$ such that $\omega(\dot{\gamma}_1'(t)) > 0$ for all $t$.

    Similar for $\gamma_2$. Then the arc $\gamma_1' \ast \gamma_2 \ast \gamma_3'$
    is a piecewise smooth $\omega$-positive arc from $x'$ to $y'$.

    A small smoothing near the corners preserves the strict positivity of $\omega(\dot{\gamma})$,
    yielding a smooth $\omega$-positive arc $\gamma'$ from $x'$ to $y'$.
\end{proof}

\begin{lemma}\label{omegapositiveloopforleaf}
    Let $x\in N\setminus S_{\omega}$ and suppose that there is an $\omega$-positive loop $\gamma$ passing through $x$. Then for any other point $x'\in L_x$, there exists an $\omega$-positive loop $\gamma'$ passing through $x'$. 
\end{lemma}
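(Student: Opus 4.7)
The plan is to reduce this lemma directly to Lemma~\ref{omegapositive} by cutting the given loop into two $\omega$-positive arcs, transporting each to the new basepoint $x'$, and then concatenating and smoothing.

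Concretely, I would pick any intermediate point $y = \gamma(1/2)$ on the $\omega$-positive loop $\gamma$ through $x$ (with $y \neq x$, which is possible since $x \notin S_\omega$ forces $\gamma$ to be nonconstant). This decomposes $\gamma$ into two $\omega$-positive arcs: $\gamma_1$ from $x$ to $y$ and $\gamma_2$ from $y$ back to $x$. Let $L_y$ denote the leaf of $\ker\omega$ through $y$. Applying Lemma~\ref{omegapositive} to $\gamma_1$, with endpoints moved to $x' \in L_x$ and the same point $y \in L_y$, yields an $\omega$-positive arc $\gamma_1'$ from $x'$ to $y$. Applying Lemma~\ref{omegapositive} to $\gamma_2$, with endpoints moved to $y \in L_y$ and $x' \in L_x$, yields an $\omega$-positive arc $\gamma_2'$ from $y$ to $x'$.

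The concatenation $\gamma_1' \ast \gamma_2'$ is then a piecewise smooth loop based at $x'$ along which $\omega$ is strictly positive except possibly at the two corners (at $y$ and at $x'$). Since $\omega$-positivity is an open condition on velocity vectors, a small smoothing in a neighborhood of each corner preserves strict positivity, producing a smooth $\omega$-positive loop $\gamma'$ through $x'$. This is exactly the conclusion sought.

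There is no real obstacle here: the whole statement is a near-immediate corollary of Lemma~\ref{omegapositive}, with the only minor technical point being the standard corner-smoothing argument already invoked at the end of the proof of that lemma. No new analytic input is needed.
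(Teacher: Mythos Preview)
Your proposal is correct and follows essentially the same idea as the paper: both reduce the claim directly to Lemma~\ref{omegapositive} and invoke the same corner-smoothing at the end. The paper's write-up is slightly more economical---it simply says to modify $\gamma$ in a small neighborhood of $L_x$ so that it passes through $x'$ instead of $x$---whereas you cut the loop at an auxiliary midpoint $y$ and apply Lemma~\ref{omegapositive} twice; but this extra cut is harmless and the content is the same. (One trivial remark: the justification ``$\gamma$ nonconstant $\Rightarrow \gamma(1/2)\neq x$'' is not literally valid, but the condition $y\neq x$ is not actually needed for Lemma~\ref{omegapositive} to apply, so nothing is lost.)
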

\begin{proof}
    This is a direct consequence of Lemma \ref{omegapositive}. One only need to modify $\gamma$ near a small neighborhood of $L_x$, to obtain the desired loop $\gamma'$.
\end{proof}

\begin{definition}
    A closed $1$-form $\omega$ is called \textbf{intrinsically harmonic} if there exists a Riemannian metric $g$ on $N$ such that $\omega$ is harmonic under $g$, i.e. $d\star_g\omega=0$. 
\end{definition}

\begin{proposition}{\cite[Theorem 4]{intrinsic}}\label{intrinsicallyharmoniccriterion}
    For a closed $1$-form $\omega$ to be intrinsically harmonic it is necessary and sufficient that:
    \begin{enumerate}[label=(\roman*)]
        \item $\omega$ is locally intrinsically harmonic.
        \item $\omega$ is transitive.
    \end{enumerate}
\end{proposition}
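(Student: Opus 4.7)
The plan is to reformulate the statement in terms of a closed $(n-1)$-form: I claim that $\omega$ is intrinsically harmonic if and only if there exists a smooth closed $(n-1)$-form $\sigma$ on $N$ with $\omega\wedge\sigma>0$ at every point of $N\setminus S_\omega$. The forward implication takes $\sigma=\star_g\omega$, for which $\omega\wedge\sigma=|\omega|_g^2\,\mathrm{vol}_g$. For the backward implication, given such a $\sigma$, at each $x\in N\setminus S_\omega$ the positivity $\omega\wedge\sigma>0$ pins down an oriented line in $T_xN$ (the direction obtained by contracting $\sigma$ against the volume form), and any metric $g_x$ in which that line has the right length and is $g_x$-orthogonal to $\ker\omega(x)$ satisfies $\star_{g_x}\omega(x)=\sigma(x)$; these local metrics patch by a partition of unity without affecting the equation $d\sigma=0$, and near $S_\omega$ one simply uses $g_U$. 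With this reduction in hand, proving the proposition means characterizing those $\omega$ that admit such a $\sigma$.

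Necessity becomes routine. Local intrinsic harmonicity is immediate from restriction. For transitivity, given a harmonic $\omega$, the $(n-1)$-form $\star_g\omega$ is a smooth closed form on $N\setminus S_\omega$ that restricts to a positive transverse measure on every leaf of $\ker\omega$. Either by Sullivan's criterion for taut foliations, or more concretely by applying Poincar\'e recurrence to the flow of the rescaled dual vector field $X=\omega^\sharp/|\omega|_g^2$ (which has $\omega(X)\equiv 1$) and then closing a long recurrent orbit by a short arc inside a leaf of $\ker\omega$ (which is $\omega$-null, hence harmless for $\omega$-positivity), one produces an $\omega$-positive smooth closed loop through every chosen regular point.

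For sufficiency, the plan is to manufacture $\sigma$ by gluing. On a neighborhood $U$ of $S_\omega$ take $\sigma_U:=\star_{g_U}\omega$, closed and positive against $\omega$ on $U\setminus S_\omega$. On the compact complement $K=N\setminus U'$ (with $U'\Subset U$), where $\omega$ is nonvanishing, transitivity produces for each $x\in K$ an $\omega$-positive loop $\gamma_x$ and hence a flow-box neighborhood $V_x$ on which $\omega=du_x$ for a submersion $u_x$ onto $S^1$; then $\sigma_x:=u_x^*\alpha\wedge(\text{local transverse volume})$ for a positive $1$-form $\alpha$ on $S^1$ gives a closed local $\sigma_x$ with $\omega\wedge\sigma_x>0$ on $V_x$. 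Cover $K$ by finitely many such $V_i$ using compactness.

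The main obstacle is assembling the local data $\{\sigma_U,\sigma_1,\ldots,\sigma_k\}$ into a single \emph{closed} global form while retaining pointwise positivity. A partition-of-unity sum $\sigma=\rho_U\sigma_U+\sum_i\rho_i\sigma_i$ lies in the (open convex) cone $\{\omega\wedge(-)>0\}$ at every regular point, but $d\sigma=d\rho_U\wedge\sigma_U+\sum_id\rho_i\wedge\sigma_i$ need not vanish. The plan is to correct by an exact form $d\beta$ chosen so that $\omega\wedge d\beta$ is small enough pointwise to preserve strict positivity; because the positivity is open and one may rescale the $\sigma_i$ freely by large positive constants, the crux is a linear estimate bounding $\|\beta\|_{C^0}$ in terms of $\|d\sigma\|$ uniformly. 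This can be arranged either by a Hodge-theoretic construction on $N$ (working modulo the harmonic representative), or, more in the spirit of Calabi, by successively killing the error on each $V_i$ using the fact that $H^{n-1}(V_i)=0$ for the flow-box and that adjacent patches share a cohomology class on their overlaps. This bounded-right-inverse for $d$, compatible with the cone condition, is the technical heart of the proof; once $\sigma$ is constructed, the algebraic reconstruction of $g$ above completes the argument.
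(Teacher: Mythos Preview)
The paper does not supply a proof of this proposition; it is quoted from Volkov \cite{intrinsic}. The paper does, however, invoke the \emph{output} of Volkov's sufficiency argument when proving the $\ZT$ version (Proposition~\ref{Z2intrinsic}): namely, the existence of a closed $(n-1)$-form $\psi$ with $\omega\wedge\psi>0$ away from the zero set and equal to $\star_{g_U}\omega$ near it. Your reformulation in terms of such a $\sigma$ is therefore exactly the right intermediate target, and your necessity sketch is fine.

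Your sufficiency argument contains a genuine gap. You assemble $\sigma=\rho_U\sigma_U+\sum_i\rho_i\sigma_i$ via a partition of unity and then propose to ``correct by an exact form $d\beta$'' to make it closed---but adding $d\beta$ to an $(n-1)$-form leaves its exterior derivative unchanged, so this cannot close up $\sigma$. (What you presumably intend is to subtract the co-exact part in a Hodge decomposition.) Even with that repaired, the rescaling trick fails: multiplying the $\sigma_i$ by a large constant $C$ multiplies the error $\sum d\rho_i\wedge\sigma_i$ by the same $C$, so the required correction scales as well and the ratio you need does not improve. The Calabi--Volkov construction sidesteps the partition of unity entirely on the regular set: thicken each $\omega$-positive loop $\gamma$ to a tube $T\cong S^1\times D^{n-1}$ and take $\sigma_\gamma=\rho(y)\,dy_1\wedge\cdots\wedge dy_{n-1}$, a bump in the disk factor extended by zero outside $T$. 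This is already a \emph{globally defined closed} $(n-1)$-form with $\omega\wedge\sigma_\gamma\ge 0$ everywhere and $>0$ near $\gamma$; a finite sum covers $N\setminus U'$ with no patching error, and only the local merge with $\star_{g_U}\omega$ near $S_\omega$ remains. (As a minor point, your formula $\sigma_x=u_x^*\alpha\wedge(\text{transverse volume})$ produces an $n$-form, not an $(n-1)$-form; the correct object is just the compactly supported transverse volume on the tube.)
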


Consider a singular locus $\Zl'\subset M$. Let $p':\hat{M}'\to M$ be the $2$-fold branched cover, branched along $\Zl'$. Suppose $b_1(\hat{M}')>0$, and let $\hat{\omega}$ be a closed $1$-form on $\hat{M}'$, that is anti-invariant under the involution $\tau'$ on $\hat{M}'$. 

In the proof of the sufficiency part of Proposition \ref{intrinsicallyharmoniccriterion}, we do not require the metric to be smooth,
as provided by the locally intrinsically harmonic condition (Definition~\ref{localharmonic}). Thus, by a similar argument, we can prove the following:
\begin{proposition}\label{Z2intrinsic}
    Suppose $\hat{\omega}$ satisfying:
    \begin{enumerate}[label=(\roman*)]
        \item Every zero $x$ of $\hat{\omega}$ on $\hat{M}'\setminus\Zl'$ is of Morse type, i.e. $\hat{\omega}=df$ for some Morse function $f$ near $x$. Additionally, we assume every zero $x$ has Morse index $1$ or $2$
        \item There is an open neighborhood $U$ of $\Zl'$ in $M$ and a smooth Reimannian metric $g|_{U}$, such that with respect to the pullback metric $(p')^\ast g|_U$, the $1$-form $\hat{\omega}$ is harmonic on $p^{-1}(U)$.
        \item $\hat{\omega}$ is transitive.
    \end{enumerate}
    Then there is a (non-smooth) Riemannian metric $\hat{g}$ on $\hat{M}'$ such that $d\star_{\hat{g}}\hat{\omega}=0$. Moreover, $\hat{g}$ can be chosen to be $\tau'$-invariant, and to coincide with $(p')^\ast g|_U$ near $\Zl'$. Thus under the pushforward (smooth) metric $g:=p'_\ast\hat{g}$, the pushforward $2$-valued $1$-form $\omega=p'_\ast\hat{\omega}$ is a $\ZT$ harmonic $1$-form on $M$.
\end{proposition}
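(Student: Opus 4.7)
The plan is to follow Calabi's proof of Proposition~\ref{intrinsicallyharmoniccriterion} from~\cite{intrinsic}, while tracking two additional constraints: $\tau'$-equivariance, and the prescription of $\star_{\hat{g}}\hat{\omega}$ near $\Zl'$. The key reformulation is that producing $\hat{g}$ with $d\star_{\hat{g}}\hat{\omega}=0$ is equivalent to producing a closed $(n-1)$-form $\eta$ on $\hat{M}'$ such that $\hat{\omega}\wedge\eta$ is a positive volume form off the zero set of $\hat{\omega}$; indeed, any such pair $(\hat{\omega},\eta)$ determines pointwise, non-uniquely but consistently, a Riemannian metric with $\star_{\hat{g}}\hat{\omega}=\eta$.

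First I would assemble $\eta$ from local pieces. On $(p')^{-1}(U)$ the form $\eta_0:=\star_{(p')^*g|_U}\hat{\omega}$ is closed by assumption~(ii) and satisfies $\hat{\omega}\wedge\eta_0>0$ away from $\Zl'$. Near each Morse zero $x\in\hat{M}'\setminus\Zl'$ of index $1$ or $2$, the classical construction, normalizing the Hessian of a local Morse primitive so that its trace vanishes in suitable coordinates, produces a local metric making $\hat{\omega}$ harmonic, and hence a local closed $(n-1)$-form $\eta_x$ with $\hat{\omega}\wedge\eta_x>0$ on a punctured neighborhood $V_x\setminus\{x\}$. The index restriction is essential: a Morse zero of index $0$ or $n$ would be a local extremum of the primitive and be forbidden by the maximum principle for harmonic functions.

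Next I would patch $\eta_0$ and the $\eta_x$'s into a global closed $(n-1)$-form $\eta$ with $\hat{\omega}\wedge\eta>0$ off the zero set, by Calabi's gluing argument. The cohomological obstruction to this gluing is precisely what transitivity kills: an $\hat{\omega}$-positive loop through every regular point provides, via the flow-box structure of the foliation $\ker\hat{\omega}$, enough freedom to modify the fiberwise volume form on $\ker\hat{\omega}$ while leaving $\eta_0$ and the $\eta_x$ untouched. This is the substantive step, and it is verbatim the proof of Calabi; prescribing $\eta_0$ near $\Zl'$ only restricts the extension problem to the complement of $(p')^{-1}(U)$, on which the transitivity hypothesis still applies.

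Finally, I would enforce $\tau'$-equivariance by replacing $\eta$ with $\eta':=\tfrac{1}{2}(\eta-(\tau')^*\eta)$. This $\eta'$ is closed and $\tau'$-anti-invariant, and since $(\tau')^*\hat{\omega}=-\hat{\omega}$ with $\tau'$ orientation-preserving,
\[\hat{\omega}\wedge\eta'=\tfrac{1}{2}\bigl(\hat{\omega}\wedge\eta+(\tau')^*(\hat{\omega}\wedge\eta)\bigr)>0,\]
both summands being positive $n$-forms. The form $\eta_0$ is itself $\tau'$-anti-invariant, being the Hodge dual of an anti-invariant form under a $\tau'$-invariant metric via the orientation-preserving involution $\tau'$, so $\eta'=\eta_0$ on $(p')^{-1}(U)$. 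From $(\hat{\omega},\eta')$ one then reconstructs a $\tau'$-invariant Riemannian metric $\hat{g}$ on $\hat{M}'$ with $\star_{\hat{g}}\hat{\omega}=\eta'$, arranged to equal $(p')^*g|_U$ near $\Zl'$; the pushforwards $g:=p'_*\hat{g}$ and $\omega:=p'_*\hat{\omega}$ then satisfy the conclusion. The main obstacle is the gluing step in the third paragraph, which is the actual content of Calabi's theorem; everything else is routine bookkeeping.
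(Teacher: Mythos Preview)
Your proposal is correct and follows essentially the same approach as the paper: both construct a closed $(n-1)$-form $\psi$ with $\hat{\omega}\wedge\psi>0$ off the zero set by invoking \cite[Theorem~4]{intrinsic}, then symmetrize via $\psi'=\tfrac12(\psi-(\tau')^*\psi)$, check positivity using that $\tau'$ is orientation-preserving, note that $\psi'=\psi$ near $\Zl'$ since $\psi$ is already $\tau'$-anti-invariant there, and finally recover the $\tau'$-invariant metric from the pair $(\hat{\omega},\psi')$. Your write-up is in fact somewhat more explicit than the paper's about why the index restriction in (i) matters and how transitivity enters the gluing.
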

\begin{proof}
    The conditions (i) and (ii) correspond to the locally intrinsically harmonic condition in Proposition \ref{intrinsicallyharmoniccriterion}. Thus, as proved in \cite[Theorem 4]{intrinsic}, we can construct a closed $2$-form $\psi$ on $\hat{M}'$, such that $\hat{\omega}\wedge \psi>0$ on $\hat{M}'\setminus\Zl'$, and $\psi$ coincides with $\star_{(p')^\ast g|_U}\hat{\omega}$ near $\Zl'$.

    As a result, $(\tau')^\ast \psi=-\psi$ near $\Zl'$. Consider the closed $2$-form $\psi'=\frac{1}{2}(\psi-(\tau')^\ast\psi)$, which still coincides with $\star_{(p')^\ast g|_U}\hat{\omega}$ near $\Zl'$. Note that \[\hat{\omega}\wedge\frac{1}{2}\left(\psi-(\tau')^\ast\psi\right)=\frac{1}{2}\left(\hat{\omega}\wedge\psi+\tau^\ast(\hat{\omega}\wedge \psi)\right)>0,\]since $\tau'$ preserves orientation.

    This allows us to extend $(p')^\ast g|_U$ to a $\tau'$-invariant metric $\hat{g}$, such that $\star_{\hat{g}}\hat{\omega}=\frac{1}{2}\psi'$. This completes the proof.
\end{proof}

Finally, we prove the following pruning result of singular locus:
\begin{theorem}[Pruning]\label{prunning}
    If there is a nondegenerate $\ZT$ harmonic $1$-form $v=v(g,\Zl,[h])$ on $M$, then we can find another rational, transverse and nondegenerate $\ZT$ harmonic $1$-form $v'=v'(g',\Zl',[h'])$ such that $\Zl'$ is a sublink of $\Zl$ having exactly two connected components. 
\end{theorem}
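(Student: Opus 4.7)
The plan is to establish the theorem by an inductive pruning of the singular locus, removing one component at a time until exactly two remain. I begin by applying Lemma~\ref{rotionalandMorse}, Theorem~\ref{perturbation}, and Lemma~\ref{1bdry1knot} in succession to replace $v$ with a rational, transverse, and nondegenerate $\ZT$ harmonic $1$-form (still denoted $v = v(g,\Zl,[h])$) whose leaf space $\T_{M,v}$ is a finite tree (by Proposition~\ref{finitetreeonRHS}) in which every boundary vertex is a singular leaf containing exactly one connected component of $\Zl$ and no isolated zero. By Haydys' theorem, $\Zl$ has at least two components; if exactly two, the proof is complete, so I assume $\Zl = \Sigma_1 \sqcup \cdots \sqcup \Sigma_r$ with $r \geq 3$ and induct on $r$.

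For the inductive step, I pick a boundary vertex $[x_r]$ corresponding to a component $\Sigma_r$, its adjacent interior vertex $[y_r]$ via the edge $e$, and a regular leaf $L \subset M$ corresponding to an interior point of $e$ close to $[y_r]$. Then $L$ separates $M$ into two closed pieces, $M_1$ (containing the singular leaf at $[x_r]$, hence $\Sigma_r$) and $M_2 = \overline{M \setminus M_1}$. The central surgery is to replace $v|_{M_1}$ by $df_1$ for a smooth single-valued function $f_1 \colon M_1 \to \RR$ satisfying (a) $f_1$ equals the local primitive $F$ of $v$ in a collar of $L$ inside $M_1$, and (b) the critical points of $f_1$ in the interior of $M_1$ are all Morse of index $1$ or $2$. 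Condition~(a) requires $E|_L$ to be trivial so that $v|_L$ is single-valued; because $L$ lies close to the singular leaf at $[y_r]$, whose local structure near $\Sigma_r$ is governed by the model $\re\hat{z}^3$ on the branched cover, the linking numbers of $1$-cycles in $L$ with $\Sigma_r$ can be controlled, possibly combined with a further small perturbation through Theorem~\ref{perturbation}, to trivialize $E|_L$. Condition~(b) follows by a standard Morse-theoretic extension: extend $F$ from the collar to a smooth function on $M_1$, perturb to Morse type, and cancel any index-$0$ or index-$3$ critical points by handle cancellation, which is unobstructed on a compact $3$-manifold with boundary.

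Set $v_0 := v$ on $M_2$ and $v_0 := df_1$ on $M_1$; these agree in the collar of $L$, so $v_0$ is a smooth closed $2$-valued $1$-form with singular locus $\Zl \setminus \Sigma_r$. I then verify the hypotheses of Proposition~\ref{Z2intrinsic} for the lift $\hat{v}_0$ on the branched cover associated to $\Zl \setminus \Sigma_r$: every zero of $v_0$ on the complement of $\Zl \setminus \Sigma_r$ is Morse of index $1$ or $2$ (on $M_1$ by the choice of $f_1$; on $M_2$ because $v_0 = v$ is harmonic under $g$ and the maximum principle applied to the local primitive forbids strict extrema, so indices $0$ and $3$ cannot occur); $v_0$ is locally intrinsically harmonic near $\Zl \setminus \Sigma_r$ since $v_0 = v$ there; and transitivity of $\hat{v}_0$ follows from the absence of index-$0,3$ critical points together with rationality. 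Proposition~\ref{Z2intrinsic} then produces a smooth Riemannian metric $g_0$ on $M$ under which $v_0$ is a $\ZT$ harmonic $1$-form with singular locus $\Zl \setminus \Sigma_r$, and nondegeneracy along this locus is inherited from $v$ because $v_0 = v$ in a neighborhood of $\Zl \setminus \Sigma_r$. Reapplying the initial reductions to $v_0$ and iterating, the singular locus reduces to exactly two components, yielding the desired form $v' = v'(g',\Zl',[h'])$; throughout the iteration each reduction preserves the isotopy class of the remaining components, so $\Zl'$ is a sublink of $\Zl$ up to isotopy.

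The principal obstacle is condition~(a) of the surgery, namely the triviality of $E|_L$: this is a global topological constraint on how the regular leaf $L$ links the remaining components of $\Zl$, and securing it requires a careful coordinated choice of $L$ and possibly a further perturbation of $v$. A secondary technical point is the rigorous verification of transitivity for $\hat{v}_0$ on the branched cover, which entails tracking gradient-like flow lines as they traverse the surface $L$ between $M_1$ and $M_2$.
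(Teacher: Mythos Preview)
Your inductive approach fails at what you dismiss as routine: condition~(b), the existence of a Morse function $f_1\colon M_1\to\RR$ with only index-$1$ and index-$2$ critical points. The region $M_1$ has \emph{connected} boundary $L$ (a regular leaf is connected by definition), and your condition~(a) forces $f_1|_L$ to be constant with nonvanishing normal derivative, say $f_1|_L=0$ and $f_1>0$ just inside. Since $M_1$ is compact, $f_1$ attains its maximum at an interior point, which is then a critical point of index~$3$. Handle cancellation cannot remove this: with a single boundary component carrying a fixed level, every Morse extension must have an interior extremum. So the surgery you propose is impossible, and the induction does not start.

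By contrast, the obstacle you flag as principal --- triviality of $E|_L$ --- is a non-issue: for any regular leaf $L$ the preimage $p^{-1}(L)$ in $\hat M$ has two components (this is stated in the paper when regular leaves of $\ker v$ are introduced), so $E|_L$ is automatically trivial.

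The paper avoids the obstruction by proceeding in the opposite direction: rather than excising one boundary vertex at a time, it \emph{keeps} small neighborhoods $W_1,W_2$ of two chosen components $\Sigma_1,\Sigma_2$ and replaces $v$ on the complementary region $W=M\setminus(W_1\cup W_2)$ by $\pm df_0$ for a Morse function $f_0$. Now $\partial W=L_1\sqcup L_2$ has \emph{two} components with $f_0|_{L_1}=1$ and $f_0|_{L_2}=2$, so $W$ is a genuine cobordism and the standard cancellation arguments from \cite{MilnorHcob} eliminate all index-$0$ and index-$3$ critical points. Transitivity is then established by an explicit construction of a $\hat v'$-positive loop threading through $\hat W_1$, $W_+$, $\hat W_2$, $W_-$, using the local model $\re z^{3/2}$ near each $\Sigma_i$; your assertion that transitivity ``follows from the absence of index-$0,3$ critical points together with rationality'' is not a general fact and would need this kind of direct verification in any case.
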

\begin{proof}
    By Theorem \ref{distinguish} and Lemma \ref{1bdry1knot}, we may assume that $v=v(g,\Zl,[h])$ is rational, transverse and nondegenerate, and that each boundary vertex of $\T_{M,v}$ contains exactly one component of $\Zl$ and no isolated zeros.

    Since the $2$-fold branched covering $p:\hat{M}\to M$ branching along $\Zl$, has a nontrivial harmonic $1$-form $\hat{v}$, it has positive first Betti number. Therefore, the number of components of $\Zl$ is at least $2$. See \cite[Theorem 5]{Haydys}.

    Consequently, there are at least two boundary vertices of $\T_{M,v}$. Let $[x_1]$ and $[x_2]$ be two such boundary vertices of $\T_{M,v}$. Then for sufficiently small $\epsilon>0$, the open balls $B_{\epsilon}([x_1])$ and $B_\epsilon([x_2])$ contain no other boundary vertices.
    
    Denote $W_i=p_v^{-1}B_\epsilon([x_i])$ for $i=1,2$. Note that for $\epsilon>0$ sufficiently small, there are no isolated zeros of $v$ in $W_i$. Consider the submanifold $W:=M\setminus (W_1\cup W_2)$ of $M$. Both $W_i$ and $W$ are connected $3$-manifolds. The boundary of $W$ has two connected components, namely $\partial W=L_1\sqcup L_2$. Moreover, $L_1,L_2$ are regular leaves of $\ker v$, such that $d_v(L_i,[x_i])=\epsilon$. See Figure \ref{treeexample}.

\begin{figure}[!h]
    \centering

\tikzset{every picture/.style={line width=0.75pt}} 

\begin{tikzpicture}[x=0.5pt,y=0.5pt,yscale=-1,xscale=1]

\draw    (141.33,152.17) -- (185.83,163.17) ;
\draw    (185.83,163.17) -- (232.83,119.17) ;
\draw    (141.33,152.17) -- (121.33,170.17) ;
\draw    (185.83,163.17) -- (195.83,192.67) ;
\draw    (114.33,106.67) -- (141.33,152.17) ;
\draw [color={rgb, 255:red, 65; green, 117; blue, 5 }  ,draw opacity=1 ] [dash pattern={on 4.5pt off 4.5pt}]  (107.83,133.17) -- (139.52,114.15) -- (145.33,110.67) ;
\draw [color={rgb, 255:red, 65; green, 117; blue, 5 }  ,draw opacity=1 ] [dash pattern={on 4.5pt off 4.5pt}]  (171.83,183.67) -- (209.67,171.58) ;
\draw    (356.33,95.67) .. controls (365.83,102.67) and (382.83,92.67) .. (381.33,83.17) ;
\draw    (360.33,97.67) .. controls (360.33,87.67) and (373.33,83.17) .. (380.5,88.5) ;
\draw    (394.33,78.17) .. controls (403.83,85.17) and (420.83,75.17) .. (419.33,65.67) ;
\draw    (398.33,80.17) .. controls (398.33,70.17) and (411.33,65.67) .. (418.5,71) ;
\draw    (343.33,105.17) .. controls (312.83,45.67) and (381.33,4.67) .. (436,59.5) ;
\draw   (471.41,248.06) .. controls (468.45,237.42) and (487.66,222.78) .. (514.32,215.36) .. controls (540.97,207.94) and (564.98,210.56) .. (567.94,221.2) .. controls (570.91,231.84) and (551.7,246.48) .. (525.04,253.9) .. controls (498.38,261.31) and (474.37,258.7) .. (471.41,248.06) -- cycle ;
\draw    (502.7,236.16) .. controls (511,244.55) and (529.34,237.31) .. (529.33,227.69) ;
\draw    (506.34,238.76) .. controls (507.89,228.88) and (521.43,226.45) .. (527.68,232.83) ;
\draw    (471.41,248.06) .. controls (482.83,305.17) and (588.83,290.17) .. (567.94,221.2) ;
\draw [color={rgb, 255:red, 74; green, 144; blue, 226 }  ,draw opacity=1 ][line width=1.5]    (369.41,63.67) .. controls (371.04,49.45) and (385.59,43.26) .. (389.65,48.01) .. controls (393.71,52.76) and (386.12,61.14) .. (380.42,62.94) ;
\draw [color={rgb, 255:red, 74; green, 144; blue, 226 }  ,draw opacity=1 ][line width=1.5]    (372.63,58.6) .. controls (382.29,61.67) and (388.35,75.05) .. (382.85,78.72) .. controls (377.35,82.39) and (371.51,74.29) .. (370.07,67.61) ;
\draw [color={rgb, 255:red, 74; green, 144; blue, 226 }  ,draw opacity=1 ][line width=1.5]    (378.06,64.02) .. controls (365.43,69.41) and (354.11,67.34) .. (353.66,61.02) .. controls (353.21,54.69) and (363.1,54.56) .. (369.01,56.75) ;
\draw [color={rgb, 255:red, 74; green, 144; blue, 226 }  ,draw opacity=1 ][line width=1.5]    (524.22,266.41) .. controls (516.59,254.3) and (524.06,240.37) .. (530.2,241.53) .. controls (536.33,242.69) and (535.67,253.97) .. (532.35,258.95) ;
\draw [color={rgb, 255:red, 74; green, 144; blue, 226 }  ,draw opacity=1 ][line width=1.5]    (523.56,260.44) .. controls (533.02,256.79) and (546.12,263.43) .. (544.13,269.73) .. controls (542.14,276.04) and (532.52,273.38) .. (527.21,269.07) ;
\draw [color={rgb, 255:red, 74; green, 144; blue, 226 }  ,draw opacity=1 ][line width=1.5]    (531.19,261.27) .. controls (524.72,273.38) and (514.6,278.86) .. (510.29,274.21) .. controls (505.97,269.57) and (513.61,263.26) .. (519.58,261.27) ;
\draw   (369.44,146.63) .. controls (364.35,136.83) and (380.36,118.42) .. (405.2,105.52) .. controls (430.04,92.63) and (454.31,90.12) .. (459.4,99.92) .. controls (464.49,109.72) and (448.48,128.12) .. (423.63,141.02) .. controls (398.79,153.92) and (374.53,156.43) .. (369.44,146.63) -- cycle ;
\draw    (443.91,211.06) .. controls (456.83,235.17) and (542.33,212.67) .. (540.44,184.2) ;
\draw  [dash pattern={on 4.5pt off 4.5pt}]  (443.91,211.06) .. controls (437.83,180.17) and (528.83,158.17) .. (540.44,184.2) ;
\draw    (369.44,146.63) .. controls (374.33,166.67) and (376.33,171.67) .. (386.33,180.17) .. controls (396.33,188.67) and (411.33,187.67) .. (443.91,211.06) ;
\draw    (459.4,99.92) .. controls (476.83,125.67) and (487.33,116.17) .. (506.33,125.17) .. controls (525.33,134.17) and (537.33,151.17) .. (540.44,184.2) ;
\draw    (417.33,116.17) .. controls (426.83,123.17) and (443.83,113.17) .. (442.33,103.67) ;
\draw    (421.33,118.17) .. controls (421.33,108.17) and (434.33,103.67) .. (441.5,109) ;
\draw    (383.33,133.17) .. controls (392.83,140.17) and (409.83,130.17) .. (408.33,120.67) ;
\draw    (387.33,135.17) .. controls (387.33,125.17) and (400.33,120.67) .. (407.5,126) ;
\draw  [dash pattern={on 4.5pt off 4.5pt}]  (476.84,197.71) .. controls (485.48,205.74) and (503.51,197.74) .. (503.1,188.13) ;
\draw  [dash pattern={on 4.5pt off 4.5pt}]  (480.58,200.15) .. controls (481.72,190.22) and (495.15,187.22) .. (501.66,193.33) ;
\draw    (345.5,150.5) -- (253.33,150.66) ;
\draw [shift={(251.33,150.67)}, rotate = 359.9] [color={rgb, 255:red, 0; green, 0; blue, 0 }  ][line width=0.75]    (10.93,-3.29) .. controls (6.95,-1.4) and (3.31,-0.3) .. (0,0) .. controls (3.31,0.3) and (6.95,1.4) .. (10.93,3.29)   ;
\draw    (343.33,105.17) .. controls (363.33,128.17) and (445.83,84.67) .. (436,59.5) ;
\draw  [dash pattern={on 4.5pt off 4.5pt}]  (343.33,105.17) .. controls (335.33,80.17) and (422.83,40.17) .. (436,59.5) ;

\draw (105,89.4) node [anchor=north west][inner sep=0.75pt]    {$\textcolor[rgb]{0.29,0.56,0.89}{x_{1}}$};
\draw (194,191.9) node [anchor=north west][inner sep=0.75pt]    {$\textcolor[rgb]{0.29,0.56,0.89}{x_{2}}$};
\draw (228,103.4) node [anchor=north west][inner sep=0.75pt]    {$x_{4}$};
\draw (107.5,165.9) node [anchor=north west][inner sep=0.75pt]    {$x_{3}$};
\draw (107,112.9) node [anchor=north west][inner sep=0.75pt]  [font=\footnotesize]  {$\textcolor[rgb]{0.25,0.46,0.02}{\epsilon }$};
\draw (183,183.4) node [anchor=north west][inner sep=0.75pt]  [font=\footnotesize]  {$\textcolor[rgb]{0.25,0.46,0.02}{\epsilon }$};
\draw (310,49.9) node [anchor=north west][inner sep=0.75pt]    {$\textcolor[rgb]{0.29,0.56,0.89}{\Sigma _{1}}$};
\draw (461.5,272.4) node [anchor=north west][inner sep=0.75pt]    {$\textcolor[rgb]{0.29,0.56,0.89}{\Sigma }\textcolor[rgb]{0.29,0.56,0.89}{_{2}}$};
\draw (454,148.9) node [anchor=north west][inner sep=0.75pt]    {$W$};
\draw (459,80.4) node [anchor=north west][inner sep=0.75pt]    {$L_{1}$};
\draw (545.5,172.9) node [anchor=north west][inner sep=0.75pt]    {$L_{2}$};
\draw (286.5,127.4) node [anchor=north west][inner sep=0.75pt]    {$p_{v}$};
\draw (430,28.4) node [anchor=north west][inner sep=0.75pt]    {$W_{1}$};
\draw (578,230.9) node [anchor=north west][inner sep=0.75pt]    {$W_{2}$};

\end{tikzpicture}

    \caption{The finite tree $(\T_{M,v},d_v)$ and the projection $p_v:M\to \T_{M,v}$.}
    \label{treeexample}
\end{figure}
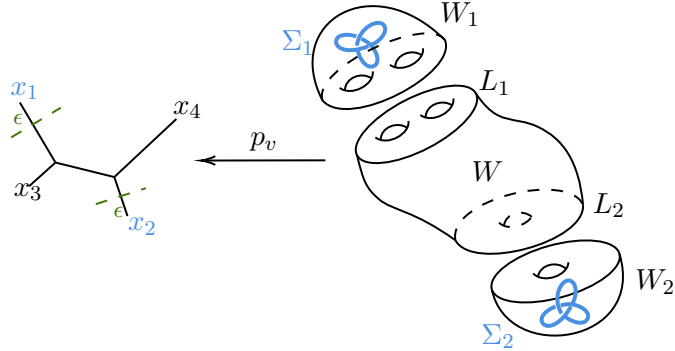

    For $\delta>0$, let $U_\delta(L_i)=\{x\in W:\dist_{g}(x,L_i)<\delta\}$ be a collar neighborhood of $L_i$, and denote $U_\delta=U_\delta(L_1)\cup U_\delta(L_2)$. Let $f_0$ be a Morse function on $W$ satisfying:
    \begin{enumerate}[label=(\roman*)]
        \item $f_0|_{L_i}\equiv i$ for $i=1,2$.
        \item There is a $\delta>0$ sufficiently small, such that $\pm df_0=v|_{U_\delta}$.
        \item All critical points of $f_0$ are contained in $W\setminus U_\delta$, and they have Morse index $1$ or $2$.
    \end{enumerate}
    The existence of such a Morse function follows from classical Morse theory. See, for example, \cite{MilnorHcob}. 
    
    There exists a piecewise smooth curve $\gamma_0$ composed of gradient flow lines of $f_0$. Then, in a Morse coordinate chart around a critical point of $f_0$ that contained in $\gamma_0$, we can modify $\gamma_0$ to obtain a smooth arc $\gamma$ such that $\gamma(0)\in L_1$, $\gamma(1)\in L_2$ and that $f_0\circ\gamma(t)$ is strictly increasing in $t$.

    We now define a $2$-valued closed $1$-form $v'$ on $M$ by:
    \begin{align}\label{finalv1}
        v'(x)=\begin{cases}
            v(x),& x\notin W,\\
            \pm df_0(x),&x\in W.
        \end{cases}
    \end{align}
    To be more precise, we need to pass to the $2$-fold branched cover $p':\hat{M}'\to M$, branched along $\Sigma_1\sqcup\Sigma_2=:\Zl'\subset\Zl$. For each $i=1,2$, the preimage $(p')^{-1}(W_i)=:\hat{W}_i$ is connected in $\hat{M}'$. The pullback $(p')^\ast v$ is a well-defined odd $1$-form on $\hat{W}_i$, coinciding with $\hat{v}=p^\ast v$. On the other hand, $(p')^{-1}(W)=:\hat{W}$ has two components, namely $\hat{W}=W_+\sqcup W_-$. Denote the boundary components of $W_\pm$ by $L^\pm_i~(i=1,2)$.

    By possibly changing the sign of $(p')^\ast v$ on $\hat{W}_i$, the following is well-defined on $\hat{W}'$:
    \begin{align}\label{finalv1}
        \hat{v}'(x)=\begin{cases}
            \hat{v}(x),& x\in \hat{W}_i,\\
            df_0(x),&x\in W_+,\\
            -df_0(x),&x\in W_-.
        \end{cases}
    \end{align}
    We assume that $\hat{v}'|_{\hat{W}_1}$ is pointing outward on $L_1^+$, and pointing inward on $L_1^-$. For $\hat{v}'$ to be well-defined, we need to set $\hat{v}'|_{\hat{W}_2}$ to be pointing outward on $L_2^-$, and pointing inward on $L_2^+$. See Figure \ref{branchedhatMprime}.

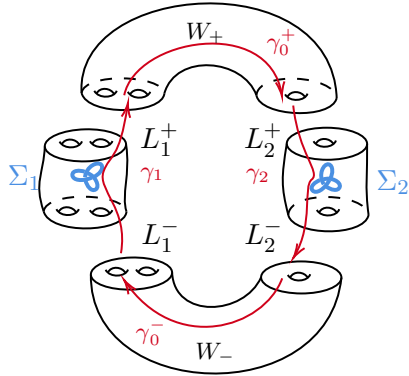
\begin{figure}[!h]
        \centering
\tikzset{every picture/.style={line width=0.75pt}} 

\begin{tikzpicture}[x=0.5pt,y=0.5pt,yscale=-1,xscale=1]

\draw    (113.67,167.13) .. controls (116.94,173.49) and (128.87,172.65) .. (130.62,167.08) ;
\draw    (115.3,169.29) .. controls (118,163.86) and (126.28,164.93) .. (128.73,169.76) ;
\draw    (139.03,167.88) .. controls (142.3,174.24) and (154.23,173.4) .. (155.98,167.83) ;
\draw    (140.66,170.04) .. controls (143.36,164.61) and (151.64,165.68) .. (154.09,170.51) ;
\draw [color={rgb, 255:red, 74; green, 144; blue, 226 }  ,draw opacity=1 ][line width=1.5]    (135.32,143) .. controls (136.31,134.37) and (145.14,130.62) .. (147.6,133.5) .. controls (150.06,136.38) and (145.46,141.47) .. (142,142.56) ;
\draw [color={rgb, 255:red, 74; green, 144; blue, 226 }  ,draw opacity=1 ][line width=1.5]    (137.28,139.92) .. controls (143.13,141.79) and (146.81,149.91) .. (143.47,152.13) .. controls (140.14,154.35) and (136.6,149.44) .. (135.72,145.39) ;
\draw [color={rgb, 255:red, 74; green, 144; blue, 226 }  ,draw opacity=1 ][line width=1.5]    (140.57,143.21) .. controls (132.91,146.48) and (126.04,145.23) .. (125.77,141.39) .. controls (125.5,137.56) and (131.5,137.48) .. (135.08,138.8) ;
\draw    (104.04,168.78) .. controls (108.7,186.67) and (165.25,185.3) .. (166.7,168.98) ;
\draw  [dash pattern={on 4.5pt off 4.5pt}]  (104.04,168.78) .. controls (106.45,153.04) and (164.77,154.93) .. (166.7,168.98) ;
\draw   (104.9,119.63) .. controls (104.72,112.93) and (118.34,107.13) .. (135.31,106.68) .. controls (152.28,106.23) and (166.18,111.29) .. (166.36,117.98) .. controls (166.54,124.68) and (152.92,130.48) .. (135.95,130.93) .. controls (118.98,131.38) and (105.08,126.32) .. (104.9,119.63) -- cycle ;
\draw    (139.1,115.7) .. controls (142.43,122.04) and (154.36,121.09) .. (156.06,115.51) ;
\draw    (140.76,117.85) .. controls (143.41,112.4) and (151.69,113.39) .. (154.19,118.2) ;
\draw    (116.05,115.96) .. controls (119.38,122.3) and (131.3,121.35) .. (133,115.77) ;
\draw    (117.7,118.12) .. controls (120.35,112.66) and (128.64,113.65) .. (131.13,118.46) ;
\draw    (104.9,119.63) .. controls (104.04,140.67) and (96.46,146.44) .. (104.04,168.78) ;
\draw    (166.36,117.98) .. controls (169.85,140.67) and (162.57,156.14) .. (166.7,168.98) ;
\draw    (142.35,77.51) .. controls (145.62,83.88) and (157.55,83.03) .. (159.3,77.47) ;
\draw    (143.98,79.68) .. controls (146.68,74.24) and (154.96,75.31) .. (157.41,80.14) ;
\draw    (167.71,78.26) .. controls (170.98,84.63) and (182.91,83.78) .. (184.66,78.22) ;
\draw    (169.34,80.43) .. controls (172.04,74.99) and (180.32,76.06) .. (182.77,80.89) ;
\draw    (132.72,79.16) .. controls (137.38,97.05) and (193.93,95.69) .. (195.38,79.37) ;
\draw  [dash pattern={on 4.5pt off 4.5pt}]  (132.72,79.16) .. controls (135.13,63.42) and (193.45,65.31) .. (195.38,79.37) ;
\draw   (268.52,218.7) .. controls (268.55,212) and (282.18,206.63) .. (298.96,206.7) .. controls (315.74,206.77) and (329.32,212.26) .. (329.29,218.96) .. controls (329.26,225.66) and (315.63,231.03) .. (298.85,230.96) .. controls (282.07,230.88) and (268.49,225.39) .. (268.52,218.7) -- cycle ;
\draw    (288.74,216.92) .. controls (292.2,223.18) and (304.1,221.99) .. (305.68,216.37) ;
\draw    (290.44,219.03) .. controls (292.97,213.53) and (301.28,214.34) .. (303.87,219.1) ;
\draw   (287.96,117.46) .. controls (287.99,110.76) and (301.61,105.39) .. (318.39,105.47) .. controls (335.18,105.54) and (348.76,111.03) .. (348.73,117.73) .. controls (348.7,124.43) and (335.07,129.8) .. (318.29,129.73) .. controls (301.51,129.65) and (287.93,124.16) .. (287.96,117.46) -- cycle ;
\draw    (308.18,115.69) .. controls (311.64,121.95) and (323.54,120.76) .. (325.12,115.14) ;
\draw    (309.88,117.8) .. controls (312.41,112.29) and (320.72,113.11) .. (323.31,117.87) ;
\draw    (264.98,80.88) .. controls (268.05,97.18) and (321.6,99.8) .. (325.71,82.99) ;
\draw  [dash pattern={on 4.5pt off 4.5pt}]  (264.98,80.88) .. controls (267.11,61.9) and (323.76,65.82) .. (325.71,82.99) ;
\draw    (289.08,171.45) .. controls (292.7,187.64) and (346.31,188.44) .. (349.85,171.5) ;
\draw  [dash pattern={on 4.5pt off 4.5pt}]  (289.08,171.45) .. controls (290.57,152.42) and (347.31,154.41) .. (349.85,171.5) ;
\draw [color={rgb, 255:red, 74; green, 144; blue, 226 }  ,draw opacity=1 ][line width=1.5]    (315.94,148.28) .. controls (311.31,140.93) and (315.84,132.48) .. (319.57,133.19) .. controls (323.29,133.89) and (322.89,140.73) .. (320.87,143.75) ;
\draw [color={rgb, 255:red, 74; green, 144; blue, 226 }  ,draw opacity=1 ][line width=1.5]    (315.54,144.66) .. controls (321.28,142.44) and (329.22,146.47) .. (328.02,150.29) .. controls (326.81,154.11) and (320.97,152.5) .. (317.75,149.89) ;
\draw [color={rgb, 255:red, 74; green, 144; blue, 226 }  ,draw opacity=1 ][line width=1.5]    (320.17,145.16) .. controls (316.25,152.5) and (310.11,155.82) .. (307.49,153.01) .. controls (304.88,150.19) and (309.5,146.37) .. (313.13,145.16) ;
\draw    (287.96,117.46) .. controls (283.87,143.1) and (283.56,149.77) .. (289.08,171.45) ;
\draw    (348.73,117.73) .. controls (349.06,137.95) and (346.94,150.68) .. (349.85,171.5) ;
\draw   (139.75,216.71) .. controls (139.57,210.01) and (153.18,204.21) .. (170.15,203.76) .. controls (187.12,203.31) and (201.02,208.37) .. (201.2,215.06) .. controls (201.38,221.76) and (187.77,227.56) .. (170.8,228.01) .. controls (153.83,228.46) and (139.92,223.4) .. (139.75,216.71) -- cycle ;
\draw    (173.95,212.78) .. controls (177.27,219.12) and (189.2,218.17) .. (190.9,212.59) ;
\draw    (175.6,214.93) .. controls (178.25,209.48) and (186.53,210.47) .. (189.03,215.28) ;
\draw    (150.89,213.04) .. controls (154.22,219.38) and (166.15,218.43) .. (167.84,212.85) ;
\draw    (152.55,215.19) .. controls (155.2,209.74) and (163.48,210.73) .. (165.98,215.54) ;
\draw    (195.38,79.37) .. controls (204.29,48.63) and (254.93,53.18) .. (264.98,80.88) ;
\draw    (132.72,79.16) .. controls (127.4,-4.79) and (321.83,-18.89) .. (325.71,82.99) ;
\draw    (201.2,215.06) .. controls (202.87,250.79) and (255.63,247.76) .. (268.52,218.7) ;
\draw    (139.75,216.71) .. controls (138.48,312.55) and (318.81,319.6) .. (329.29,218.96) ;
\draw    (310.68,167.69) .. controls (314.14,173.95) and (326.04,172.76) .. (327.62,167.14) ;
\draw    (312.38,169.8) .. controls (314.91,164.29) and (323.22,165.11) .. (325.81,169.87) ;
\draw    (286.68,80.19) .. controls (290.14,86.45) and (302.04,85.26) .. (303.62,79.64) ;
\draw    (288.38,82.3) .. controls (290.91,76.79) and (299.22,77.61) .. (301.81,82.37) ;
\draw [color={rgb, 255:red, 208; green, 2; blue, 27 }  ,draw opacity=1 ]   (162.5,200.5) .. controls (168.33,166.67) and (144.33,152.67) .. (149.33,138.67) .. controls (154.26,124.88) and (161.12,130.98) .. (167.54,88.63) ;
\draw [shift={(167.83,86.67)}, rotate = 98.31] [color={rgb, 255:red, 208; green, 2; blue, 27 }  ,draw opacity=1 ][line width=0.75]    (10.93,-3.29) .. controls (6.95,-1.4) and (3.31,-0.3) .. (0,0) .. controls (3.31,0.3) and (6.95,1.4) .. (10.93,3.29)   ;
\draw [color={rgb, 255:red, 208; green, 2; blue, 27 }  ,draw opacity=1 ]   (292.83,88.67) .. controls (296.83,125.17) and (313.83,139.17) .. (306.83,144.67) .. controls (299.97,150.06) and (308.48,177.54) .. (293.76,202.63) ;
\draw [shift={(292.83,204.17)}, rotate = 302.11] [color={rgb, 255:red, 208; green, 2; blue, 27 }  ,draw opacity=1 ][line width=0.75]    (10.93,-3.29) .. controls (6.95,-1.4) and (3.31,-0.3) .. (0,0) .. controls (3.31,0.3) and (6.95,1.4) .. (10.93,3.29)   ;
\draw [color={rgb, 255:red, 208; green, 2; blue, 27 }  ,draw opacity=1 ]   (164.98,80.88) .. controls (173.74,38.1) and (274.29,20.52) .. (284.07,79.85) ;
\draw [shift={(284.33,81.67)}, rotate = 262.59] [color={rgb, 255:red, 208; green, 2; blue, 27 }  ,draw opacity=1 ][line width=0.75]    (10.93,-3.29) .. controls (6.95,-1.4) and (3.31,-0.3) .. (0,0) .. controls (3.31,0.3) and (6.95,1.4) .. (10.93,3.29)   ;
\draw [color={rgb, 255:red, 208; green, 2; blue, 27 }  ,draw opacity=1 ]   (284.5,219.5) .. controls (270.4,257.48) and (203.51,277.47) .. (165.41,221.52) ;
\draw [shift={(164.83,220.67)}, rotate = 56.31] [color={rgb, 255:red, 208; green, 2; blue, 27 }  ,draw opacity=1 ][line width=0.75]    (10.93,-3.29) .. controls (6.95,-1.4) and (3.31,-0.3) .. (0,0) .. controls (3.31,0.3) and (6.95,1.4) .. (10.93,3.29)   ;

\draw (74.77,131.89) node [anchor=north west][inner sep=0.75pt]    {$\textcolor[rgb]{0.29,0.56,0.89}{\Sigma }\textcolor[rgb]{0.29,0.56,0.89}{_{1}}$};
\draw (353.63,135.3) node [anchor=north west][inner sep=0.75pt]    {$\textcolor[rgb]{0.29,0.56,0.89}{\Sigma }\textcolor[rgb]{0.29,0.56,0.89}{_{2}}$};
\draw (209.26,23.48) node [anchor=north west][inner sep=0.75pt]  [font=\footnotesize]  {$W_{+}$};
\draw (216.62,265.74) node [anchor=north west][inner sep=0.75pt]  [font=\footnotesize]  {$W_{-}$};
\draw (174.9,99.64) node [anchor=north west][inner sep=0.75pt]    {$L_{1}^{+}$};
\draw (175.7,172.5) node [anchor=north west][inner sep=0.75pt]    {$L_{1}^{-}$};
\draw (253.73,172.41) node [anchor=north west][inner sep=0.75pt]    {$L_{2}^{-}$};
\draw (253.56,100.05) node [anchor=north west][inner sep=0.75pt]    {$L_{2}^{+}$};
\draw (175,133.4) node [anchor=north west][inner sep=0.75pt]  [font=\footnotesize]  {$\textcolor[rgb]{0.82,0.01,0.11}{\gamma _{1}}$};
\draw (255,134.4) node [anchor=north west][inner sep=0.75pt]  [font=\footnotesize]  {$\textcolor[rgb]{0.82,0.01,0.11}{\gamma _{2}}$};
\draw (270.5,28.4) node [anchor=north west][inner sep=0.75pt]  [font=\footnotesize]  {$\textcolor[rgb]{0.82,0.01,0.11}{\gamma _{0}^{+}}$};
\draw (171.5,247.9) node [anchor=north west][inner sep=0.75pt]  [font=\footnotesize]  {$\textcolor[rgb]{0.82,0.01,0.11}{\gamma }\textcolor[rgb]{0.82,0.01,0.11}{_{0}^{-}}$};

\end{tikzpicture}

     \caption{A decomposition of the branched cover $\hat{M}'$.}
        \label{branchedhatMprime}
    \end{figure}
    
    Now we have a closed $1$-form $\hat{v}'$ that is anti-invariant under the involution $\tau'$ on $\hat{M}'$. And by construction, $\hat{v}'$ satisfies conditions (i) and (ii) in Proposition \ref{Z2intrinsic}. We will prove the transitive condition (iii) for $\hat{v}'$ in the following. It will then follow that there exists a smooth metric $g'$ on $M$, that coincides with $g$ near $\Zl'=\Sigma_1\sqcup \Sigma_2$, such that $v'$ is a transverse and nondegenerate $\ZT$ harmonic $1$-form on $(M,g')$. 

    First, let's show that in the manifold $\hat{W}_1$, there is a $\hat{v}'$-positive arc $\gamma_1$ from $L_1^-$ to $L_1^+$. 
    
    By the nondegeneracy of $v$, there is a tubular neighborhood $U_1\subset W_1$ of $\Sigma_1$, and a diffeomorphism $\phi_1:S^1\times D^2\to U_1$, such that $\phi^\ast v=d\re z^{3/2}$.

    The set $\{z\in D^2:\re z^{3/2}=0\}$ divides $D^2$ into three parts. Thus $\underline{U}_1:=U_1\setminus \phi\left(\{(\theta,z)\in S^1\times D^2:\re z^{3/2}=0\}\right)$ has three connected components, namely $U_1^1,U_1^2$ and $U_1^3$. 
    
    In the $2$-fold branched cover $\hat{M}'$, $\hat{\underline{U}}_1=(p')^{-1}(\underline{U}_1)$ has six components. On the slice of $S^1\times D^2$, we arrange them in the following order (Figure \ref{slice6}):

    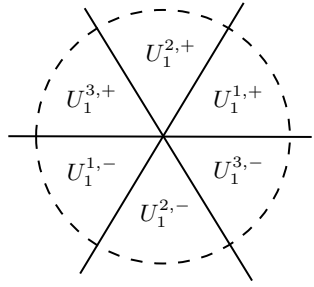
\begin{figure}[!h]
        \centering
\tikzset{every picture/.style={line width=0.75pt}} 

\begin{tikzpicture}[x=0.75pt,y=0.75pt,yscale=-1,xscale=1]

\draw  [dash pattern={on 4.5pt off 4.5pt}] (109.35,133.22) .. controls (109.35,97.87) and (138,69.22) .. (173.34,69.22) .. controls (208.68,69.22) and (237.33,97.87) .. (237.33,133.22) .. controls (237.33,168.56) and (208.68,197.21) .. (173.34,197.21) .. controls (138,197.21) and (109.35,168.56) .. (109.35,133.22) -- cycle ;
\draw    (95.33,132.95) -- (248.33,133.33) ;
\draw    (131.56,202.28) -- (214.04,65.67) ;
\draw    (133.83,68.36) -- (218.25,205.83) ;

\draw (197.06,105.82) node [anchor=north west][inner sep=0.75pt]  [font=\footnotesize]  {$U_{1}^{1,+}$};
\draw (162.95,82.65) node [anchor=north west][inner sep=0.75pt]  [font=\footnotesize]  {$U_{1}^{2,+}$};
\draw (123.02,105.7) node [anchor=north west][inner sep=0.75pt]  [font=\footnotesize]  {$U_{1}^{3,+}$};
\draw (123.31,142.49) node [anchor=north west][inner sep=0.75pt]  [font=\footnotesize]  {$U_{1}^{1,-}$};
\draw (160.22,163.54) node [anchor=north west][inner sep=0.75pt]  [font=\footnotesize]  {$U_{1}^{2,-}$};
\draw (197.15,141.81) node [anchor=north west][inner sep=0.75pt]  [font=\footnotesize]  {$U_{1}^{3,-}$};

\end{tikzpicture}

        \caption{Components of $\underline{\hat{U}}_1$.}
        \label{slice6}
    \end{figure}

    Recall that $L_1$ is a regular leaf of $\ker v$ on $M$ and that $W_1$ contains no isolated zeros of $v$. Therefore, each leaf of $\ker v$ on $W_1$ is diffeomorphic to $L_1$. 
    
    Consider the dual vector field $\hat{V}$ of $\hat{v}$ on $\hat{W}_1$, then there exists three simple closed curves $\Sigma_1^{i,-}~(i=1,2,3)$ on $L_1^-$, such that under the flow generated by $\hat{V}$, each $\Sigma_1^{i,-}$ is sent into $U_1^{i,-}$, and as the time tends to infinity, they all converge to $\Sigma_1$. Analogously, we define three simple closed curves $\Sigma_1^{i,+}~(i=1,2,3)$ on $L_1^+$, so that they are sent into $U_1^{i,+}$ under the flow generated by $-\hat{V}$.

    Now, there is a piecewise smooth arc $\gamma'$ in $\hat{W}_1$, composed of flow lines of $\hat{V}$, that runs from $\Sigma_1^{3,-}$ to $\Sigma_1$, and then from $\Sigma_1$ to $\Sigma_1^{1,+}$. Then we can modify $\gamma'$ near $\Sigma_1$ to obtain a $\hat{v}$-positive (and hence $\hat{v}'$-positive) smooth arc $\gamma_1$ from $L_1^-$ to $L_1^+$. See Figure \ref{tranarcnearSigma1}, in where the blue arcs represent the regular leaf of $\ker\hat{v}$ near $\Sigma_1$.

\begin{figure}[!h]
        \centering
\tikzset{every picture/.style={line width=0.75pt}} 

\begin{tikzpicture}[x=0.75pt,y=0.75pt,yscale=-1,xscale=1]

\draw  [dash pattern={on 4.5pt off 4.5pt}] (49.35,123.72) .. controls (49.35,88.37) and (78,59.72) .. (113.34,59.72) .. controls (148.68,59.72) and (177.33,88.37) .. (177.33,123.72) .. controls (177.33,159.06) and (148.68,187.71) .. (113.34,187.71) .. controls (78,187.71) and (49.35,159.06) .. (49.35,123.72) -- cycle ;
\draw    (35.33,123.45) -- (188.33,123.83) ;
\draw    (71.56,192.78) -- (154.04,56.17) ;
\draw    (73.83,58.86) -- (158.25,196.33) ;
\draw   (221,47.5) .. controls (221,45.01) and (223.01,43) .. (225.5,43) .. controls (227.99,43) and (230,45.01) .. (230,47.5) .. controls (230,49.99) and (227.99,52) .. (225.5,52) .. controls (223.01,52) and (221,49.99) .. (221,47.5) -- cycle ; \draw   (222.32,44.32) -- (228.68,50.68) ; \draw   (228.68,44.32) -- (222.32,50.68) ;
\draw [color={rgb, 255:red, 208; green, 2; blue, 27 }  ,draw opacity=1 ]   (113.34,123.72) .. controls (196.49,115.09) and (171.84,87.56) .. (221.8,51.11) ;
\draw [shift={(223.33,50)}, rotate = 144.57] [color={rgb, 255:red, 208; green, 2; blue, 27 }  ,draw opacity=1 ][line width=0.75]    (10.93,-3.29) .. controls (6.95,-1.4) and (3.31,-0.3) .. (0,0) .. controls (3.31,0.3) and (6.95,1.4) .. (10.93,3.29)   ;
\draw [color={rgb, 255:red, 208; green, 2; blue, 27 }  ,draw opacity=1 ]   (214.33,194.5) .. controls (170.77,190.54) and (160.54,154.73) .. (117.64,124.9) ;
\draw [shift={(116.33,124)}, rotate = 34.29] [color={rgb, 255:red, 208; green, 2; blue, 27 }  ,draw opacity=1 ][line width=0.75]    (10.93,-3.29) .. controls (6.95,-1.4) and (3.31,-0.3) .. (0,0) .. controls (3.31,0.3) and (6.95,1.4) .. (10.93,3.29)   ;
\draw [color={rgb, 255:red, 74; green, 144; blue, 226 }  ,draw opacity=0.84 ]   (163.83,190) .. controls (126.83,131) and (128.83,130) .. (200.33,130) ;
\draw [color={rgb, 255:red, 74; green, 144; blue, 226 }  ,draw opacity=1 ]   (160.5,60.33) .. controls (117.33,124.5) and (124.33,117) .. (195.33,116) ;
\draw   (214.33,194.5) .. controls (214.33,192.01) and (216.35,190) .. (218.83,190) .. controls (221.32,190) and (223.33,192.01) .. (223.33,194.5) .. controls (223.33,196.99) and (221.32,199) .. (218.83,199) .. controls (216.35,199) and (214.33,196.99) .. (214.33,194.5) -- cycle ; \draw   (215.65,191.32) -- (222.02,197.68) ; \draw   (222.02,191.32) -- (215.65,197.68) ;
\draw  [dash pattern={on 4.5pt off 4.5pt}] (318.85,121.72) .. controls (318.85,86.37) and (347.5,57.72) .. (382.84,57.72) .. controls (418.18,57.72) and (446.83,86.37) .. (446.83,121.72) .. controls (446.83,157.06) and (418.18,185.71) .. (382.84,185.71) .. controls (347.5,185.71) and (318.85,157.06) .. (318.85,121.72) -- cycle ;
\draw    (304.83,121.45) -- (457.83,121.83) ;
\draw    (341.06,190.78) -- (423.54,54.17) ;
\draw    (343.33,56.86) -- (427.75,194.33) ;
\draw   (490.5,45.5) .. controls (490.5,43.01) and (492.51,41) .. (495,41) .. controls (497.49,41) and (499.5,43.01) .. (499.5,45.5) .. controls (499.5,47.99) and (497.49,50) .. (495,50) .. controls (492.51,50) and (490.5,47.99) .. (490.5,45.5) -- cycle ; \draw   (491.82,42.32) -- (498.18,48.68) ; \draw   (498.18,42.32) -- (491.82,48.68) ;
\draw [color={rgb, 255:red, 208; green, 2; blue, 27 }  ,draw opacity=1 ]   (483.83,192.5) .. controls (386.33,128) and (368.33,125) .. (491.33,48) ;
\draw [shift={(491.33,48)}, rotate = 147.95] [color={rgb, 255:red, 208; green, 2; blue, 27 }  ,draw opacity=1 ][line width=0.75]    (10.93,-3.29) .. controls (6.95,-1.4) and (3.31,-0.3) .. (0,0) .. controls (3.31,0.3) and (6.95,1.4) .. (10.93,3.29)   ;
\draw [color={rgb, 255:red, 74; green, 144; blue, 226 }  ,draw opacity=0.84 ]   (433.33,188) .. controls (396.33,129) and (398.33,128) .. (469.83,128) ;
\draw [color={rgb, 255:red, 74; green, 144; blue, 226 }  ,draw opacity=1 ]   (430,58.33) .. controls (386.83,122.5) and (393.83,115) .. (464.83,114) ;
\draw   (483.83,192.5) .. controls (483.83,190.01) and (485.85,188) .. (488.33,188) .. controls (490.82,188) and (492.83,190.01) .. (492.83,192.5) .. controls (492.83,194.99) and (490.82,197) .. (488.33,197) .. controls (485.85,197) and (483.83,194.99) .. (483.83,192.5) -- cycle ; \draw   (485.15,189.32) -- (491.52,195.68) ; \draw   (491.52,189.32) -- (485.15,195.68) ;
\draw [line width=1.5]    (213,121.5) -- (286.83,121.5) ;
\draw [shift={(289.83,121.5)}, rotate = 180] [color={rgb, 255:red, 0; green, 0; blue, 0 }  ][line width=1.5]    (14.21,-4.28) .. controls (9.04,-1.82) and (4.3,-0.39) .. (0,0) .. controls (4.3,0.39) and (9.04,1.82) .. (14.21,4.28)   ;

\draw (142.06,93.32) node [anchor=north west][inner sep=0.75pt]  [font=\footnotesize]  {$U_{1}^{1,+}$};
\draw (102.95,73.15) node [anchor=north west][inner sep=0.75pt]  [font=\footnotesize]  {$U_{1}^{2,+}$};
\draw (63.02,96.2) node [anchor=north west][inner sep=0.75pt]  [font=\footnotesize]  {$U_{1}^{3,+}$};
\draw (63.31,132.99) node [anchor=north west][inner sep=0.75pt]  [font=\footnotesize]  {$U_{1}^{1,-}$};
\draw (100.22,154.04) node [anchor=north west][inner sep=0.75pt]  [font=\footnotesize]  {$U_{1}^{2,-}$};
\draw (143.65,126.81) node [anchor=north west][inner sep=0.75pt]  [font=\footnotesize]  {$U_{1}^{3,-}$};
\draw (239.5,41.4) node [anchor=north west][inner sep=0.75pt]  [font=\footnotesize]  {$\Sigma _{1}^{1,+}$};
\draw (231.5,190.9) node [anchor=north west][inner sep=0.75pt]  [font=\footnotesize]  {$\Sigma _{1}^{3,-}$};
\draw (411.56,91.32) node [anchor=north west][inner sep=0.75pt]  [font=\footnotesize]  {$U_{1}^{1,+}$};
\draw (372.45,71.15) node [anchor=north west][inner sep=0.75pt]  [font=\footnotesize]  {$U_{1}^{2,+}$};
\draw (332.52,94.2) node [anchor=north west][inner sep=0.75pt]  [font=\footnotesize]  {$U_{1}^{3,+}$};
\draw (332.81,130.99) node [anchor=north west][inner sep=0.75pt]  [font=\footnotesize]  {$U_{1}^{1,-}$};
\draw (369.72,152.04) node [anchor=north west][inner sep=0.75pt]  [font=\footnotesize]  {$U_{1}^{2,-}$};
\draw (413.15,124.81) node [anchor=north west][inner sep=0.75pt]  [font=\footnotesize]  {$U_{1}^{3,-}$};
\draw (509,39.4) node [anchor=north west][inner sep=0.75pt]  [font=\footnotesize]  {$\Sigma _{1}^{1,+}$};
\draw (501,188.9) node [anchor=north west][inner sep=0.75pt]  [font=\footnotesize]  {$\Sigma _{1}^{3,-}$};
\draw (225.5,99.5) node [anchor=north west][inner sep=0.75pt]   [align=left] {Modify};

\end{tikzpicture}

        \caption{Modify flow lines of $\hat{V}$.}
        \label{tranarcnearSigma1}
    \end{figure}
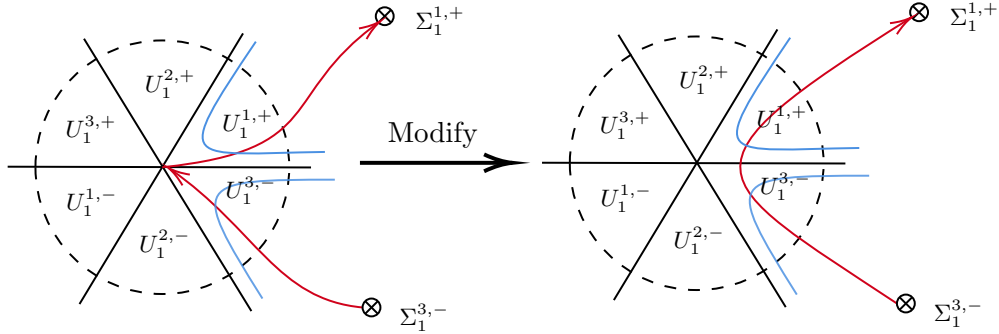

Then by Lemma \ref{omegapositive}, for any $x\in L_1^-$ and any $y\in L_1^+$, there always exists a smooth $\hat{v}'$-positive arc $\gamma_1$  from $x$ to $y$, that is contained in $\hat{W}_1$. 

Similarly, for any $x\in L_2^+$ and any $y\in L_2^-$, there always exists a smooth $\hat{v}'$-positive arc $\gamma_2$  from $x$ to $y$, that is contained in $\hat{W}_2$. 

Recall that after define the Morse function $f_0$, we found a $df_0$-positive arc $\gamma_0$ in $W$ that connecting from $L_1$ to $L_2$. This defines a $\hat{v}'$-positive arc $\gamma_0^+$ in $W_+$, from $L_1^+$ to $L_2^+$, and defines another $\hat{v}'$-positive arc $\gamma_0^+$ in $W_-$, from $L_2^-$ to $L_1^-$.

Now by Lemma \ref{omegapositive}, we can combine $\gamma_1$, $\gamma_0^+$, $\gamma_2$ and $\gamma_0^-$ together, to obtain a $\hat{v}'$-positive loop $\bar{\gamma}$ on $\hat{M}'$. See Figure \ref{branchedhatMprime}.

 Similar to Proposition \ref{finitetreeonRHS} and \ref{1bdry1knot}, we can prove that $\T_{M,v'}$ must be a finite tree, and each boundary vertex corresponds to a component of $\Zl'$. As a result, $T_{M,v'}$ must be a closed interval, and hence level sets of $f_0$ are connected.  
 Therefore, $\bar{\gamma}$ passes through every leaf of $\ker\hat{v}'$. By Lemma \ref{omegapositiveloopforleaf}, it follows that $\hat{v}'$ satisfies the transitive condition.

This proves that there exists a Riemannian metric $g'$ on $M$, such that $v'$ is a transverse and nondegenerate $\ZT$ harmonic $1$-form, with singular locus $\Zl'=\Sigma_1\sqcup\Sigma_2$. Moreover, the $\hat{v}'$-positive loop $\bar{\gamma}$ indeed defines a map $u:\hat{M}'\to S^1$, such $u^\ast d\theta=\mu\hat{v}'$ for some $\mu\in\QQ$. Thus $v'=v'(g',\Zl,[h]')$ for some $[h]'\in H^1(\hat{M}';\QQ)$, i.e., $v'$ is rational.
\end{proof}

\begin{remark}
    There are various examples of nondegenerate $\ZT$ harmonic $1$-forms on rational homology spheres, whose singular loci have more than two connected components. See \cite{Z3He,HeParker}.
\end{remark}

	\bibliographystyle{alpha}

\end{document}